\theoremstyle{plain}
\newtheorem{thm}{Theorem}[section]
\newtheorem{lem}[thm]{Lemma}
\newtheorem{prop}[thm]{Proposition}
\theoremstyle{definition}
\newtheorem{defn}{Definition}[section]
\theoremstyle{remark}
\newtheorem{rem}{Remark}[section]
\numberwithin{equation}{subsection}
\newcommand{\abs}[1]{\left\vert#1\right\vert}
\newcommand{\set}[1]{\left\{#1\right\}}
\newcommand{\qnt}[1]{\left(#1\right)}
\newcommand{\lb}[1]{\left\{#1\right.}
\def\parbar{{\partial\mkern-10.5mu/}}
\DeclareMathOperator{\sgn}{sgn}
\DeclareMathOperator*{\dist}{dist}
\newcommand{\me}{\mathrm{e}}
\newcommand{\dif}{\mathrm{d}}
\DeclareSymbolFont{lettersA}{U}{pxmia}{m}{it}
\DeclareMathSymbol{\piup}{\mathord}{lettersA}{"19}
\newcommand{\Rmnum}[1]{\expandafter\@slowromancap\romannumeral#1@}
\newcommand{\Real}{\mathbb R}
\begin{document}
\title[Hypersonic flow onto a large curved wedge and the dissipation of shock wave]
{Hypersonic flow onto a large curved wedge and the dissipation of shock wave}

\author{Dian Hu}

\address{School of Mathematics, East China University of Science and Technology,
Shanghai, 200237, China.}
\email{\tt hudianaug@qq.com}

\author{Aifang Qu}

\address{Department of Mathematics, Shanghai Normal University,
Shanghai, 200234, China.}
\email{\tt afqu@shnu.edu.cn}

\keywords{Hypersonic flow; piecewise smooth potential flow; global attached shock; shock wave dissipation}
\subjclass[2010]{35B20, 35D30, 35Q31, 35L65, 76J20, 76L05, 76N10}
\date{\today}

\begin{abstract}
The flow field with a Mach number larger than 5 is named hypersonic flow. In this paper, we explore the existence of smooth flow field after shock for hypersonic potential flow past a curved smooth wedge with neither smallness assumption on the height of the wedge nor that it is a BV perturbation of a line. The asymptotic behaviour of the shock is also analysed. We prove that for given Bernoulli constant of the incoming flow, there exists a sufficient large constant such that if the Mach number of the incoming flow is larger than it, then there exists a global shock wave attached to the tip of the wedge together with a smooth flow field between it and the wedge. The state of the flow after shock is in a neighbourhood of a curve that is determined by the wedge and the density of the incoming flow. If the slope of the wedge has a positive limit as $x$ goes to infinity, then the slope of the shock tends to that of the self-similar case that the same incoming flow past a straight wedge with slope of the limit. Specifically, we demonstrate that if the slope of the wedge is parallel to the incoming flow at infinity, the strength of the shock will diminish to zero at infinity. The restrictions on the surface of a wedge have been greatly relaxed compared to the previous works on supersonic flow past wedges. The method employed in this paper is characteristic decomposition, and the existence of the solution is obtained by finding an invariant domain of the solution based on geometry structures of the governing equations. The ideas and methods presented here may be applicable to other problems.
\end{abstract}

\maketitle
\tableofcontents

\section{Introduction}\label{sect:Introduction}
We are concerned with the problems of attached shock arising in hypersonic flows past wedges with large data. These problems are fundamental not only in aerodynamics but also in the mathematical theory of multidimensional conservation laws since their solutions play the role of building blocks of general solutions to the multidimensional Euler equations for compressible fluids \cite{Anderson2006AIAA} \cite{CourantFriedrichs1976AMS}. When a supersonic flow passes a wedge with an angle less than a critical value, an attached shock appears ahead of it. As the Mach number of the upcoming flow increases, the shock fronts approach the wedge, the region between the shock and the wedge becomes narrower and narrower, which is called a thin shock layer. The flow is called hypersonic if its Mach number is bigger than 5. The hypersonic flows show different features from those of supersonic flows. The thin shock layer experiences small variations as the Mach number of the incoming flow increases beyond a certain value. This important feature was observed in physical and engineering. However, many fundamental issues for hypersonic flow past obstacles have not been understood, including the global structure, stability and asymptotic behavior. Therefore, it is essential to establish the global existence and structural stability of solutions of hypersonic flow past an obstacle in order to understand fully the Mach number independence principle. On the other hand, there has been no rigorous mathematical results on the global existence and stability of the solution, including the case of potential flow which is widely used in aerodynamics (cf. \cite{Anderson2006AIAA} \cite{CourantFriedrichs1976AMS}). One of the main reasons is that the problems involve several challenging difficulties in the analysis of nonlinear partial differential equations such as solutions with large data, free boundary problems, and singularity similar as vacuum when the shock layer is thin. In this paper we develop a rigorous mathematical approach to overcome these difficulties and establish a global theory of existence and stability for hypersonic flow past a curved wedge for potential flow. The techniques and ideas developed here will be useful for other nonlinear problems involving similar difficulties.

On the $(x, y)$-plane, the steady Euler equations for potential flow are
\begin{equation}\label{eq:EulerEquations}
\lb{
\begin{aligned}
&(\rho u)_x + (\rho v)_y = 0,\\
&v_x-u_y=0,
\end{aligned}
}
\end{equation}
where the first line is the conservation of mass and the second is the irrotational condition. The unknowns $\rho$ and $(u, v)$ denote respectively the density of the mass and the velocity of the flow. We use the notation $\vec{U}:=(u, v)$. For polytropic gas, the pressure of the flow is
$$p(\rho)=A\rho^\gamma,$$ 
where $A>0$ is a constant and $\gamma\in(1, 3)$ the adiabatic index of the gas. The sound speed $c(\rho)$ is given by
\begin{equation}
c^2(\rho)=\frac{\dif p}{\dif \rho}=A\gamma\rho^{\gamma-1}.
\end{equation}
The density $\rho$ and the velocity $(u, v)$ are connected by the Bernoulli’s law
\begin{equation}\label{eq:BernoulliLaw}
\frac{u^2+v^2}{2}+\frac{c^2}{\gamma-1}=\mathbf{B}=\frac{\bar{q}^2}{2}
\end{equation}
with $\mathbf{B}$ being the Bernoulli constant determined by the incoming flow and $\bar{q}$ is the limit speed. The Mach number of the flow is defined by
\begin{equation}\label{eq:Mach}
\mathbf{M}=\frac{q}{c}
\end{equation}
with $q=\sqrt{u^2+v^2}$. For \eqref{eq:BernoulliLaw}, it can be solved that
\begin{equation}\label{eq:cuv}
c(u, v)=\sqrt{\frac{\gamma-1}{2}(\bar{q}^2-u^2-v^2)},\quad\mbox{and}\quad\rho(u, v) = \qnt{\frac{\gamma-1}{2\gamma}(\bar{q}^2 - u^2 - v^2)}^{\frac{1}{\gamma-1}}.
\end{equation}
Thus, in the following, we keep in mind that $c$ and $\rho$ are functions of $(u, v)$.

In the upper half of the $(x, y)$ plane, there is a wedge $\textsf{W}$ with a given $C^2$ smooth boundary function $y=f(x)$, where $f(x)>0$ for $x>0$ and $f(0)=0$. Here, we are mainly concerned with the following two kinds of wedge respectively.
\begin{description}
\item[Case A: Wedge without convexity] The function $f\in C^2(\Real^+)$ satisfies
\begin{equation}\label{eq:SupersonicWallCondition}
\underline{f}\leq f'\leq\bar{f}<\sqrt{\frac{2}{\gamma-1}},
\end{equation}
and
\begin{equation}\label{eq:WallCondition2nd}
\abs{f''(x)}\leq\frac{B}{\qnt{1+x}^{1+a}}
\end{equation}
where $\underline{f}$, $\bar{f}$, $a$ and $B$ are any positive constants with
\begin{equation}\label{eq:SupersonicWallConditionf}
0<\underline{f}<\bar{f}<\sqrt{\frac{2}{\gamma-1}}.
\end{equation}

\item[Case B: Bullet-like wedge] The function $f\in C^3_{loc}(\Real^+)$ satisfies
\begin{equation}\label{eq:ConvexWall}
f''\leq 0;
\end{equation}
\begin{equation}\label{eq:ConvexWallOrigin}
0<f'(0)<\sqrt{\frac{2}{\gamma-1}};
\end{equation}
and
\begin{equation}\label{eq:AsyCondition}
\lim\limits_{x\rightarrow+\infty}f'(x)\in[0, \sqrt{\frac{3-\gamma}{\gamma-1}}).
\end{equation}
\end{description}
By \eqref{eq:WallCondition2nd} and \eqref{eq:ConvexWall}, for both {\bf Case A} and {\bf Case B}, we can define $f_\infty$ as follows
\begin{equation}\label{eq:finfty}
f_\infty:=\lim\limits_{x\rightarrow+\infty}f'(x).
\end{equation}
It is worth noting that the assumptions on $f$ in {\bf Case A} mean no small BV bounded or even any convexity.

\begin{figure}[htbp]
\setlength{\unitlength}{1bp}
\begin{center}
\begin{picture}(190,190)
\put(-60,-10){\includegraphics[scale=0.4]{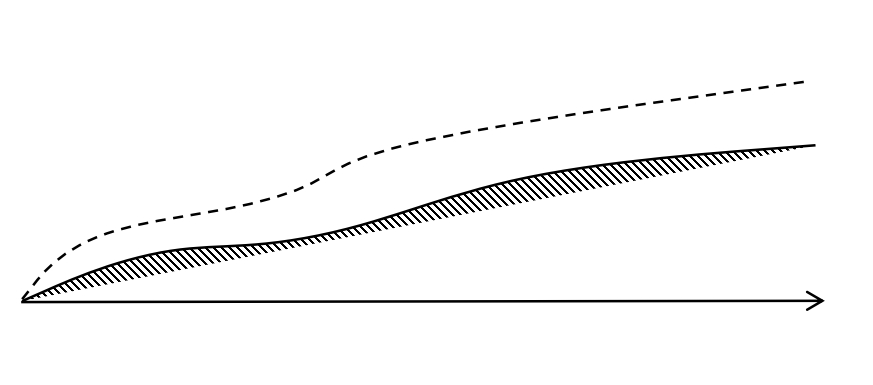}}
\put(-35,110){\small$\underline{u}<\bar{q}$}
\put(160,110){\small$\textsf{S}$}
\put(200,60){\small$\textsf{W}$}
\put(-55,5){\small$O$}
\put(250,5){\small$x$}
\end{picture}
\end{center}
\caption{supersonic flow onto a curved wedge}\label{fig:convexwedge}
\end{figure}

Our problem can be formulated as follows: {\bf Problem 1 (Initial boundary value problem):} Given $\mathbf{B}$, seek a solution of system \eqref{eq:EulerEquations} with given velocity $\qnt{\underline{u}, \underline{v}}$ of the incoming flow at $x=0$
\begin{equation}\label{eq:initialdata}
 (\rho, u, v)=(\underline{\rho}, \underline{u}, 0)\quad\mbox{on}\quad x=0,
\end{equation}
and the slip boundary condition along the wedge boundary $\mathsf{W}$:
\begin{equation}\label{eq:VelocityWallCondition}
v(x, y)=u(x, y)f'(x)\quad\mbox{on}\quad y=f(x),
\end{equation}
where $f$ satisfies either {\bf Case A} or {\bf Case B}.

For a supersonic flow past the wedge, a shock presents ahead of the wedge. Since the incoming flow is uniform and the boundary of the wedge is $C^2$ smooth, we seek piecewise smooth solutions of the form
\begin{equation}\label{eq:PiecewiseSolution}
(\rho, u, v)(x, y)=\begin{cases}
(\underline{\rho}, \underline{u}, 0), & x>0,~y>\phi(x),\\
(\rho, u, v),& x>0,~f(x)<y<\phi(x),
\end{cases}
\end{equation}
where $y=\phi(x)$ is the discontinuity of the solution, namely, the shock front, along which it holds the Rankine-Hugoniot (R-H) condition
\begin{equation}\label{eq:RHCondition}
\lb{
\begin{aligned}
&[\rho u]\phi' = [\rho v];\\
&[v]\phi' = -[u],
\end{aligned}
}
\end{equation}
where the bracket means the jump of the corresponding quantities on the discontinuity curve $y=\phi(x)$, for example, $[v]|_{y=\phi(x)}=v(x, \phi(x)-)-v(x, \phi(x)+)$. Then {\bf Problem 1} can be summarised as follows.

{\bf Problem 2 (Free boundary value problem):} Determine the flow fields $(u, v)$ and their asymptotic behaviour in the domain $\Omega$ bounded by the wedge $\textsf{W}$ and the free boundary $\textsf{S}: y=\phi(x)$, satisfying \eqref{eq:EulerEquations}, \eqref{eq:BernoulliLaw}, \eqref{eq:VelocityWallCondition}, \eqref{eq:PiecewiseSolution} and \eqref{eq:RHCondition} for a given smooth function $f$ satisfying either {\bf Case A} (\eqref{eq:SupersonicWallCondition} \eqref{eq:WallCondition2nd}) or {\bf Case B} (\eqref{eq:ConvexWall} \eqref{eq:ConvexWallOrigin}). Meanwhile, the entropy condition $p>\underline{p}$ should hold along the shock.

Due to the appearance of the wedge, the gas is compressed when it past the line $x=0$, and a shock presents in front of the wedge. Following the von Neumann criterion, the local existence of an attached shock has been thoroughly researched by many mathematicians and system theories have been developed to this kind of problems, provided that the wedge angle is less than a critical value. The shock front is a small perturbation of the one resulting in the same incoming flow past a straight wedge with inclined angle $\arctan f'(0)$.

We develop a rigorous mathematical approach to extend the local theory to a global theory for solutions of an attached shock. The shock approaches to the wedge as the Mach number of the incoming flow $\underline{\mathbf{M}}$ tends to infinity. The solution is supersonic and $C^1$ smooth in the domain bounded by the shock and the wedge. The shock fronts is $C^2$ smooth. Our main results are stated as follows

\begin{thm}\label{thm:MainTheorem1}
For the supersonic incoming flow with fixed Bernoulli's constant $\mathbf{B}$, there exists $\mathbf{M}_1>1$ only depending on the wall and $\mathbf{B}$, such that, for $\underline{\mathbf{M}}>\mathbf{M}_1$, the problem \eqref{eq:EulerEquations}, \eqref{eq:initialdata} and \eqref{eq:VelocityWallCondition} admits a global piecewise smooth solution $(\rho, u, v)$ with an attached shock $y=\phi(x)$ satisfying \eqref{eq:RHCondition}, provided that the wedge boundary $f\in C^2(\Real^+)$ satisfies \eqref{eq:SupersonicWallCondition}-\eqref{eq:SupersonicWallConditionf}.

In addition, we have the asymptotic result
\begin{equation}\label{eq:ApproximateVelocity}
\lim\limits_{x\rightarrow+\infty\atop(x, y)\in\Omega}(u, v)(x, y)=(u_\infty, v_\infty),
\end{equation}
where $(u_\infty, v_\infty)$ is the constant supersonic solution for corresponding problem involving the straight wedge $f(x)=f_\infty x$. Additionally, we have
\begin{equation}\label{eq:ApproximateShock}
\lim\limits_{x\rightarrow+\infty}\phi'(x)=\frac{\underline{u}-u_\infty}{v_\infty}.
\end{equation}
Furthermore, the shock approaches the wedge as the Mach number of the incoming flow increases.
\end{thm}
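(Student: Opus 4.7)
The plan is to reformulate \textbf{Problem 2} as a free-boundary problem on the strip between the wedge $\textsf{W}$ and the unknown shock $\textsf{S}$, with the slip condition \eqref{eq:VelocityWallCondition} on $\textsf{W}$ and the Rankine--Hugoniot conditions \eqref{eq:RHCondition} on $\textsf{S}$. Working with state variables $\theta=\arctan(v/u)$ and $q=\sqrt{u^2+v^2}$, and writing $\mu=\arcsin(c/q)$ for the Mach angle, the $C_\pm$ characteristics of \eqref{eq:EulerEquations} have slopes $\tan(\theta\pm\mu)$. A characteristic decomposition then produces first-order ODEs along each family for the Riemann invariants $R_\pm=\theta\pm\nu(q)$ (with $\nu$ the Prandtl--Meyer function) together with derived evolution equations for the directional derivatives $\partial_\pm R_\mp$. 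Because the incoming flow is uniform, $C_-$ characteristics transport data from $\textsf{W}$ to $\textsf{S}$ and $C_+$ characteristics transport it back, so the geometry reduces the problem to a sequence of boundary reflections on $\textsf{W}$ and on $\textsf{S}$.

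The heart of the argument is to construct an invariant region in state space that survives both reflections. For each admissible slope $\alpha\in[\arctan\underline{f},\arctan\bar{f}]$ the straight-wedge self-similar problem at incoming state $(\underline{\rho},\underline{u},0)$ admits a unique weak-shock post-shock state $(\theta_\alpha,q_\alpha)$; as $\alpha$ varies, these states trace out a curve $\Gamma$ in $(\theta,q)$-space. I would look for a tubular neighbourhood $\mathcal{D}$ of $\Gamma$ such that (i) the weak-shock polar branch attached to $(\underline{\rho},\underline{u},0)$ maps the admissible portion of $\mathcal{D}$ into itself, and (ii) the slip reflection on $\textsf{W}$ maps $\mathcal{D}$ into itself. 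For $\underline{\mathbf{M}}\gg 1$ the weak-shock polar, parametrised appropriately, is almost tangent to $\Gamma$ on the relevant range, so both reflections become contractions up to a forcing term proportional to $|f''|$.

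Granted this, global existence proceeds by continuation from the local attached-shock solution supplied by the von Neumann criterion near $x=0$. On any maximal $[0,X)$ on which a $C^1$ piecewise smooth solution exists with downstream state in $\mathcal{D}$, the characteristic decomposition yields Riccati-type equations for $\partial_\pm R_\mp$ along $C_\pm$; the reflection estimates control the transfer of these derivatives across the shock layer, and \eqref{eq:WallCondition2nd} gives $\int_0^\infty|f''(s)|\,ds\le B/a$, so one obtains uniform bounds on $\|(u,v)\|_{C^1}$ and on $\phi'$. These bounds rule out envelope formation of $C_\pm$ characteristics (interior gradient blow-up) and prevent detachment of $\textsf{S}$ from the tip, hence $X=\infty$. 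The principal obstacle is precisely this step: one must produce a region preserved under reflection on \emph{both} boundaries without any smallness of $f-f(0)$ or BV control on $f'$, while staying away from the near-vacuum singularity $c\to 0$ intrinsic to the hypersonic regime. The large parameter $\underline{\mathbf{M}}$ is exploited through the geometry of the shock polar, not through smallness of the wedge; this is the novel point relative to the perturbative theory.

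For the asymptotics, integrability of $f''$ together with $f'(x)\to f_\infty$ forces $\theta|_{\textsf{W}}\to\arctan f_\infty$; propagating this limit along $C_-$ from $\textsf{W}$ to $\textsf{S}$ and applying the shock-polar relation on $\textsf{S}$ yields $(u,v)\to(u_\infty,v_\infty)$ as in \eqref{eq:ApproximateVelocity}, while the Rankine--Hugoniot jump \eqref{eq:RHCondition} gives \eqref{eq:ApproximateShock} by passing to the limit in $\phi'=-[u]/[v]$. The limit is justified because the Riccati forcing is $L^1$ in $x$, again by \eqref{eq:WallCondition2nd}. Finally, the thin-layer statement follows from the hypersonic density ratio $\rho/\underline{\rho}\to(\gamma+1)/(\gamma-1)$ implied by \eqref{eq:BernoulliLaw} and \eqref{eq:RHCondition} as $\underline{\mathbf{M}}\to\infty$: mass conservation across any vertical section forces the thickness $\phi(x)-f(x)$ to shrink to zero uniformly on compact sets in $x$, so $\textsf{S}$ collapses onto $\textsf{W}$ in the hypersonic limit.
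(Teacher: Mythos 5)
There is a genuine gap. Your outline assembles the right general tools (characteristic decomposition, an invariant neighbourhood of the one-parameter family of post-shock states, reflection contraction on $\textsf{S}$, continuation from the local solution), and these all appear in the paper; but it omits the single ingredient the whole proof turns on, namely the \emph{narrow estimate} obtained from conservation of mass. With the Bernoulli constant fixed, $\underline{\mathbf{M}}\to\infty$ forces the incoming mass flux $\epsilon=\underline{\rho}\,\underline{u}\to 0$ while the downstream normal mass flux stays bounded below, so the shock layer has thickness $O(\epsilon\,\xi)$ (Theorem \ref{thm:NarrowEstimate}). This is what (i) pins the state at every interior point to within $O(\epsilon)$ of the limit-solution state $\vec U_s(P_w)$ determined \emph{pointwise} by the local wedge slope (Theorem \ref{thm:SpecialSolutionPerturbationEstimate}), which is how the paper dispenses with any smallness or BV control on $f'$; and (ii) guarantees that one full reflection cycle shock--wall--shock advances $\xi$ only by a factor $1+O(\epsilon)$, so the contraction $\bar K<1$ at the shock closes the inductive bound $|\bar{\partial}^\pm c|\le M_\pm/\xi$ (Theorem \ref{thm:SonicCharacteristicBoundInductive}). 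Your substitute -- reflection contraction plus $\int_0^\infty|f''|\le B/a$ -- only bounds the accumulated drift of the Riemann invariants by a quantity of order $B/a$, which is not small; it cannot keep the state inside a thin tube around a target curve that itself moves with $f'(x)$, so the invariant-region step does not close as stated.

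The final thin-layer claim is also wrong for this model. The ratio $\rho/\underline{\rho}\to(\gamma+1)/(\gamma-1)$ is the strong-shock limit for full Euler with fixed upstream density; here $\mathbf{B}$ is fixed, so $\underline{u}\to\bar q$ forces $\underline{\rho}\to 0$ while the post-shock density tends to the positive value $\rho_s$ of \eqref{eq:SpeedS}, i.e.\ $\rho/\underline{\rho}\to+\infty$. Even granting a finite density ratio, mass conservation would then give a layer whose thickness is a fixed fraction of $\phi(x)$, not one collapsing onto $\textsf{W}$. The correct mechanism is exactly the one above: vanishing incoming flux $\epsilon$ against a downstream flux bounded below, giving $|P^sP_w|\le \epsilon M_0\xi$ as in \eqref{eq:NarrowEstimate}. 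Repairing your proposal essentially amounts to inserting this mass-conservation estimate at the start and rebuilding the invariant region and the $1/\xi$ derivative decay around it, which is precisely the paper's route.
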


\begin{figure}[htbp]
\setlength{\unitlength}{1bp}
\begin{center}
\begin{picture}(190,190)
\put(-60,-10){\includegraphics[scale=0.4]{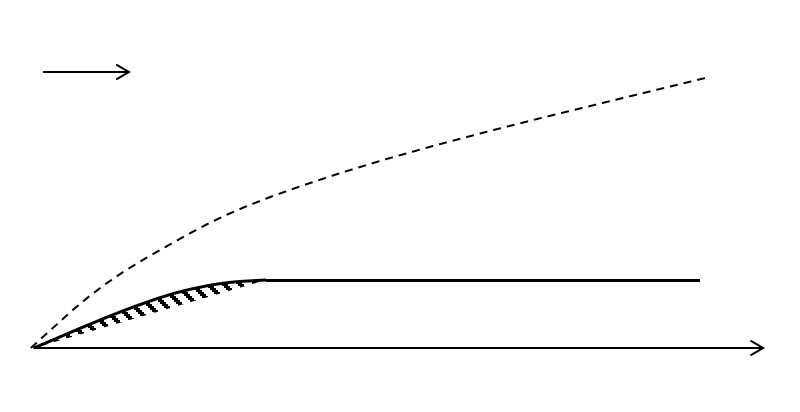}}
\put(-43,104){\small$\underline{u}<\bar{q}$}
\put(160,123){\small$\textsf{S}$}
\put(200,55){\small$\textsf{W}$}
\put(-58,2){\small$O$}
\put(250,5){\small$x$}
\end{picture}
\end{center}
\caption{supersonic flow onto a bullet wedge}\label{fig:bulletwedge}
\end{figure}

\begin{thm}\label{thm:MainTheorem2}
For the supersonic incoming flow with Mach number $\underline{\mathbf{M}}$ and fixed Bernoulli's constant $\mathbf{B}$, if the wedge boundary $f\in C^3_{loc}(\Real^+)$ satisfies \eqref{eq:ConvexWall}-\eqref{eq:AsyCondition}, then there exists $\mathbf{M}_1>1$, such that, for $\underline{\mathbf{M}}>\mathbf{M}_1$, the problem \eqref{eq:EulerEquations}, \eqref{eq:initialdata} and \eqref{eq:VelocityWallCondition} admits a global piecewise smooth solution $(u, v)$ with an attached shock $y=\phi(x)$ satisfying \eqref{eq:RHCondition} with the asymptotic results \eqref{eq:ApproximateVelocity} and \eqref{eq:ApproximateShock}. In particular, as illustrated in Figure \ref{fig:bulletwedge}, for the wedge of bullet-like shape where $f_\infty=0$ and adiabatic index $\gamma=2$, the shock is extinct at infinity and we have
\begin{equation}\label{eq:ApproximateVelocityF}
\lim\limits_{x\rightarrow+\infty\atop(x, y)\in\Omega}(u, v)=(\underline{u}, 0),
\end{equation}
and
\begin{equation}\label{eq:ApproximateShockF}
\lim\limits_{x\rightarrow+\infty}\phi'(x)=\frac{\underline{c}}{\sqrt{\underline{u}^2-\underline{c}^2}}.
\end{equation}
\end{thm}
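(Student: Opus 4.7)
\emph{Proof plan.} The approach is to reformulate the free boundary problem in characteristic coordinates and construct an invariant region for the state after the shock. Writing $\theta=\arctan(v/u)$ for the flow angle and $\alpha$ for the Mach angle ($\sin\alpha=c/q$), system \eqref{eq:EulerEquations} combined with Bernoulli's law \eqref{eq:BernoulliLaw} diagonalizes into two transport equations for Riemann invariants $R^\pm$ along characteristics $C^\pm$ of slopes $\tan(\theta\pm\alpha)$. The unknowns become $R^\pm$ on the domain $\Omega$, with the slip condition \eqref{eq:VelocityWallCondition} coupling $R^+$ and $R^-$ on $y=f(x)$, and the Rankine--Hugoniot conditions \eqref{eq:RHCondition} with constant upstream $(\underline{\rho},\underline{u},0)$ coupling $R^\pm$ and simultaneously determining $\phi'$ on $y=\phi(x)$.

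Starting from the classical local attached-shock solution near $x=0$, valid when $f'(0)<\sqrt{2/(\gamma-1)}$ and $\underline{\mathbf{M}}$ is large, I would extend it globally by building an invariant region $\mathcal{R}$ in the $(u,v)$-plane and obtaining uniform a priori bounds. The decisive use of \textbf{Case B} is the convexity $f''\leq 0$: this forces the wall to turn only in the expansion direction, so the image of $\mathcal{R}$ under the slip boundary map stays inside $\mathcal{R}$. At the shock, the shock polar geometry together with the monotonicity of the wedge angle keeps the post-shock state between the polar points at angles $f'(0)$ and $f_\infty$. A characteristic decomposition---differentiating $R^\pm$ along $C^\pm$ to obtain transport-type ODEs for the derivatives of the Riemann invariants---then shows that the source terms have signs compatible with $f''\leq 0$ and, for hypersonic $\underline{\mathbf{M}}$, remain bounded, yielding $C^1$ bounds on $(u,v)$ and $C^2$ bounds on $\phi$ and hence global existence by continuation.

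For the asymptotics, monotonicity and boundedness of $f'$ give $f'(x)\to f_\infty$ and $f''(x)\to 0$, so the wall condition on strips $\{x>X\}$ is asymptotically that of the straight wedge of slope $f_\infty$; integrating the characteristic equations on such strips and comparing with the self-similar solution yields \eqref{eq:ApproximateVelocity}, and \eqref{eq:ApproximateShock} then follows from \eqref{eq:RHCondition} since the downstream state converges. For the dissipation case $f_\infty=0$ and $\gamma=2$, the limiting straight-wedge problem has vanishing wedge angle and hence the trivial solution $(\underline{u},0)$, giving \eqref{eq:ApproximateVelocityF}; the vanishing jump in \eqref{eq:RHCondition} then forces $\phi'(x)$ to the incoming Mach direction $\underline{c}/\sqrt{\underline{u}^2-\underline{c}^2}$, yielding \eqref{eq:ApproximateShockF}. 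The main obstacle I anticipate is constructing $\mathcal{R}$ so that it is preserved uniformly in $x$ under the nonlinear characteristic sources in the hypersonic regime, where $c/q$ is small and the $C^\pm$ families are nearly coincident; the special role of $\gamma=2$ in the final assertion is to make the matching with the degenerate zero-angle limit algebraically explicit, so that the Rankine--Hugoniot relations can be solved in closed form as the shock strength tends to zero.
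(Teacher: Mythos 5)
Your outline captures the right general philosophy --- characteristic decomposition, an invariant region in the $(u,v)$-plane, and the use of $f''\le 0$ to propagate signs through the wall reflection --- and your closing asymptotic reasoning (weak shock $\Rightarrow$ Mach direction) is correct. The paper indeed proceeds by showing $\bar{\partial}^\pm c<0$ is an invariant sign condition, that the shock is then convex, and that the asymptotic state is pinned down by a characteristic-reflection contradiction argument. But there is one central technical obstruction that your plan does not identify, and without it the continuation argument cannot be closed in the dissipation case $f_\infty=0$.

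The gap is the uniform estimate $0<g(\vec{U},\epsilon)\le K<1$ for the shock reflection coefficient in \eqref{eq:ShockRelation} as the post-shock state approaches the zero-strength point $(\bar{q},0)$ of the shock polar. Your a priori bounds at the shock necessarily take the form $|\bar{\partial}^+c|\le g\,|\bar{\partial}^-c|$, and the global derivative bound is obtained by iterating the reflection cycle wall $\to$ shock $\to$ wall; this iteration converges only if $g$ is uniformly bounded away from $1$ along the whole shock. When $f_\infty>0$ this follows from continuity (Theorem \ref{thm:gRangeS} and Lemma \ref{lem:PerturbationConstants}), but when $f_\infty=0$ the shock state tends to $(\bar{q},0)$, where $g$ is genuinely singular and \emph{multivalued}: the paper computes $\lim_{v\to0+,\,\epsilon>0}g=0$ while $\lim_{u\to\bar{q},\,\epsilon=0}g=(\sqrt{2/(\gamma-1)}-1)/(\sqrt{2/(\gamma-1)}+1)$, so no continuity or implicit-function argument applies. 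This is exactly where $\gamma=2$ is used in an essential way --- not, as you suggest, merely to "make the matching with the degenerate limit algebraically explicit," but to turn the relevant quantities $\mathcal{F}$, $\mathcal{G}$ into polynomials in $(U,V,Y)=(1-u,v^2,1-\underline{u})$ so that the non-vanishing of $\mathcal{F}$ on the polar near the vacuum corner, and hence the sign conditions \eqref{eq:InnerCondition} and the bound \eqref{eq:gBoundV}, can be verified by explicit polynomial division and comparison of growth rates (Theorem \ref{thm:ShockPolarIV}). Your statement that the source terms "remain bounded for hypersonic $\underline{\mathbf{M}}$" is precisely what fails to be automatic here. A secondary omission: you assert the invariant region is preserved, but you do not rule out a $C^-$ characteristic becoming tangent to the wall (vacuum formation) or a $C^+$ characteristic becoming tangent to the shock; the paper needs a separate lemma, resting on the no-vacuum Theorem \ref{thm:NoVacuum} and the convexity of the shock, to guarantee that every characteristic actually connects $\mathsf{S}$ to $\mathsf{W}$ so that the reflection iteration is well defined.
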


The problem of given incoming flow past an obstacle has been extensively studied in various models across numerous literatures. For the case where the wall boundary is a straight line, extensive research has been performed, as exemplified in \cite{CourantFriedrichs1976AMS}. Subsequently, many authors have investigated the nonlinear perturbation problem, see \cite{Chen2017SCM,ChenFang2009DCDS,ChenFang2017AM,ChenLi2008JDE,ChenZhangZhu2006ARMA,Chen1998CAMS,Chen1998JPDE,Chen2003ZAMP,ChenFang2007JDE,ChenXinYin2002CMP,CuiYin2009JDE,LiWittYin2014CMP,LienLiu1999CMP,ZhangCui2012AAM,YinZhou2009JDE,XuYin2009SIAM,Schaeffer1976DMJ,Zhang1999SIAM,Zhang2003JDE} and the references therein. All these studies commonly assume that the solid wall boundary is a small perturbation of a straight line and the flow fields behind the shock are perturbations of constant states.

\begin{figure}[htbp]
	\setlength{\unitlength}{1bp}
	\begin{center}
		\begin{picture}(190,190)
		\put(-10,-10){\includegraphics[scale=0.4]{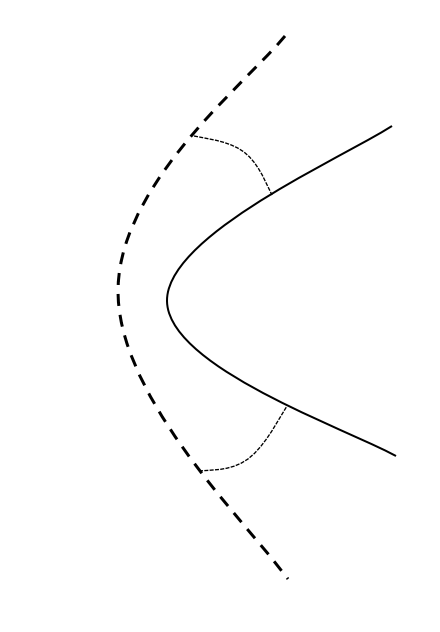}}
		\put(61,139){\tiny$\mbox{supersonic}$}
		\put(37,120){\tiny$\mbox{subsonic}$}
		\put(55,165){\small$\textsf{S}$}
		\put(100,125){\small$\textsf{W}$}
		\end{picture}
	\end{center}
	\caption{detached shock}\label{fig:bluntbody}
\end{figure}

As illustrated in Figure \ref{fig:bluntbody}, an interesting issue arises in the context of supersonic flow over a blunt body. For an obstacle with a boundary where $f'(0)=+\infty$, the shock front is detached. The downstream flow fields around the head of the body are subsonic. If the slope of the wall boundary at infinity is less than a critical value, then the downstream flow becomes supersonic there. The downstream flow field exhibits a wide range of variations. This wave structure has been consistently observed across numerous experiments. Mathematically, determining the position of shock front and the distribution of the flow field behind them is a challenging problem with significant importance in both theory and application, see \cite{Chen2010QAM}. This wave structure is characterized by three features: (1) shock, (2) solid wall, and (3) flow fields behind the shock with large variations. To our knowledge, in steady flow scenarios, there are few results that simultaneously encompass all these three features. For self-similar case, such problem has been studied in \cite{BaeChenFeldman2009IM,EllingLiu2008CPAM,ChenFeldman2010AM,ChenFeldman2018AMS,ChenFeldmanXiang2020ARMA}.


In \cite{HuZhang2019SIAM}, the authors introduce a limit case for this problem by considering the incoming flow reaching a limit speed $\underline{u}=\bar{q}$. According to \eqref{eq:cuv}, this implies that the incoming density $\underline{\rho}=0$. A limit solution can be obtained by setting $\mathsf{S}=\mathsf{W}$ and combining \eqref{eq:VelocityWallCondition} and \eqref{eq:RHCondition} to determine the downstream flow field. This solution addresses the problem in the distributional sense \cite[Section 11.1.1]{Evans2010GSM} and can be formulated for more general obstacles. It provides the simplest example that simultaneously contains the three aforementioned features. On the other hand, the shock-free boundary is often tackled using the continuity method, as seen in \cite{BaeXiang2023AAM,ChenFeldman2018AMS,ChenFeldmanXiang2020ARMA,EllingLiu2008CPAM}. Starting with a known specific solution, a general solution is derived through {\it a priori} estimates and continuous variations. Following this approach, we aim to use the limit solution as the starting point for the continuity method to construct a general solution that simultaneously incorporates these three features for $\underline{u}<\bar{q}$. Specifically, we intend to construct a solution where $\underline{u}$ is sufficiently close to $\bar{q}$, treating the limit solution as the asymptotic state. At this stage, apart from at the vertex, the shock wave detaches from the solid wall. Furthermore, this constructed solution should asymptotically approach the limit solution as $\underline{u}$ approaches $\bar{q}$.

For the issue of shock wave dissipation, there are some examples for scalar conservation laws in one space dimension, see \cite[Section 3.4.1 Example 3]{Evans2010GSM}. However, regarding conservation laws of system, such issues are rarely studied. By comparison, there are numerous results regarding singularity formation, see \cite{ChenChenZhu2021SIAM,ChenPanZhu2017SIAM,ChenYoungZhang2013JHDE,Christodoulou2022JMP,ChristodoulouMiao2014SMM,John1974CPAM,Kong2002TAMS,Lax1964JMP,Liu1979JDE,MiaoYu2017IM,Nakane1988SIAM,Rammaha1989PAMS}.

Some efforts have been made mathematically for the hyperbolic flow past obstacles. So far, tangible results have been achieved primarily by studying special flow patterns. The limiting hyperbolic Euler flow, as Mach number goes to infinity in the case that the adiabatic index of the flow $\gamma$ goes to one, past a straight wedge was studied in Qu-Yuan-Zhao \cite{QuYuanZhao2020ZAMM} by proposing a definition of solution in the frame of Radon measure, namely, Radon measure solution, and the Newton' sine squared laws in hyperbolic flow was firstly proved. Later, the Newton-Busemann pressure law and their generalized versions have been derived for the limiting flow past general wedges and cones in \cite{JinQuYuan2021CPAA} \cite{JinQuYuan2023CAMC} \cite{QuYuan2023CAMB}. In these works, the wedge angle and the cone angle are fixed, and the shock layer is singular as it degenerate to a curve or a surface with mass and thus momentum as well as energy concentrated on it. These solutions are useful since they can be adapted to relatively simple boundary conditions. The hypersonic similarity was first discovered by Tsien \cite{Tsien1939JSR} for steady potential flow. For a fixed similarity parameter defined by the product of the Mach number and the wedge angle, the flow are governed approximately by the same equation. Recently, Kuang-Xiang-Zhang proved the validation of this in \cite{KuangXiangZhang2020AIPAN} for potential flow past a slender wedge by proving that the shock solution structures after scaling are consistent. The convergence of the approximation was studied in \cite{KuangXiangZhang2023CVPDE} and the hypersonic similarity for full Euler equations is discussed in \cite{ChenKuangXiangZhang2024AM}.

Our proof originates from a fundamental idea. Under the assumption of an invariant Bernoulli constant, as the incoming flow continuously accelerates, approaching the limit speed, the corresponding density of the gas will decay to zero. Consequently, the associated mass flux diminishes as well. In this context, we can consider an asymptotic state defined as a limit solution, where the shock front coincides with the solid wall. The flow field after the shock is defined through a combination of the Rankine-Hugoniot conditions and the solid wall boundary conditions (see Figure \ref{fig:convexwedgeS}). Then, our estimates rely on three fundamental steps. First, owing to the conservation of mass and the entropy condition, we can control the distance between the shock front and the solid wall. This provides us with an estimate of the shock (see Theorem \ref{thm:NarrowEstimate}). Second, under the assumption that the derivatives of the flow field are uniformly bounded, as the shock front approaches the solid wall, the flow field's velocity can be approximated by the velocities obtained through the combination of the shock conditions and the solid wall conditions. This approximation allows for an estimate of the flow field (see Theorem \ref{thm:SpecialSolutionPerturbationEstimate}). Third, as we approach the limit solution, an analysis of the characteristics' reflection provides us with an estimate for the derivatives of the solution (see Theorem \ref{thm:SonicCharacteristicBoundInductive}). We point out here that the first two steps are directly reflected in the definition of limit solution, which is the logic of conservation laws and has not appeared in previous proofs. The third logic stems from the equation.

\begin{figure}[htbp]
\setlength{\unitlength}{1bp}
\begin{center}
\begin{picture}(190,190)
\put(-60,-10){\includegraphics[scale=0.4]{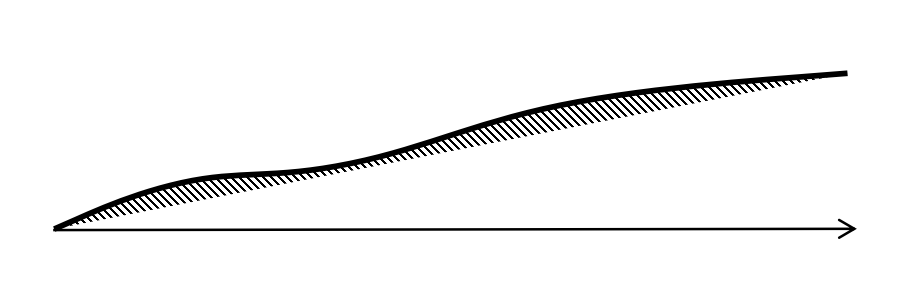}}
\put(-43,84){\small$\underline{u}=\bar{q}$}
\put(180,65){\small$\textsf{S}=\textsf{W}$}
\put(-58,2){\small$O$}
\put(180,5){\small$x$}
\end{picture}
\end{center}
\caption{flow with limit speed onto a curved wedge}\label{fig:convexwedgeS}
\end{figure}



We need to analyse a solution with large variations, in the presence of both free and fixed boundaries. Due to the complexity of the objects we are calculating, the presentation of our calculations is crucial. Our approach relies on two key techniques. The first is characteristic decomposition. In reference \cite{LiZhangZheng2006CMP}, the authors introduce the technique of characteristic decomposition for quasi-linear hyperbolic systems, clearly demonstrating the differential relationships among various quantities of the system, and further constructing global solutions for rarefaction wave interactions that occur in high-dimensional Riemann problems, see \cite{ChenZheng2010IUMJ,HuLi2020ARMA,Lai2019IUMJ,LaiSheng2016SIAM,LaiSheng2021ARMA,LiXiao2012JDE,LiYangZheng2011JDE,LiZheng2009ARMA,LiZheng2010CMP,LiZheng2011ARMA}. We employ their technique to investigate the equations and their corresponding boundary relations. The second technique is the application of symbolic computing software Mathematica. As the incoming and post-shock flows near the limit speeds, due to the significant singularity of various quantities at this time, the calculation is difficult. In this part, we only consider the case where the adiabatic constant is $\gamma=2$. There are two benefits to doing this. First, in this case, the state equation, Bernoulli's equation, and the Rankine-Hugoniot conditions have polynomial forms, which streamline the calculations. Second, it ensures that various quantities maintain good differentiability near the limit speed. Utilizing computational software Mathematica under these conditions, we are well-equipped to derive the desired computation. We believe that the methods discussed herein can be further extended to yield more general results.

The rest of this paper is organized as follows. In Section \ref{sect:BasicDifferentialRelations}, we first decompose both the system and boundary conditions by making use of the characteristics and reformulate the problem into one in which $\rho$, $(u, v)$ and their derivatives are expressed by $c$ and the directive derivation along the characteristic, that is $\bar{\partial}^\pm c$. In Section \ref{sect:LimitSolution}, the limit solution is introduced for the case $\underline{u}=\bar{q}$. A particular curve was specified by analysing the jump conditions, the Rankine-Hugoniot, that the shock satisfies. Estimates are obtained here for the states curve $(u, v)$ that could be connected to a given hypersonic flow by a shock by analysing the continuity of a family of parametric curves with parameter relating to the Mach number of the incoming flow. In Section \ref{subsect:SomeaprioriEstimates}, we present some a priori estimates for the solution, whose states locate around the limit solution. The proof of Theorem \ref{thm:MainTheorem1} is given in Section \ref{sect:GlobalShockwithoutConvexity}. We establish the a priori estimates for the flow with boundary of {\bf Case A} in this section and get the global existence of solution. The asymptotic behaviour of the solution is analysed in Section \ref{subsect:AsymptoticBehaviour}. Furthermore, we analyse the higher order derivatives of the solution and obtain their limit as the Mach number of the incoming flow goes to infinity in Section \ref{subsect:HigherOrderProperty}. Finally, for {\bf Case B}, the proof of Theorem \ref{thm:MainTheorem2} is given in Section \ref{sect:ConvexWedge}. In Section \ref{subsect:SignofShockStates}, we present the delicate states estimates around the shock polar. Then, under the convex assumption, we show the sign estimates of the solution and prove the global existence. The shock is proved extinguished at infinity for $f_\infty=0$.

\section{Basic differential relations}\label{sect:BasicDifferentialRelations}
In this section, we introduce characteristic decomposition. Specifically, we consider the differentiation of various quantities along characteristic directions, as well as the higher-order dependencies of these derivatives. Then, we consider the differential relations derived from the boundary conditions. The idea here comes from a series of works in \cite{LiZhangZheng2006CMP,LiYangZheng2011JDE,ChenZheng2010IUMJ}.
\subsection{Preliminary}\label{subsect:Preliminary}
For the states along the shock, we can eliminate $\phi'$ in \eqref{eq:RHCondition} to have the shock polar equation
\begin{equation*}
(\rho u-\underline{\rho}\underline{u})(u-\underline{u})+\rho v^2=0.
\end{equation*}
Set the incoming mass flux
\begin{equation}\label{eq:epsilon}
\epsilon:=\underline{\rho}\underline{u}.
\end{equation}
Owing to \eqref{eq:BernoulliLaw}, for $\underline{u}>\underline{c}$, $\epsilon$ is a decreasing function of $\underline{u}$. Hence, for fixed Bernoulli constant $\mathbf{B}$, we have
\begin{equation*}
\underline{u}\rightarrow\bar{q},
\end{equation*}
and
\begin{equation*}
\underline{\mathbf{M}}\rightarrow+\infty,
\end{equation*}
as
\begin{equation*}
\epsilon\rightarrow0+.
\end{equation*}
In view of above, we have
\begin{prop}
Theorem \ref{thm:MainTheorem1} and Theorem \ref{thm:MainTheorem2} hold if and only if their conclusions are valid for sufficiently small $\epsilon$.
\end{prop}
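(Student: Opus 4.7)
The plan is to establish a monotone bijection between the incoming Mach number $\underline{\mathbf{M}}$ and the incoming mass flux $\epsilon=\underline{\rho}\underline{u}$ on the supersonic range, holding the Bernoulli constant $\mathbf{B}$ fixed. Once that bijection is in place, the hypothesis ``$\underline{\mathbf{M}}>\mathbf{M}_1$'' in Theorems \ref{thm:MainTheorem1} and \ref{thm:MainTheorem2} translates into ``$\epsilon<\epsilon_1$'' for some $\epsilon_1>0$, which is the content of the proposition.

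First I would parametrize everything by $\underline{u}$ alone. Since $\underline{v}=0$ and $\mathbf{B}=\bar{q}^2/2$ is fixed, \eqref{eq:cuv} gives $\underline{c}$ and $\underline{\rho}$ as explicit functions of $\underline{u}$, valid on the interval where the flow is supersonic, namely $\underline{u}\in\bigl(\sqrt{\tfrac{\gamma-1}{\gamma+1}}\,\bar{q},\,\bar{q}\bigr)$. Differentiating Bernoulli's law \eqref{eq:BernoulliLaw} along this family yields $d\underline{c}=-\tfrac{(\gamma-1)\underline{u}}{2\underline{c}}\,d\underline{u}$, and combining with $\underline{c}^2=A\gamma\underline{\rho}^{\gamma-1}$ gives the clean relation
\begin{equation*}
\frac{d\underline{\rho}}{\underline{\rho}}=-\frac{\underline{u}}{\underline{c}^2}\,d\underline{u}.
\end{equation*}
Hence
\begin{equation*}
\frac{d\epsilon}{d\underline{u}}=\underline{\rho}+\underline{u}\,\frac{d\underline{\rho}}{d\underline{u}}=\underline{\rho}\bigl(1-\underline{\mathbf{M}}^2\bigr),
\end{equation*}
which is strictly negative on the supersonic range. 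This establishes that $\underline{u}\mapsto\epsilon(\underline{u})$ is a strictly decreasing $C^1$ diffeomorphism on that interval.

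Next I would read off the two boundary limits: as $\underline{u}\to\bar{q}^{-}$, $\underline{c}\to0$ so $\underline{\rho}\to 0$ (since $\gamma\in(1,3)$) and therefore $\epsilon\to 0^{+}$, while simultaneously $\underline{\mathbf{M}}=\underline{u}/\underline{c}\to+\infty$; conversely, as $\underline{u}$ drops toward the sonic threshold, $\underline{\mathbf{M}}\to 1^{+}$ and $\epsilon$ attains its supremum $\epsilon_{\max}$. Combining the monotonicity with the boundary behaviour yields a strictly decreasing bijection between $\underline{\mathbf{M}}\in(1,+\infty)$ and $\epsilon\in(0,\epsilon_{\max})$.

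The equivalence in the proposition is then immediate: given any threshold $\mathbf{M}_1>1$, set $\epsilon_1:=\epsilon(\mathbf{M}_1)>0$; then ``$\underline{\mathbf{M}}>\mathbf{M}_1$'' is equivalent to ``$\epsilon<\epsilon_1$'', and vice versa. There is no real obstacle in this argument; the only thing to keep track of is that the derivations use only the fixed Bernoulli constant and the supersonic condition, both of which are already standing assumptions in Theorems \ref{thm:MainTheorem1} and \ref{thm:MainTheorem2}. Thus the remainder of the paper is free to work in the more analytically convenient small-$\epsilon$ regime, treating the limit $\epsilon\to0^{+}$ as the natural hypersonic limit.
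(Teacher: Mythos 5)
Your proposal is correct and follows the same route the paper takes: the paper simply asserts that, for fixed $\mathbf{B}$, $\epsilon=\underline{\rho}\,\underline{u}$ is a decreasing function of $\underline{u}$ on the supersonic range with $\epsilon\to 0^{+}$ exactly as $\underline{u}\to\bar{q}$ and $\underline{\mathbf{M}}\to+\infty$, and you have merely supplied the short calculus verification $\tfrac{d\epsilon}{d\underline{u}}=\underline{\rho}\bigl(1-\underline{\mathbf{M}}^2\bigr)<0$ that the paper leaves implicit. Nothing further is needed.
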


Combining \eqref{eq:epsilon} with \eqref{eq:cuv}, for $\underline{u}>\underline{c}$, we can solve $\underline{u}$ and $\underline{\rho}$ as functions of $\epsilon$. The states $(u, v)$ along the shock should satisfy
\begin{equation}\label{eq:ShockPolar}
G(\vec{U}, \epsilon)=0,
\end{equation}
with
\begin{equation}\label{eq:ShockPolarGExpression}
G(\vec{U}, \epsilon):=(u-\frac{\epsilon}{\rho})(u-\underline{u}(\epsilon))+v^2.
\end{equation}
In above, we can solve $\epsilon$ uniquely from $(u, v)$ for $\epsilon<<1$. To see this, we differentiate above with respect to $\epsilon$ to have
\begin{equation}\label{eq:Ge}
G_\epsilon=-\frac{\underline{u}-u}{\rho}-\underline{u}'\qnt{\frac{\epsilon}{\rho}-u}.
\end{equation}
By entropy condition, we have
\begin{equation*}
\underline{u}-u>0.
\end{equation*}
By \eqref{eq:ShockPolar}, we have
\begin{equation*}
u-\frac{\epsilon}{\rho}=-\frac{v^2}{u-\underline{u}}>0.
\end{equation*}
It can be directly verified that
\begin{equation*}
\underline{u}'<0,
\end{equation*}
for $\epsilon<<1$ and $\bar{q}-\underline{u}<<1$. These three indicates $G_\epsilon<0$, so $\epsilon$ can be solved as a function of $(u, v)$. 

In the following discussion, sometimes for the sake of convenience, we list both $\vec{U}$ and $\epsilon$, but in fact, they share a constraint condition \eqref{eq:ShockPolar}.
\subsection{Characteristic decomposition}\label{subsect:CharacteristicDecomposition}
If the flow field is $C^1$ smooth, then we can rewrite \eqref{eq:EulerEquations} in the following non-conservation form
\begin{equation}\label{eq:EulerEquationsNC}
\begin{cases}
(c^2-u^2)u_x-uv(u_y+v_x)+(c^2-v^2)v_y=0;\\
u_y-v_x=0,
\end{cases}
\end{equation}
or the matrix form
\begin{equation}\label{eq:EulerEquationsM}
\begin{pmatrix}u\\v\end{pmatrix}_x + \begin{pmatrix}\frac{-2uv}{c^2-u^2}& \frac{c^2-v^2}{c^2-u^2}\\-1& 0\end{pmatrix}\begin{pmatrix}u\\v\end{pmatrix}_y=0.
\end{equation}
If $u^2+v^2>c^2$, then \eqref{eq:EulerEquationsM} is a hyperbolic system with two genuinely nonlinear characteristics
\begin{equation}\label{eq:lambda}
\lambda_\pm(u, v):=\frac{uv \pm c\sqrt{u^2+v^2-c^2}}{u^2-c^2}.
\end{equation}
Multiply \eqref{eq:EulerEquationsM} with $l_\pm=(1,\lambda_\mp)$ on the left and rewriting it in the characteristic forms yields
\begin{equation}\label{eq:EulerEquationsC}
\partial^\pm u +\lambda_\mp \partial^\pm v=0,
\end{equation}
where $\partial^\pm=\partial_x+\lambda^\pm\partial_y$. Since the expressions \eqref{eq:lambda} cannot be defined for the case $u=c$, for convenience, in the following discussion, we normalize the derivatives $\partial^\pm$ by defining $\bar{\partial}^\pm$ and introduce the angles $\alpha, \beta, \omega, \tau$ and some constants as follows
\begin{equation}\label{eq:Angles}
\begin{split}
&\bar{\partial}^+:=\cos\alpha\partial_x+\sin\alpha\partial_y,\quad \bar{\partial}^-:=\cos\beta\partial_x+\sin\beta\partial_y,\quad \bar{\partial}^0:=\cos\tau\partial_x+\sin\tau\partial_y,\\
&\tan\alpha:=\lambda_+,\quad\tan\beta:=\lambda_-,\quad\omega:=(\alpha-\beta)/2,\quad\tau:=(\alpha+\beta)/2,\\
&\nu:=(\gamma+1)/[2(\gamma-1)],\quad\Xi:=m-\tan^2\omega,\quad m:=(3-\gamma)/(\gamma+1),\quad \kappa:=\frac{\gamma-1}{2},
\end{split}
\end{equation}
where $\omega$ is the Mach angle and $\tau$ is the velocity angle. In this paper, we have the angle ranges as follows
\begin{equation}\label{eq:AnglesRange}
\alpha\in[0, \pi], \beta\in[-\frac{\pi}{2}, \frac{\pi}{2}], \omega\in(0, \frac{\pi}{2}), \tau\in(0, \frac{\pi}{2}).
\end{equation}
Then, \eqref{eq:EulerEquationsC} can be rewritten as
\begin{equation}\label{eq:EulerEquationsCN}
\bar{\partial}^\pm u +\lambda_\mp \bar{\partial}^\pm v=0.
\end{equation}

We have the following two lemmas, which can be found in \cite{Hu2018JMAA}.
\begin{lem}[Characteristic expressions]
For the $C^1$ solution of system \eqref{eq:EulerEquationsCN}, we have the characteristic expressions as follows
\begin{equation}\label{eq:SonicExpressions}
\begin{cases}
c\bar{\partial}^+\beta=-2\nu\tan\omega\bar{\partial}^+c,\quad c\bar{\partial}^+\alpha=-\nu\Xi\sin(2\omega)\bar{\partial}^+c,\\
c\bar{\partial}^-\alpha=2\nu\tan\omega\bar{\partial}^-c,\quad c\bar{\partial}^-\beta=\nu\Xi\sin(2\omega)\bar{\partial}^-c,\\
c\bar{\partial}^\pm\omega=\Big(\kappa^{-1}\sin^2\omega+1\Big)\tan\omega\bar{\partial}^\pm c,\\
\bar{\partial}^+u=\kappa^{-1}\sin\beta\bar{\partial}^+c,\quad \bar{\partial}^+v=-\kappa^{-1}\cos\beta\bar{\partial}^+c,\\
\bar{\partial}^-u=-\kappa^{-1}\sin\alpha\bar{\partial}^-c,\quad \bar{\partial}^-v=\kappa^{-1}\cos\alpha\bar{\partial}^-c,\\
c\bar{\partial}^+\tau=-\frac{\sin(2\omega)}{2\kappa}\bar{\partial}^+c,\quad c\bar{\partial}^-\tau=\frac{\sin(2\omega)}{2\kappa}\bar{\partial}^-c.
\end{cases}
\end{equation}
\end{lem}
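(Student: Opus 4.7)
The plan is to derive every identity in \eqref{eq:SonicExpressions} by coupling the characteristic form \eqref{eq:EulerEquationsCN} with the differentiated Bernoulli law, and by converting between Cartesian velocity components and the polar representation $u=q\cos\tau$, $v=q\sin\tau$ with the Mach relation $\sin\omega=c/q$. I proceed in three stages: first the velocity derivatives $\bar{\partial}^\pm u,\bar{\partial}^\pm v$, then the scalar angles $\tau$ and $\omega$, and finally the characteristic angles $\alpha=\tau+\omega$ and $\beta=\tau-\omega$.

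For the velocity derivatives, applying $\bar{\partial}^+$ to \eqref{eq:BernoulliLaw} gives $u\bar{\partial}^+u+v\bar{\partial}^+v=-\kappa^{-1}c\bar{\partial}^+c$, while \eqref{eq:EulerEquationsCN} multiplied by $\cos\beta$ gives $\cos\beta\,\bar{\partial}^+u+\sin\beta\,\bar{\partial}^+v=0$. This $2\times 2$ linear system in $(\bar{\partial}^+u,\bar{\partial}^+v)$ has determinant $q\sin(\tau-\beta)=q\sin\omega=c$, so Cramer's rule immediately produces $\bar{\partial}^+u=\kappa^{-1}\sin\beta\,\bar{\partial}^+c$ and $\bar{\partial}^+v=-\kappa^{-1}\cos\beta\,\bar{\partial}^+c$. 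The minus analogues follow verbatim from the other characteristic equation, the sign change arising from $\sin(\tau-\alpha)=-\sin\omega$.

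For the angle derivatives, differentiating $u=q\cos\tau$, $v=q\sin\tau$ yields $q\bar{\partial}^\pm\tau=-\sin\tau\,\bar{\partial}^\pm u+\cos\tau\,\bar{\partial}^\pm v$; inserting the stage-one formulas and using $\sin\tau\sin\beta+\cos\tau\cos\beta=\cos\omega$ (and the symmetric identity for $\alpha$) collapses this to $q\bar{\partial}^+\tau=-\kappa^{-1}\cos\omega\,\bar{\partial}^+c$, whence multiplication by $\sin\omega$ together with $q\sin\omega=c$ produces the stated $c\bar{\partial}^+\tau=-\sin(2\omega)/(2\kappa)\,\bar{\partial}^+c$. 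For $\omega$, differentiating the Mach relation $\sin\omega=c/q$ and eliminating $\bar{\partial}^\pm q$ through Bernoulli (which gives $q\bar{\partial}^\pm q=-\kappa^{-1}c\bar{\partial}^\pm c$) yields the single formula $c\bar{\partial}^\pm\omega=(1+\kappa^{-1}\sin^2\omega)\tan\omega\,\bar{\partial}^\pm c$, valid for both signs since $\omega$ depends only on the scalars $q$ and $c$.

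Finally, because $\alpha=\tau+\omega$ and $\beta=\tau-\omega$, the expressions for $\bar{\partial}^\pm\alpha$ and $\bar{\partial}^\pm\beta$ arise by adding and subtracting the $\tau$-formula and the $\omega$-formula, and the remaining work is purely algebraic. For $c\bar{\partial}^+\beta$ the resulting coefficient $-\left[\sin(2\omega)/(2\kappa)+(1+\kappa^{-1}\sin^2\omega)\tan\omega\right]$ simplifies via $\sin^2\omega+\cos^2\omega=1$ to $-\tan\omega\cdot(1+\kappa)/\kappa=-2\nu\tan\omega$, using the definitional identity $(1+\kappa)/\kappa=(\gamma+1)/(\gamma-1)=2\nu$. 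For $c\bar{\partial}^+\alpha$ the same manipulation yields $\tan\omega\,[\kappa-\cos(2\omega)]/\kappa$; matching this to $-\nu\Xi\sin(2\omega)$ requires expanding $\Xi=m-\tan^2\omega$ with $m=(3-\gamma)/(\gamma+1)$ and verifying the two identities $2\nu(1+m)=2/\kappa$ and $2\nu m=(3-\gamma)/(\gamma-1)$. This trigonometric--algebraic bookkeeping, distributing $\tan\omega$ and $\sin(2\omega)$ factors and matching them against the constants $(\nu,m,\Xi,\kappa)$, is the main (if routine) obstacle; once completed, the companion formulas for $\bar{\partial}^-\alpha$ and $\bar{\partial}^-\beta$ follow from the analogous computation on the minus characteristic.
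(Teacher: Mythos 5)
Your derivation is correct and complete: the Cramer's-rule step for $\bar{\partial}^\pm u,\bar{\partial}^\pm v$ (Bernoulli plus the characteristic relation, with determinant $\pm q\sin\omega=\pm c$), the polar-coordinate computations for $\tau$ and $\omega$, and the final recombination via $\alpha=\tau+\omega$, $\beta=\tau-\omega$ all check out, including the two constant identities $2\nu(1+m)=2/\kappa$ and $2\nu m=(3-\gamma)/(\gamma-1)$ needed to match $\tan\omega[\kappa-\cos(2\omega)]/\kappa$ with $-\nu\Xi\sin(2\omega)$. The paper itself gives no proof of this lemma and simply cites \cite{Hu2018JMAA}; your argument is the standard derivation used there, so there is nothing to add beyond noting that your proposal supplies the self-contained verification the paper omits.
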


\begin{lem}[Characteristic decompositions]
For the $C^2$ solution of the system \eqref{eq:EulerEquationsCN}, we have the characteristic decompositions as follows
\begin{equation}\label{eq:CharacteristicDecomposition}
\begin{cases}
c\bar{\partial}^-\bar{\partial}^+c
=\bar{\partial}^+c\Big( \nu(1+\tan^2\omega)\bar{\partial}^+c+\frac{\nu(\tan^2\omega-1)^2+2\tan^2\omega}{\tan^2\omega+1}\bar{\partial}^-c \Big),\\
c\bar{\partial}^+\bar{\partial}^-c
=\bar{\partial}^-c\Big( \nu(1+\tan^2\omega)\bar{\partial}^-c+\frac{\nu(\tan^2\omega-1)^2+2\tan^2\omega}{\tan^2\omega+1}\bar{\partial}^+c \Big).
\end{cases}
\end{equation}
\end{lem}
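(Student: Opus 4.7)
The plan is to derive both identities as the solution of a $2\times 2$ linear system in the unknowns $A := c\bar{\partial}^-\bar{\partial}^+ c$ and $B := c\bar{\partial}^+\bar{\partial}^- c$. The two required scalar equations come from applying $\bar{\partial}^-$ to the first-order identities $\bar{\partial}^+ u=\kappa^{-1}\sin\beta\,\bar{\partial}^+ c$ and $\bar{\partial}^+ v=-\kappa^{-1}\cos\beta\,\bar{\partial}^+ c$ in \eqref{eq:SonicExpressions}, computing analogously the reversed compositions $\bar{\partial}^+\bar{\partial}^- u$ and $\bar{\partial}^+\bar{\partial}^- v$ by applying $\bar{\partial}^+$ to $\bar{\partial}^- u=-\kappa^{-1}\sin\alpha\,\bar{\partial}^- c$ and $\bar{\partial}^- v=\kappa^{-1}\cos\alpha\,\bar{\partial}^- c$, and matching each difference against the commutator $[\bar{\partial}^+,\bar{\partial}^-]$ applied to $u$ and to $v$ respectively.

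The requisite commutator identity is established as a preliminary calculation. Expanding $\bar{\partial}^+\bar{\partial}^- - \bar{\partial}^-\bar{\partial}^+$ from $\bar{\partial}^\pm=\cos(\alpha,\beta)\partial_x+\sin(\alpha,\beta)\partial_y$ and re-expressing the transverse derivatives in the oblique basis $\{\bar{\partial}^+,\bar{\partial}^-\}$ (whose Jacobian has determinant $-\sin(2\omega)$) yields the general formula
$$
[\bar{\partial}^+,\bar{\partial}^-]f = \frac{\bar{\partial}^-\alpha}{\sin(2\omega)}\bigl(\bar{\partial}^- f-\cos(2\omega)\bar{\partial}^+ f\bigr) + \frac{\bar{\partial}^+\beta}{\sin(2\omega)}\bigl(\bar{\partial}^+ f-\cos(2\omega)\bar{\partial}^- f\bigr).
$$
Inserting $c\bar{\partial}^-\alpha=2\nu\tan\omega\,\bar{\partial}^- c$ and $c\bar{\partial}^+\beta=-2\nu\tan\omega\,\bar{\partial}^+ c$ from \eqref{eq:SonicExpressions}, the right-hand side becomes an explicit quadratic in $\bar{\partial}^\pm c$ multiplied by $\bar{\partial}^\pm f$, which is usable for $f=u,v,c$ in turn. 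Applied directly with $f=c$ the cross terms cancel and one already recovers the difference $c\bar{\partial}^+\bar{\partial}^- c-c\bar{\partial}^-\bar{\partial}^+ c=\nu(1+\tan^2\omega)\bigl((\bar{\partial}^- c)^2-(\bar{\partial}^+ c)^2\bigr)$ of the two target identities.

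After assembling the two scalar relations from $u$ and $v$, the system takes the form
$$
\begin{pmatrix}\sin\beta & \sin\alpha\\ \cos\beta & \cos\alpha\end{pmatrix}\begin{pmatrix}A\\B\end{pmatrix}=\begin{pmatrix}R_1\\R_2\end{pmatrix},
$$
where $R_1, R_2$ are explicit quadratics in $\bar{\partial}^+ c,\bar{\partial}^- c$ with coefficients involving $\nu$, $\Xi$, $\sin(2\omega)$, $\cos(2\omega)$. The determinant $-\sin(2\omega)$ is non-zero in the supersonic regime, so Cramer's rule determines $A$ and $B$ uniquely. The main obstacle is then purely algebraic: verifying that the coefficient of the cross term $(\bar{\partial}^+ c)(\bar{\partial}^- c)$ in the resulting formula for $A$ simplifies to $\frac{\nu(\tan^2\omega-1)^2+2\tan^2\omega}{\tan^2\omega+1}$. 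After clearing denominators and substituting $\Xi=m-\tan^2\omega$ with $m=(3-\gamma)/(\gamma+1)$, this reduces to the elementary arithmetic identity $\nu m=\nu-1$, which follows at once from $\nu=(\gamma+1)/(2(\gamma-1))$. The expression for $B$ is then obtained either from the same Cramer computation or, more economically, by exploiting the $+\leftrightarrow-$, $\alpha\leftrightarrow\beta$ symmetry of the derivation.
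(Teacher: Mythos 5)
Your derivation is correct: the commutator identity $[\bar{\partial}^+,\bar{\partial}^-]f$ expressed in the oblique basis, the resulting $2\times 2$ system with determinant $-\sin(2\omega)$, and the reduction of the cross-term coefficient to the arithmetic identity $\nu m=\nu-1$ all check out (as does the consistency check with $f=c$, which recovers exactly the difference of the two target identities). The paper itself gives no proof of this lemma --- it simply cites \cite{Hu2018JMAA} --- and your argument is essentially the standard characteristic-decomposition computation used there and originating in \cite{LiZhangZheng2006CMP}, so there is nothing substantive to compare.
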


\subsection{Differential relations on the wedge boundary.}\label{subsect:DifferentialRelationW}
Denote by $\kappa_w$ the curvature of the wall $\mathsf{W}$. Then, in view of \eqref{eq:VelocityWallCondition}, we have
\begin{equation}\label{eq:WallCurvature}
\kappa_w(x, f(x)):=\frac{f''}{(1+(f')^2)^{3/2}}=\bar{\partial}^0\tau.
\end{equation}
Since $\bar{\partial}^0\tau=\frac1{2\cos\omega}\qnt{\bar{\partial}^+\tau+\bar{\partial}^-\tau}$, owing to \eqref{eq:SonicExpressions} and by the fact that $c=q\sin\omega$, we have
\begin{lem}
On the wedge boundary $\mathsf{W}$, it holds the differential relation as follows
\begin{equation}\label{eq:WallRelationCharacteristic}
\bar{\partial}^-c-\bar{\partial}^+c=(\gamma-1)q\kappa_w \quad \mbox{on } \quad \mathsf{W}.
\end{equation}
\end{lem}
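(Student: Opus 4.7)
The plan is to exploit the slip condition \eqref{eq:VelocityWallCondition}, which forces the flow velocity to be tangent to $\mathsf{W}$ at every boundary point, so that the velocity angle $\tau$ coincides with the wall inclination $\arctan f'(x)$. Consequently the operator $\bar{\partial}^0 = \cos\tau\,\partial_x + \sin\tau\,\partial_y$ acts as the unit tangential derivative along $\mathsf{W}$, and the curvature identity $\kappa_w = \bar{\partial}^0\tau$ already recorded in \eqref{eq:WallCurvature} becomes the starting point of the argument. From here the task reduces to rewriting $\bar{\partial}^0\tau$ purely in terms of the characteristic quantities $\bar{\partial}^\pm c$.

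The first step is to express $\bar{\partial}^0$ in terms of the characteristic derivatives $\bar{\partial}^\pm$. Using $\alpha = \tau+\omega$, $\beta = \tau-\omega$ from the definitions \eqref{eq:Angles} and applying the sum-to-product identities to $\cos\alpha+\cos\beta$ and $\sin\alpha+\sin\beta$, one obtains $\bar{\partial}^+ + \bar{\partial}^- = 2\cos\omega\,\bar{\partial}^0$, which is exactly the relation quoted just before the lemma. Applying this to the scalar field $\tau$ gives
\[
\kappa_w = \bar{\partial}^0\tau = \frac{1}{2\cos\omega}\bigl(\bar{\partial}^+\tau + \bar{\partial}^-\tau\bigr).
\]

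The second step is to invoke the characteristic expressions in \eqref{eq:SonicExpressions}, namely $c\,\bar{\partial}^+\tau = -\tfrac{\sin(2\omega)}{2\kappa}\bar{\partial}^+c$ and $c\,\bar{\partial}^-\tau = \tfrac{\sin(2\omega)}{2\kappa}\bar{\partial}^-c$. Summing these and substituting into the previous display gives a factor $\sin(2\omega)/(4\kappa c\cos\omega) = \sin\omega/(2\kappa c)$ multiplying $\bar{\partial}^-c - \bar{\partial}^+c$. Finally, the Mach-angle relation $\sin\omega = c/q$ (equivalently $c = q\sin\omega$), combined with $\kappa = (\gamma-1)/2$, collapses the prefactor to $1/((\gamma-1)q)$, yielding \eqref{eq:WallRelationCharacteristic}.

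I expect no genuine obstacle: the proof is a concatenation of trigonometric simplifications and direct substitution from results already proved. The only conceptual point that must be checked carefully is the identification of $\bar{\partial}^0$ with the wall tangent, which rests entirely on the slip condition; once that is in hand, the computation is mechanical and the algebraic simplification $\sin(2\omega)/(2\cos\omega) = \sin\omega$ is the one small piece of bookkeeping that turns the $\sin(2\omega)$ from \eqref{eq:SonicExpressions} and the $\cos\omega$ from the $\bar{\partial}^0$ decomposition into the clean factor $(\gamma-1)q$.
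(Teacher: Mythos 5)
Your proposal is correct and follows essentially the same route as the paper: both start from $\kappa_w=\bar{\partial}^0\tau$, use the decomposition $\bar{\partial}^0=\frac{1}{2\cos\omega}(\bar{\partial}^++\bar{\partial}^-)$, substitute the characteristic expressions $c\,\bar{\partial}^\pm\tau=\mp\frac{\sin(2\omega)}{2\kappa}\bar{\partial}^\pm c$, and conclude via $c=q\sin\omega$ and $\kappa=\frac{\gamma-1}{2}$. The arithmetic in your simplification of the prefactor is right, so nothing further is needed.
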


\subsection{Differential relations on the shock fronts.}\label{subsect:DifferentialRelationS}
Next, we intend to derive the relation of $\bar{\partial}^\pm c$ on the shock $y=\phi(x)$ by \eqref{eq:ShockPolar}. By \eqref{eq:RHCondition}, define the angle of shock $s$ as follows
\begin{equation}\label{eq:sExpression}
s(\vec{U}, \epsilon):=\arctan \phi'(x)=\arctan\frac{\rho v}{\rho u-\epsilon}.
\end{equation}
Then the derivative along the shock is defined by
\begin{equation}\label{eq:sderivative}
\bar{\partial}^s=\cos s\partial_x+\sin s\partial_y.
\end{equation}
A directive calculation yields
\begin{equation}\label{eq:sderivativet}
\bar{\partial}^s=t_+\bar{\partial}^++t_-\bar{\partial}^-,
\end{equation}
where
\begin{equation}\label{eq:sderivativetexpression}
t_+=\frac{\sin(\beta-s)}{\sin(\beta-\alpha)}\quad\mbox{and}\quad t_-=\frac{\sin(s-\alpha)}{\sin(\beta-\alpha)}.
\end{equation}

Noting that it holds \eqref{eq:ShockPolar} along the shock, we can apply $\bar{\partial}^s$ on it to yield
\begin{equation}\label{eq:RHConditionS}
G_u\bar{\partial}^su+G_v\bar{\partial}^sv=0\quad\mbox{on}\quad \mathsf{S}.
\end{equation}
Combining \eqref{eq:SonicExpressions} and \eqref{eq:sderivative} gives
\begin{equation*}
\begin{cases}
\bar{\partial}^su=t_+\bar{\partial}^+u+t_-\bar{\partial}^-u=
\kappa^{-1}(t_+\sin\beta\bar{\partial}^+c-t_-\sin\alpha\bar{\partial}^-c),\\
\bar{\partial}^sv=t_+\bar{\partial}^+v+t_-\bar{\partial}^-v=
-\kappa^{-1}(t_+\cos\beta\bar{\partial}^+c -t_-\cos\alpha\bar{\partial}^-c),
\end{cases} \mbox{on }\quad \mathsf{S}.
\end{equation*}
Inserting above into \eqref{eq:RHConditionS} yields the following equation of $\bar{\partial}^\pm c$
\begin{equation}\label{eq:ShockRelation1}
t_+(G_u\sin\beta-G_v\cos\beta)\bar{\partial}^+c+t_-(-G_u\sin\alpha+G_v\cos\alpha)\bar{\partial}^-c=0, \quad {\rm on}\quad \mathsf{S}.
\end{equation}

Let
\begin{equation}\label{eq:kexpression}
\tan k:=-\frac{G_u}{G_v},
\end{equation}
where $k=k(\vec{U}, \epsilon)$ is the angle of the tangent line at the point $(u, v)$ on the shock polar \eqref{eq:ShockPolar} and belongs to $[-\frac{\pi}{2}, 0]$ in this paper. Inserting it into \eqref{eq:ShockRelation1}, we have
\begin{equation*}
\sin(\beta-s)\cos(\beta-k)\bar{\partial}^+c+\sin(\alpha-s)\cos(\alpha-k)\bar{\partial}^-c=0\quad \mbox{on} \quad \mathsf{S}.
\end{equation*}
Then we immediately arrive at the following conclusion.
\begin{lem}
It holds on the shock $\mathsf{S}$ the following differential relations
\begin{equation}\label{eq:ShockRelation}
\bar{\partial}^+c=g(\vec{U}, \epsilon)\bar{\partial}^-c,\quad \mbox{on} \quad \mathsf{S},
\end{equation}
where
\begin{equation}\label{eq:gexpression}
g(\vec{U}, \epsilon):=\frac{\sin(s-\alpha)\cos(k-\alpha)}{\sin(\beta-s)\cos(k-\beta)}.
\end{equation}
\end{lem}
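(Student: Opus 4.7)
The plan is to derive the stated relation directly by differentiating the shock-polar identity $G(\vec{U}, \epsilon) = 0$ along the shock and then repackaging everything into the characteristic-derivative language of $c$.

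First, since $G(\vec{U}(x, \phi(x)), \epsilon) \equiv 0$ on $\mathsf{S}$, applying the tangential derivative $\bar{\partial}^s$ to this identity immediately yields the linear relation $G_u \bar{\partial}^s u + G_v \bar{\partial}^s v = 0$, which is \eqref{eq:RHConditionS}. Because the flow is supersonic and the shock is transversal to both characteristics, the vector $(\cos s, \sin s)$ is a linear combination of the two characteristic directions $(\cos\alpha, \sin\alpha)$ and $(\cos\beta, \sin\beta)$; a direct application of Cramer's rule in this non-degenerate basis gives the decomposition \eqref{eq:sderivativet}–\eqref{eq:sderivativetexpression}.

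Next, I would use the characteristic expressions \eqref{eq:SonicExpressions} to rewrite $\bar{\partial}^\pm u$ and $\bar{\partial}^\pm v$ as explicit multiples of $\bar{\partial}^\pm c$. Substituting these into $\bar{\partial}^s u = t_+ \bar{\partial}^+ u + t_- \bar{\partial}^- u$ and similarly for $\bar{\partial}^s v$, and inserting the resulting expressions into \eqref{eq:RHConditionS}, yields (up to the overall factor $\kappa^{-1}$) the linear equation \eqref{eq:ShockRelation1}:
\begin{equation*}
t_+(G_u \sin\beta - G_v \cos\beta)\bar{\partial}^+ c + t_-(-G_u \sin\alpha + G_v \cos\alpha)\bar{\partial}^- c = 0 .
\end{equation*}
The two bracketed combinations have the shape $G_u \sin\theta - G_v \cos\theta$, which begs for polar-coordinate simplification. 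Introducing $k$ via $\tan k = -G_u/G_v$ and pulling out the common factor $\sqrt{G_u^2 + G_v^2}$ converts $G_u \sin\theta - G_v \cos\theta$ into $-\sqrt{G_u^2 + G_v^2}\,\cos(\theta - k)$ (with a consistent sign coming from the fact that $k \in [-\pi/2, 0]$ in our regime). Combined with the explicit forms of $t_\pm$ from \eqref{eq:sderivativetexpression}, the common factor $\sqrt{G_u^2 + G_v^2}/\sin(\beta - \alpha)$ drops out of both sides, leaving precisely
\begin{equation*}
\sin(\beta - s)\cos(\beta - k)\,\bar{\partial}^+ c + \sin(\alpha - s)\cos(\alpha - k)\,\bar{\partial}^- c = 0.
\end{equation*}
Solving for $\bar{\partial}^+ c$ then produces the claimed formula for $g(\vec{U}, \epsilon)$.

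The only technical point worth verifying is that the denominator $\sin(\beta - s)\cos(k - \beta)$ is nonzero so the division is legitimate. The factor $\sin(\beta - s)$ is controlled by the shock being transversal to the $C_-$ characteristic: for the admissible attached shock emanating from the tip, $s > \beta$ strictly under the entropy condition used earlier to derive $G_\epsilon < 0$. The factor $\cos(k - \beta)$ is nonzero because the shock-polar tangent direction $k \in [-\pi/2, 0]$ and the characteristic angle $\beta \in (-\pi/2, \pi/2)$ stay well separated from orthogonality in the hypersonic regime $\epsilon \ll 1$, a bound that is a consequence of the continuity statements to be set up in Section \ref{sect:LimitSolution}. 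I expect this transversality bookkeeping — and more precisely, the careful sign tracking when passing from $G_u \sin\theta - G_v \cos\theta$ to $\cos(\theta - k)$ — to be the only mildly delicate step; the rest is an essentially mechanical characteristic-decomposition calculation following the pattern already used for the wall identity \eqref{eq:WallRelationCharacteristic}.
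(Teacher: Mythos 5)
Your proposal is correct and follows essentially the same route as the paper: apply $\bar{\partial}^s$ to $G(\vec{U},\epsilon)=0$, decompose $\bar{\partial}^s=t_+\bar{\partial}^++t_-\bar{\partial}^-$, convert $\bar{\partial}^\pm u,\bar{\partial}^\pm v$ to multiples of $\bar{\partial}^\pm c$ via \eqref{eq:SonicExpressions}, and absorb $G_u\sin\theta-G_v\cos\theta$ into $\cos(\theta-k)$ using $\tan k=-G_u/G_v$. Your additional remark on the nondegeneracy of $\sin(\beta-s)\cos(k-\beta)$ is a sensible supplement; the paper defers that quantitative control to the estimates of Section \ref{sect:LimitSolution} (e.g.\ Lemma \ref{lem:PerturbationConstants}).
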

$g(\vec{U}, \epsilon)$ is continuous with respect to both $\vec{U}$ and $\epsilon$ under the constrain \eqref{eq:ShockPolar}. As previously, we use the notation $g=g(\vec{U}, \epsilon)$ in the sequel without confusion. Therefore, Problem 2 induces the derivatives relations \eqref{eq:CharacteristicDecomposition}, \eqref{eq:WallRelationCharacteristic} and \eqref{eq:ShockRelation}.

\section{A limit case}\label{sect:LimitSolution}
In this section, we introduce the limit solution for the case where $\underline{u}=\bar{q}$ ($\underline{\rho} \underline{u}=\epsilon=0$) and present some useful estimates. This solution is first introduced in \cite{Hu2018JMAA,HuZhang2019SIAM} to consider the hypersonic flow past a bend obstacle.
\subsection{The limit solution}
For the limit speed $\bar{q}$ in \eqref{eq:BernoulliLaw}, since $1<\gamma<3$, for the corresponding density $\bar{\rho}$, pressure $\bar{p}$ and sonic $\bar{c}$, we have
\begin{equation}\label{eq:LimitStates}
\bar{\rho}=\bar{p}=\bar{c}=0.
\end{equation}
The Mach number 
\begin{equation}
\overline{\mathbf{M}}=+\infty.
\end{equation}
In view of above, if the incoming flow has a limit speed $\underline{u}=\bar{q}$, then the R-H condition \eqref{eq:RHCondition} has the form
\begin{equation}\label{eq:ShockPolarS}
G(\vec{U}, 0)=u(u-\bar{q})+ v^2=0.
\end{equation}
On the $(u, v)$ plane, it is a circle passing through the origin and centered at $(\frac{\bar{q}}{2}, 0)$. For such a case, we have many simplifications. First, for the shock $y=\phi(x)$, since $\underline{\rho}=\rho(\bar{q})=\bar{\rho}=0$, it yields
\begin{equation*}
\phi'=\frac{[\rho v]}{[\rho u]}=\frac{\rho v-\underline{\rho}\underline{v}}{\rho u-\underline{\rho}\underline{u}}=\frac{\rho v-0}{\rho u-0}=\frac{v}{u}.
\end{equation*}
This leads to
\begin{equation}\label{eq:stauS}
s(\vec{U}, 0)=\tau(\vec{U}).
\end{equation}

We can construct a limit solution as follows
\begin{defn}
We define a nonzero flow field $(u_s, v_s)$ along $\mathsf{W}$ by combining \eqref{eq:VelocityWallCondition} and \eqref{eq:ShockPolarS} directly as follows
\begin{equation}\label{eq:LimitSolution}
(u_s, v_s)(x, f(x)):=\qnt{\frac{\bar{q}}{1+(f')^2}, \frac{\bar{q}f'}{1+(f')^2}}.
\end{equation}
Meanwhile, letting $\mathsf{S}$ coincide with $\mathsf{W}$ directly, we define the limit solution for {\bf Problem 2} by $(u_s, v_s)$ and $\mathsf{S}$. For this solution, the domain $\Omega$ degenerates to the curve $\mathsf{W}$. Moreover, we have 
\begin{equation}\label{eq:SpeedS}
q_s=\sqrt{u_s^2+v_s^2}=\frac{\bar{q}}{\sqrt{1+(f')^2}},~ c^2_s=\frac{\gamma-1}{2}\frac{f'^2}{1+f'^2}\bar{q}^2,~ \rho_s=\qnt{\frac{\gamma-1}{2\gamma}\frac{f'^2}{1+f'^2}\bar{q}^2}^{\frac{1}{\gamma-1}}.
\end{equation}
Therefore, it can be easily deduced that the flow $(u_s, v_s)$ is subsonic if
\begin{equation*}
f'>\sqrt{\frac{2}{\gamma-1}}\qnt{\mbox{or\ } u_s<\frac{\gamma-1}{\gamma+1}\bar{q}};
\end{equation*}
it is supersonic if
\begin{equation*}
0\leq f'<\sqrt{\frac{2}{\gamma-1}}\qnt{\mbox{or\ }u_s>\frac{\gamma-1}{\gamma+1}\bar{q}};
\end{equation*}
and it can not be defined if $f'<0$.
\begin{rem}
For the limit case $\underline{u}=\bar{q}$, we can define another interesting limit solution
\begin{equation}\label{eq:LimitSolutionBX}
(u_o, v_o):=(0, 0),
\end{equation}
with the shock $\mathsf{S}$ defined as follows
\begin{equation}\label{eq:LimitSolutionShockBX}
x=-S_o
\end{equation}
where $S_o>0$ is any positive constant. Based on this limit solution, Bae and Xiang establish the global existence of a detached transonic shock for a supersonic flow onto a blunt body in \cite{BaeXiang2023AAM}.
\end{rem}
\end{defn}
\subsection{Differential relations for the limit solution}
For the limit case, we calculate the equation \eqref{eq:ShockRelation}. First, by \eqref{eq:stauS}, recalling \eqref{eq:sderivativet}, we have
\begin{equation}\label{eq:ShockRelationT}
t_\pm=\frac{\sin(-\omega)}{\sin(-2\omega)}=\frac{\sin\omega}{\sin(2\omega)}=\frac{1}{2\cos\omega}>0.
\end{equation}
Moreover, the shock polar \eqref{eq:ShockPolar} becomes the circle \eqref{eq:RHConditionS}, and we have 
\begin{equation}\label{eq:kLimit}
k=2\tau-\frac{\pi}{2}.
\end{equation} 
Therefore, combine above two, we have 
\begin{lem}
For the limit case $\epsilon=0$, \eqref{eq:ShockRelation} has the following form
\begin{equation}\label{eq:ShockRelationS}
\bar{\partial}^+c=\frac{\sin\beta}{\sin\alpha}\bar{\partial}^-c=g(\vec{U}, 0)\bar{\partial}^-c,
\end{equation}
where $(u, v)$ locates on the shock polar \eqref{eq:ShockPolarS}.
\end{lem}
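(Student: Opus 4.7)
The plan is a direct substitution: we evaluate the general expression \eqref{eq:gexpression} for $g(\vec U, \epsilon)$ at $\epsilon = 0$, exploiting the two simplifications already established in this limit case, namely $s(\vec U, 0) = \tau$ from \eqref{eq:stauS} and $k = 2\tau - \pi/2$ from \eqref{eq:kLimit}. Everything else is elementary trigonometry using $\tau = (\alpha+\beta)/2$ and $\omega = (\alpha-\beta)/2$ from \eqref{eq:Angles}.

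First I would rewrite each of the four trigonometric arguments in \eqref{eq:gexpression} purely in terms of $\alpha$, $\beta$, $\omega$. Using $s = \tau = (\alpha+\beta)/2$ gives
\[
s - \alpha = \tfrac{\beta-\alpha}{2} = -\omega, \qquad \beta - s = \tfrac{\beta-\alpha}{2} = -\omega,
\]
so the two sines in the numerator and denominator both equal $\sin(-\omega) = -\sin\omega$, a common nonzero factor (since $\omega \in (0, \pi/2)$ by \eqref{eq:AnglesRange}) that cancels. Using $k = 2\tau - \pi/2 = \alpha + \beta - \pi/2$ gives
\[
k - \alpha = \beta - \tfrac{\pi}{2}, \qquad k - \beta = \alpha - \tfrac{\pi}{2},
\]
so the two cosines become $\cos(\beta - \pi/2) = \sin\beta$ and $\cos(\alpha - \pi/2) = \sin\alpha$. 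Combining these in \eqref{eq:gexpression} produces the ratio $\sin\beta / \sin\alpha$, and \eqref{eq:ShockRelation} then delivers \eqref{eq:ShockRelationS}.

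Since the argument is a one-step simplification that leans entirely on \eqref{eq:stauS} and \eqref{eq:kLimit}, there is no genuine obstacle inside the proof of this lemma itself. The only subtlety is conceptual rather than computational: one must be confident that both limit-case identities are legitimate. The first follows from $\underline{\rho} = 0$ reducing $\phi' = [\rho v]/[\rho u]$ to $v/u = \tan\tau$; the second from the observation that the shock polar \eqref{eq:ShockPolarS} is the circle of radius $\bar q/2$ centered at $(\bar q/2, 0)$, on which the tangent direction at a point with velocity angle $\tau$ is rotated by $\pi/2$ relative to the radius, giving tangent angle $2\tau - \pi/2$. Both were recorded above the statement, so the lemma itself is essentially the bookkeeping that collects them into the identity $g(\vec U, 0) = \sin\beta / \sin\alpha$.
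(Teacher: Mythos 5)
Your proposal is correct and follows essentially the same route as the paper: the paper likewise uses $s=\tau$ to reduce the sine ratio (via $t_\pm=\sin(-\omega)/\sin(-2\omega)=1/(2\cos\omega)$) and $k=2\tau-\tfrac{\pi}{2}$ to turn the cosines into $\sin\beta$ and $\sin\alpha$, so the two arguments are the same substitution packaged slightly differently. Your trigonometric bookkeeping checks out, including the cancellation of the nonzero common factor $\sin(-\omega)$.
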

In order to estimate $g(\vec{U}, \epsilon)$, we need the calculation for $g(\vec{U}, 0)=\frac{\sin\beta}{\sin\alpha}$ as follows
\begin{thm}\label{thm:gRangeS}
For $(u, v)$ on the shock polar \eqref{eq:ShockPolarS},

(1) If $v\geq0$ and 
\begin{equation}\label{eq:gEstimateF}
\frac{\gamma-1}{\gamma+1}\bar{q}<u<\bar{q},
\end{equation}
then we have
\begin{equation}\label{eq:gEstimateS}
\sin\alpha>\abs{\sin\beta};
\end{equation}

(2) If $v\geq0$ and 
\begin{equation}\label{eq:gEstimateF1}
\frac{\gamma-1}{2}\bar{q}<u<\bar{q},
\end{equation}
then we have
\begin{equation}\label{eq:gEstimateS1}
0<\frac{\sin\beta}{\sin\alpha}\leq K<1,
\end{equation}
where $K$ is a constant.
\end{thm}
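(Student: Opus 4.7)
\medskip

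\noindent\textbf{Proof plan.} The starting point is to parameterize the shock polar \eqref{eq:ShockPolarS} by $u$, so that for $v\ge 0$ one has $v=\sqrt{u(\bar q-u)}$. Substituting into \eqref{eq:cuv} gives the two clean identities
\begin{equation*}
q^{2}=u^{2}+v^{2}=u\bar q,\qquad c^{2}=\tfrac{\gamma-1}{2}\bar q\,(\bar q-u),
\end{equation*}
which reduce every quantity appearing in $g(\vec U,0)=\sin\beta/\sin\alpha$ to an elementary function of $u$. Using $\alpha=\tau+\omega$ and $\beta=\tau-\omega$ (from \eqref{eq:Angles}), one gets
\begin{equation*}
\sin\alpha=\sin\tau\cos\omega+\cos\tau\sin\omega,\qquad \sin\beta=\sin\tau\cos\omega-\cos\tau\sin\omega,
\end{equation*}
so the whole theorem becomes a comparison between $\sin\tau\cos\omega$ and $\cos\tau\sin\omega$, i.e.\ between $\tau$ and $\omega$.

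For part (1), I first check that under $\frac{\gamma-1}{\gamma+1}\bar q<u<\bar q$ the flow is supersonic ($q^{2}>c^{2}$, equivalent to $u>\frac{\gamma-1}{\gamma+1}\bar q$ from the identities above), and that $v>0$ strictly, so $\tau,\omega\in(0,\pi/2)$. Then the sum-to-product identities
\begin{equation*}
\sin\alpha-\sin\beta=2\cos\tau\sin\omega>0,\qquad \sin\alpha+\sin\beta=2\sin\tau\cos\omega>0
\end{equation*}
immediately yield $\sin\alpha>|\sin\beta|$, establishing \eqref{eq:gEstimateS}.

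For part (2), the key observation is the equivalence chain
\begin{equation*}
\tau>\omega\;\iff\;\tan\tau>\tan\omega\;\iff\;\tfrac{v}{u}>\tfrac{c}{\sqrt{q^{2}-c^{2}}}\;\iff\;v^{2}>c^{2},
\end{equation*}
and using the explicit formulas above, $v^{2}>c^{2}$ reduces to $u(\bar q-u)>\frac{\gamma-1}{2}\bar q(\bar q-u)$, i.e.\ $u>\frac{\gamma-1}{2}\bar q$. Hence under \eqref{eq:gEstimateF1} we have $\tau>\omega$, i.e.\ $\sin\beta=\sin(\tau-\omega)>0$, giving the lower bound in \eqref{eq:gEstimateS1}. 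The pointwise upper bound
\begin{equation*}
\frac{\sin\beta}{\sin\alpha}=\frac{\sin(\tau-\omega)}{\sin(\tau+\omega)}<1
\end{equation*}
is immediate from $\sin(\tau+\omega)-\sin(\tau-\omega)=2\cos\tau\sin\omega>0$.

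The remaining obstacle, and the only nonroutine point, is upgrading this pointwise bound into a uniform bound $K<1$ on the whole half-open interval $(\frac{\gamma-1}{2}\bar q,\bar q)$. The difficulty is the endpoint $u=\bar q$, where $q\to\bar q$, $c\to 0$, and both $\tau,\omega\to 0$, so the ratio is $0/0$. I would resolve this by an asymptotic expansion: writing $u=\bar q(1-\delta)$ with $\delta\to 0^{+}$, the formulas give $\tan\tau\sim\sqrt{\delta}$ and $\sin\omega\sim\sqrt{(\gamma-1)\delta/2}$, so
\begin{equation*}
\lim_{u\to\bar q^{-}}\frac{\sin(\tau-\omega)}{\sin(\tau+\omega)}=\frac{1-\sqrt{(\gamma-1)/2}}{1+\sqrt{(\gamma-1)/2}},
\end{equation*}
which is strictly less than $1$ because $\gamma<3$ implies $\sqrt{(\gamma-1)/2}<1$. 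Thus $g(\vec U,0)$ extends continuously to the compact interval $[\frac{\gamma-1}{2}\bar q,\bar q]$ and stays strictly below $1$ at both endpoints and in the interior; taking $K$ to be its maximum finishes \eqref{eq:gEstimateS1}. The whole argument is essentially trigonometric once the two explicit identities for $q^{2}$ and $c^{2}$ on the shock polar are in hand, so the main care is only at the degenerate endpoint.
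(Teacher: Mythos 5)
Your proposal is correct and follows essentially the same route as the paper: the sum-to-product identities for part (1), the reduction of part (2) to comparing $\tau$ with $\omega$ via the on-polar identities $q^{2}=u\bar q$ and $c^{2}=\frac{\gamma-1}{2}\bar q(\bar q-u)$, the asymptotic evaluation of the $0/0$ ratio at $u\to\bar q$ (your limit $\frac{1-\sqrt{(\gamma-1)/2}}{1+\sqrt{(\gamma-1)/2}}$ is algebraically the paper's $\frac{\sqrt{2/(\gamma-1)}-1}{\sqrt{2/(\gamma-1)}+1}$), and the continuity-on-a-compact-interval argument for the uniform constant $K$.
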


\begin{proof}
(1) First, we can directly compute that \eqref{eq:gEstimateF} and $v\geq0$ lead to
$$\tau, \omega\in(0, \frac{\pi}{2}).$$
Then, 
\begin{equation*}
\begin{cases}
\sin\alpha+\sin\beta=2\sin\tau\cos\omega>0,\\
\sin\alpha-\sin\beta=2\cos\tau\sin\omega>0,
\end{cases}
\end{equation*}
indicate that
\begin{equation*}
-\sin\alpha<\sin\beta<\sin\alpha.
\end{equation*}
(1) is proved.

(2) In order to compute the sign of $\sin\beta$, we compare $\frac{v}{q}=\sin\tau$ with $\frac{c}{q}=\sin\omega$. In fact, according to \eqref{eq:BernoulliLaw} and \eqref{eq:ShockPolarS}, we have
\begin{equation*}
v^2=u(\bar{q}-u),\quad q^2=u\bar{q},\quad\mbox{and}\quad c^2=\frac{\gamma-1}{2}\bar{q}(\bar{q}-u).
\end{equation*}
So
\begin{equation*}
\begin{split}
\frac{v^2}{q^2}-\frac{c^2}{q^2}&=\frac{(u-\frac{\gamma-1}{2}\bar{q})(\bar{q}-u)}{u\bar{q}}.
\end{split}
\end{equation*}
In view of above, $\bar{q}>u>\frac{\gamma-1}{2}\bar{q}$ yields $\frac{v}{q}>\frac{c}{q}$, which indicates $\sin\tau>\sin\omega$. Since $\omega, \tau\in(0, \frac{\pi}{2})$, we have $\frac{\pi}{2}>\tau>\omega>0$. So
\begin{equation*}
\sin\alpha>\abs{\sin\beta}=\sin\beta=\sin\qnt{\tau-\omega}>0.
\end{equation*}
Moreover, we have
\begin{equation*}
\frac{\sin^2\tau}{\sin^2\omega}=\frac{v^2}{c^2}=\frac{2}{\gamma-1}\frac{u}{\bar{q}}.
\end{equation*}
In addition, on the shock polar \eqref{eq:ShockPolarS}, as $u\rightarrow\bar{q}$, we have
\begin{equation*}
\tau, \omega\rightarrow0.
\end{equation*}
So, for $\gamma\in(1, 3)$, we can conclude that
\begin{equation*}
\lim_{u\rightarrow\bar{q}}\frac{\tau}{\omega}=\lim_{u\rightarrow\bar{q}}\frac{\sin\tau}{\sin\omega}= \lim_{u\rightarrow\bar{q}}\sqrt{\frac{2}{\gamma-1}\frac{u}{\bar{q}}}=\sqrt{\frac{2}{\gamma-1}}>1.
\end{equation*}
We finally get
\begin{equation}\label{eq:gLimitS}
\begin{split}
\lim_{u\rightarrow\bar{q}}\frac{\sin\beta}{\sin\alpha}&=\lim_{u\rightarrow\bar{q}}\frac{\sin\qnt{\tau-\omega}}{\sin\qnt{\tau+\omega}}\\
&=\lim_{u\rightarrow\bar{q}}\frac{\tau-\omega}{\tau+\omega}=\qnt{\sqrt{\frac{2}{\gamma-1}}-1}/ \qnt{\sqrt{\frac{2}{\gamma-1}}+1}\in(0,1)
\end{split}
\end{equation}
To conclude, for the function $\frac{\sin\beta}{\sin\alpha}$ defined on the shock polar \eqref{eq:ShockPolarS}, we have 

(1)$$\frac{\sin\beta}{\sin\alpha}\in(0, 1)\quad\mbox{for}\quad \frac{\gamma-1}{2}\bar{q}<u<\bar{q};$$

(2)$$\frac{\sin\beta}{\sin\alpha}=0\quad\mbox{for}\quad u=\frac{\gamma-1}{2}\bar{q};$$

(3)$$\lim_{u\rightarrow\bar{q}}\frac{\sin\beta}{\sin\alpha}=\qnt{\sqrt{\frac{2}{\gamma-1}}-1}/ \qnt{\sqrt{\frac{2}{\gamma-1}}+1}\in(0,1).$$ 
Thus, by the continuity of $\frac{\sin\beta}{\sin\alpha}$, we achieve \eqref{eq:gEstimateS1}. The proof is complete.
\end{proof}
\begin{rem}\label{rem:gRangeS}
For the limit solution $\qnt{u_s, v_s}$ in \eqref{eq:LimitSolution}, we can calculate directly that \eqref{eq:gEstimateF} and \eqref{eq:gEstimateF1} are equivalent to
\begin{equation}\label{eq:gEstimateFf}
0<f'<\sqrt{\frac{2}{\gamma-1}},
\end{equation}
and
\begin{equation}\label{eq:gEstimateF1f}
0<f'<\sqrt{\frac{3-\gamma}{\gamma-1}},
\end{equation}
respectively.
\end{rem}

Let
\begin{equation*}
\mathcal{C}_s:=\set{(u_s, v_s) | u_s\in(\frac{\gamma-1}{\gamma+1}\bar{q}, \bar{q}),~G(\vec{U}_s, 0)=0~\mbox{ and }~\frac{v_s}{u_s}\in[\underline{f}, \bar{f}]}.
\end{equation*}
It is obvious that $\mathcal{C}_s$ is compact and it is a part of the circle \eqref{eq:ShockPolarS} on the $(u, v)$ plane. 
\begin{lem}\label{lem:omegaRangeS}
If $0<\underline{f}<\bar{f}<\sqrt{\frac{2}{\gamma-1}}$, then there exist positive constants $\omega_0\in(0, \frac{\pi}{16})$, $\rho_m$ and $q_m$, for any point $(u, v)\in\mathcal{C}_s$, we have
\begin{equation*}
\alpha-\tau=\alpha-s=\tau-\beta=s-\beta=\omega\in (\omega_0, \frac{\pi}{2}-\omega_0),
\end{equation*}
\begin{equation}\label{eq:RhoLowerS}
\rho\in[\rho_m, \qnt{\frac{\gamma-1}{2\gamma}\bar{q}^2}^{\frac{1}{\gamma-1}}],
\end{equation}
and
\begin{equation}\label{eq:qRangeS}
q\in [q_m, \bar{q}],
\end{equation}
where $\omega_0, \rho_m$ and $q_m$ depend only on $\bar{f}$ and $\underline{f}$.
\end{lem}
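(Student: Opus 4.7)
The plan has two parts. The chain of equalities $\alpha-\tau=\alpha-s=\tau-\beta=s-\beta=\omega$ should follow immediately from the definitions in \eqref{eq:Angles} (which give $\alpha-\tau=\tau-\beta=\omega$) together with \eqref{eq:stauS} (which gives $s=\tau$ on the limit shock polar); no calculation is required for this part.

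For the quantitative bounds, my approach is to parametrize $\mathcal{C}_s$ explicitly by the slope $f'\in[\underline f,\bar f]$. Combining the slip relation $v_s=f'u_s$ with $G(\vec U_s,0)=u_s^2+v_s^2-u_s\bar q=0$ gives $u_s=\bar q/(1+(f')^2)$ and $v_s=\bar q f'/(1+(f')^2)$, in agreement with \eqref{eq:LimitSolution}. Hence $\mathcal{C}_s$ is the continuous image of the compact interval $[\underline f,\bar f]$, and in particular is compact, and moreover $\omega$, $\rho$, $q$ become explicit one-variable functions of $f'$.

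Using $q_s^2=u_s\bar q$ together with the Bernoulli identity $c_s^2=\frac{\gamma-1}{2}\bar q(\bar q-u_s)$, a direct substitution yields
\begin{equation*}
\sin^2\omega=\frac{c_s^2}{q_s^2}=\frac{(\gamma-1)(f')^2}{2}.
\end{equation*}
The hypothesis $0<\underline f\leq f'\leq\bar f<\sqrt{2/(\gamma-1)}$ then forces $\sin\omega$ to lie strictly between $0$ and $1$, uniformly on $\mathcal{C}_s$. I plan to set
\begin{equation*}
\omega_0:=\min\!\Bigl\{\arcsin\!\bigl(\sqrt{(\gamma-1)/2}\,\underline f\bigr),\ \tfrac{\pi}{2}-\arcsin\!\bigl(\sqrt{(\gamma-1)/2}\,\bar f\bigr),\ \tfrac{\pi}{17}\Bigr\},
\end{equation*}
which is strictly positive, lies in $(0,\pi/16)$, and depends only on $\underline f,\bar f$ (and the fixed $\gamma$). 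For the remaining bounds, \eqref{eq:SpeedS} gives $q_s=\bar q/\sqrt{1+(f')^2}$ and $\rho_s=\bigl(\frac{\gamma-1}{2\gamma}\bar q^2(f')^2/(1+(f')^2)\bigr)^{1/(\gamma-1)}$, which are monotone in $f'$; the lower bounds $q_m=\bar q/\sqrt{1+\bar f^2}$ and $\rho_m$ (attained at $f'=\underline f$) are strictly positive, while $\rho\leq\bigl(\frac{\gamma-1}{2\gamma}\bar q^2\bigr)^{1/(\gamma-1)}$ is immediate from $(f')^2/(1+(f')^2)<1$, and $q<\bar q$ from $u_s<\bar q$.

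I do not anticipate any substantive obstacle: the entire argument is compactness plus explicit parametrization of the limit shock polar. The only fussy point is ensuring the cosmetic constraint $\omega_0<\pi/16$, which is handled by including $\pi/17$ in the minimum above.
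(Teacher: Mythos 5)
Your proposal is correct and is essentially the argument the paper intends: the paper states this lemma without proof, relying on the observation that $\mathcal{C}_s$ is a compact arc of the circle \eqref{eq:ShockPolarS}, and your explicit parametrization by $f'\in[\underline f,\bar f]$ via \eqref{eq:LimitSolution}, together with the identity $\sin^2\omega=c_s^2/q_s^2=\frac{\gamma-1}{2}(f')^2$ and the monotonicity of $q_s$ and $\rho_s$ in $f'$, makes that compactness argument concrete (the chain of equalities being immediate from \eqref{eq:Angles} and \eqref{eq:stauS}). The only nitpick is that with $\omega_0$ defined as the minimum itself, $\omega$ can equal $\omega_0$ at the endpoint $f'=\underline f$ (or $\pi/2-\omega_0$ at $f'=\bar f$), violating the strict inclusion $\omega\in(\omega_0,\tfrac{\pi}{2}-\omega_0)$; taking $\omega_0$ to be, say, half of that minimum fixes this trivially.
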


Denote by $\mathcal{B}(P, \delta)$ the neighbourhood of point $P$ with radius $\delta$ in $\mathbb{R}^2$ space and $\mathcal{C}_s^\delta$ the $\delta$ neighbourhood of $\mathcal{C}_s$. 
\begin{lem}\label{lem:BasicWavePattern}
If $0<\underline{f}<\bar{f}<\sqrt{\frac{2}{\gamma-1}}$, then there exists $\delta_\omega>0$ such that for any $\vec{U}_s\in\mathcal{C}_s$, $\vec{U}\in \mathcal{B}((u_s, v_s), \delta_\omega)$ and $\epsilon\leq\delta_\omega$, we have
\begin{equation}\label{eq:CharacteristicPerturbation}
\abs{\alpha(\vec{U})-\alpha(\vec{U}_s)}+\abs{\beta(\vec{U})-\beta(\vec{U}_s)}+\abs{s(\vec{U})-s(\vec{U}_s)} +\abs{\tau(\vec{U})-\tau(\vec{U}_s)}\leq \frac{\omega_0}{100},
\end{equation}
and
\begin{equation}\label{eq:omegaPerturbation}
\omega(\vec{U})\in[\frac{\omega_0}{2}, \frac{\pi}{2}-\frac{\omega_0}{2}].
\end{equation}
\end{lem}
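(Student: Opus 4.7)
The plan is a straightforward compactness plus uniform continuity argument: all four angle functions $\alpha, \beta, \tau, \omega$ depend only on $\vec{U}$ and extend continuously to an open neighbourhood of the compact curve $\mathcal{C}_s$, while $s$ depends on $(\vec{U},\epsilon)$ and is continuous on a neighbourhood of $\mathcal{C}_s\times\{0\}$ and reduces to $\tau$ there by \eqref{eq:stauS}. Since $\mathcal{C}_s$ is compact, uniform continuity will then produce the single $\delta_\omega$ required.

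First I would verify smoothness of the maps on a suitable neighbourhood. On $\mathcal{C}_s$, Lemma \ref{lem:omegaRangeS} gives $\omega>\omega_0>0$, hence $u^2+v^2-c^2(u,v)\geq q_m^2\sin^2\omega_0>0$, and also $u^2-c^2>0$ (since $\tau<\pi/2-\omega_0$ implies $u>q\sin\omega$ is comparable to $c$ with a strict gap). Thus $\lambda_\pm(u,v)$ in \eqref{eq:lambda}, and the induced angles $\alpha=\arctan\lambda_+$, $\beta=\arctan\lambda_-$, $\omega=(\alpha-\beta)/2$, $\tau=(\alpha+\beta)/2$, all extend continuously to an open set $\mathcal{O}\supset\mathcal{C}_s$. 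For $s(\vec{U},\epsilon)$ from \eqref{eq:sExpression}, the denominator is $\rho u-\epsilon$; by \eqref{eq:RhoLowerS} and \eqref{eq:qRangeS} we have $\rho u\geq \rho_m q_m\cos(\pi/2-\omega_0)>0$ on $\mathcal{C}_s$, so $s$ is continuous on a neighbourhood $\mathcal{O}'\times[0,\eta_0)\subset\mathcal{O}\times[0,\infty)$ of $\mathcal{C}_s\times\{0\}$, and \eqref{eq:stauS} yields $s(\vec{U}_s,0)=\tau(\vec{U}_s)$ for $\vec{U}_s\in\mathcal{C}_s$.

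Next I would fix a compact neighbourhood $\mathcal{K}\times[0,\eta_1]\subset\mathcal{O}'\times[0,\eta_0)$ of $\mathcal{C}_s\times\{0\}$, on which $\alpha,\beta,\tau,\omega,s$ are uniformly continuous. Choose $\delta_\omega\in(0,\eta_1]$ small enough that
\begin{equation*}
\mathcal{B}((u_s,v_s),\delta_\omega)\subset\mathcal{K}\quad\text{for all }\vec{U}_s\in\mathcal{C}_s,
\end{equation*}
and such that
\begin{equation*}
|\alpha(\vec{U})-\alpha(\vec{U}_s)|+|\beta(\vec{U})-\beta(\vec{U}_s)|+|\tau(\vec{U})-\tau(\vec{U}_s)|+|s(\vec{U},\epsilon)-s(\vec{U}_s,0)|\leq\frac{\omega_0}{100}
\end{equation*}
whenever $|\vec{U}-\vec{U}_s|\leq\delta_\omega$ and $\epsilon\leq\delta_\omega$; since $s(\vec{U}_s,0)=\tau(\vec{U}_s)$, the $s$-term in the statement is controlled by $|s(\vec{U},\epsilon)-\tau(\vec{U}_s)|$, so \eqref{eq:CharacteristicPerturbation} follows. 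For \eqref{eq:omegaPerturbation}, the identity $\omega=(\alpha-\beta)/2$ gives
\begin{equation*}
|\omega(\vec{U})-\omega(\vec{U}_s)|\leq\tfrac{1}{2}\bigl(|\alpha(\vec{U})-\alpha(\vec{U}_s)|+|\beta(\vec{U})-\beta(\vec{U}_s)|\bigr)\leq\tfrac{\omega_0}{200},
\end{equation*}
and combined with $\omega(\vec{U}_s)\in(\omega_0,\pi/2-\omega_0)$ from Lemma \ref{lem:omegaRangeS} this forces $\omega(\vec{U})\in[\omega_0/2,\pi/2-\omega_0/2]$.

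The only genuine check, rather than routine bookkeeping, is the non-degeneracy of the denominators defining $\lambda_\pm$ and $s$ on a uniform neighbourhood of $\mathcal{C}_s\times\{0\}$; this in turn is entirely supplied by the uniform lower bounds on $\omega$, $\rho$ and $q$ in Lemma \ref{lem:omegaRangeS}. Once those lower bounds are in hand, the rest is standard uniform continuity over a compact set, and there is no genuine analytic obstacle.
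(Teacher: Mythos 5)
Your approach is exactly the paper's: the published proof is a one-line appeal to continuity of $\alpha,\beta,\tau,s$ in their arguments, and your compactness-plus-uniform-continuity argument is the honest expansion of that line, so the substance matches.

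One sub-claim in your first paragraph is wrong, though it does not sink the proof. You assert $u^2-c^2>0$ on $\mathcal{C}_s$ in order to make $\lambda_\pm$ in \eqref{eq:lambda} well defined; but $u^2-c^2>0$ is equivalent to $\alpha=\tau+\omega<\pi/2$, and on $\mathcal{C}_s$ this can fail when $\bar{f}$ is close to $\sqrt{2/(\gamma-1)}$ (e.g.\ for $\gamma=2$ and $f'\in(1,\sqrt{2})$ one has $q>c$ but $u<c$). This is precisely why the paper abandons $\lambda_\pm$ and works with the normalized angles in \eqref{eq:Angles}. The repair is immediate: define $\tau=\arctan(v/u)$ and $\omega=\arcsin(c/q)$ directly, which are continuous on the open set $\{u>0,\ 0<c(u,v)<q\}$ containing $\mathcal{C}_s$ (the uniform bounds of Lemma \ref{lem:omegaRangeS} keep a compact neighbourhood inside this set), and then set $\alpha=\tau+\omega$, $\beta=\tau-\omega$; your uniform-continuity argument then goes through verbatim. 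Your treatment of $s$ (continuity of $\rho u-\epsilon$ away from zero, plus $s(\vec{U}_s,0)=\tau(\vec{U}_s)$ from \eqref{eq:stauS}) and the derivation of \eqref{eq:omegaPerturbation} from \eqref{eq:CharacteristicPerturbation} are fine.
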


\begin{proof}
Since the functions $\alpha(u, v), \beta(u, v), \tau(u, v)$ and $s(u, v, \epsilon)$ are all continuous with respect to their arguments, it is obvious.
\end{proof}

\smallskip

\begin{lem}\label{lem:PerturbationConstants}
If $0<\underline{f}<\bar{f}<\sqrt{\frac{2}{\gamma-1}}$, then there exists a positive constant $\delta_{\bar{K}}$, which is depending on $\underline{f}$ and $\bar{f}$, to ensure the following estimate
\begin{equation}\label{eq:gRange}
\abs{g(\vec{U}, \epsilon)}\leq\bar{K}<1
\end{equation}
for all $\vec{U}\in\mathcal{C}_s^{\delta_{\bar{K}}}$ and $\epsilon<\delta_{\bar{K}}$.
\end{lem}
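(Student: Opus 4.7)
The plan is first to establish the uniform bound $|g(\vec{U}_s,0)|\le K<1$ on the compact set $\mathcal{C}_s$, then to deduce \eqref{eq:gRange} from joint continuity of $g$ in $(\vec{U},\epsilon)$. The main delicacy will be keeping the denominator of $g$ uniformly bounded away from zero on a neighborhood of $\mathcal{C}_s\times\{0\}$ so that the continuity argument is valid; the angle estimates of Lemmas \ref{lem:omegaRangeS} and \ref{lem:BasicWavePattern} will take care of this.

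First I would simplify $g(\vec{U}_s,0)$ using the limit-case identities $s=\tau$ from \eqref{eq:stauS} and $k=2\tau-\pi/2$ from \eqref{eq:kLimit}. With $\alpha=\tau+\omega$ and $\beta=\tau-\omega$, one has $s-\alpha=\beta-s=-\omega$, $k-\alpha=\beta-\pi/2$, and $k-\beta=\alpha-\pi/2$, so $\cos(k-\alpha)=\sin\beta$ and $\cos(k-\beta)=\sin\alpha$; plugging these into \eqref{eq:gexpression} and cancelling $\sin(-\omega)$ reduces the expression to $g(\vec{U}_s,0)=\sin\beta/\sin\alpha$, precisely the quantity bounded in Theorem \ref{thm:gRangeS}. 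By definition $\mathcal{C}_s$ satisfies $u_s>(\gamma-1)\bar q/(\gamma+1)$, which is enforced by the hypothesis $\bar f<\sqrt{2/(\gamma-1)}$ together with $u_s=\bar q/(1+(f')^2)$, so Theorem \ref{thm:gRangeS}(1) gives $|\sin\beta/\sin\alpha|<1$ pointwise on $\mathcal{C}_s$. Since $\mathcal{C}_s$ is compact and this ratio is continuous (with $\sin\alpha$ uniformly positive by Lemma \ref{lem:omegaRangeS}), the supremum is attained and yields a constant $K\in(0,1)$ with $|g(\vec{U}_s,0)|\le K$ on all of $\mathcal{C}_s$.

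Finally I would pass to the perturbation. On $\mathcal{C}_s\times\{0\}$ the denominator of \eqref{eq:gexpression} equals $-\sin\omega\sin\alpha$, which is uniformly nonzero by Lemma \ref{lem:omegaRangeS}; moreover $s(\vec{U},\epsilon)$ and $k(\vec{U},\epsilon)$ depend continuously on $(\vec{U},\epsilon)$ under the constraint \eqref{eq:ShockPolar} (in \eqref{eq:kexpression} one needs $G_v\ne 0$, which holds since $v_s>0$ on $\mathcal{C}_s$), and Lemma \ref{lem:BasicWavePattern} controls the perturbation of the angles $\alpha,\beta,s,\tau,\omega$ on a neighborhood of $\mathcal{C}_s\times\{0\}$. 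Joint continuity of $g$ on that neighborhood therefore holds. Choosing any $\bar{K}\in(K,1)$ and invoking uniform continuity on the compact set $\mathcal{C}_s\times\{0\}$ produces a radius $\delta_{\bar{K}}>0$ such that $|g(\vec{U},\epsilon)-g(\vec{U}_s,0)|\le \bar{K}-K$, which combined with the previous paragraph gives $|g(\vec{U},\epsilon)|\le\bar{K}$ for all admissible $(\vec{U},\epsilon)$ with $\vec{U}\in\mathcal{C}_s^{\delta_{\bar{K}}}$ and $\epsilon<\delta_{\bar{K}}$, completing the proof.
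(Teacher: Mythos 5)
Your proposal is correct and follows essentially the same route as the paper: bound $g(\vec{U}_s,0)=\sin\beta/\sin\alpha$ on the compact set $\mathcal{C}_s$ via Theorem \ref{thm:gRangeS}(1), then extend to $\mathcal{C}_s^{\delta_{\bar K}}$ and small $\epsilon$ by continuity of $g$ under the constraint \eqref{eq:ShockPolar}. Your write-up in fact supplies details the paper leaves implicit (the reduction of \eqref{eq:gexpression} to $\sin\beta/\sin\alpha$ at $\epsilon=0$ and the uniform non-vanishing of the denominator $\sin(\beta-s)\cos(k-\beta)$), which strengthens rather than alters the argument.
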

\begin{proof}
By the first part of Theorem \ref{thm:gRangeS}, there exists a constant $\bar{K}\in(0, 1)$, such that
\begin{equation*}
\abs{g(\vec{U}_s, 0)|_{\vec{U}_s\in \mathcal{C}_s}}\leq\frac{3\bar{K}-1}{2},
\end{equation*}
provided $0<\underline{f}<\bar{f}<\sqrt{\frac{2}{\gamma-1}}$. For any $(u, v)\in\mathcal{C}_s^{\delta_{\bar{K}}}$, similar to the proof in Lemma \ref{lem:BasicWavePattern}, we can get the conclusion by the continuity of the function $g(\vec{U}, \epsilon)$.
\end{proof}

\section{Global attached shock to a curved wedge without convexity}\label{sect:GlobalShockwithoutConvexity}

\subsection{Some a priori estimates}\label{subsect:SomeaprioriEstimates}
In this subsection, we present some basic estimates for the flow field with states around $\mathcal{C}_s$. We use the notation $M_s$ to denote the quantities that depend only on the solution $(u_s, v_s)$ and the wedge boundary $\mathsf{W}$.

\subsubsection{$(\xi, \eta)$ Coordinates}
For the convenience of discussion, we introduce a useful coordinate. It follows from \cite[Section 14.6]{gilbarg2001elliptic} that under the assumptions \eqref{eq:SupersonicWallCondition}-\eqref{eq:WallCondition2nd} on the wedge boundary, there exists a constant $M_{\mathsf{W}}>0$, such that for any point $P$ in the domain
\begin{equation*}
\Omega_0=\set{(x, y)|f(x)\leq y\leq f(x)+M_{\mathsf{W}}x; x\geq0},
\end{equation*}
there exists a unique point $P_w\in \mathsf{W}$, such that
\begin{equation*}
\abs{P_wP}=\dist(P, \mathsf{W}).
\end{equation*}
Let $\eta(P)=\abs{P_wP}$ and $\xi(P)$ be the length of the path connecting $O$ with $P_w$ along $\mathsf{W}$. Then, for any $P(x, y)\in\Omega_0$, it defines a map as follows
\begin{equation}\label{eq:eqltofxxi}
(x, y)\in\Omega_0\mapsto (\xi, \eta)\in \mathbb{R}_+\times \mathbb{R}_+.
\end{equation}
Clearly, $P_wP$ is perpendicular to $\mathsf{W}$ at $P_w$. Here and in the sequel, for any point $P$, the subscript $w$ stands for the projection of $P$ on $\mathsf{W}$, and the superscript $s$ stands for the intersection of $\mathsf{S}$ and $P_wP$. Denote the intersection of $P_wP$ and $\mathsf{S}$ by $P^s$. It is obvious that the transformation has the following property.
\begin{lem}
Under the assumptions \eqref{eq:SupersonicWallCondition} and \eqref{eq:WallCondition2nd} on the wedge boundary and in $\Omega_0$, $x$ and $\xi$ are equivalent parameters. Indeed, there exists $M_s>1$ such that
\begin{equation}\label{eq:arcparameterestimate}
\frac{\xi}{x}\in[1, M_s].
\end{equation}
\end{lem}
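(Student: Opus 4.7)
The plan is to express both $x$ and $\xi$ in terms of the $x$-coordinate $x_w$ of the foot $P_w$, then sandwich using the slope bounds $\underline{f}\le f'\le \bar{f}$. Parametrising $\mathsf{W}$ by $t\mapsto (t,f(t))$, the arc-length from $O$ to $P_w=(x_w,f(x_w))$ is
\[
\xi=\int_0^{x_w}\sqrt{1+(f'(t))^2}\,dt,
\]
so the one-sided bound $\underline{f}\le f'(t)\le \bar{f}$ immediately yields $x_w\le \xi\le \sqrt{1+\bar{f}^2}\,x_w$. The second ingredient is the geometry of the projection: the upward unit normal to $\mathsf{W}$ at $P_w$ equals $(-f'(x_w),1)/\sqrt{1+(f'(x_w))^2}$, and since $P$ lies above $\mathsf{W}$ at distance $\eta$, I obtain the coordinate identity
\[
x=x_w-\frac{\eta\,f'(x_w)}{\sqrt{1+(f'(x_w))^2}}.
\]

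For the lower bound this is essentially free: since $f'\ge \underline{f}>0$ and $\eta\ge 0$, the identity gives $x_w\ge x$, and combined with $\xi\ge x_w$ we conclude $\xi/x\ge 1$.

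For the upper bound I use the defining condition of $\Omega_0$. The vertical segment from $P=(x,y)$ to $(x,f(x))$ lies in the upper half-plane and meets $\mathsf{W}$, so it bounds the true distance from above:
\[
\eta=\dist(P,\mathsf{W})\le y-f(x)\le M_{\mathsf{W}}\,x.
\]
Plugging this into the coordinate identity, and using the monotonicity of $t\mapsto t/\sqrt{1+t^2}$ together with $f'(x_w)\le \bar{f}$, I get
\[
x_w=x+\frac{\eta\,f'(x_w)}{\sqrt{1+(f'(x_w))^2}}\le\left(1+\frac{\bar{f}\,M_{\mathsf{W}}}{\sqrt{1+\bar{f}^2}}\right)x,
\]
whence
\[
\xi\le\sqrt{1+\bar{f}^2}\left(1+\frac{\bar{f}\,M_{\mathsf{W}}}{\sqrt{1+\bar{f}^2}}\right)x =: M_s\,x.
\]
The constant $M_s$ depends only on $\bar{f}$ and $M_{\mathsf{W}}$, both of which are determined by the wedge alone, and $M_s>1$ since $\bar{f},M_{\mathsf{W}}>0$.

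The proof is essentially routine plane geometry, and I do not expect a genuine obstacle. The only delicate point is invoking uniqueness of the nearest-point projection $P_w$ on $\Omega_0$, but this is precisely what is encoded in the choice of $M_{\mathsf{W}}$ via \cite{gilbarg2001elliptic}, i.e.\ the hypothesis \eqref{eq:WallCondition2nd} is used only to ensure well-posedness of the coordinate change $(x,y)\leftrightarrow(\xi,\eta)$; the quantitative comparison \eqref{eq:arcparameterestimate} itself needs only the first-order slope bounds in \eqref{eq:SupersonicWallCondition} and the definition of $\Omega_0$.
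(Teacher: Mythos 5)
Your proof is correct. The paper itself offers no argument for this lemma (it is introduced with ``It is obvious to have the following property''), and your computation --- arc-length comparison $x_w\le\xi\le\sqrt{1+\bar f^2}\,x_w$ plus the normal-projection identity $x=x_w-\eta f'(x_w)/\sqrt{1+(f'(x_w))^2}$ combined with $\eta\le M_{\mathsf W}x$ --- is exactly the natural verification one would supply; the resulting constant $M_s=\sqrt{1+\bar f^2}+\bar f M_{\mathsf W}$ depends only on the wedge, as required. The one point worth a half-sentence in a write-up is that $P_w$ is an interior point of $\mathsf W$ for $x(P)>0$ (since $x+yf'(0)>0$ rules out $P_w=O$), so the perpendicularity underlying your coordinate identity indeed holds.
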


\subsubsection{Thickness of the shock layer}
By conservation of mass, we first have the following estimate about the distance between the shock $y=\phi(x)$ and the wedge $y=f(x)$. Recall that for any $P\in\Omega_0$, we have $P\in P^sP_w$, and $\xi\qnt{P}=\xi\qnt{P_w}=\xi\qnt{P^s}$. Estimates about the thickness of the shock layer, that is the length of the section $P^sP_w$, can be obtained as follows.

\begin{thm}[Narrow estimate]\label{thm:NarrowEstimate}
Under the assumption \eqref{eq:SupersonicWallCondition}, there exists $\delta_m>0$, if $0<\epsilon\leq\delta_m$ and the solution of the problem $(u, v)\in\mathcal{C}_s^{\delta_m}$, then we have
\begin{enumerate}
\item there exist two constants $m_0$ and $m_1$ to ensure the following estimates
\begin{equation}\label{eq:MassFluxRange}
m_0\leq\rho(\vec{U}) \vec{U}\cdot\vec{b}(\xi)\leq m_1,
\end{equation}
for $\xi\in\Real^+$;
\item 
\begin{equation}\label{eq:NarrowEstimate}
\epsilon M_1\xi(P)\leq\abs{P^sP_w}\leq \epsilon M_0\xi(P)
\end{equation}
where $M_{0,1}$ are constants depending only on $\underline{f}$ and $\bar{f}$.
\end{enumerate}
\end{thm}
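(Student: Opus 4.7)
My plan is to use conservation of mass in the shock layer to trade the thin width $|P^s P_w|$ for the small incoming mass-flux parameter $\epsilon=\underline{\rho}\underline{u}$. Here $\vec{b}(\xi)$ is the unit tangent to $\mathsf{W}$ at arc length $\xi$, so that $\rho\vec{U}\cdot\vec{b}(\xi)$ represents the streamwise mass flux along the wedge. The bound in (i) is essentially a continuity/compactness statement, and the narrow estimate (ii) follows from a single divergence-theorem computation on the region enclosed by the wedge, the perpendicular segment $\Gamma_\perp:=[P_w,P^s]$, and the shock.

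For (i), on $\mathcal{C}_s$ the velocity $\vec{U}_s$ is tangent to $\mathsf{W}$, so $\vec{U}_s=q_s\vec{b}(\xi)$ and $\rho_s\vec{U}_s\cdot\vec{b}(\xi)=\rho_s q_s$. By \eqref{eq:RhoLowerS}--\eqref{eq:qRangeS} both $\rho_s$ and $q_s$ are bounded above and away from zero uniformly on $\mathcal{C}_s$. Since $\vec{b}(\xi)$ ranges in a compact set of directions (because $f'\in[\underline{f},\bar{f}]$) and the map $(u,v,\xi)\mapsto\rho(u,v)(u,v)\cdot\vec{b}(\xi)$ is continuous, a further shrinking of $\delta_m$ yields $0<m_0\le\rho\vec{U}\cdot\vec{b}(\xi)\le m_1$ whenever $\vec{U}\in\mathcal{C}_s^{\delta_m}$.

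For (ii), I apply the divergence theorem to $(\rho u,\rho v)$ on the region $R$ bounded by the wedge from $O$ to $P_w$, the perpendicular segment $\Gamma_\perp$, and the shock from $P^s$ back to $O$. The identity $\operatorname{div}(\rho\vec{U})=0$ holds in $R$ by \eqref{eq:EulerEquations}; the wedge contribution vanishes by the slip condition \eqref{eq:VelocityWallCondition}; on the shock, \eqref{eq:RHCondition} replaces the flux by that of the uniform incoming field $(\underline{\rho}\underline{u},0)$, which admits $\psi=\underline{\rho}\underline{u}\,y$ as a stream function, so the shock contribution equals $\epsilon\,y(P^s)$. Since $\vec{b}(\xi)$ is the outward unit normal to $\Gamma_\perp$, the divergence theorem yields the key identity
\begin{equation*}
\int_0^{|P^s P_w|}\rho(\vec{U})\,\vec{U}\cdot\vec{b}(\xi)\,d\eta \;=\; \epsilon\,y(P^s) \;=\; \epsilon\bigl[f(x(P_w))+|P^s P_w|\cos\theta_w\bigr],
\end{equation*}
with $\theta_w=\arctan f'(x(P_w))$. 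Combining with (i) and absorbing the term $\epsilon|P^s P_w|\cos\theta_w$ into the left-hand side (permissible once $\delta_m$ is small compared with $m_0$), one obtains $|P^s P_w|$ comparable to $\epsilon\,f(x(P_w))$. Finally, \eqref{eq:SupersonicWallCondition} gives $\underline{f}\,x\le f(x)\le\bar{f}\,x$, and \eqref{eq:arcparameterestimate} gives the equivalence of $x$ and $\xi$, producing \eqref{eq:NarrowEstimate} with constants $M_0,M_1$ depending only on $\underline{f},\bar{f}$ and $M_s$.

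The main technical obstacle is careful bookkeeping of outward normals and signs in the divergence-theorem computation; the stream-function representation of the uniform-flow flux neatly circumvents an awkward direct parametrization of the shock. A secondary subtlety is that $\vec{U}\in\mathcal{C}_s^{\delta_m}$ does not force it to be near the specific state $(u_s(\xi),v_s(\xi))$ matching the wedge slope at $\xi$; however, the uniform lower bounds from Lemma \ref{lem:omegaRangeS} keep $u$ bounded away from zero on the whole of $\mathcal{C}_s$, and the wedge tangent $\vec{b}(\xi)$ always makes a small angle with such states, so $\rho\vec{U}\cdot\vec{b}(\xi)$ remains strictly positive and (i) goes through as stated.
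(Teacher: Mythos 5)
Your proposal is correct and follows essentially the same route as the paper: part (i) is the same continuity/compactness argument using \eqref{eq:RhoLowerS}--\eqref{eq:qRangeS} and the uniform positive lower bound on $\vec{b}(\xi')\cdot\vec{b}(\xi)$, and part (ii) rests on exactly the paper's mass-balance identity $\int_{P_wP^s}\rho\vec{U}\cdot\vec{b}(P_w)\,\dif\eta=\epsilon\,y(P^s)$, which you justify via the divergence theorem where the paper simply invokes conservation of mass. The only cosmetic difference is that you bound $y(P^s)$ through the exact decomposition $y(P^s)=f(x(P_w))+\abs{P^sP_w}\cos\theta_w$ and absorb the second term, whereas the paper uses slope bounds $\underline{\phi},\bar{\phi}$ on the shock; both give constants depending only on $\underline{f},\bar{f}$.
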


\begin{proof}
(i) By the definition of $(u_s, v_s)$, its direction vector takes the form as follows
\begin{equation*}
\qnt{\cos\tau(\vec{U}_s(\xi)), \sin\tau(\vec{U}_s(\xi))}=\vec{b}(\xi).
\end{equation*}
So, in view of \eqref{eq:SupersonicWallCondition}, there exists a positive constant $b_m$, such that
\begin{equation}\label{eq:AngleLowerS}
\qnt{\cos\tau(\vec{U}_s(\xi')), \sin\tau(\vec{U}_s(\xi'))}\cdot\vec{b}(\xi)=\vec{b}(\xi')\cdot\vec{b}(\xi)\in[b_m, 1].
\end{equation}
where $\xi, \xi'\in\Real^+$. Therefore, by \eqref{eq:RhoLowerS} and \eqref{eq:qRangeS}, there exist two positive constants $m_0$ and $m_1$, such that
\begin{equation*}
\frac{m_1}{2}\geq\rho(\vec{U}_s(\xi'))\vec{U}_s(\xi')\cdot\vec{b}(\xi)=\rho(q_s(\xi'))q_s(\xi')\vec{b}(\xi')\cdot\vec{b}(\xi)\geq2m_0,
\end{equation*}
for any $\xi, \xi'\in\Real^+$. So we have
\begin{equation}\label{eq:MassFluxRangeS}
\frac{m_1}{2}\geq\rho(\vec{U}_s)\vec{U}_s\cdot\vec{b}(\xi)\geq2m_0,
\end{equation}
where $\vec{U}_s\in\mathcal{C}_s$ and $\xi\in\Real^+$. By a perturbation discussion, we could prove \eqref{eq:MassFluxRange} by \eqref{eq:MassFluxRangeS}.

(ii) Assume that $(u, v)\in\mathcal{C}_s^{\delta_m}$ is the solution of the problem, then by \eqref{eq:MassFluxRange}, we have
\begin{equation}\label{eq:MassFluxRangeP}
m_1\geq\qnt{\rho(u, v)}(P)\cdot \vec{b}(P_w)\geq m_0>0,
\end{equation}
for any $P\in \Omega_0$. Thus we obtain the conclusion of item (i).

Next, by the conservation of mass (see Figure \ref{fig:ConMass}),we have
\begin{equation}\label{eq:ConservationofMass}
\underline{\rho}\underline{u}y(P^s)=\int_{P\in P_wP^s}\rho (u, v)(P)\cdot \vec{b}(P_w)\dif\eta_P.
\end{equation}

\begin{figure}[htbp]
\setlength{\unitlength}{1bp}
\begin{center}
\begin{picture}(190,190)
\put(-40,-10){\includegraphics[scale=0.54]{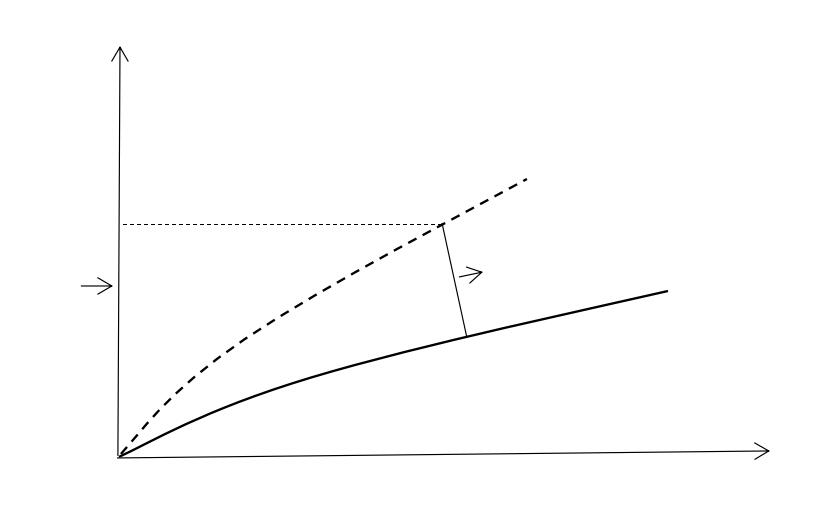}}
\put(-3,168){\small$y$}
\put(-3,95){\tiny$\underline{u}$}
\put(160,128){\small$\textsf{S}$}
\put(133,85){\small$P$}
\put(153,85){\tiny$(u, v)$}
\put(143,55){\small$P_w$}
\put(127,115){\small$P^s$}
\put(200,65){\small$\textsf{W}$}
\put(0,8){\small$O$}
\put(260,8){\small$x$}
\end{picture}
\end{center}
\caption{conservation of mass}\label{fig:ConMass}
\end{figure}

For the left-hand side of the above equality, since by Lemma \ref{lem:PerturbationConstants} the slope $\phi'(x)=\phi'(\vec{U}, \epsilon)$ of the shock-front is controlled by $\underline{\phi}$ and $\bar{\phi}$, we have on the one hand
\begin{equation*}
y(P^s)\leq \bar{\phi}x(P^s)\leq \bar{\phi}x(P_w)\leq \bar{\phi}\xi(P_w).
\end{equation*}
On the other hand,
\begin{equation*}
y(P^s)\geq y(P_w)\geq \underline{\phi}x(P_w)\geq \frac{1}{M_s}\underline{\phi}\xi(P_w).
\end{equation*}
Applying above two to the left hand term of \eqref{eq:ConservationofMass} yields
\begin{equation}\label{eq:CoordinatesInequality}
\frac{1}{M_s}\epsilon \underline{\phi} \xi(P_w)\leq\underline{\rho}\underline{u}y(P^s)\leq\epsilon \bar{\phi}\xi(P_w),
\end{equation}
where we use the arc parameter equivalency \eqref{eq:arcparameterestimate}.

For the right-hand side of \eqref{eq:ConservationofMass}, by \eqref{eq:MassFluxRangeP}, we have
\begin{equation}\label{eq:CoordinatesInequality1}
\begin{split}
m_0\abs{P_wP^s}\leq\int_{P\in P_wP^s}\rho (u, v)(P)\cdot \vec{b}(P_w)\dif\eta\leq m_1\abs{P_wP^s}.
\end{split}
\end{equation}
Thus, we can substitute \eqref{eq:CoordinatesInequality} and \eqref{eq:CoordinatesInequality1} into \eqref{eq:ConservationofMass} to get
\begin{equation*}
\frac{\epsilon\underline{\phi}}{m_1 M_s}\xi(P_w)\leq\abs{P_wP^s}\leq \frac{ \epsilon\bar{\phi}}{m_0}\xi(P_w).
\end{equation*}
Let $M_0=\frac{\bar{\phi}}{m_0}$ and $M_1=\frac{\underline{\phi}}{m_1 M_s}$, we have \eqref{eq:NarrowEstimate}. The proof is complete.
\end{proof}

\subsubsection{A priori estimates related to the characteristics in the shock layer}
In order to estimate the length of the characteristics between the shock $\mathsf{S}$ and the wall $\mathsf{W}$, we consider a standardised domain $\Omega$. It has a wall boundary $\mathsf{W}$ and a shock boundary $\mathsf{S}$. Let $P_w\in\mathsf{W}$ and $P^s\in\mathsf{S}$. We employ an arclength parameter $\xi$ on $\mathsf{W}$ with $\xi(P_w)=0$ and $\abs{P_wP^s}=1$. $\eta$ is the distance function with respect to $\mathsf{W}$. Assume a $C^-$ characteristic starting from $P^s$ intersects with a $C^+$ characteristic starting from $P_w$ at $Q$. We remind that $Q_w$ is the projection of $Q$ on the wall and $Q^s$ is the intersection of $QQ_w$ and $\mathsf{S}$. Then we have the following characteristic estimates result.

\begin{figure}[htbp]
\setlength{\unitlength}{1bp}
\begin{center}
\begin{picture}(200,180)
\put(-30,-10){\includegraphics[scale=0.7]{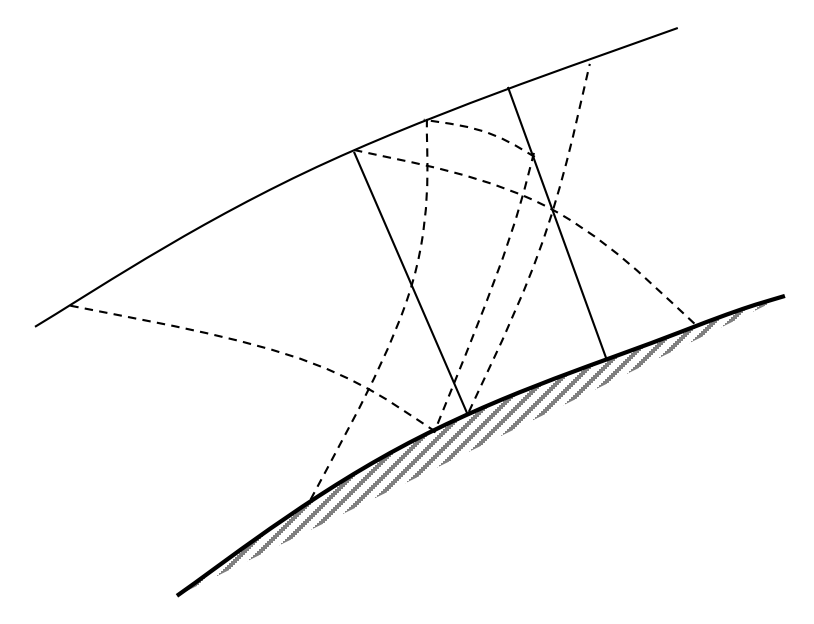}}
\put(108,44){\tiny$P_w$}
\put(94,58){\tiny$\underline{R}_1$}
\put(54,38){\tiny$\underline{R}_2$}
\put(180,168){\small$\textsf{S}$}
\put(162,101){\tiny$C^-$}
\put(145,130){\tiny$C^+$}
\put(145,164){\tiny$P^1$}
\put(150,60){\tiny$Q_w$}
\put(140,114){\tiny$Q$}
\put(131,130){\tiny$R$}
\put(120,155){\tiny$Q^s$}
\put(94,145){\tiny$\bar{R}_1$}
\put(-14,90){\tiny$\bar{R}_2$}
\put(176,85){\tiny$P_1$}
\put(200,75){\small$\textsf{W}$}
\put(69,135){\tiny$P^s$}
\end{picture}
\end{center}
\caption{the characteristics in the standardised domain}\label{fig:standardisedDomain}
\end{figure}

\begin{figure}[htbp]
\setlength{\unitlength}{1bp}
\begin{center}
\begin{picture}(200,180)
\put(-30,-10){\includegraphics[scale=0.7]{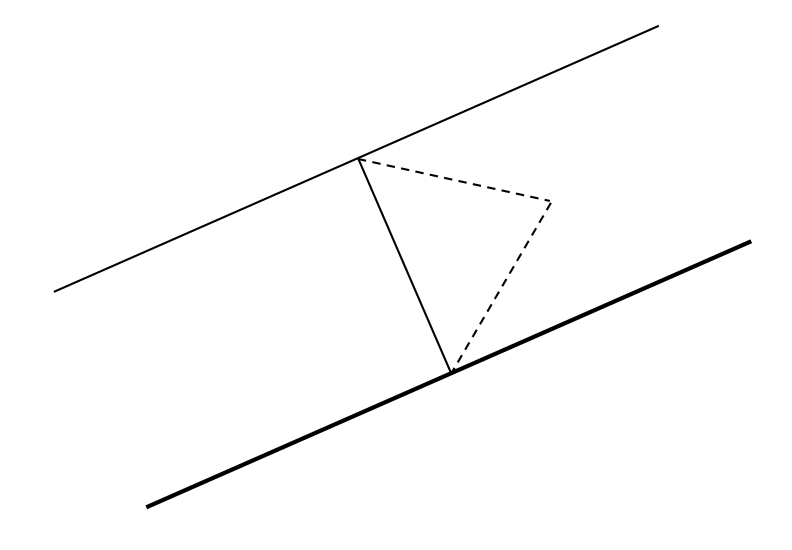}}
\put(103,33){\tiny$P_w$}
\put(170,148){\small$\textsf{S}$}
\put(122,61){\tiny$C^+$}
\put(136,95){\tiny$Q$}
\put(105,105){\tiny$C^-$}
\put(190,65){\small$\textsf{W}$}
\put(69,110){\tiny$P^s$}
\end{picture}
\end{center}
\caption{the characteristics in the standardised domain for the limit case}\label{fig:standardisedDomainS}
\end{figure}
\begin{thm}[Characteristic estimates]\label{thm:characteristicestimate}
As illustrated in Figure \ref{fig:standardisedDomain}, for any point $R$ in the domain bounded by $\widehat{P^sP_wQ_wQ^s}$, consider the $\pm$characteristics emanating backward from $R$ and the reflected parts $\widehat{R\bar{R}_1}$, $\widehat{\bar{R}_1\underline{R}_2}$, $\widehat{R\underline{R}_1}$ and $\widehat{\underline{R}_1\bar{R}_2}$. For $\delta_\omega$ defined in Lemma \ref{lem:BasicWavePattern}, if
\begin{equation}\label{eq:EstimatesO1}
(u, v)\in \mathcal{B}(\vec{U}_s, \delta_\omega),
\end{equation}
satisfies Problem 2 in $\Omega$ with $\epsilon<\delta_\omega$ for some fixed $\vec{U}_s\in\mathcal{C}_s$, then we have

(1) the length of $\widehat{R\bar{R}_1}, \widehat{\bar{R}_1\underline{R}_2}, \widehat{R\underline{R}_1}$ and $\widehat{\underline{R}_1\bar{R}_2}$ are each less than some constant $M_l$, which is only depending on $\omega_0$;

(2)
\begin{equation}
\xi(Q)\geq \frac{1}{8\tan\qnt{\frac{\pi}{2}-\omega_0}}=\frac{1}{8}\tan\omega_0;
\end{equation}

(3) $\bar{R}_2$ and $\underline{R}_2$ must locate in the backward part of $P^sP_w$ and
\begin{equation}
\xi(\bar{R}_2), \xi(\underline{R}_2)\in[-\frac{8}{\tan\omega_0}, 0].
\end{equation}
Here $\omega_0$ is the constant determined in Lemma \ref{lem:omegaRangeS}.
\end{thm}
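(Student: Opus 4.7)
The key idea is to exploit the uniform angle bounds $\omega\in[\omega_0/2,\,\pi/2-\omega_0/2]$ from Lemma~\ref{lem:BasicWavePattern} together with the identities $\alpha-\tau=\tau-\beta=\alpha-s=s-\beta=\omega$ of Lemma~\ref{lem:omegaRangeS}, which reduce everything to an essentially Euclidean computation in the $(\xi,\eta)$-coordinates. In those coordinates $\mathsf{W}$ is the $\xi$-axis, $P_w=(0,0)$, $P^s=(0,1)$, and $\mathsf{S}$ is close to the line $\eta=1$. At each interior point the $C^{+}$-characteristic makes an angle close to $+\omega$ with $\mathsf{W}$ and the $C^{-}$-characteristic an angle close to $-\omega$, so both families are uniformly transverse to both boundaries.

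Part~(1) is then immediate: any $C^{\pm}$-segment in the shock layer traverses an $\eta$-distance at most $1$ at an angle at least $\omega_0/2-\omega_0/100$ with $\mathsf{W}$, hence has length at most $M_l:=2/\sin(\omega_0/2)$. For part~(2), I would parametrise the $C^{+}$-characteristic from $P_w$ as $(t\cos\omega,\,t\sin\omega)$ and the $C^{-}$-characteristic from $P^s$ as $(t\cos\omega,\,1-t\sin\omega)$; equating the two gives $Q\approx(\tfrac{1}{2}\cot\omega,\,\tfrac{1}{2})$, so $\xi(Q)\geq\tfrac{1}{2}\tan(\omega_0/2)\geq\tfrac{1}{8}\tan\omega_0$ for $\omega_0\in(0,\pi/16)$. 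For part~(3), propagating the two characteristics backward from a point $R=(\xi_R,\eta_R)\in[0,\tfrac{1}{2}\cot\omega]\times[0,1]$ yields, after one reflection, the explicit formulas $\underline{R}_{2}=(\xi_R-(2-\eta_R)\cot\omega,\,0)$ on $\mathsf{W}$ and $\bar{R}_{2}=(\xi_R-(1+\eta_R)\cot\omega,\,1)$ on $\mathsf{S}$; both $\xi$-coordinates lie in $[-2\cot(\omega_0/2),\,0]$, and the elementary bound $\cot(\omega_0/2)\leq 4\cot\omega_0$ valid on $(0,\pi/16)$ delivers the claimed range $[-8/\tan\omega_0,\,0]$.

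The main obstacle is the passage from this idealised straight-line picture to the true geometry: $\mathsf{W}$ and $\mathsf{S}$ are curved, the angles $\alpha,\beta,\tau,s$ vary along each characteristic, and the $(\xi,\eta)$-metric carries a $1-\eta\kappa_w$ factor. All of these deviations are however uniformly $O(\omega_0/100)$ by Lemma~\ref{lem:BasicWavePattern} together with the hypothesis $(u,v)\in\mathcal{B}(\vec{U}_s,\delta_\omega)$ and the smallness of $\epsilon$ and $\delta_\omega$, and the numerical slack built into the statement---$\tfrac{1}{8}$ in place of the sharp $\tfrac{1}{2}$, and $8\cot\omega_0$ in place of the sharp $2\cot(\omega_0/2)$---is designed precisely to absorb them. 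Carefully propagating these perturbations through the explicit intersection formulas above is the one technical step that requires real care.
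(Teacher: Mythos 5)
Your proposal is correct and follows essentially the same route as the paper: both treat the configuration as a small perturbation (controlled by Lemma~\ref{lem:omegaRangeS} and Lemma~\ref{lem:BasicWavePattern}) of the constant-state picture in which $\mathsf{W}$ and $\mathsf{S}$ are parallel lines and the characteristics are straight lines meeting them at angle $\omega\in(\omega_0,\tfrac{\pi}{2}-\omega_0)$, and both rely on the built-in numerical slack (the factors $\tfrac18$ and $8$) to absorb the $O(\omega_0/100)$ angle deviations. The one step you defer --- propagating the perturbations through the curved geometry --- is exactly what the paper executes via its ``Project Theorem'' identities, writing $|P_wP^s|$ and $0$ as arc-length integrals of $\sin$ and $\cos$ of the pointwise angle differences along each characteristic and solving the resulting pair of linear inequalities, which is the natural rigorous implementation of your straight-line intersection formulas.
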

\begin{proof}
We give an outline of the proof. For any point $\vec{U}_s\in\mathcal{C}_s$ and $\epsilon=0$, we have
\begin{equation*}
\alpha(\vec{U}_s)-\tau(\vec{U}_s)=\alpha(\vec{U}_s)-s(\vec{U}_s)=\tau(\vec{U}_s)-\beta(\vec{U}_s)=s(\vec{U}_s)-\beta(\vec{U}_s)= \omega(\vec{U}_s) \in(\omega_0, \frac{\pi}{2}-\omega_0),
\end{equation*}
and
\begin{equation*}
\tau(\vec{U}_s)=s(\vec{U}_s).
\end{equation*}
In view of above, as illustrated in Figure \ref{fig:standardisedDomainS}, for the constant states $\vec{U}_s$, the standardised domain becomes a domain bounded by a pair of parallel lines $\mathsf{W}$ and $\mathsf{S}$. The characteristics become two groups of parallel lines having an angle $\omega(\vec{U}_s)$ with $\mathsf{W}$ and $\mathsf{S}$. Thus in view of Lemma \ref{lem:omegaRangeS} and Lemma \ref{lem:BasicWavePattern}, the wave pattern in Figure \ref{fig:standardisedDomain} is only a perturbation of that in Figure \ref{fig:standardisedDomainS}. The conclusion is obvious.

Indeed, the proof is based on the Project Theorem. First, for any point $\vec{U}_s\in\mathcal{C}_s$ and $\epsilon=0$, we denote
\begin{equation*}
s_0(\vec{U}_s)=s(\vec{U}_s, 0),
\end{equation*}
and, for any point $\vec{U}\in \mathcal{B}(\vec{U}_s, \delta_\omega)$ and $\epsilon\leq \delta_\omega$, we denote
\begin{equation*}
s_\epsilon(\vec{U})=s(\vec{U}, \epsilon).
\end{equation*}

Then, for the first part, by the Project Theorem, (here abuse notation, $'\rm{ds}'$ denote for the length parameter along the curves), we have
\begin{equation}\label{eq:CharacteristicEstimates1}
\begin{split}
1=\abs{P_wP^s}=&\int_{\widehat{P_wP^1}}\sin\qnt{\alpha(\vec{U}(P))-\tau(\vec{U}_s(P_w))}\rm{ds}\\
&-\int_{\widehat{P^sP^1}}\sin\qnt{s_\epsilon(\vec{U}(P))-\tau(\vec{U}_s(P_w))}\rm{ds}\\
&\begin{cases}
\geq\abs{\widehat{P_wP^1}}\sin\frac{\omega_0}{2}-\abs{\widehat{P^sP^1}}\sin\frac{\omega_0}{50};\\
\leq\abs{\widehat{P_wP^1}}\sin2\omega_0+\abs{\widehat{P^sP^1}}\sin\frac{\omega_0}{50},
\end{cases}
\end{split}
\end{equation}
and
\begin{equation}\label{eq:CharacteristicEstimates2}
\begin{split}
0=&\int_{\widehat{P_wP^1}}\cos\qnt{\alpha(\vec{U}(P))-\tau(\vec{U}_s(P_w))}\rm{ds}\\
&-\int_{\widehat{P^sP^1}}\cos\qnt{s_\epsilon(\vec{U}(P))-\tau(\vec{U}_s(P_w))}\rm{ds}\\
&\begin{cases}
\leq\abs{\widehat{P_wP^1}}\cos\frac{\omega_0}{2}-\abs{\widehat{P^sP^1}}\cos\frac{\omega_0}{50};\\
\geq\abs{\widehat{P_wP^1}}\cos2\omega_0-\abs{\widehat{P^sP^1}},
\end{cases}
\end{split}
\end{equation}
where, by the perturbation assumption \eqref{eq:CharacteristicPerturbation} and Lemma \ref{lem:omegaRangeS}, we apply the estimates
\begin{equation*}
\alpha(\vec{U}(P))-\tau(\vec{U}_s(P_w))\in[\frac{\omega_0}{2}, 2\omega_0],
\end{equation*}
and
\begin{equation*}
\abs{s_\epsilon(\vec{U}(P))-\tau(\vec{U}_s(P_w))}=\abs{s_\epsilon(\vec{U}(P))-s_0(\vec{U}_s(P_w))}\leq\frac{\omega_0}{100}
\end{equation*}
to get the inequalities.

By the first of \eqref{eq:CharacteristicEstimates2}, we have
\begin{equation*}
\abs{\widehat{P_wP^1}}\geq\abs{\widehat{P^sP^1}}\frac{\cos\frac{\omega_0}{50}}{\cos\frac{\omega_0}{2}}.
\end{equation*}
Insert above into the first of \eqref{eq:CharacteristicEstimates1}, we have
\begin{equation*}
\abs{\widehat{P^sP^1}}\leq1/\qnt{\frac{\cos\frac{\omega_0}{50}\sin\frac{\omega_0}{2}}{\cos\frac{\omega_0}{2}}-\sin\frac{\omega_0}{50}}\leq\frac{1}{\sin\frac{\omega_0}{4}}.
\end{equation*}
In view of the second of \eqref{eq:CharacteristicEstimates2}, above yields
\begin{equation*}
\abs{\widehat{P_wP^1}}\leq\frac{1}{\cos2\omega_0\qnt{\frac{\cos\frac{\omega_0}{50}\sin\frac{\omega_0}{2}}{\cos\frac{\omega_0}{2}}-\sin\frac{\omega_0}{50}}}\leq\frac{1}{\sin\frac{\omega_0}{4}}.
\end{equation*}
Similarly, by the second of \eqref{eq:CharacteristicEstimates2}, we have
\begin{equation*}
\cos2\omega_0\abs{\widehat{P_wP^1}}\leq\abs{\widehat{P^sP^1}}.
\end{equation*}
Insert above into the second of \eqref{eq:CharacteristicEstimates1}, we have
\begin{equation*}
\abs{\widehat{P^sP^1}}\geq1/\qnt{\frac{\sin2\omega_0}{\cos2\omega_0}+\sin\frac{\omega_0}{50}}\geq\frac{1}{4\sin\omega_0}.
\end{equation*}
In view of the first of \eqref{eq:CharacteristicEstimates2}, above yields
\begin{equation*}
\abs{\widehat{P_wP^1}}\geq\frac{\cos\frac{\omega_0}{50}}{\cos\frac{\omega_0}{2}\qnt{\frac{\sin2\omega_0}{\cos2\omega_0}+\sin\frac{\omega_0}{50}}}\geq\frac{1}{4\sin\omega_0}.
\end{equation*}
In above inequalities, we repeat applying the fact that $\omega_0\in(0, \frac{\pi}{16})$.

This discussion comes from a simple observation. By Mean Value Theorem, the equations \eqref{eq:CharacteristicEstimates1} and \eqref{eq:CharacteristicEstimates2} can be written as
\begin{equation}\label{eq:CharacteristicEstimates1P}
1=\abs{P_wP^s}=\sin\qnt{\omega(\vec{U}_s)+\Delta\omega_1}\abs{\widehat{P_wP^1}}-\sin\qnt{\Delta\omega_2}\abs{\widehat{P^sP^1}},
\end{equation}
and
\begin{equation}\label{eq:CharacteristicEstimates2P}
0=\cos\qnt{\omega(\vec{U}_s)+\Delta\omega_3}\abs{\widehat{P_wP^1}}-\cos\qnt{\Delta\omega_4}\abs{\widehat{P^sP^1}}.
\end{equation}
where $\abs{\Delta\omega_i}\leq\frac{\omega_0}{100}$. So $\abs{\widehat{P_wP^1}}$ and $\abs{\widehat{P^sP^1}}$ can be viewed as a perturbation of the solution $\abs{\widehat{P_wP^1}}$ and $\abs{\widehat{P^sP^1}}$ for the following equations
\begin{equation}\label{eq:CharacteristicEstimates3P}
1=\abs{P_wP^s}=\sin\omega(\vec{U}_s)\abs{\widehat{P_wP^1}}-\sin0\abs{\widehat{P^sP^1}},
\end{equation}
and
\begin{equation}\label{eq:CharacteristicEstimates4P}
0=\cos\omega(\vec{U}_s)\abs{\widehat{P_wP^1}}-\cos0\abs{\widehat{P^sP^1}},
\end{equation}
which is corresponding to case where $\vec{U}(P)\equiv \vec{U}_s(P_w)$ and $\epsilon=0$. The other cases for the first part can be discussed similarly.

For the second part, since $Q$ is the intersection of $C^+$ and $C^-$, as \eqref{eq:CharacteristicEstimates1} and \eqref{eq:CharacteristicEstimates2}, we have
\begin{equation}\label{eq:CharacteristicEstimates3}
\begin{split}
1=\abs{P_wP^s}=&\int_{\widehat{P_wQ}}\sin\qnt{\tau(\vec{U}_s(P_w))-\beta(\vec{U}(P))}\rm{ds}\\
&+\int_{\widehat{P^sQ}}\sin\qnt{\alpha(\vec{U}(P))-\tau(\vec{U}_s(P_w))}\rm{ds},
\end{split}
\end{equation}
and
\begin{equation}\label{eq:CharacteristicEstimates4}
\begin{split}
0=&\int_{\widehat{P_wQ}}\cos\qnt{\tau(\vec{U}_s(P_w))-\beta(\vec{U}(P))}\rm{ds}\\
&-\int_{\widehat{P^sQ}}\cos\qnt{\alpha(\vec{U}(P))-\tau(\vec{U}_s(P_w))}\rm{ds}.
\end{split}
\end{equation}
$Q_w$ is the projection of $Q$, we have
\begin{equation}\label{eq:CharacteristicEstimates5}
\begin{split}
0=&\int_{\widehat{P_wQ}}\cos\qnt{\alpha(\vec{U}(P))-\tau(\vec{U}(Q_w))}\rm{ds}\\
&-\int_{\widehat{P_wQ_w}}\cos\qnt{\tau(\vec{U}(P))-\tau(\vec{U}_s(P_w))}\rm{ds},
\end{split}
\end{equation}
and $\abs{\widehat{P_wQ_w}}=\xi(Q)$. By \eqref{eq:CharacteristicEstimates3} and \eqref{eq:CharacteristicEstimates4}, we get the estimate for $\abs{\widehat{P_wQ}}$. Then, we estimate $\abs{\widehat{P_wQ_w}}=\xi(Q)$ by \eqref{eq:CharacteristicEstimates5}. The third part is similar.
\end{proof}

\subsection{Extension of the local solution and uniform estimates}\label{subsect:StandisedProblem}
In this subsection, we first introduce the inductive estimates of the states and their derivatives and establish the local existence of the solution with these estimates. Then, by continuity method, we prove the global existence of the solution by showing the inductive estimates can be extended to infinity.
\subsubsection{Inductive estimates and the local solution}
By \eqref{eq:SupersonicWallCondition} and \eqref{eq:WallCondition2nd}, we set $M_b>0$ as follows
\begin{equation*}
M_b:=\sup_{\xi_0\in\Real^+}\max_{\xi\in[\frac{\xi_0}{2}, \xi_0+\frac{\tan\omega_0\xi_0}{2}]}\xi\abs{\kappa_w(\xi)},
\end{equation*}
$M_\pm$ be the solution of
\begin{equation}\label{eq:pmBound}
\begin{cases}
\frac{\bar{K}+1}{2}(M_-+1)+1=M_+;\\
\frac{3-\bar{K}}{2}M_++M_b+1=M_-,
\end{cases}
\end{equation}
and $\delta_0$
\begin{equation*}
\delta_0:=\min\set{\delta_\omega, \delta_m, \delta_{\bar{K}}}.
\end{equation*}
To conclude, the constants $\delta_0, \omega_0, m_0, m_1, \bar{K}, M_b$ and $M_\pm$ defined above are all depending on the states set $\mathcal{C}_s$ or the wall $\textsf{W}$. In particular, $\omega_0, m_0, m_1, \bar{K}, M_b$ and $M_\pm$ can be expressed by $M_s$.

Set the {\bf inductive estimates}:
\begin{equation}\label{eq:SonicCharacteristicBoundInductive}
\abs{\bar{\partial}^\pm c(\xi)}\leq \frac{M_\pm}{\xi},
\end{equation}
and
\begin{equation}\label{eq:VelocityBoundInductive}
(u, v)(\xi)\in \mathcal{C}_s^{\frac{\delta_0}{3}}.
\end{equation}

\begin{thm}\label{thm:InitialTheorem}
There exists a positive constant $\epsilon_O$, such that, for any $\epsilon\leq\epsilon_O$, Problem 2 admits a local solution with the inductive estimates \eqref{eq:SonicCharacteristicBoundInductive} and \eqref{eq:VelocityBoundInductive} in $\xi\leq\xi_\epsilon$, where $\xi_\epsilon$ is a constant depending on $\epsilon$.
\end{thm}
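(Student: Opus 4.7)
My plan is to combine the classical local existence theory for attached shocks at a wedge tip with a continuity argument anchored at the limit solution $(u_s,v_s)$ introduced in Section~\ref{sect:LimitSolution}. The theorem asks only for existence with the inductive bounds on some $\epsilon$-dependent interval $\{\xi\leq\xi_\epsilon\}$, so the proof will be qualitative rather than quantitative.

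First, I would invoke the standard local existence result for attached shocks past a smooth wedge (see the works of Gu, Li, Schaeffer, and Chen cited in the introduction). Since $f'(0)\in(0,\sqrt{2/(\gamma-1)})$ is below the critical detachment angle for any sufficiently large Mach number, and since $\underline{\mathbf{M}}\to\infty$ as $\epsilon\to 0^+$, for each $\epsilon\leq\epsilon_O$ with $\epsilon_O$ small enough there exists a $C^1$ piecewise smooth flow $(\rho,u,v)$ together with a $C^2$ attached shock $\mathsf{S}:y=\phi(x)$ in some strip adjacent to the tip. The constant post-shock state at the origin is fixed by solving the shock polar $G(\vec{U},\epsilon)=0$ in (\ref{eq:ShockPolar}) under the slip constraint $v/u=f'(0)$ and the entropy selection $\underline{u}-u>0$; the solution is then propagated by the method of characteristics for the reformulated system (\ref{eq:EulerEquationsCN}) with the shock as a free boundary and the wedge as a fixed boundary.

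Next I would verify the state bound (\ref{eq:VelocityBoundInductive}). As $\epsilon\to 0^+$, the post-shock state at the tip converges, by the implicit function argument surrounding (\ref{eq:Ge}), to $(u_s(0),v_s(0))$ given by (\ref{eq:LimitSolution}), which by assumption (\ref{eq:SupersonicWallCondition}) lies in $\mathcal{C}_s$. Shrinking $\epsilon_O$ if necessary guarantees that this tip state lies in $\mathcal{C}_s^{\delta_0/6}$, and then continuity of the $C^1$ flow in the coordinate $\xi$ along the wall produces a length $\xi_\epsilon^{(1)}>0$ on which $(u,v)(\xi)\in\mathcal{C}_s^{\delta_0/3}$. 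For the derivative bound (\ref{eq:SonicCharacteristicBoundInductive}), since the local solution is $C^1$, the characteristic derivatives $\bar{\partial}^\pm c$ are continuous on a closed neighbourhood of the tip and hence bounded there by some finite constant $M(\epsilon)$; the constants $M_\pm$ from (\ref{eq:pmBound}) are fixed and positive, so $M_\pm/\xi\to\infty$ as $\xi\to 0^+$ and the desired inequality $|\bar{\partial}^\pm c(\xi)|\leq M_\pm/\xi$ holds automatically for $\xi\leq\min\{M_-,M_+\}/M(\epsilon)$. Setting $\xi_\epsilon:=\min\{\xi_\epsilon^{(1)},M_-/M(\epsilon),M_+/M(\epsilon)\}$ establishes both inductive estimates on $\{\xi\leq\xi_\epsilon\}$.

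The hard part I expect is that, by the narrow estimate (\ref{eq:NarrowEstimate}), the shock layer thickness is of order $\epsilon\,\xi$, so as $\epsilon\to 0^+$ the physical domain degenerates onto the wedge and the local $C^1$ norm $M(\epsilon)$ cannot be expected to remain bounded in $\epsilon$. This non-uniformity is tolerable here, since the present theorem only asserts existence of some $\epsilon$-dependent $\xi_\epsilon$; the uniform propagation of the bounds to $\xi\to\infty$ is reserved for the continuation/induction step of the next subsection, where the wall relation (\ref{eq:WallRelationCharacteristic}), the shock relation (\ref{eq:ShockRelation}) with $|g|\leq\bar{K}<1$, and the characteristic decompositions (\ref{eq:CharacteristicDecomposition}), together with the geometric estimates of Theorem~\ref{thm:characteristicestimate}, will be used to close the induction through the algebraic system (\ref{eq:pmBound}) defining $M_\pm$.
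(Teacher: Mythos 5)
Your proposal is correct and follows essentially the same route as the paper: the paper also invokes the classical local existence theory (via Li--Yu, Theorem 3.1.1), obtains \eqref{eq:VelocityBoundInductive} near the tip from the convergence of the tip state $\vec{U}(O)$ to $\vec{U}_s(O)\in\mathcal{C}_s$ as $\epsilon\to0$ by continuity of $G$, and gets \eqref{eq:SonicCharacteristicBoundInductive} for free from $\lim_{\xi\to0}M_\pm/\xi=+\infty$ against the finite local $C^1$ bound. Your explicit choice of $\xi_\epsilon$ and your remark that the bounds need not be uniform in $\epsilon$ at this stage are both consistent with how the paper uses this theorem.
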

\begin{proof}
By \cite[Theorem 3.1.1.]{LiYu1985DUMS}, we can directly establish local existence of $C^1$ solution for Problem 2. The solution $\vec{U}$ and $\vec{U}_s$ obey the algebra equations 
\begin{equation}\label{eq:OriginEquation}
\begin{cases}
G(\vec{U}(O), \epsilon)=0;\\
(\frac{v}{u})(O)=f'(0),
\end{cases}
\end{equation} 
and 
\begin{equation}\label{eq:OriginEquationS}
\begin{cases}
G(\vec{U}_s(O), 0)=0;\\
(\frac{v_s}{u_s})(O)=f'(0),
\end{cases}
\end{equation}
respectively. Then, by the continuity of $G$, above two lead to
\begin{equation*}
\lim\limits_{\epsilon\rightarrow0}\vec{U}(O)=\vec{U}_s(O)\in \mathcal{C}_s.
\end{equation*}
So there exists $\epsilon_O$, for any $\epsilon\leq\epsilon_O$, we have \eqref{eq:VelocityBoundInductive} around the origin. Next, since $\lim\limits_{\xi\rightarrow0}\frac{M_\pm}{\xi}=+\infty$, it holds \eqref{eq:SonicCharacteristicBoundInductive} locally as well. The proof is complete.
\end{proof}
We show two important estimates as follows.

\begin{lem}\label{lem:DerivativeEquiv}
If the $C^1$ solution $(u, v)\in\mathcal{C}_s^{\delta_\omega}$, then we have
\begin{equation}\label{eq:VelocityDerivativeBoundC}
\abs{\bar{\partial}^\pm(u, v)}\leq M_U\abs{\bar{\partial}^\pm c}.
\end{equation}
Furthermore, if \eqref{eq:SonicCharacteristicBoundInductive} holds in $\Omega_0$ for $\xi(x, y)\in[0, \xi_0]$, we have
\begin{equation}\label{eq:VelocityDerivativeBound}
\abs{\nabla_{(x, y)}u(P)}+\abs{\nabla_{(x, y)}v(P)}\leq\frac{M_U}{\xi(P)}, \quad\mbox{for}\quad P\in \Omega_0\quad\mbox{ and }\quad\xi(P)\leq \xi_0.
\end{equation}
\end{lem}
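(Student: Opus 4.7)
The first inequality is an immediate consequence of the characteristic expressions \eqref{eq:SonicExpressions}. Indeed, from
\begin{equation*}
\bar{\partial}^+u=\kappa^{-1}\sin\beta\bar{\partial}^+c,\quad \bar{\partial}^+v=-\kappa^{-1}\cos\beta\bar{\partial}^+c,\quad \bar{\partial}^-u=-\kappa^{-1}\sin\alpha\bar{\partial}^-c,\quad \bar{\partial}^-v=\kappa^{-1}\cos\alpha\bar{\partial}^-c,
\end{equation*}
and the trivial bound $\abs{\sin\alpha},\abs{\cos\alpha},\abs{\sin\beta},\abs{\cos\beta}\le 1$, I would simply take $M_U=2\kappa^{-1}=4/(\gamma-1)$ to absorb both components. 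Note that this step uses only the pointwise Bernoulli relation via $\kappa$ and does not require the hypothesis $(u,v)\in\mathcal{C}_s^{\delta_\omega}$; the latter is needed only to guarantee that the characteristic angles $\alpha,\beta$ are well-defined, which follows because \eqref{eq:VelocityBoundInductive}-type assumptions place the state in the hyperbolic regime.

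For the second assertion I would invert the change of basis from $(\partial_x,\partial_y)$ to $(\bar\partial^+,\bar\partial^-)$. Writing
\begin{equation*}
\begin{pmatrix}\bar{\partial}^+\\ \bar{\partial}^-\end{pmatrix}=\begin{pmatrix}\cos\alpha&\sin\alpha\\ \cos\beta&\sin\beta\end{pmatrix}\begin{pmatrix}\partial_x\\ \partial_y\end{pmatrix},
\end{equation*}
the determinant equals $\sin(\beta-\alpha)=-\sin(2\omega)$, so
\begin{equation*}
\partial_x=\frac{-\sin\beta\,\bar{\partial}^++\sin\alpha\,\bar{\partial}^-}{\sin(2\omega)},\qquad \partial_y=\frac{\cos\beta\,\bar{\partial}^+-\cos\alpha\,\bar{\partial}^-}{\sin(2\omega)}.
\end{equation*}
Crucially, Lemma~\ref{lem:BasicWavePattern} gives $\omega\in[\omega_0/2,\pi/2-\omega_0/2]$, so $\sin(2\omega)\ge \sin\omega_0>0$ uniformly. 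Combining this lower bound with the first part of the lemma yields, for any scalar quantity $w\in\{u,v\}$,
\begin{equation*}
\abs{\partial_xw}+\abs{\partial_yw}\le \frac{C}{\sin\omega_0}\bigl(\abs{\bar{\partial}^+c}+\abs{\bar{\partial}^-c}\bigr),
\end{equation*}
for a purely numerical constant $C$.

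Finally I would invoke the hypothesis \eqref{eq:SonicCharacteristicBoundInductive}, which provides $\abs{\bar\partial^\pm c(\xi)}\le M_\pm/\xi$ throughout $\xi\le\xi_0$. Substituting into the preceding display gives
\begin{equation*}
\abs{\nabla_{(x,y)}u(P)}+\abs{\nabla_{(x,y)}v(P)}\le \frac{C(M_++M_-)}{\sin\omega_0}\cdot\frac{1}{\xi(P)},
\end{equation*}
and enlarging $M_U$ once more to absorb the constant $C(M_++M_-)/\sin\omega_0$ yields the desired bound. I do not foresee any real obstacle here; the only non-trivial input is the uniform positivity of $\sin(2\omega)$, which is precisely the content of Lemma~\ref{lem:BasicWavePattern}. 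If the author wants a single $M_U$ working in both conclusions, it suffices to redefine $M_U$ at the end as the maximum of the two constants produced above.
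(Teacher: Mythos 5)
Your proposal is correct and follows essentially the same route as the paper: the first bound comes straight from the characteristic expressions \eqref{eq:SonicExpressions}, and the second from the uniform transversality of the $\bar\partial^\pm$ directions (the angle $2\omega$ bounded away from $0$ and $\pi$ via Lemma \ref{lem:BasicWavePattern}), which lets one control $\nabla_{(x,y)}$ by $\abs{\bar\partial^+\cdot}+\abs{\bar\partial^-\cdot}$ with a factor of order $1/\sin\omega_0$, after which \eqref{eq:SonicCharacteristicBoundInductive} gives the $1/\xi$ decay. The only cosmetic difference is that you write out the inverse change-of-basis matrix explicitly, whereas the paper just states the resulting gradient inequality.
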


\begin{proof}
For $(u, v)\in\mathcal{C}_s^{\delta_\omega}$, it follows from Lemma \ref{lem:PerturbationConstants} that the vectors $(\cos\alpha, \sin\alpha)$ and $(\cos\beta, \sin\beta)$ have an angle, which is $2\omega$, in the interval $[2\omega_0, \pi-2\omega_0]$. So, for any smooth function $\psi$, we have the following derivative estimates
\begin{equation*}
\abs{\bar{\partial}^\pm\psi}\leq\abs{\nabla\psi}\leq \frac{2}{\sin\omega_0}\qnt{\abs{\bar{\partial}^+\psi}+\abs{\bar{\partial}^-\psi}}.
\end{equation*}
In particular, for the velocity $(u, v)$,
\begin{equation*}
\abs{\nabla_{(x, y)}u(P)}+\abs{\nabla_{(x, y)}v(P)}\leq \frac{2}{\sin\omega_0}\qnt{\abs{\bar{\partial}^+u}+\abs{\bar{\partial}^-u}+\abs{\bar{\partial}^+v}+\abs{\bar{\partial}^-v}}.
\end{equation*}
By the sonic expressions \eqref{eq:SonicExpressions}, for $(u, v)\in \mathcal{C}_s^{\delta_\omega}$,
\begin{equation*}
\abs{\bar{\partial}^\pm u}+\abs{\bar{\partial}^\pm v}\leq M_U\abs{\bar{\partial}^\pm c}.
\end{equation*}
Combining these two inequalities, we have
\begin{equation*}
\abs{\nabla_{(x, y)}u(P)}+\abs{\nabla_{(x, y)}v(P)}\leq \frac{M_U}{\sin\omega_0}\qnt{\abs{\bar{\partial}^+c}+\abs{\bar{\partial}^-c}}\leq\frac{M_U}{\xi(P)}.
\end{equation*}
The proof is complete.
\end{proof}

\begin{thm}[Perturbation estimate]\label{thm:SpecialSolutionPerturbationEstimate}
Under the same assumptions as in Lemma \ref{lem:DerivativeEquiv}, there exists $\epsilon_U>0$ such that for any $\epsilon\in\qnt{0, \epsilon_U}$, we have
\begin{equation}\label{eq:SpecialSolutionPerturbationEstimate3}
\abs{\vec{U}(P)-\vec{U}_s(P_w)}\leq \epsilon M_s\leq \frac{\delta_0}{3}.
\end{equation}
\end{thm}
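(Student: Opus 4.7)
The plan is to compare $\vec{U}(P)$ with $\vec{U}_s(P_w)$ by routing through the foot $\vec{U}(P_w)$ on the wall, exploiting the fact that both the actual solution and the limit solution satisfy the slip condition \eqref{eq:VelocityWallCondition} exactly at $P_w$.

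First I would use the narrow estimate \eqref{eq:NarrowEstimate} to bound $\abs{PP_w}\leq\abs{P^sP_w}\leq\epsilon M_0\xi(P)$ and invoke the derivative bound \eqref{eq:VelocityDerivativeBound} from Lemma \ref{lem:DerivativeEquiv}. Integrating the gradient along the perpendicular segment $P_wP^s$ yields
\[
\abs{\vec{U}(P)-\vec{U}(P_w)}\leq\frac{M_U}{\xi(P)}\cdot\epsilon M_0\xi(P)=\epsilon M_0 M_U.
\]
Applied to $P^s$ itself, the same inequality gives $\abs{\vec{U}(P^s)-\vec{U}(P_w)}\leq\epsilon M_0 M_U$, which together with the R-H relation $G(\vec{U}(P^s),\epsilon)=0$ and the $C^1$ smoothness of $G$ on a neighborhood of $\mathcal{C}_s$ implies $G(\vec{U}(P_w),\epsilon)=O(\epsilon)$.

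Second, I would estimate $\abs{\vec{U}(P_w)-\vec{U}_s(P_w)}$ on the ray $\ell_{x_w}:=\{u(1,f'(x_w)):u>0\}$, on which both states lie because of the slip condition. Define the one-variable function $h(u):=G(u(1,f'(x_w)),\epsilon)$. The previous step reads $h(u(P_w))=O(\epsilon)$, while the definition \eqref{eq:LimitSolution} of the limit solution together with $G(\vec{U}_s(P_w),0)=0$ and the smooth dependence of $G$ on $\epsilon$ yields $h(u_s(x_w))=O(\epsilon)$. A direct computation using \eqref{eq:ShockPolarGExpression} at $\epsilon=0$ gives
\[
h_0'(u_s)=2(1+f'(x_w)^2)u_s-\bar{q}=\bar{q}>0,
\]
independently of $x_w$. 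Thus $h'(u_s(x_w))$ is bounded below by a positive constant uniformly in $x_w\in\Real^+$ once $\epsilon$ is small, and the mean value theorem delivers $\abs{u(P_w)-u_s(x_w)}\leq C\epsilon$, hence $\abs{\vec{U}(P_w)-\vec{U}_s(P_w)}=O(\epsilon)$. The triangle inequality then produces \eqref{eq:SpecialSolutionPerturbationEstimate3} with a constant $M_s$ depending only on $\mathcal{C}_s$ and $\mathsf{W}$, and $\epsilon_U$ is chosen so that $\epsilon_U M_s\leq\delta_0/3$.

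The main obstacle I anticipate is maintaining the transversality constant $h_0'(u_s)=\bar{q}$ as a uniform lower bound on $h'$ as $x_w$ ranges over all of $\Real^+$. This is precisely where the compactness of $\mathcal{C}_s$ and the structural range \eqref{eq:SupersonicWallCondition} on $f'$ are essential: combined with Lemma \ref{lem:omegaRangeS}, they keep $\vec{U}_s$ bounded away from the sonic point of the limit shock polar \eqref{eq:ShockPolarS} where $h_0'$ would degenerate. Once this uniformity is secured, all constants produced by the argument depend only on the limit geometry, which is the content of the notation $M_s$.
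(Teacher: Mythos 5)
Your proposal is correct and follows essentially the same route as the paper: both arguments combine the narrow estimate \eqref{eq:NarrowEstimate} with the gradient bound \eqref{eq:VelocityDerivativeBound} to place $\vec{U}(P)$ within $O(\epsilon)$ of both the shock polar $G(\cdot,\epsilon)=0$ and the wall ray $v=f'(x_w)u$, and then conclude by the transversality of that intersection (the paper via the intermediate exact intersection point $\vec{U}_\epsilon^*$, you via the one-variable function $h$ on the ray). Your explicit computation $h_0'(u_s)=2(1+(f')^2)u_s-\bar q=\bar q>0$ is a nice quantitative substitute for the transversality that the paper only asserts implicitly; note only that your claim $h(u_s(x_w))=O(\epsilon)$ inherits the same imprecision as the paper's phrase ``$G(\vec U,\epsilon)$ is a perturbation of $G(\vec U,0)$'', since $\bar q-\underline u(\epsilon)\sim\epsilon^{\gamma-1}$ rather than $O(\epsilon)$ when $\gamma<2$ (harmless for the $\delta_0/3$ bound, but relevant to the stated rate).
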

\begin{proof}
In view of Lemma \ref{lem:DerivativeEquiv}, \eqref{eq:VelocityDerivativeBound} is available. So, by the mean value theorem, we have
\begin{equation}\label{eq:EstimateBetweenSW}
\begin{split}
&\abs{(u, v)(P)-(u, v)(P^s)}+\abs{(u, v)(P)-(u, v)(P_w)}\\
&\quad\quad\quad\leq \sup_{\tilde{P}\in P_wP^s}\abs{\nabla_{(x, y)}(u, v)(\tilde{P})}\abs{P_wP^s}\\
&\quad\quad\quad\leq \epsilon M_s,
\end{split}
\end{equation}
where we use \eqref{eq:NarrowEstimate} to estimate $\abs{P_wP^s}$.

On the one hand, recall that
\begin{equation*}
G(\vec{U}(P^s), \epsilon)=0.
\end{equation*}
So, by \eqref{eq:EstimateBetweenSW} and above, on the $(u, v)$ plane, the state $(u, v)(P)$ locates around the shock polar \eqref{eq:ShockPolar}. On the other hand, by the solid boundary relation,
\begin{equation*}
\frac{v(P_w)}{u(P_w)}=f'(P_w).
\end{equation*}
and \eqref{eq:EstimateBetweenSW}, on the $(u, v)$ plane, the state $(u, v)(P)$ locates around the line
\begin{equation}\label{eq:SolidWallStatesLine}
\frac{v}{u}=f'(P_w).
\end{equation}

In view of above two, the state $(u, v)(P)$ must locates around the intersection of the shock polar \eqref{eq:ShockPolar} and the line \eqref{eq:SolidWallStatesLine}.

In addition, let $\vec{U}_\epsilon^*=(u_\epsilon^*, v_\epsilon^*)$ be the unique supersonic state given by
\begin{equation*}
\begin{cases}
G(\vec{U}_\epsilon^*, \epsilon)=0,\\
\frac{v_\epsilon^*}{u_\epsilon^*}=f'(P_w),
\end{cases}
\end{equation*}
for small $\epsilon$.

Therefore, we have
\begin{equation}\label{eq:uvPerturbation1}
\abs{(u, v)(P)-(u_\epsilon^*, v_\epsilon^*)}\leq \epsilon M_s.
\end{equation}
Finally, recalling that $\vec{U}_s(P_w)=(u_s, v_s)(P_w)$ is the supersonic solution of
\begin{equation*}
\begin{cases}
G(\vec{U}_s(P_w), 0)=0,\\
\frac{v_s(P_w)}{u_s(P_w)}=f'(P_w),
\end{cases}
\end{equation*}
and $G(\vec{U}, \epsilon)$ is a perturbation of $G(\vec{U}, 0)$, we have
\begin{equation}\label{eq:uvPerturbation2}
\abs{(u_s, v_s)(P_w)-(u_\epsilon^*, v_\epsilon^*)}\leq \epsilon M_s.
\end{equation}
Therefore, combining \eqref{eq:uvPerturbation1} with \eqref{eq:uvPerturbation2}, we get \eqref{eq:SpecialSolutionPerturbationEstimate3}. The proof is complete.
\end{proof}

\begin{figure}[htbp]
\setlength{\unitlength}{1bp}
\begin{center}
\begin{picture}(200,180)
\put(-30,-10){\includegraphics[scale=0.7]{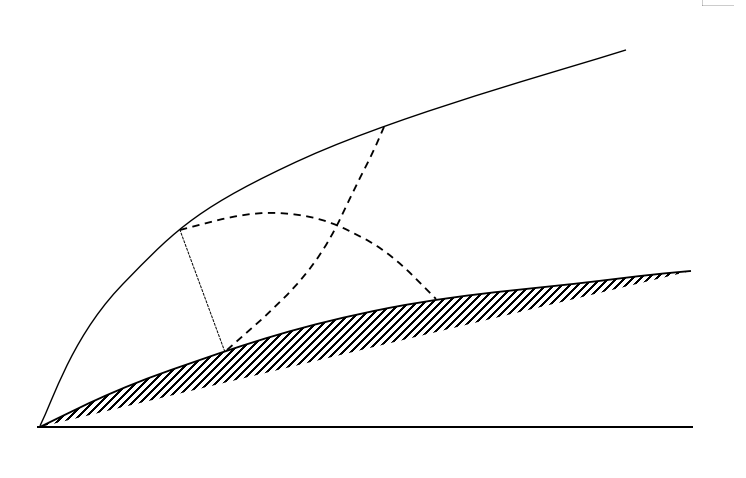}}
\put(89,110){\small$\bar{\Omega}_1$}
\put(109,70){\small$\underline{\Omega}_1$}
\put(69,75){\small$\Omega_1$}
\put(160,158){\small$\textsf{S}$}
\put(77,93){\tiny$C^-$}
\put(77,63){\tiny$C^+$}
\put(200,75){\small$\textsf{W}$}
\put(150,73){\small$\underline{P}_1$}
\put(32,93){\small$P^s$}
\put(120,142){\small$\bar{P}_1$}
\put(47,44){\small$P_w$}
\end{picture}
\end{center}
\caption{the characteristics between the shock and wedge (1)}\label{fig:characteristicSW}
\end{figure}

\subsubsection{Finite Growth Estimate}
By Theorem \ref{thm:InitialTheorem}, we know that it holds \eqref{eq:SonicCharacteristicBoundInductive} and \eqref{eq:VelocityBoundInductive} for $\xi\leq\xi_\epsilon$. We will show that if the solution $(u, v)$ satisfies \eqref{eq:SonicCharacteristicBoundInductive} and \eqref{eq:VelocityBoundInductive} for $\xi\leq\xi_0$ with $\xi_0>0$, then there exists $\Delta$ depending on $\xi_0$ such that the solution can be extended to $\xi\leq \xi_0+\Delta$ with \eqref{eq:SonicCharacteristicBoundInductive} and \eqref{eq:VelocityBoundInductive}. As illustrated in Figure \ref{fig:characteristicSW}, with given smooth data on $P_wP^s=\set{P|\xi(P)=\xi_0,~0\leq\eta(P)\leq\eta(P^s)}$ and the slipping boundary condition on the wedge $\mathsf{W}$, we consider the standardised free boundary value problem in $\xi\geq\xi_0$. We have the following finite growth estimates.

\begin{thm}[Finite growth estimates]\label{thm:finitegrowth}
Assume that Problem 2 admits a $C^1$ smooth solution in $\xi\in[0, \xi_0]$ with \eqref{eq:SonicCharacteristicBoundInductive} and \eqref{eq:VelocityBoundInductive}, then there exists a positive constant $\epsilon_F<\min\set{\delta_0, \epsilon_U}$, so that we can solve free boundary value problem in $\Omega_1\cup\bar{\Omega}_1\cup\underline{\Omega}_1$ for $\epsilon<\epsilon_F$ with the estimates
\begin{equation}\label{eq:SemiclassicalEstimate1}
\begin{split}
\abs{\bar{\partial}^\pm c|_{\Omega_1\cup\bar{\Omega}_1\cup\underline{\Omega}_1}}\leq \frac{M_s\qnt{M_++M_-+M_b}}{\xi_0},
\end{split}
\end{equation}
and
\begin{equation}\label{eq:SemiclassicalEstimate2}
\abs{\vec{U}(P) - \vec{U}(P')}\leq \frac{\delta_0}{3},
\end{equation}
where $P\in P_wP^s$ and $P'\in\Omega_1\cup\bar{\Omega}_1\cup\underline{\Omega}_1$ are connected via a characteristic.
\end{thm}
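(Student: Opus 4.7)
The plan is to combine a standard local existence result for the free boundary value problem with a careful propagation of the inductive estimates via the characteristic decomposition. First I would invoke the local existence theorem of Li--Yu \cite{LiYu1985DUMS} for the standardised problem with Cauchy data on $P_wP^s$, the slip condition \eqref{eq:VelocityWallCondition} on $\mathsf{W}$, and the Rankine--Hugoniot condition \eqref{eq:RHCondition} on the free boundary $\mathsf{S}$; the $C^1$ norm of the data is finite by the inductive estimates \eqref{eq:SonicCharacteristicBoundInductive}--\eqref{eq:VelocityBoundInductive} and Lemma \ref{lem:DerivativeEquiv}. Lemma \ref{lem:BasicWavePattern} gives a uniform lower bound $\omega_0/2$ on the Mach angle, and Theorem \ref{thm:characteristicestimate} ensures that all characteristic segments involved in forming $\Omega_1\cup\bar\Omega_1\cup\underline\Omega_1$ have length bounded by an absolute constant depending only on $\omega_0$. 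Hence the solution extends to fill the entire region $\Omega_1\cup\bar\Omega_1\cup\underline\Omega_1$, provided that the a priori estimates below hold.

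For the bound \eqref{eq:SemiclassicalEstimate1}, I would propagate $\bar\partial^\pm c$ along characteristics through the characteristic decomposition \eqref{eq:CharacteristicDecomposition}. On a $C^-$ characteristic, \eqref{eq:CharacteristicDecomposition} is a Riccati-type ODE for $\bar\partial^+c$ whose coefficients are controlled thanks to $\omega\in(\omega_0/2,\pi/2-\omega_0/2)$ and the lower bound on $c$ coming from \eqref{eq:qRangeS} together with \eqref{eq:SpecialSolutionPerturbationEstimate3}. Starting from $|\bar\partial^\pm c|\le M_\pm/\xi_0$ on $P_wP^s$, a Gronwall--Bihari argument on characteristic arcs of length $O(1)$ (Theorem \ref{thm:characteristicestimate}) keeps $|\bar\partial^\pm c|$ of order $1/\xi_0$ inside $\Omega_1$. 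When a characteristic meets $\mathsf{S}$, the shock relation \eqref{eq:ShockRelation} combined with Lemma \ref{lem:PerturbationConstants} gives the contraction $|\bar\partial^+c|\le\bar K|\bar\partial^-c|$; when it meets $\mathsf{W}$, the wall relation \eqref{eq:WallRelationCharacteristic} adds at most $(\gamma-1)\bar q M_b/\xi_0$, using the definition of $M_b$ and the arc-length equivalence \eqref{eq:arcparameterestimate}. Tracking the (at most one) reflection needed to fill $\bar\Omega_1$ and $\underline\Omega_1$ and combining via the defining relations \eqref{eq:pmBound} yields \eqref{eq:SemiclassicalEstimate1} with the constant $M_s(M_++M_-+M_b)/\xi_0$, uniformly in $\epsilon<\epsilon_F$.

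The estimate \eqref{eq:SemiclassicalEstimate2} is then obtained by integrating the sonic expressions \eqref{eq:SonicExpressions} for $\bar\partial^\pm(u,v)$ along the characteristic segment joining $P$ and $P'$. Since $|\bar\partial^\pm(u,v)|\le M_U|\bar\partial^\pm c|$ by Lemma \ref{lem:DerivativeEquiv} and each characteristic segment has bounded length by Theorem \ref{thm:characteristicestimate}, the total variation of $\vec U$ along the segment is of order $M_s(M_++M_-+M_b)/\xi_0$ times a length of order $1$. Together with the shock-polar perturbation \eqref{eq:SpecialSolutionPerturbationEstimate3} at the endpoint on $\mathsf{S}$ (whose contribution is $O(\epsilon)$), this gives \eqref{eq:SemiclassicalEstimate2} once $\epsilon_F$ is chosen small enough and $\xi_0$ is large enough along the continuity argument; at the initial extension, $\xi_0$ is simply bounded below by a positive constant so that $1/\xi_0$ absorbs into $\delta_0/3$ after the global argument closes.

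The main obstacle is the combination of the Riccati nonlinearity in \eqref{eq:CharacteristicDecomposition} with reflections at both boundaries: along a $C^-$ characteristic hitting $\mathsf{S}$ and returning, one must prevent runaway growth of $\bar\partial^\pm c$. The two facts that make the argument close are precisely (i) the strict contraction $\bar K<1$ on $\mathsf{S}$ provided by Lemma \ref{lem:PerturbationConstants}, which is baked into the choice \eqref{eq:pmBound} of $M_\pm$, and (ii) the uniform lower bound on $\omega$ from Lemma \ref{lem:BasicWavePattern} which keeps $\pm$ characteristics transverse. Ensuring that the perturbation hypothesis of Lemma \ref{lem:PerturbationConstants} is preserved on the extension---so that both the contraction factor $\bar K$ and the angle bounds in Theorem \ref{thm:characteristicestimate} remain available---is the delicate point, and it is precisely this that dictates the smallness of $\epsilon_F<\min\{\delta_0,\epsilon_U\}$.
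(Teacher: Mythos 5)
Your overall strategy (Li--Yu local existence plus propagation of $\bar{\partial}^\pm c$ through the characteristic decomposition, with reflections handled by \eqref{eq:ShockRelation} and \eqref{eq:WallRelationCharacteristic}) points in the right direction, but there is a genuine gap in how you account for the smallness, and it shows up most clearly in your treatment of \eqref{eq:SemiclassicalEstimate2}. You write that the variation of $\vec{U}$ along a characteristic segment is ``of order $M_s(M_++M_-+M_b)/\xi_0$ times a length of order $1$'' and that $1/\xi_0$ is absorbed into $\delta_0/3$ because $\xi_0$ is large, or at worst bounded below. This cannot work: the extension must be performed starting from $\xi_0=\xi_\epsilon$ given by Theorem \ref{thm:InitialTheorem}, which is not uniformly bounded below, and $\epsilon_F$ must be chosen independently of $\xi_0$, so a bound of size $C/\xi_0$ is useless near the tip. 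The correct mechanism, which your proposal misses, is the cancellation between the gradient decay and the shock-layer thickness: by the narrow estimate \eqref{eq:NarrowEstimate} the cell $\Omega_1\cup\bar{\Omega}_1\cup\underline{\Omega}_1$ has diameter $O(\epsilon\xi_0)$ (and by Theorem \ref{thm:characteristicestimate} every characteristic arc in it has length $\leq M_l\abs{P^sP_w}(\xi_0)\leq M_l\epsilon M_0\xi_0$), so the product with $\abs{\nabla\vec{U}}\leq M_U/\xi_0$ is $O(\epsilon)$ \emph{uniformly in} $\xi_0$. The paper implements this by rescaling by $\abs{P^sP_w}(\xi_0)$, after which the data satisfy $\abs{\bar{\partial}^\pm c}\leq\epsilon M_0M_\pm$ and the rescaled wall curvature is $O(\epsilon)$; your proposal mixes the unscaled derivative bound $M_\pm/\xi_0$ with the scaled arc length $O(1)$, which is why the $\epsilon$ never appears where it must. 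The same bookkeeping is what makes your Riccati/Gronwall step close: the quadratic source integrated over one cell contributes $O(\epsilon)$ relative to the data, not merely a factor controlled by an arc of ``length $O(1)$''.

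Two further points. First, existence in $\bar{\Omega}_1$ and $\underline{\Omega}_1$ is not a single application of a Cauchy-data theorem: these are characteristic boundary value problems (one with the slip condition, one with the free Rankine--Hugoniot boundary), and to invoke \cite[Theorem 3.3.4]{LiYu1985DUMS} one must verify the minimal characterizing number conditions ($\theta_{\min}=1/2$ at the wall after a rotation, and $H\neq1$ at the shock); your proposal skips this verification entirely. Second, a structural remark rather than an error: the contraction $\bar{K}<1$ from Lemma \ref{lem:PerturbationConstants} and the system \eqref{eq:pmBound} are not needed for the finite growth estimate \eqref{eq:SemiclassicalEstimate1}, which only asserts that the derivatives grow by a bounded factor within one cell; the contraction is the engine of the \emph{next} step (Theorem \ref{thm:SonicCharacteristicBoundInductive}), where the sharper inductive bound $M_\pm/\xi$ is recovered. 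Conflating the two is harmless for validity but obscures why the induction closes.
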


\begin{proof}
To see the impact of $\epsilon$, we employ a standard scaling transformation. Apply the coordinates transformation
\begin{equation}\label{eq:Standise}
(x, y)\mapsto (\tilde{x}, \tilde{y}):=\frac{1}{\abs{P^sP_w}(\xi_0)}(x-x(P_w), y-y(P_w))=(\frac{x-x(P_w)}{\abs{P^sP_w}(\xi_0)}, \frac{y-y(P_w)}{\abs{P^sP_w}(\xi_0)}).
\end{equation}
To utilise the decay \eqref{eq:SonicCharacteristicBoundInductive}, we keep the parameter for $\xi(P_w)=\xi_0$. In the following, we omit the tilde $'\textasciitilde’$.

Recalling the narrow estimate \eqref{eq:NarrowEstimate}, under this transformation, we can assume without loss of generality that
\begin{equation}\label{eq:StandisedInitialBoundaryLength}
\abs{P^sP_w}(\xi_0)=1;
\end{equation}
\begin{equation}\label{eq:StandisedInitialValueDerivativeBound}
\abs{\bar{\partial}^\pm c|_{P^sP_w}}\leq \epsilon M_0\xi_0\frac{M_\pm}{\xi_0}=\epsilon M_0 M_\pm;
\end{equation}
and
\begin{equation}\label{eq:StandisedWallBoundaryCurvatureBound}
\abs{\kappa_{\tilde{w}}|_{[\xi_0, \xi_0+\frac{\xi_0\tan\omega_0}{2\abs{P^sP_w}(\xi_0)}]}}\leq \epsilon M_0\xi_0\abs{\kappa_w|_{[\frac{\xi_0}{2}, \xi_0+\frac{\tan\omega_0\xi_0}{2}]}}\leq \epsilon M_0M_b,
\end{equation}
where we apply the decay \eqref{eq:WallCondition2nd} to get the last inequality.

The proof consists of three wave patterns discussions.

{\bf Pattern 1:} In Figure \ref{fig:characteristicSW}, we have the data of $(u, v)$ along $P_wP^s$ with the estimate\eqref{eq:StandisedInitialValueDerivativeBound}. Then we can solve the flow field in $\Omega_1$.

{\bf Pattern 2:} In Figure \ref{fig:characteristicSW}, assume that: (1) along the $C^+$ characteristic, we have the data of $(u, v)$ solved in {\bf Pattern 1}; (2) along the wall boundary $\mathsf{W}$, we have the boundary value condition \eqref{eq:StandisedWallBoundaryCurvatureBound}. We can solve the flow field in $\underline{\Omega}_1$.

{\bf Pattern 3:} In Figure \ref{fig:characteristicSW}, assume that: (1) along the $C^-$ characteristic, we have the data of $(u, v)$ solved in {\bf Pattern 1}; (2) along the shock front boundary $\mathsf{S}$, it holds the free boundary value condition \eqref{eq:RHCondition}. We can solve the flow field in $\bar{\Omega}_1$.

\medskip

\underline{Consider Pattern 1}. By \cite[Remark 1.4.2]{LiYu1985DUMS}, there exists $\Delta_i>0$, such that the Cauchy problem \eqref{eq:EulerEquations} with initial data $(u, v)|_{P\in P_wP^s}$ satisfying $|\nabla(u, v)(P)|\big|_{P\in P_wP^s}<\Delta_i$ admits a unique $C^1$ smooth solution in the determined domain of $P_wP^s$, namely, $\Omega_1$. By Lemma \ref{lem:DerivativeEquiv}, \eqref{eq:StandisedInitialValueDerivativeBound} and Theorem \ref{thm:SpecialSolutionPerturbationEstimate}, we know that there exists a positive constant $\epsilon_i$ such that for any $\epsilon<\epsilon_i$, we have $|\nabla(u, v)(P)||_{P\in P_wP^s}<\Delta_i$ and $(u, v)(P)|_{P\in P_wP^s}\in \mathcal{B}(\vec{U}_s(P_w),\frac{\delta_0}{3})$. Thus we get the $C^1$ smooth solution in $\Omega_1$. We point out that the domain $\Omega_1$ depends on $\epsilon$.

Recalling that $M_l$ is the maximum of the length for all the $C^\pm$ characteristics obtained by Theorem \ref{thm:characteristicestimate} for $\abs{P_wP^s}=1$, we consider the coefficients of \eqref{eq:CharacteristicDecomposition} and let
\begin{equation*}
M_{\omega}:=\sup _{\vec{U} \in \mathcal{C}_{s}^{\delta_{0}}}\left|\frac{\nu\left(1+\tan ^{2} \omega\right)}{c}+\frac{\nu\left(\tan ^{2} \omega-1\right)^{2}+2 \tan ^{2} \omega}{c\left(\tan ^{2} \omega+1\right)}\right| .
\end{equation*}

By \eqref{eq:StandisedInitialValueDerivativeBound}, choose $\Delta_{i1}$ such that for any $\epsilon<\Delta_{i1}$, we have
\begin{equation*}
M_l M_s \me^{M_l} \sup _{P \in P_w P^s}\left(\left|\bar{\partial}^+ c\right|+\left|\bar{\partial}^- c\right|\right)(P)<\frac{\delta_{0}}{20},
\end{equation*}
and
\begin{equation}\label{eq:Patterns1}
\me^{M_l} \sup _{P \in P_w P^s}\left(\left|\bar{\partial}^+ c\right|+\left|\bar{\partial}^- c\right|\right)(P) M_{\omega}<\frac{1}{3}.
\end{equation}

Claim that for $\epsilon<\Delta_{i1}$, it holds uniformly in $\Omega_1$ that
\begin{equation}\label{eq:Patterns2}
\left(\left|\bar{\partial}^+ c\right|+\left|\bar{\partial}^- c\right|\right) M_{\omega}<1,
\end{equation}
and
\begin{equation*}
\abs{\vec{U}(P)-\vec{U}(P')} \leq \frac{\delta_0}{20},
\end{equation*}
where $P\in P_wP^s$ and $P'\in\Omega_1$ are connected via a characteristic.

Note that it holds \eqref{eq:Patterns1} on $P_wP^s$. By continuity of the solution as well as its first derivative, we know that \eqref{eq:Patterns2} holds in a neighbourhood of $P_wP^s$.
We first prove that \eqref{eq:Patterns2} could not fail in $\Omega_1$ under the assumption \eqref{eq:Patterns1} on $P_wP^s$. Assume to the contrary that there exists a point $P'\in \Omega_{1}$ such that $\left(\left|\bar{\partial}^{+} c\right|+\left|\bar{\partial}^{-} c\right|\right)(P') M_{\omega}=1$, $\left(\left|\bar{\partial}^{+} c\right|+\left|\bar{\partial}^{-} c\right|\right) (Q)M_{\omega}<1$ and $\vec{U}(Q)\in \mathcal{C}_s^{\delta_0}$ for $Q\in\Omega_{1}$ with $\xi(Q)<\xi(P')$.We go on to deduce a contradiction.

Indeed, it follows from \eqref{eq:CharacteristicDecomposition} and \eqref{eq:Patterns2} that
\begin{equation*}
\bar{\partial}^{\mp}\left|\bar{\partial}^{ \pm} c\right| \leq\left|\bar{\partial}^{ \pm} c\right|.
\end{equation*}
Integrating it along the characteristic from $P'$ to $P$, and then the Gronwall inequality leads to
\begin{equation}\label{eq:Patterns3}
\left(\left|\bar{\partial}^{+} c\right|+\left|\bar{\partial}^{-} c\right|\right)(P') \leq \mathrm{e}^{M_{l}} \sup _{P \in P_{w} P^{s}}\left(\left|\bar{\partial}^{+} c\right|+\left|\bar{\partial}^{-} c\right|\right)(P).
\end{equation}
Here we have used the fact that the length of the characteristic between $P$ and $P'$ is bounded by $M_l$ due to Theorem \ref{thm:characteristicestimate}. It follows from \eqref{eq:Patterns1} that \eqref{eq:Patterns3} leads to 
$$\left(\left|\bar{\partial}^{+} c\right|+\left|\bar{\partial}^{-} c\right|\right)(P')M_{\omega}<\frac{1}{3},$$
which is a contradiction to the choice of $P'$.

Second, if there exists a $P'\in\Omega_1$ with smallest $\xi(P')$ such that $\mbox{dist}(\vec{U}(P'), \mathcal{C}_s)=\delta_0$ while \eqref{eq:Patterns2} holds for $\xi\leq \xi(P')$, we get a contradiction once again. In fact, we apply the mean value theorem to have
\begin{equation}\label{eq:finitegrowth1}
\begin{aligned}
\abs{\vec{U}(P')|_{P' \in \Omega_1}-\vec{U}(P)|_{P \in P^{s} P_{w}\qnt{\xi_0}}} & \leq\abs{\widehat{P P'}} \sup _{Q \in \Omega_{1}\cap \{\xi_0\leq \xi_Q\leq \xi_P\}}|\nabla \vec{U}(Q)| \\
&\leq M_{l} \sup _{Q \in \Omega_{1}\cap \{\xi_0\leq \xi_Q\leq \xi_P\}}|\nabla \vec{U}(Q)| \\
& \leq M_{l} M_{s} \sup _{Q \in \Omega_{1}\cap \{\xi_0\leq \xi_Q\leq \xi_P\}}\left(\left|\bar{\partial}^{+} c\right|+\left|\bar{\partial}^{-} c\right|\right)(Q) \\
& \leq M_{l} M_{s} \mathrm{e}^{M_{l}} \sup _{Q \in P_{w} P^{s}}\left(\left|\bar{\partial}^{+} c\right|+\left|\bar{\partial}^{-} c\right|\right)(Q)\\
&<\frac{\delta_{0}}{20},
\end{aligned}
\end{equation}
which implies that $\vec{U}(P')\in \mathcal{C}_{s}^{\frac{\delta_{0}}{3}+\frac{\delta_{0}}{20}}$, a contradiction. Due to \eqref{eq:Patterns2}, it follows from \eqref{eq:StandisedInitialValueDerivativeBound} and \eqref{eq:Patterns3} that
\begin{equation}\label{eq:Patterns4}
\sup_{P \in \Omega_l}
\qnt{\abs{\bar{\partial}^+ c}+\abs{\bar{\partial}^- c}}(P) \leq \epsilon \me^{M_l} M_0 M_\pm\leq \epsilon M_sM_0 M_\pm.
\end{equation}
Rescaling back, we show that \eqref{eq:SemiclassicalEstimate1} is valid in $\Omega_1$.

\medskip

Pattern 2 and Pattern 3 can be treated by the theory of characteristic boundary value problem in \cite[Section 3.3]{LiYu1985DUMS}. We need to check the minimal characterizing number.

\medskip

\underline{Consider Pattern 2}. The boundary value problem can be formulated as: (1) the reformed wall condition of \eqref{eq:VelocityWallCondition} as follows
\begin{equation*}
u+\lambda_-(P_w)v=u+\lambda_-(P_w)v-\lambda_-(P_w)v+\lambda_-(P_w)f'(P_w)u;
\end{equation*}
(2) the characteristic boundary value condition
\begin{equation*}
u+\lambda_+(P_w)v=u+\lambda_+(P_w)v|_{C^+};
\end{equation*}
(3) the equation \eqref{eq:EulerEquationsC}.

By a rotation, we may assume that $(u, v)|_{P_w}$ is the $x$ direction, in another word $v(P_w)=0$. In such a case
\begin{equation*}
\lambda_+(P_w)=-\lambda_-(P_w)>0,\quad f'|_{P_w}=0.
\end{equation*}
We can direct compute the minimal characterizing number as
\begin{equation*}
\theta_{\min}=\abs{\frac{-\lambda_-(P_w)}{\lambda_+(P_w)-\lambda_-(P_w)}}=\frac{1}{2}.
\end{equation*}
By \cite[Theorem 3.3.4]{LiYu1985DUMS}, we get the local solution. By the results in \cite{LiYu1985DUMS}, if $\vec{U}|_{C^+}\subset\mathcal{C}_s^{\frac{\delta_0}{3}+\frac{\delta_0}{20}}$, $\abs{\bar{\partial}^\pm c|_{C^+}}\leq 1$ and $\abs{\kappa_{\tilde{b}}}\leq 1$, we can solve the problem in $\underline{\Omega}_1\cap\set{\xi\in[\xi_0, \xi_0+\Delta_w]}$ where $\Delta_w$ is depending on $\mathcal{C}_s^{\delta_0}$. In our case, with the estimates on the wall \eqref{eq:StandisedWallBoundaryCurvatureBound} and the characteristic boundary estimate \eqref{eq:SemiclassicalEstimate1} on $C^+$, we can rescale $(x, y)=\frac{1}{\epsilon M_s(M_s+M_++M_-+M_b)}(\bar{x}, \bar{y})$. Thus under the $(\bar{x}, \bar{y})$ coordinates, the problem can be solved forward in $\Delta_w$, which is depending only on $\mathcal{C}_s^{\delta_0}$. So in the $(x, y)$ coordinates, the problem can be solved forward in $\frac{\Delta_w}{\epsilon M_s(M_s+M_++M_-+M_b)}$. This is the semi-classic property. Therefore, we can solve this pattern in $\underline{\Omega}_1$ provided $\epsilon<\Delta_w$ for some positive constant $\Delta_w$. The estimates \eqref{eq:SemiclassicalEstimate1} and \eqref{eq:SemiclassicalEstimate2} in $\underline{\Omega}_1$ can be obtained similarly as that in Pattern 1 by \eqref{eq:WallRelationCharacteristic}. Moreover, as \eqref{eq:finitegrowth1} we have
\begin{equation}\label{eq:finitegrowth2}
\abs{\vec{U}(P')-\vec{U}(P)} <\frac{\delta_{0}}{20},
\end{equation}
where $P\in C^+$ and $P'\in\underline{\Omega}_1$ are connected via a -characteristic.

\medskip

\underline{Consider Pattern 3}. To apply \cite[Theorem 3.3.4]{LiYu1985DUMS}, we have the shock free boundary condition
\begin{equation*}
[v]\phi' = -[u],
\end{equation*}
with the shock polar condition
\begin{equation*}
G(\vec{U}, \epsilon)=0\quad\mbox{on}\quad y=\phi(x),
\end{equation*}
which can be reformed as
\begin{equation*}
u+\lambda_+(P^s)v=G(\vec{U}, \epsilon)+(u+\lambda_+(P^s)v)\quad\mbox{on}\quad y=\phi(x).
\end{equation*}
Once more, we rotate to set $v(P^s)=0$. Thus the minimal characterizing number condition can be calculated as follows
\begin{equation*}
\begin{split}
H&=\frac{1}{\lambda_+(P^s)-\lambda_-(P^s)}\qnt{G_u(\vec{U}, \epsilon)+1, G_v(\vec{U}, \epsilon)+\lambda_+(P^s)}(-\lambda_-(P^s), 1)\\
&=\frac{-G_u(\vec{U}, \epsilon)\lambda_-(P^s)+G_v(\vec{U}, \epsilon)}{\lambda_+(P^s)-\lambda_-(P^s)}+1\neq1.
\end{split}
\end{equation*}
The discussion for \eqref{eq:SemiclassicalEstimate1} and \eqref{eq:SemiclassicalEstimate2} in $\bar{\Omega}_1$ are similar as Pattern 2, in the same way, for some positive constant $\Delta_s$, we have 
\begin{equation}\label{eq:finitegrowth3}
\abs{\vec{U}(P')-\vec{U}(P)} <\frac{\delta_{0}}{20},\quad\mbox{for}\quad\epsilon<\Delta_s,
\end{equation}
where $P\in C^-$ and $P'\in\bar{\Omega}_1$ are connected via a +characteristic.
\medskip
Finally, we can summarise the estimates \eqref{eq:finitegrowth1}, \eqref{eq:finitegrowth2} and \eqref{eq:finitegrowth3} in $\Omega_1$, $\bar{\Omega}_1$ and $\underline{\Omega}_1$ to get \eqref{eq:SemiclassicalEstimate2} with the derivative estimates \eqref{eq:SemiclassicalEstimate2} for $\epsilon_F\leq\min\set{\Delta_{i}, \Delta_{i1}, \Delta_w, \Delta_s, \delta_0, \epsilon_U}$. The proof is complete.
\end{proof}

\eqref{eq:SpecialSolutionPerturbationEstimate3} and \eqref{eq:SemiclassicalEstimate2} indicate the following
\begin{equation*}
(u, v)|_{\Omega_1\cup\bar{\Omega}_1\cup\underline{\Omega}_1}\in \mathcal{C}_s^{\frac{2\delta_0}{3}}.
\end{equation*}
\begin{lem}\label{lem:PerturbationVelocity}
If \eqref{eq:SonicCharacteristicBoundInductive} and \eqref{eq:VelocityBoundInductive} hold for $\xi\leq\xi_0$, then there exists $\epsilon_B\leq\epsilon_F$, such that in the domain
$$\xi(P)\in [\xi_0-\abs{P^sP_w}(\xi_0)\frac{8}{\tan\omega_0}, \xi_0+\abs{P^sP_w}(\xi_0)\frac{\tan\omega_0}{8}],$$
we have
\begin{equation}\label{eq:LocalPerturbationP}
\abs{\vec{U}(P)-\vec{U}_s(\xi_0)}\leq\frac{2\delta_0}{3}
\end{equation}
for $\epsilon<\epsilon_B$.
\end{lem}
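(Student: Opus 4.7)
The plan is to split the interval $[\xi_0 - L_-, \xi_0 + L_+]$, with $L_- := \frac{8}{\tan\omega_0}\abs{P^sP_w}(\xi_0)$ and $L_+ := \frac{\tan\omega_0}{8}\abs{P^sP_w}(\xi_0)$, into a backward part $I_- := [\xi_0-L_-,\xi_0]$ that lies inside the already-constructed domain where the inductive estimates \eqref{eq:SonicCharacteristicBoundInductive}--\eqref{eq:VelocityBoundInductive} hold, and a forward part $I_+ := [\xi_0,\xi_0+L_+]$ that lies inside the extension produced by Theorem \ref{thm:finitegrowth}. On each part the goal is to prove $\abs{\vec U(P)-\vec U_s(\xi_0)}\leq C\epsilon$ with $C$ independent of $\xi_0$, and then to choose $\epsilon_B$ small enough. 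The narrow estimate \eqref{eq:NarrowEstimate} gives $L_\pm = O(\epsilon\xi_0)$, so imposing $\frac{8M_0}{\tan\omega_0}\epsilon < 1$ keeps $I_-$ inside $\set{\xi\geq 0}$ where the inductive hypothesis is in force.

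On the backward slab, Theorem \ref{thm:SpecialSolutionPerturbationEstimate} applies directly and yields $\abs{\vec U(P)-\vec U_s(P_w)}\leq \epsilon M_s$ for every $P$ with $\xi(P)\in I_-$. To replace $\vec U_s(P_w)$ by $\vec U_s(\xi_0)$, I would differentiate the explicit formula \eqref{eq:LimitSolution}; since along the wall $\xi$ is the arclength, $\abs{\partial_\xi \vec U_s}\leq C \abs{f''}$ with $C$ depending only on $\underline f,\bar f$. Integrating \eqref{eq:WallCondition2nd} over $[\xi(P_w),\xi_0]$, whose length is $\abs{\xi(P_w)-\xi_0}\leq L_-\leq \frac{8M_0\epsilon}{\tan\omega_0}\xi_0$, gives
\begin{equation*}
\abs{\vec U_s(P_w)-\vec U_s(\xi_0)}\leq \frac{CB\epsilon \xi_0}{(1+\xi_0)^{1+a}} \leq CB\epsilon,
\end{equation*}
uniformly in $\xi_0\geq 0$, and combining with the previous inequality delivers the desired $O(\epsilon)$ bound on $I_-$.

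On the forward slab, the decisive geometric fact is that in the scaled coordinates of Theorem \ref{thm:finitegrowth} the intersection $Q$ of the characteristic emanating from $P_w$ and the characteristic emanating from $P^s$ satisfies $\xi(Q)\geq \tan\omega_0/8$ by Theorem \ref{thm:characteristicestimate}(2); scaling back, this shows $I_+$ is contained in $\Omega_1 \cup \bar\Omega_1\cup\underline\Omega_1$, so Theorem \ref{thm:finitegrowth} already constructs the solution there. I would then revisit the mean-value--Gronwall step \eqref{eq:finitegrowth1} but track the $\epsilon$ factor: the scaled initial bound \eqref{eq:StandisedInitialValueDerivativeBound} carries $\epsilon M_0 M_\pm$ on $P_wP^s(\xi_0)$, so the same calculation together with \eqref{eq:Patterns3} yields $\abs{\vec U(P')-\vec U(P_0)}\leq C\epsilon$ for any $P'\in I_+$ and the point $P_0\in P_wP^s(\xi_0)$ connected to $P'$ by a characteristic, rather than the looser ceiling $\delta_0/20$ appearing in the original argument. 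Since the backward-slab result at $P_0$ already gives $\abs{\vec U(P_0)-\vec U_s(\xi_0)}\leq C\epsilon$, the triangle inequality closes the estimate on $I_+$.

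Choosing $\epsilon_B\leq\epsilon_F$ small enough that $C\epsilon_B\leq \frac{2\delta_0}{3}$ and $\frac{8M_0}{\tan\omega_0}\epsilon_B < 1$ then completes the proof. The main obstacle is uniformity of the constants in $\xi_0$, which I expect to be handled by two complementary ingredients: the power-law decay \eqref{eq:WallCondition2nd} of $f''$ keeps the wall-variation integral $\frac{\epsilon\xi_0}{(1+\xi_0)^{1+a}}$ uniformly $O(\epsilon)$ even as $\xi_0\to\infty$, and the explicit $\epsilon$-tracking through the Gronwall argument in Theorem \ref{thm:finitegrowth} is what upgrades the forward-slab bound from the crude ceiling \eqref{eq:SemiclassicalEstimate2} to the needed $O(\epsilon)$ estimate.
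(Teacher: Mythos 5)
Your proposal is correct and follows essentially the same route as the paper's proof: split into the backward slab (handled by Theorem \ref{thm:SpecialSolutionPerturbationEstimate} plus control of the drift of $\vec U_s$ along the wall via the narrow estimate and the decay \eqref{eq:WallCondition2nd}) and the forward slab (handled by Theorem \ref{thm:finitegrowth} and the triangle inequality through a point on $P_wP^s(\xi_0)$). The only difference is that you track explicit $O(\epsilon)$ bounds where the paper is content with the fixed ceilings $\delta_0/3$ from \eqref{eq:SpecialSolutionPerturbationEstimate3} and \eqref{eq:SemiclassicalEstimate2}; this is a harmless (indeed slightly sharper) refinement, not a different argument.
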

\begin{proof}
First, by \eqref{eq:SpecialSolutionPerturbationEstimate3}, we have
\begin{equation}\label{eq:SpecialSolutionPerturbationEstimatedelta}
\abs{\vec{U}(P)-\vec{U}_s(P_w)}<\frac{\delta_0}{3}
\end{equation}
for $\xi(P)\in[\xi_0-\abs{P^sP_w}(\xi_0)\frac{8}{\tan\omega_0}, \xi_0]$.

Second, for $\xi\in [\xi_0-\abs{P^sP_w}(\xi_0)\frac{8}{\tan\omega_0}, \xi_0]$ and $\xi_0>0$, there exists $\epsilon_L>0$, such that we have
\begin{equation}\label{eq:equivContinuityS}
\abs{\vec{U}_s(\xi)-\vec{U}_s(\xi_0)}\leq\frac{\delta_0}{3},
\end{equation}
for $\epsilon<\epsilon_L$. To see this, we can apply the narrow estimate \eqref{eq:NarrowEstimate} to have
$$\xi\in[\xi_0-\abs{P^sP_w}(\xi_0)\frac{8}{\tan\omega_0},\xi_0]\subset [(1-\epsilon M_0\frac{8}{\tan\omega_0})\xi_0,\xi_0].$$
By the uniform continuity of $\vec{U}_s(\xi)$ and the decay \eqref{eq:WallCondition2nd}, for sufficiently small $\epsilon_L$, we can show \eqref{eq:equivContinuityS} for $\xi\in[(1-\epsilon M_0\frac{8}{\tan\omega_0})\xi_0, \xi_0]$ and $\xi_0\in\Real^+$. Thus, \eqref{eq:equivContinuityS} is proved.

Combining above two, we have
\begin{equation}\label{eq:LocalPerturbation1}
\abs{\vec{U}(P) - \vec{U}_s(\xi_0)}\leq\abs{\vec{U}(P)-\vec{U}_s(P_w)}+\abs{\vec{U}_s(P_w)-\vec{U}_s(\xi_0)}\leq \frac{2\delta_0}{3}
\end{equation}
for $\xi(P)\in [\xi_0-\abs{P^sP_w}(\xi_0)\frac{8}{\tan\omega_0}, \xi_0]$.

Third, for $\epsilon<\epsilon_F$, we have
\begin{equation*}
\abs{\vec{U}(P)-\vec{U}_s(\xi_0)}\leq\abs{\vec{U}(P)-\vec{U}(P')}+\abs{\vec{U}(P')-\vec{U}_s(\xi_0)}
\end{equation*}
where $\xi(P)\in[\xi_0, \xi_0+\frac{\tan\omega_0}{8}\abs{P^sP_w}(\xi_0)]$ and $\xi(P')=\xi_0$. For the first term on the right hand side, in view of \eqref{eq:SemiclassicalEstimate2} in Theorem \ref{thm:finitegrowth}, it has a bound of $\frac{\delta_0}{3}$. For the second term on the right hand side, by \eqref{eq:SpecialSolutionPerturbationEstimatedelta}, it has a bound of $\frac{\delta_0}{3}$ as well. Therefore, we have
\begin{equation}\label{eq:LocalPerturbation2}
\abs{\vec{U}(P)-\vec{U}_s(\xi_0)}\leq\frac{2\delta_0}{3}
\end{equation}
where $\xi(P)\in[\xi_0, \xi_0+\frac{\tan\omega_0}{8}\abs{P^sP_w}(\xi_0)]$.

Combining \eqref{eq:LocalPerturbation1} and \eqref{eq:LocalPerturbation2} and setting $\epsilon_B=\min\set{\epsilon_F, \epsilon_L}$, we complete the proof.
\end{proof}

\begin{figure}[htbp]
\setlength{\unitlength}{1bp}
\begin{center}
\begin{picture}(200,180)
\put(-30,-10){\includegraphics[scale=0.7]{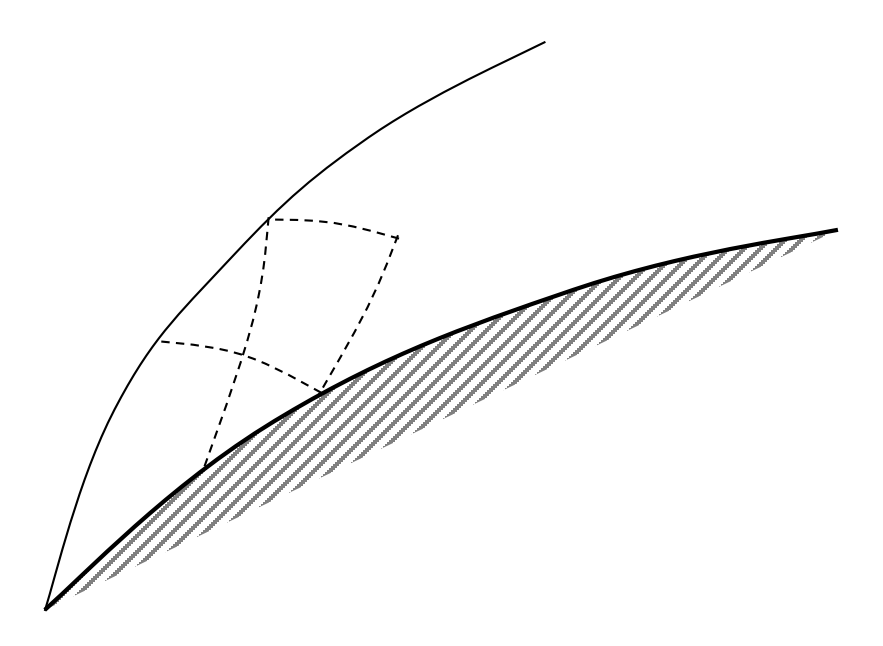}}
\put(93,110){\tiny$Q$}
\put(59,75){\tiny$\underline{Q}_1$}
\put(140,178){\small$\textsf{S}$}
\put(80,83){\tiny$C_1^+$}
\put(30,83){\tiny$C_2^-$}
\put(50,90){\tiny$C_2^+$}
\put(70,120){\tiny$C_1^-$}
\put(220,95){\small$\textsf{W}$}
\put(42,123){\tiny$\bar{Q}_1$}
\put(19,44){\tiny$\underline{Q}_2$}
\put(7,84){\tiny$\bar{Q}_2$}
\end{picture}
\end{center}
\caption{the characteristics between the shock and wedge (2)}\label{fig:CharacteristicReflectPM}
\end{figure}

Next, we intend to show the inductive estimates \eqref{eq:SonicCharacteristicBoundInductive} and \eqref{eq:VelocityBoundInductive} for $$\xi \leq \xi_0 + \frac{1}{8}\tan\omega_0\abs{P^sP_w}(\xi_0).$$ First, we prove \eqref{eq:SonicCharacteristicBoundInductive} in the following
\begin{thm}[Characteristic reflection estimates]\label{thm:SonicCharacteristicBoundInductive}
In the domain 
$$\xi_0\leq\xi\leq \xi_0+\frac{1}{8}\tan\omega_0\abs{P^sP_w}(\xi_0),$$ 
there exists a positive constant $\epsilon_R<\epsilon_B$, so that we have \eqref{eq:SonicCharacteristicBoundInductive} for $\epsilon<\epsilon_R$. This $\epsilon_R$ is only depending on the states $\mathcal{C}_s$ and the wedge boundary $\textsf{W}$ constants $\bar{f}, \underline{f}, a$ and $B$ in \eqref{eq:SupersonicWallCondition} and \eqref{eq:WallCondition2nd}.
\end{thm}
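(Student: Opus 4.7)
My plan is to propagate the inductive bound $\abs{\bar{\partial}^\pm c}\leq M_\pm/\xi$ from the curve $\set{\xi=\xi_0}$ (where the inductive hypothesis \eqref{eq:SonicCharacteristicBoundInductive} is in force) into the extension strip $\set{\xi_0 \leq \xi \leq \xi_0 + \frac{1}{8}\tan\omega_0 \abs{P^sP_w}(\xi_0)}$ via a bootstrap argument that follows characteristics backward through at most one cycle of reflections between $\mathsf{W}$ and $\mathsf{S}$. First I apply the standardizing rescaling \eqref{eq:Standise} so that $\abs{P^sP_w}(\xi_0)=1$. Under this rescaling the narrow estimate \eqref{eq:NarrowEstimate} converts the inductive data on $\set{\xi=\xi_0}$ into bounds of size $\epsilon M_0 M_\pm$, and the wall-curvature bound \eqref{eq:WallCondition2nd} (via \eqref{eq:arcparameterestimate}) becomes $\abs{\tilde{\kappa}_{\tilde w}}\leq \epsilon M_0 M_b$. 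Thus in the unit-size rescaled domain every relevant derivative is $O(\epsilon)$, and the quadratic inhomogeneous terms in \eqref{eq:CharacteristicDecomposition} are $O(\epsilon^2)$.

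Let $\xi^\sharp$ denote the supremum of those $\xi_1$ in the extension strip such that $\abs{\bar{\partial}^+ c(P)}\xi(P)\leq M_+$ and $\abs{\bar{\partial}^- c(P)}\xi(P)\leq M_-$ both hold on $\set{\xi_0\leq \xi(P)\leq \xi_1}$; I aim to rule out $\xi^\sharp$ being strictly interior. Suppose otherwise and pick a point $Q^\sharp$ with $\xi(Q^\sharp)=\xi^\sharp$ at which one of the two bounds is saturated. From $Q^\sharp$ I trace the $C^+$ and $C^-$ characteristics backward. By Theorem \ref{thm:characteristicestimate}(1)--(3) each of them reaches $\set{\xi\leq\xi_0}$ after at most one wall reflection followed by one shock reflection (or the reverse), with total arclength bounded by the constant $M_l$ depending only on $\omega_0$. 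Along each characteristic segment I integrate \eqref{eq:CharacteristicDecomposition} and apply a Gronwall estimate analogous to the one leading to \eqref{eq:Patterns3}, which contributes only $O(\epsilon)$ since the integrand is quadratic in $\bar{\partial}^\pm c$. At each wall reflection I invoke \eqref{eq:WallRelationCharacteristic}, contributing an additive term dominated by $(\gamma-1)q M_b/\xi$, and at each shock reflection Lemma \ref{lem:PerturbationConstants} supplies the contraction factor $\abs{\bar{\partial}^+ c}\leq \bar{K}\abs{\bar{\partial}^- c}$ from \eqref{eq:ShockRelation}.

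Collecting these contributions around the full reflection cycle yields inequalities of the form
\begin{equation*}
\abs{\bar{\partial}^+ c(Q^\sharp)}\xi(Q^\sharp)\leq \tfrac{\bar{K}+1}{2}(M_-+1)+1+C\epsilon,\qquad \abs{\bar{\partial}^- c(Q^\sharp)}\xi(Q^\sharp)\leq \tfrac{3-\bar{K}}{2}M_++M_b+1+C\epsilon,
\end{equation*}
where $C$ depends only on $\mathcal{C}_s$ and $\mathsf{W}$; the coefficients $\tfrac{\bar{K}+1}{2}$ and $\tfrac{3-\bar{K}}{2}$ arise from averaging the characteristic contributions across the two segments of each reflected path, and the $+1$ terms absorb both the Gronwall residuals and the mismatch between $1/\xi(Q^\sharp)$ and $1/\xi_0$ across the strip (a factor $1+O(\epsilon)$). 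By the defining identities \eqref{eq:pmBound} of $M_\pm$ the right-hand sides equal precisely $M_+$ and $M_-$; choosing $\epsilon_R$ sufficiently small renders the inequalities strict, contradicting the saturation at $Q^\sharp$. The principal obstacle is matching the concrete reflection algebra to the precise coefficients in \eqref{eq:pmBound}: one must carefully balance the shock contraction $\bar{K}$ against the wall curvature contribution $M_b$ around the cycle, confirming that every inhomogeneous term produced by \eqref{eq:WallRelationCharacteristic} and the nonlinear decomposition \eqref{eq:CharacteristicDecomposition} fits within the $O(1)$ margin built into \eqref{eq:pmBound}, and that all $\epsilon$-dependent corrections (including those from the perturbation estimates \eqref{eq:LocalPerturbationP} and \eqref{eq:SpecialSolutionPerturbationEstimate3}) can be simultaneously absorbed into a single threshold $\epsilon_R$.
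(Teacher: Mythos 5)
Your proposal follows essentially the same route as the paper: rescale by \eqref{eq:Standise} so the strip has unit thickness, trace the two characteristics backward from a point of the strip through one shock reflection (where \eqref{eq:ShockRelation} and Lemma \ref{lem:PerturbationConstants} give the contraction $\bar{K}$) and one wall reflection (where \eqref{eq:WallRelationCharacteristic} contributes $M_b$), control the characteristic segments by integrating the quadratic decomposition \eqref{eq:CharacteristicDecomposition} with Gronwall, land in $\set{\xi\leq\xi_0}$ where the inductive bound applies, and close using the slack built into \eqref{eq:pmBound}. The only structural difference is that you obtain the in-strip a priori derivative bounds needed for the Gronwall step by a saturation/bootstrap argument, whereas the paper imports them directly from the finite growth estimate \eqref{eq:SemiclassicalEstimate1} of Theorem \ref{thm:finitegrowth}; both are legitimate.

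One step as written does not close, however. Your displayed inequalities bound $\abs{\bar{\partial}^{\pm}c(Q^{\sharp})}\xi(Q^{\sharp})$ by $\tfrac{\bar{K}+1}{2}(M_-+1)+1+C\epsilon$ and $\tfrac{3-\bar{K}}{2}M_++M_b+1+C\epsilon$, which by \eqref{eq:pmBound} equal $M_++C\epsilon$ and $M_-+C\epsilon$; these exceed $M_\pm$ for every $\epsilon>0$, so they cannot contradict saturation at $Q^{\sharp}$ no matter how small $\epsilon_R$ is. The correct bookkeeping — and what the paper actually does — is that the raw estimate is of the form $\bar{K}\,\frac{1+O(\epsilon)}{1-O(\epsilon)}M_- + O(\epsilon)$ (respectively $\frac{1+O(\epsilon)}{1-O(\epsilon)}M_+ + M_b + O(\epsilon)$), and the $+1$ margins and the gaps $\tfrac{\bar{K}+1}{2}-\bar{K}=\tfrac{3-\bar{K}}{2}-1=\tfrac{1-\bar{K}}{2}>0$ in \eqref{eq:pmBound} are precisely what absorb all the $\epsilon$-dependent corrections, yielding a bound strictly below $M_\pm$ for small $\epsilon$. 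You state this absorption in words but then also add $C\epsilon$ on top of the exact values $M_\pm$, which double-counts the error and breaks the contradiction; rewriting the final display so that every $O(\epsilon)$ term sits inside the margin of \eqref{eq:pmBound} repairs the argument.
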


\begin{proof}
As illustrated in Figure \ref{fig:CharacteristicReflectPM}, fix a point $Q$ in the domain $\xi_0\leq\xi\leq \xi_0+\frac{1}{8}\tan\omega_0\abs{P^sP_w}(\xi_0)$. Intend to apply the result in Section \ref{subsect:StandisedProblem} and Theorem \ref{thm:characteristicestimate}, as \eqref{eq:Standise}, we standardise the domain at $\xi=\xi_0$. All the estimates of the derivatives are multiplied by $\abs{P^sP_w}(\xi_0)$. So, in the part $\xi(Q)\in [\xi_0-\abs{P^sP_w}(\xi_0)\frac{8}{\tan\omega_0}, \xi_0]$ and under the scaled domain, as \eqref{eq:StandisedInitialValueDerivativeBound}, we have
\begin{equation}\label{eq:StandisedInitialValueDerivativeBoundInverse}
\abs{\bar{\partial}^\pm c|_{Q^sQ_w(\xi)}}\leq\abs{P^sP_w}(\xi_0)\frac{M_\pm}{\xi} \leq\epsilon M_0\xi_0\frac{M_\pm}{\xi}\leq \epsilon M_s M_\pm,
\end{equation}
where we apply \eqref{eq:SonicCharacteristicBoundInductive} to estimate $\bar{\partial}^\pm c$. Then, in the part $\xi\in[\xi_0, \xi_0+\frac{1}{8}\tan\omega_0\abs{P^sP_w}(\xi_0)]$, by Theorem \ref{thm:finitegrowth}, we have \eqref{eq:SemiclassicalEstimate1}.

As Theorem \ref{thm:characteristicestimate}, we consider the $C^\pm$ characteristics emanating backward from $Q$ and their reflected parts $C_1^-, C_2^+, C_1^+$ and $C_2^-$. By the characteristic decomposition \eqref{eq:CharacteristicDecomposition}, we can integrate $\bar{\partial}^-\bar{\partial}^+ c$ and $\bar{\partial}^+\bar{\partial}^- c$ along $C_1^-$ and $C_2^+$ respectively to have the inequalities
\begin{equation}\label{eq:CharateristicPointEstimates1}
\abs{\bar{\partial}^+ c(Q)}\leq\abs{\bar{\partial}^+ c(\bar{Q}_1)}+ M_s \qnt{\epsilon M_s\qnt{M_++M_-+M_b}}^2,
\end{equation}
and
\begin{equation}\label{eq:CharateristicPointEstimates2}
\abs{\bar{\partial}^- c(\bar{Q}_1)}\leq\abs{\bar{\partial}^- c(\underline{Q}_2)} + M_s \qnt{\epsilon M_s\qnt{M_++M_-+M_b}}^2.
\end{equation}
Indeed, we integrate $\bar{\partial}^-\bar{\partial}^+ c$ along $C_1^-$ to have
\begin{equation*}
\begin{split}
\bar{\partial}^+ c(\bar{Q}_1)&=\bar{\partial}^+ c(Q)+\int_{\widehat{\bar{Q}_1Q}}\bar{\partial}^+c\Big( \nu(1+\tan^2\omega)\bar{\partial}^+c+\frac{\nu(\tan^2\omega-1)^2+2\tan^2\omega}{\tan^2\omega+1}\bar{\partial}^-c \Big)\rm{ds}\\
&\leq \bar{\partial}^+ c(Q)+\abs{\widehat{\bar{Q}_1Q}}\sup_{\widehat{\bar{Q}_1Q}}\qnt{\abs{\bar{\partial}^+c}+\abs{\bar{\partial}^-c}}^2M_s,
\end{split}
\end{equation*}
where we apply \eqref{eq:omegaPerturbation} to estimate the terms of $\tan\omega$ by $M_s$. Thus, inserting the estimates of $\bar{\partial}^\pm c$ \eqref{eq:StandisedInitialValueDerivativeBoundInverse} and \eqref{eq:SemiclassicalEstimate1} in above and adopting the characteristic estimates obtained in the first conclusion of Theorem \ref{thm:characteristicestimate} to control $\abs{\widehat{\bar{Q}_1Q}}$, we finally have \eqref{eq:CharateristicPointEstimates1}. \eqref{eq:CharateristicPointEstimates2} can be deduced in the same way.

On the shock part, it holds \eqref{eq:ShockRelation}, we apply \eqref{eq:gRange} in Lemma \ref{lem:PerturbationConstants} to have
\begin{equation}\label{eq:CharateristicPointEstimates3}
\abs{\bar{\partial}^+ c(\bar{Q}_1)}\leq \bar{K}\abs{\bar{\partial}^- c(\bar{Q}_1)}.
\end{equation}
By \eqref{eq:CharateristicPointEstimates1}-\eqref{eq:CharateristicPointEstimates3}, we have
\begin{equation*}
\abs{\bar{\partial}^+ c(Q)}\leq \bar{K}\abs{\bar{\partial}^- c(\underline{Q}_2)}+ M_s \qnt{\epsilon M_s\qnt{M_++M_-+M_b}}^2.
\end{equation*}
By Theorem \ref{thm:characteristicestimate} and Lemma \ref{lem:PerturbationVelocity}, we have $\xi(\underline{Q}_2)\leq \xi_0$. So it holds the estimate assumption \eqref{eq:SonicCharacteristicBoundInductive} for $\bar{\partial}^-c(\underline{Q}_2)$. We convert above estimate to
\begin{equation*}
\abs{\bar{\partial}^+ c(Q)}\leq\abs{P^sP_w}(\xi_0) \bar{K}\frac{M_-}{\xi(\underline{Q}_2)}+ M_s \qnt{\epsilon M_s\qnt{M_++M_-+M_b}}^2.
\end{equation*}
By a scaling discussion, \eqref{eq:NarrowEstimate} yields
\begin{equation*}
\xi(Q)\leq \xi_0+\frac{1}{8}\tan\omega_0\abs{P^sP_w}(\xi_0)\leq (1+ \frac{\epsilon M_0\tan\omega_0}{8})\xi_0;
\end{equation*}
and the third conclusion of Theorem \ref{thm:characteristicestimate} yields
\begin{equation*}
\xi(\underline{Q}_2)\geq\xi_0-\abs{P^sP_w}(\xi_0)\frac{8}{\tan\omega_0}\geq (1-\epsilon M_0 \frac{8}{\tan\omega_0})\xi_0.
\end{equation*}
So, by choosing $\epsilon$ small enough, we have
\begin{equation*}
\begin{split}
\abs{\bar{\partial}^+ c(Q)}&\leq\abs{P^sP_w}(\xi_0) \bar{K}\frac{M_-}{(1-\epsilon M_0 \frac{8}{\tan\omega_0})\xi_0}+ M_s \qnt{\epsilon M_s\qnt{M_++M_-+M_b}}^2\\
&\leq\frac{\abs{P^sP_w}(\xi_0)}{\xi(Q)} \bar{K}\frac{1+\frac{\epsilon M_0\tan\omega_0}{8}}{1-\epsilon M_0 \frac{8}{\tan\omega_0}}M_-\\
&\quad+\frac{\abs{P^sP_w}(\xi_0)}{\xi(Q)} \frac{M_s \qnt{\epsilon M_s\qnt{M_++M_-+M_b}}^2}{\abs{P^sP_w}(\xi_0) }(1+\frac{\epsilon M_0\tan\omega_0}{8})\xi_0\\
&\leq\frac{\abs{P^sP_w}(\xi_0)}{\xi(Q)} \bar{K}\frac{1+\frac{\epsilon M_0\tan\omega_0}{8}}{1-\epsilon M_0 \frac{8}{\tan\omega_0}}M_-\\
&\quad+\frac{\abs{P^sP_w}(\xi_0)}{\xi(Q)} \frac{M_s \qnt{\epsilon M_s\qnt{M_++M_-+M_b}}^2}{\epsilon M_1\xi_0 }(1+\frac{\epsilon M_0\tan\omega_0}{8})\xi_0\\
&\leq \abs{P^sP_w}(\xi_0) \frac{M_+}{\xi(Q)},
\end{split}
\end{equation*}
where we apply \eqref{eq:NarrowEstimate} to estimate $\abs{P^sP_w}(\xi_0)$ and use the first part of \eqref{eq:pmBound}.

Similarly, we can integrate along $C_1^+$ and $C_2^-$ to have
\begin{equation*}
\abs{\bar{\partial}^- c(Q)}\leq\abs{\bar{\partial}^- c(\underline{Q}_1)}+ M_s \qnt{\epsilon M_s\qnt{M_++M_-+M_b}}^2,
\end{equation*}
and
\begin{equation*}
\abs{\bar{\partial}^+ c(\underline{Q}_1)}\leq\abs{\bar{\partial}^+ c(\bar{Q}_2)} + M_s \qnt{\epsilon M_s\qnt{M_++M_-+M_b}}^2,
\end{equation*}
respectively. At the point $\underline{Q}_1\in\mathsf{W}$, by \eqref{eq:WallRelationCharacteristic}, we have
\begin{equation*}
\abs{\bar{\partial}^- c(\underline{Q}_1)}\leq \abs{\bar{\partial}^+ c(\underline{Q}_1)}+\epsilon M_0M_b.
\end{equation*}
In view of above three, we have
\begin{equation*}
\abs{\bar{\partial}^- c(Q)}\leq\abs{\bar{\partial}^+ c(\bar{Q}_2)}+\epsilon M_0M_b+ M_s \qnt{\epsilon M_s\qnt{M_++M_-+M_b}}^2.
\end{equation*}
Noting that
\begin{equation*}
\xi(\bar{Q}_2)\geq\xi_0-\abs{P^sP_w}(\xi_0)\frac{8}{\tan\omega_0}\geq (1-\epsilon M_0 \frac{8}{\tan\omega_0})\xi_0,
\end{equation*}
we have
\begin{equation*}
\begin{split}
\abs{\bar{\partial}^- c(Q)}&\leq\abs{P^sP_w}(\xi_0) \frac{M_+}{\xi(\bar{Q}_2)}+\epsilon M_0M_b+ M_s \qnt{\epsilon M_s\qnt{M_++M_-+M_b}}^2\\
&\leq\frac{\abs{P^sP_w}(\xi_0)}{\xi(Q)}\frac{1+\frac{\epsilon M_0\tan\omega_0}{8}}{1-\epsilon M_0 \frac{8}{\tan\omega_0}}M_+\\
&\quad+\frac{\abs{P^sP_w}(\xi_0)}{\xi(Q)} \frac{M_s \qnt{\epsilon M_s\qnt{M_++M_-+M_b}}^2}{\abs{P^sP_w}(\xi_0) }(1+\frac{\epsilon M_0\tan\omega_0}{8})\xi_0\\
&\leq\frac{\abs{P^sP_w}(\xi_0)}{\xi(Q)}\frac{1+\frac{\epsilon M_0\tan\omega_0}{8}}{1-\epsilon M_0 \frac{8}{\tan\omega_0}}M_+\\
&\quad+\frac{\abs{P^sP_w}(\xi_0)}{\xi(Q)} \frac{M_s \qnt{\epsilon M_s\qnt{M_++M_-+M_b}}^2}{\epsilon M_1\xi_0 }(1+\frac{\epsilon M_0\tan\omega_0}{8})\xi_0\\
&\leq \abs{P^sP_w}(\xi_0) \frac{M_-}{\xi(Q)},
\end{split}
\end{equation*}
where we apply \eqref{eq:NarrowEstimate} to estimate $\abs{P^sP_w}(\xi_0)$ and use the second of \eqref{eq:pmBound}. So it yields
\begin{equation*}
\abs{\bar{\partial}^\pm c(Q)}\leq \abs{P^sP_w}(\xi_0) \frac{M_\pm}{\xi(Q)}.
\end{equation*}
Rescaling back, we get \eqref{eq:SonicCharacteristicBoundInductive}.
\end{proof}
\begin{lem}\label{lem:SpecialSolutionPerturbationEstimate3}
In the domain $\xi_0\leq\xi\leq \xi_0+\frac{1}{8}\tan\omega_0\abs{P_wP^s}(\xi_0)$, we have $(u, v)(\xi)\in \mathcal{C}_s^{\frac{\delta_0}{3}}$.
\end{lem}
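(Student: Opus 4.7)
The plan is to observe that this lemma is essentially a corollary of Theorem~\ref{thm:SonicCharacteristicBoundInductive} combined with the machinery already developed in Theorem~\ref{thm:SpecialSolutionPerturbationEstimate}. Indeed, Theorem~\ref{thm:SonicCharacteristicBoundInductive} has just established the bound \eqref{eq:SonicCharacteristicBoundInductive} for $\abs{\bar{\partial}^\pm c}$ on the extended range $\xi \leq \xi_0 + \tfrac{1}{8}\tan\omega_0\,\abs{P_wP^s}(\xi_0)$, and Lemma~\ref{lem:PerturbationVelocity} ensures $(u,v)\in\mathcal{C}_s^{2\delta_0/3}\subset\mathcal{C}_s^{\delta_\omega}$ there, so the hypotheses of Lemma~\ref{lem:DerivativeEquiv} are met. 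Applying that lemma yields the gradient bound $\abs{\nabla_{(x,y)}(u,v)(P)} \leq M_U/\xi(P)$ uniformly on the extended region.

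First I would invoke the narrow estimate \eqref{eq:NarrowEstimate} of Theorem~\ref{thm:NarrowEstimate}, valid since $(u,v)\in\mathcal{C}_s^{\delta_m}$, to obtain $\abs{P^sP_w} \leq \epsilon M_0\,\xi(P)$ for every $P$ in the extended domain. Combining this with the gradient bound and the mean-value theorem along the normal segment $P_wP^s$ gives
\begin{equation*}
\abs{\vec{U}(P) - \vec{U}(P^s)} + \abs{\vec{U}(P) - \vec{U}(P_w)} \leq \epsilon M_s,
\end{equation*}
exactly as in \eqref{eq:EstimateBetweenSW}. From here I would repeat verbatim the geometric argument of Theorem~\ref{thm:SpecialSolutionPerturbationEstimate}: the state $\vec U(P)$ lies within $\epsilon M_s$ of the shock polar $G(\vec U,\epsilon)=0$ (since $\vec U(P^s)$ lies on it) and simultaneously within $\epsilon M_s$ of the wall line $v/u = f'(P_w)$ (since $\vec U(P_w)$ satisfies the slip condition). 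Comparing both with the unique supersonic intersection $\vec U^*_\epsilon$ of these two loci, and then with $\vec U_s(P_w)$ (the corresponding intersection for $\epsilon=0$), yields the conclusion
\begin{equation*}
\abs{\vec{U}(P) - \vec{U}_s(P_w)} \leq \epsilon M_s.
\end{equation*}

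Finally, I would fix $\epsilon$ so small that $\epsilon M_s \leq \delta_0/3$ (shrinking $\epsilon_R$ further if necessary). Because $P_w$ lies on $\mathsf{W}$ and the wall slope $f'$ satisfies \eqref{eq:SupersonicWallCondition}, the point $\vec{U}_s(P_w)$ belongs to $\mathcal{C}_s$ by definition, and therefore $(u,v)(P)\in\mathcal{C}_s^{\delta_0/3}$ throughout the extended range. There is no real obstacle here: the delicate step was the derivative closure in Theorem~\ref{thm:SonicCharacteristicBoundInductive}, and the present lemma only reassembles that derivative bound with the narrow estimate and the polar–wall geometry. Its role is simply to close the induction loop \eqref{eq:SonicCharacteristicBoundInductive}--\eqref{eq:VelocityBoundInductive} so that the continuity method can advance past $\xi_0$.
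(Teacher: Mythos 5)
Your proposal is correct and follows essentially the same route as the paper: the paper's own proof simply notes that Theorem \ref{thm:SonicCharacteristicBoundInductive} supplies \eqref{eq:SonicCharacteristicBoundInductive} on the extended range, whereupon the hypotheses of Theorem \ref{thm:SpecialSolutionPerturbationEstimate} hold and \eqref{eq:SpecialSolutionPerturbationEstimate3} gives the conclusion. Your write-up merely unpacks that same chain (gradient bound via Lemma \ref{lem:DerivativeEquiv}, narrow estimate, polar--wall intersection argument) in more detail.
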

\begin{proof}
By the above theorem, we get \eqref{eq:SonicCharacteristicBoundInductive} in this region. Then, by Theorem \ref{thm:finitegrowth}, the assumptions of Theorem \ref{thm:SpecialSolutionPerturbationEstimate} are valid, we have \eqref{eq:SpecialSolutionPerturbationEstimate3}.
\end{proof}

\subsubsection{The global existence of smooth solution}\label{subsect:GlobalSolutionwithoutConvexity}
\begin{prop}\label{prop:GlobalExistence}
Under the conditions \eqref{eq:SupersonicWallCondition} and \eqref{eq:WallCondition2nd} on the wedge boundary $f$, there exists $\epsilon_0=\min\set{\epsilon_R, \epsilon_O}$, such that for any $\epsilon<\epsilon_0$, Problem 2 admits a global piecewise smooth solution with an attached shock.
\end{prop}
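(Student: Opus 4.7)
The strategy is a standard continuation argument resting on the quantitative extension mechanism furnished by Theorems~\ref{thm:finitegrowth}--\ref{thm:SonicCharacteristicBoundInductive} together with the narrow-layer bound of Theorem~\ref{thm:NarrowEstimate}. Fix $\epsilon<\epsilon_0:=\min\{\epsilon_R,\epsilon_O\}$. First, I would invoke Theorem~\ref{thm:InitialTheorem} to produce a $C^1$ piecewise smooth solution to Problem~2 on some initial interval $\xi\in[0,\xi_\epsilon]$ verifying the inductive estimates \eqref{eq:SonicCharacteristicBoundInductive} and \eqref{eq:VelocityBoundInductive}. Then define
\[
\xi^{*}:=\sup\bigl\{\tilde{\xi}>0 : \text{Problem~2 admits a } C^1 \text{ solution on }[0,\tilde{\xi}] \text{ satisfying } \eqref{eq:SonicCharacteristicBoundInductive} \text{ and } \eqref{eq:VelocityBoundInductive}\bigr\},
\]
and aim to show $\xi^{*}=+\infty$.

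Arguing by contradiction, suppose $\xi^{*}<+\infty$. By a continuity/closedness argument, the solution together with \eqref{eq:SonicCharacteristicBoundInductive}--\eqref{eq:VelocityBoundInductive} survives to $\xi=\xi^{*}$. I would then apply Theorem~\ref{thm:finitegrowth} with $\xi_0=\xi^{*}$ to obtain a $C^1$ extension of the solution in the triangle $\Omega_1\cup\bar{\Omega}_1\cup\underline{\Omega}_1$ based at $P_wP^s(\xi^{*})$, together with the rough derivative estimate \eqref{eq:SemiclassicalEstimate1} and the velocity perturbation \eqref{eq:SemiclassicalEstimate2}. Next, I would upgrade \eqref{eq:SemiclassicalEstimate1} to the sharper weighted bound \eqref{eq:SonicCharacteristicBoundInductive} on the smaller interval $[\xi^{*},\,\xi^{*}+\tfrac{1}{8}\tan\omega_0\,|P^sP_w|(\xi^{*})]$ by invoking Theorem~\ref{thm:SonicCharacteristicBoundInductive}, and then appeal to Lemma~\ref{lem:SpecialSolutionPerturbationEstimate3} to recover \eqref{eq:VelocityBoundInductive} there. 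The crucial quantitative point that closes the induction is the narrow-layer lower bound \eqref{eq:NarrowEstimate}, which gives $|P^sP_w|(\xi^{*})\geq \epsilon M_1\xi^{*}>0$, so the extension length $\tfrac{1}{8}\tan\omega_0\,|P^sP_w|(\xi^{*})\geq \tfrac{\epsilon M_1\tan\omega_0}{8}\xi^{*}$ is strictly positive. This contradicts the maximality of $\xi^{*}$ and forces $\xi^{*}=+\infty$, yielding the claimed global piecewise smooth solution with attached shock.

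The main subtlety — and the step that deserves careful verification — is the closedness of the set of admissible $\tilde\xi$, i.e.\ the passage of the inductive bounds to the endpoint $\xi^{*}$. The estimate \eqref{eq:SonicCharacteristicBoundInductive} is a \emph{weighted} pointwise bound with a singular weight $1/\xi$ at the origin, so one must ensure that the uniform constants $M_{\pm}$ (which by \eqref{eq:pmBound} depend only on $\bar K$ and $M_b$, hence only on the limit curve $\mathcal{C}_s$ and on $\mathsf{W}$ through \eqref{eq:SupersonicWallCondition}--\eqref{eq:WallCondition2nd}) do not deteriorate as $\tilde\xi\nearrow\xi^{*}$, and that the $C^1$ solution and its directional derivatives $\bar\partial^{\pm}c$ extend continuously up to $\xi=\xi^{*}$ across both the free boundary $\mathsf{S}$ and the fixed boundary $\mathsf{W}$. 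Once this is in place — which follows from the uniform nature of all the constants identified in Lemmas~\ref{lem:omegaRangeS}--\ref{lem:PerturbationConstants} and Theorem~\ref{thm:NarrowEstimate} together with the $C^1$ regularity supplied by Theorem~\ref{thm:finitegrowth} — the iteration closes and Proposition~\ref{prop:GlobalExistence} is established.
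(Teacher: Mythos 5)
Your proposal is correct and follows essentially the same route as the paper: local existence from Theorem \ref{thm:InitialTheorem}, then repeated extension via Theorems \ref{thm:finitegrowth}, \ref{thm:SonicCharacteristicBoundInductive} and Lemma \ref{lem:SpecialSolutionPerturbationEstimate3}, with the narrow estimate $\abs{P^sP_w}(\xi_0)\geq\epsilon M_1\xi_0$ guaranteeing that each step advances $\xi$ by a fixed multiplicative factor $1+\tfrac{\epsilon M_1\tan\omega_0}{8}$. The paper phrases this as a direct iteration $\xi_{n}\to\infty$ rather than a maximal-interval contradiction, which incidentally lets you bypass the closedness-at-$\xi^{*}$ issue you flag, since for $\tilde\xi$ sufficiently close to $\xi^{*}$ the extension already overshoots $\xi^{*}$.
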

\begin{proof}
By Theorem \ref{thm:InitialTheorem}, for any fixed $0<\epsilon<\epsilon_O$, there always exists $\xi_\epsilon>0$ depending on $\epsilon$, it holds \eqref{eq:SonicCharacteristicBoundInductive} and \eqref{eq:VelocityBoundInductive} for $\xi\in[0, \xi_\epsilon]$. Then the assumptions of Theorem \ref{thm:SonicCharacteristicBoundInductive} and Lemma \ref{lem:SpecialSolutionPerturbationEstimate3} are valid. The inductive estimates can be extended to $\xi_\epsilon\leq\xi\leq \xi_\epsilon+\frac{1}{8}\tan\omega_0\abs{P^sP_w}(\xi_\epsilon)$. By \eqref{eq:NarrowEstimate}, we have $\epsilon M_1\xi_\epsilon\leq\abs{P^sP_w}(\xi_\epsilon)$. Thus these estimates are valid in the domain $\xi_\epsilon\leq\xi\leq \xi_\epsilon+\frac{\epsilon M_1}{8}\xi_\epsilon\tan\omega_0$. For the fixed $\epsilon$, we can repeat the process to extend the estimates to infinity and obtain the global solution. By \eqref{eq:SpecialSolutionPerturbationEstimate3}, the entropy condition is valid as well. The proof is complete.
\end{proof}

\subsection{Asymptotic behaviour}\label{subsect:AsymptoticBehaviour}
In this subsection, we show the asymptotic property of the solution. We first prove the decay of derivatives under the assumption \eqref{eq:WallCondition2nd} as follows
\begin{lem}\label{lem:DerivativeDecay}
The solution obtained in Proposition \ref{prop:GlobalExistence} satisfies 
\begin{equation*}
\lim\limits_{X\rightarrow+\infty}\max_{\xi(P)=X}\xi(P)\abs{\bar{\partial}^\pm c(P)}=0.
\end{equation*}
\end{lem}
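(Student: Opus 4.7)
Define the non-increasing quantities
\[
M^\pm(X) := \sup_{P \in \Omega,\, \xi(P) \geq X}\, \xi(P)\,|\bar{\partial}^\pm c(P)|,
\]
which by \eqref{eq:SonicCharacteristicBoundInductive} are bounded above by $M_\pm$, so the limits $L^\pm := \lim_{X\to+\infty} M^\pm(X)$ exist in $[0, M_\pm]$. Since $\max_{\xi(P)=X}\xi(P)|\bar{\partial}^\pm c(P)| \leq M^\pm(X)$, the lemma reduces to showing $L^+ = L^- = 0$.

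I will re-run the reflection cycle from the proof of Theorem~\ref{thm:SonicCharacteristicBoundInductive}, but substitute the refined quantities $M^\pm(X')$ with $X':=X(1-C\epsilon)$ in place of the crude inductive constants $M_\pm$, and then pass to the limit. Fix $Q$ with $\xi(Q)=X$ large; trace $\bar{\partial}^+c$ backwards along $C^-$ to a shock point $\bar Q_1$, apply the contractive relation $|\bar{\partial}^+c(\bar Q_1)|\leq \bar K\,|\bar{\partial}^-c(\bar Q_1)|$ coming from \eqref{eq:ShockRelation} and Lemma~\ref{lem:PerturbationConstants}, then trace $\bar{\partial}^-c$ further backwards along $C^+$ to a wall point $\underline Q_2$ and convert via the wall relation \eqref{eq:WallRelationCharacteristic}. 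Each of the two characteristic integrations of \eqref{eq:CharacteristicDecomposition} contributes an error controlled by (segment length) $\times \sup |\bar{\partial}^\pm c|^2$. Theorem~\ref{thm:NarrowEstimate} and Theorem~\ref{thm:characteristicestimate} bound each segment length by $M_l\,|P^s P_w|(X)\leq C\epsilon X$ and ensure $\xi(\bar Q_1),\xi(\underline Q_2)\in[X',X]$; replacing $|\bar{\partial}^\pm c|$ along the path by $M^\pm(X')/\xi$ then bounds each error by $O(\epsilon\,M^+(X')M^-(X')/X)$. The wall term is $X(\gamma-1)q|\kappa_w(\underline Q_2)|=O(X^{-a})$ by \eqref{eq:WallCondition2nd}. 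Multiplying through by $X$, taking the supremum over $Q$ with $\xi(Q)\geq X$, and absorbing the ratio $X/X'=1+O(\epsilon)$ into a constant yields
\[
M^+(X)\leq \bar K(1+C\epsilon)\,M^+(X') + C\epsilon\,M^+(X')M^-(X') + C X^{-a}.
\]

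Sending $X\to+\infty$ gives $L^+\leq \bar K(1+C\epsilon)\,L^+ + C\epsilon\,L^+ L^-$. Since $L^-\leq M_-$ is a fixed constant and $\bar K<1$, shrinking $\epsilon_0$ once more so that $\bar K(1+C\epsilon) + C\epsilon M_- < 1$ forces $L^+=0$; the symmetric cycle (starting with $\bar{\partial}^-c$, tracing $C^+$ backwards to the wall at $\underline Q_1$ first, then $C^-$ backwards to the shock) gives $L^-=0$ in the same way. The principal obstacle is that under the raw bound \eqref{eq:SonicCharacteristicBoundInductive} the quadratic error $X\cdot(\text{length})\cdot|\bar{\partial}^\pm c|^2$ is $O(\epsilon M_+M_-)$, which does \emph{not} vanish as $X\to\infty$; the gain comes only from replacing $M_\pm$ by the refined $M^\pm(X')\downarrow L^\pm$ and then exploiting the strict shock contraction $\bar K<1$ together with the supercritical wall-curvature decay $|\kappa_w|=O(X^{-1-a})$.
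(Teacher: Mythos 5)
Your proposal follows essentially the same route as the paper: the paper defines $F(X)=\sup_{\xi(P)\geq X}\xi(P)\max\{|\bar{\partial}^+c(P)|,|\bar{\partial}^-c(P)|\}$, runs exactly the reflection cycle you describe (interior integration of \eqref{eq:CharacteristicDecomposition}, shock contraction by $\bar K$, wall relation with the $O(\xi^{-1-a})$ curvature term), obtains $F(\tfrac{X}{1-\epsilon M_s})\leq\tfrac{\bar K+1}{2}F(X)+M_sX^{-a}$, and lets $X\to\infty$. Your key observation -- that the quadratic error must be measured against the refined sup $M^\pm(X')$ rather than the crude constants $M_\pm$, and that boundedness of $M^\pm$ lets you absorb it into the contraction -- is precisely the paper's mechanism.

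One bookkeeping slip: because you keep $M^+$ and $M^-$ separate, the error from the second integration (along $C^+$ from $\bar Q_1$ to $\underline Q_2$, which propagates $\bar{\partial}^-c$) is controlled by $C\epsilon\,M^-(X')\bigl(M^+(X')+M^-(X')\bigr)$, i.e.\ it carries a factor of $M^-$, not of $M^+$; so your limiting inequality is really $L^+\leq\theta L^+ + C\epsilon M_- L^-$ with $\theta<1$, and the single inequality does not by itself force $L^+=0$. You must either run the symmetric recursion for $L^-$ simultaneously and combine the two (which gives $L^+\leq C\epsilon^2 L^+$, hence $L^+=L^-=0$), or simply work with the maximum of the two quantities as the paper does, in which case every error term is $\leq C\epsilon F^2\leq C'\epsilon F$ and the single contraction closes. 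This is a two-line repair, not a change of strategy.
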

\begin{proof}
Let
\begin{equation*}
F(X)=\sup_{\xi(P)\geq X}\xi(P)\max\set{\abs{\bar{\partial}^+ c(P)}, \abs{\bar{\partial}^- c(P)}}.
\end{equation*}
We intend to show $\lim\limits_{X\rightarrow+\infty}F(X)=0$. By inductive estimates \eqref{eq:SonicCharacteristicBoundInductive}, we have
\begin{equation*}
0\leq F(X)\leq \max\set{M_+, M_-},
\end{equation*}
for $X\in\Real^+$.

Denote $F_1=F(X)$ where $X\geq1$. We have
\begin{equation*}
\abs{\bar{\partial}^\pm c(P)}\leq \frac{F_1}{\xi},
\end{equation*}
for $\xi(P)\geq X$. As indicated in Figure \ref{fig:CharacteristicReflect}, for any point $P\in\Omega$, denote by $C_1^-$ the backward $C^-$ characteristic passing through it. This characteristic intersects the shock front $\textsf{S}$ at $\bar{P}_1$. Then, we denote by $C_1^+$ the backward $C^+$ characteristic starting from $\bar{P}_1$ and $\underline{P}_1$ the intersection of $C_1^+$ and $\textsf{W}$. This process continues and eventually approaches the origin.

\begin{figure}[htbp]
\setlength{\unitlength}{1bp}
\begin{center}
\begin{picture}(200,160)
\put(-30,-10){\includegraphics[scale=0.34]{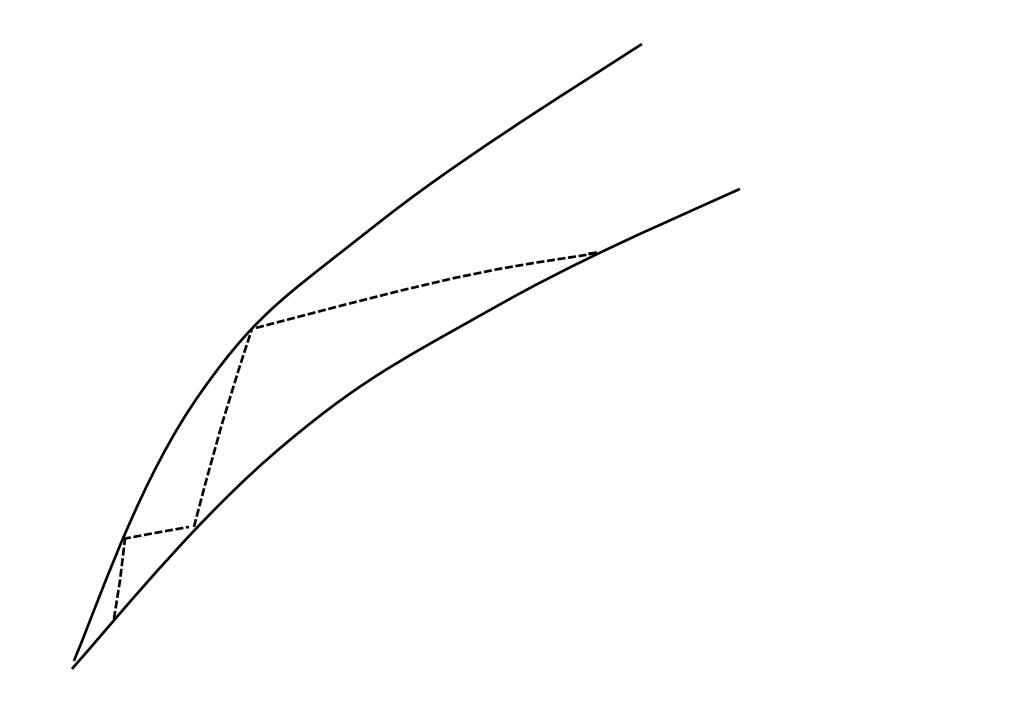}}
\put(-10,38){\small$\bar{P}_2$}
\put(120,158){\small$\textsf{S}$}
\put(60,102){\tiny$C^-_1$}
\put(5,40){\tiny$C^-_2$}
\put(3,27){\tiny$C^+_2$}
\put(30,65){\tiny$C^+_1$}
\put(101,106){\small$P$}
\put(94,101){\small$\bullet$}
\put(145,125){\small$\textsf{W}$}
\put(20,90){\small$\bar{P}_1$}
\put(20,30){\small$\underline{P}_1$}
\put(0,8){\small$\underline{P}_2$}
\end{picture}
\end{center}
\caption{The characteristics between the shock and wedge (3)}\label{fig:CharacteristicReflect}
\end{figure}

We have
\begin{equation*}
\abs{\bar{\partial}^+ c(P)}\leq \abs{\bar{\partial}^+ c(\bar{P}_1)}+\epsilon M_s\xi(P)\frac{M_sF_1^2}{\xi^2(\bar{P}_1)},
\end{equation*}
\begin{equation*}
\abs{\bar{\partial}^+ c(\bar{P}_1)}\leq \bar{K}\abs{\bar{\partial}^- c(\bar{P}_1)},
\end{equation*}
\begin{equation*}
\abs{\bar{\partial}^- c(\bar{P}_1)}\leq \abs{\bar{\partial}^- c(\underline{P}_1)}+\epsilon M_s\xi(\bar{P}_1)\frac{M_sF_1^2}{\xi^2(\underline{P}_1)},
\end{equation*}
and
\begin{equation*}
\abs{\bar{\partial}^- c(\underline{P}_1)}\leq \abs{\bar{\partial}^+ c(\underline{P}_1)}+\frac{M_b}{\xi^{1+a}(\underline{P}_1)}.
\end{equation*}

Summarising above four, we have
\begin{equation*}
\abs{\bar{\partial}^+ c(P)}\leq \bar{K}\abs{\bar{\partial}^+ c(\underline{P}_1)}+\bar{K}\frac{M_b}{\xi^{1+a}(\underline{P}_1)}+\bar{K}\epsilon M_s\xi(\bar{P}_1)\frac{M_sF_1^2}{\xi^2(\underline{P}_1)}+\epsilon M_s\xi(P)\frac{M_sF_1^2}{\xi^2(\bar{P}_1)}.
\end{equation*}
By Theorem \ref{thm:characteristicestimate}, there exist positive constants $M_s$ and $m_s$ satisfying $M_s>m_s$, so that
\begin{equation*}
\xi(\underline{P}_1),\xi(\bar{P}_1)\in[(1-\epsilon M_s)\xi(P), (1-\epsilon m_s)\xi(P)].
\end{equation*}
Then, we have $\xi(\underline{P}_1)\geq X$ provided $\xi(P)\geq \frac{X}{1-\epsilon M_s}$ and
\begin{equation*}
\begin{split}
\xi(P)\abs{\bar{\partial}^+ c(P)}&\leq \bar{K}\frac{\xi(P)}{\xi(\underline{P}_1)}\xi(\underline{P}_1)\abs{\bar{\partial}^+ c(\underline{P}_1)}+\bar{K}\frac{M_b\xi(P)}{\xi^{1+a}(\underline{P}_1)}\\
&\quad+\bar{K}\epsilon M_s\xi(\bar{P}_1)\frac{M_sF_1^2\xi(P)}{\xi^2(\underline{P}_1)}+\epsilon M_s\xi^2(P)\frac{M_sF_1^2}{\xi^2(\bar{P}_1)}\\
&\leq \bar{K}\frac{1}{1-\epsilon M_s}\xi(\underline{P}_1)\abs{\bar{\partial}^+ c(\underline{P}_1)}+\frac{\bar{K}}{(1-\epsilon M_s)^{1+a}}\frac{M_b}{\xi^a(P)}\\
&\quad+\bar{K}\epsilon M_s(1-\epsilon m_s)\frac{M_sF_1^2}{(1-\epsilon M_s)^2}+\epsilon M_s\frac{M_sF_1^2}{(1-\epsilon M_s)^2}.
\end{split}
\end{equation*}

Noting that $F_1$ is bounded by a constant $M_s$ which is only depending on the wedge boundary $\textsf{W}$ and independent of $\epsilon$, so does $m_s$ and $M_b$, we can choose $\epsilon$ sufficiently small, so that
\begin{equation*}
\begin{split}
\xi(P)\abs{\bar{\partial}^+ c(P)}
&\leq \frac{\bar{K}}{1-\epsilon M_s}F_1+\frac{\bar{K}}{(1-\epsilon M_s)^{1+a}}\frac{M_b}{\xi^a(P)}\\
&\quad+\bar{K}\epsilon M_s\frac{1-\epsilon m_s}{(1-\epsilon M_s)^2}F_1+\epsilon M_s\frac{F_1}{(1-\epsilon M_s)^2}\\
&\leq \frac{\bar{K}+1}{2}F_1+\frac{M_s}{X^a}.
\end{split}
\end{equation*}

Similarly, we have
\begin{equation*}
\xi(P)\abs{\bar{\partial}^- c(P)}\leq \frac{\bar{K}+1}{2}F_1+\frac{M_s}{X^a}.
\end{equation*}
Combining the above two, we have
\begin{equation*}
F(\frac{X}{1-\epsilon M_s})\leq \frac{\bar{K}+1}{2}F(X)+\frac{M_s}{X^a}.
\end{equation*}
Let $X\rightarrow+\infty$, we have
\begin{equation*}
0\leq F_\infty\leq \frac{\bar{K}+1}{2}F_\infty
\end{equation*}
where $F_\infty=\lim\limits_{X\rightarrow+\infty}F(X)$. $\bar{K}\in(0, 1)$ yields $F_\infty=0$ . Therefore,
\begin{equation*}
\lim\limits_{X\rightarrow+\infty}\max_{\xi(P)=X}\xi(P)\abs{\bar{\partial}^\pm c(P)}=\lim\limits_{X\rightarrow+\infty}F(X)=0.
\end{equation*}
The proof is complete.
\end{proof}

\begin{prop}\label{prop:AsymptoticBehaviour}
The solution obtained in Proposition \ref{prop:GlobalExistence} satisfies
\begin{equation*}
\lim\limits_{X\rightarrow+\infty}\max_{\xi(P)=X}\xi(P)\abs{\nabla_{(x, y)} \vec{U}(P)}=0,
\end{equation*}
\begin{equation*}
\lim\limits_{x\rightarrow+\infty}(u, v)=(u_\infty, v_\infty),
\end{equation*}
and 
\begin{equation*}
\lim\limits_{x\rightarrow+\infty}\phi'(x)=\frac{\underline{u}-u_\infty}{v_\infty}.
\end{equation*}
\end{prop}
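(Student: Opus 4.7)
The plan is to deduce the three asymptotic statements in sequence, using Lemma \ref{lem:DerivativeDecay} as the crucial input beyond what the inductive bound \eqref{eq:SonicCharacteristicBoundInductive} already gives. First, I would decompose $\nabla_{(x,y)}$ in the characteristic basis. Because $\omega\in[\omega_0/2,\pi/2-\omega_0/2]$ by Lemma \ref{lem:BasicWavePattern}, the directions $(\cos\alpha,\sin\alpha)$ and $(\cos\beta,\sin\beta)$ are uniformly transverse, so $|\nabla_{(x,y)}\vec U| \leq \tfrac{2}{\sin\omega_0}\bigl(|\bar\partial^+u|+|\bar\partial^-u|+|\bar\partial^+v|+|\bar\partial^-v|\bigr)$, and \eqref{eq:SonicExpressions} bounds each $|\bar\partial^\pm(u,v)|$ by $M_s|\bar\partial^\pm c|$. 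Multiplying by $\xi(P)$ and invoking Lemma \ref{lem:DerivativeDecay} yields $\lim_{X\to\infty}\max_{\xi(P)=X}\xi(P)|\nabla_{(x,y)}\vec U(P)|=0$, which is the first conclusion.

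For the velocity limit, set $\delta(X):=\sup_{\xi(Q)\geq X}\xi(Q)\max\{|\bar\partial^+c|,|\bar\partial^-c|\}(Q)$, so $\delta(X)\to 0$ by Lemma \ref{lem:DerivativeDecay}, and $|\nabla\vec U(Q)|\leq M_s\delta(\xi(Q))/\xi(Q)$. For any $P\in\Omega$ with foot $P_w\in\mathsf W$ on its normal and shock intersection $P^s\in\mathsf S$ on the same normal, the mean value theorem combined with the narrow estimate \eqref{eq:NarrowEstimate} gives
\begin{equation*}
|\vec U(P)-\vec U(P_w)|+|\vec U(P)-\vec U(P^s)|\leq M_s\frac{\delta(\xi(P))}{\xi(P)}\cdot|P^sP_w|\leq \epsilon M_0 M_s\,\delta(\xi(P))\longrightarrow 0.
\end{equation*}
Now $\vec U(P^s)$ lies on the shock polar $G(\vec U(P^s),\epsilon)=0$, whereas $\vec U(P_w)$ lies on the wall-direction line $v=uf'(x(P_w))$ with $f'(x(P_w))\to f_\infty$. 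Since $\vec U(P_w)$ stays in the compact set $\mathcal C_s^{\delta_0/3}$, any subsequential limit point of $\vec U(P_w)$ (equivalently, of $\vec U(P^s)$, by the display above) must simultaneously satisfy $G(\cdot,\epsilon)=0$ and $v/u=f_\infty$; by the entropy condition and uniqueness of the supersonic branch of this intersection (the classical Courant--Friedrichs shock polar analysis), this limit point is forced to be $(u_\infty,v_\infty)$. Uniqueness of the limit point upgrades subsequential convergence to full convergence, and since $|\vec U(P)-\vec U(P_w)|\to 0$, we get $\vec U(P)\to (u_\infty,v_\infty)$ as $x(P)\to\infty$.

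Finally, the shock slope limit is a direct consequence of the second Rankine--Hugoniot relation in \eqref{eq:RHCondition}: $\phi'(x)=-[u]/[v]=(\underline u-u(x,\phi(x)))/v(x,\phi(x))$. Combined with $\vec U(x,\phi(x))\to(u_\infty,v_\infty)$ (applied along the shock, where $P=P^s$) and the fact that $v_\infty>0$ because $f_\infty>0$, this yields $\lim_{x\to\infty}\phi'(x)=(\underline u-u_\infty)/v_\infty$. The main obstacle is the second step: the bound \eqref{eq:SonicCharacteristicBoundInductive} and Theorem \ref{thm:SpecialSolutionPerturbationEstimate} individually only give $|\vec U(P)-\vec U(P_w)|=O(\epsilon)$, which is insufficient since $\epsilon$ is fixed. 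It is precisely the sharper \emph{little-o} decay furnished by Lemma \ref{lem:DerivativeDecay} that makes the thickness--gradient product $\epsilon M_0M_s\delta(\xi(P))$ tend to zero and collapses the triangle $\{P,P_w,P^s\}$ in state space asymptotically, allowing the shock-polar and wall-slope constraints to jointly pin the limit to $(u_\infty,v_\infty)$.
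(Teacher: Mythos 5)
Your proposal is correct and follows essentially the same route as the paper: the gradient decomposition in the characteristic basis combined with Lemma \ref{lem:DerivativeDecay} gives the little-$o$ decay, the narrow estimate \eqref{eq:NarrowEstimate} then collapses the states at $P$, $P_w$, $P^s$ so that the shock-polar and wall-slope constraints jointly force the limit $(u_\infty,v_\infty)$, and the shock slope follows from \eqref{eq:RHCondition}. Your explicit subsequential-compactness step and the remark that the fixed-$\epsilon$ bound $O(\epsilon)$ alone would not suffice are just more careful articulations of what the paper leaves implicit.
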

\begin{proof}
For the first part, the discussion is similar to Lemma \ref{lem:DerivativeEquiv}. For the second part, the proof is similar to Theorem \ref{thm:SpecialSolutionPerturbationEstimate}. Let $\xi(P)=X$, then we have
\begin{equation}\label{eq:EstimateBetweenSWInfty}
\begin{split}
\abs{\vec{U}(P)-\vec{U}(P^s)}+\abs{\vec{U}(P)-\vec{U}(P_w)}&\leq \sup_{\tilde{P}\in P_wP^s}\abs{\nabla_{(x, y)}\vec{U}(\tilde{P})}\abs{P_wP^s}\\
&\leq \epsilon M_s\max_{\xi(P)=X}\xi(P)\abs{\nabla_{(x, y)} \vec{U}(P)}.
\end{split}
\end{equation}
So
\begin{equation*}
\abs{\vec{U}(P)-\vec{U}(P^s)}+\abs{\vec{U}(P)-\vec{U}(P_w)}\rightarrow0,
\end{equation*}
as $\xi(P)\rightarrow+\infty$. On the one hand, $\vec{U}(P^s)$ locates on the shock polar $G(\vec{U}, \epsilon)=0$. On the other hand, $\vec{U}(P_w)$ satisfies
\begin{equation*}
\frac{v}{u}(P_w)=f'\rightarrow f_\infty
\end{equation*}
as $\xi(P)\rightarrow+\infty$. Thus $\vec{U}(P)$ approximates the intersection of shock polar and the line $\frac{v}{u}=f_\infty$ as $\xi(P)\rightarrow+\infty$. The equation \eqref{eq:ApproximateShock} is a direct result of \eqref{eq:ApproximateVelocity}. The proof is complete.
\end{proof}

\subsection{Higher order properties}\label{subsect:HigherOrderProperty}
In this section, we discuss the higher order differentiability of solutions and the approximation of the derivatives.

\begin{thm}\label{thm:HigherSmoothnessoftheSolution}
If
\begin{equation}\label{eq:ThirdOrderRegularW}
f\in C^3([0, \xi_0])
\end{equation}
and it holds \eqref{eq:SupersonicWallCondition} for $\xi\in[0, \xi_0]$, then there exists $\epsilon_1\leq\epsilon_0$, such that Problem 2 can be solved for $0<\epsilon<\epsilon_1$ and $\xi\in[0, \xi_0]$. Moreover, we have
\begin{equation}\label{eq:SecondOrderEstimatesofCharacteristics}
\abs{\bar{\partial}^\pm\bar{\partial}^\pm c}\leq \frac{M_s(\xi_0)}{\xi^2},
\end{equation}
and
\begin{equation}\label{eq:SecondOrderEstimates}
\abs{\nabla^2 (u, v)}\leq \frac{M_s(\xi_0)}{\xi^2}.
\end{equation}
\end{thm}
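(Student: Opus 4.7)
\textbf{Proof proposal for Theorem \ref{thm:HigherSmoothnessoftheSolution}.} The plan is to parallel the strategy used for the first-order estimates in Section \ref{subsect:StandisedProblem}, but now applied to the second characteristic derivatives $\bar\partial^\pm\bar\partial^\pm c$. Since Proposition \ref{prop:GlobalExistence} already gives the global piecewise smooth solution for $\epsilon<\epsilon_0$ together with the first-order bounds \eqref{eq:SonicCharacteristicBoundInductive}, the task reduces to a bootstrap argument that upgrades regularity on the compact window $\xi\in[0,\xi_0]$ under the extra regularity \eqref{eq:ThirdOrderRegularW}.

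First I would derive the transport equations for the second-order quantities. Applying $\bar\partial^\pm$ to the characteristic decompositions \eqref{eq:CharacteristicDecomposition}, and using \eqref{eq:SonicExpressions} to rewrite $\bar\partial^\pm\omega$, $\bar\partial^\pm\alpha$, $\bar\partial^\pm\beta$ in terms of $\bar\partial^\pm c$, one obtains schematically
\begin{equation*}
c\,\bar\partial^\mp\bigl(\bar\partial^\pm\bar\partial^\pm c\bigr)=A_1(\vec U)\,\bar\partial^\pm c\cdot\bar\partial^\pm\bar\partial^\pm c+A_2(\vec U)\,\bar\partial^\mp c\cdot\bar\partial^\pm\bar\partial^\pm c+R(\vec U,\bar\partial^\pm c,\bar\partial^\mp c),
\end{equation*}
where the coefficients $A_i$ are smooth in $\vec U\in\mathcal{C}_s^{\delta_0}$ and the remainder $R$ is a polynomial in first-order characteristic derivatives (hence of order $O(\xi^{-2})$ by \eqref{eq:SonicCharacteristicBoundInductive}). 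Next I would differentiate the boundary relations. Applying $\bar\partial^s$ to \eqref{eq:ShockRelation} yields
\begin{equation*}
\bar\partial^+\bar\partial^+c=g(\vec U,\epsilon)^2\,\bar\partial^-\bar\partial^- c+\text{(products of first-order terms)}\quad\text{on }\mathsf S,
\end{equation*}
where the leading coefficient $g^2$ still satisfies $|g^2|\le\bar K^2<1$ by Lemma \ref{lem:PerturbationConstants}. Applying $\bar\partial^0$ to \eqref{eq:WallRelationCharacteristic} and using \eqref{eq:SonicExpressions} together with $\kappa_w'(x)$, which is controlled by $\|f\|_{C^3([0,\xi_0])}$ under \eqref{eq:ThirdOrderRegularW}, I would get an analogue of \eqref{eq:WallRelationCharacteristic} for the second derivatives on $\mathsf W$ with a source term bounded by $M_s(\xi_0)\xi^{-2}$.

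With these two ingredients in hand, the strategy of Theorem \ref{thm:SonicCharacteristicBoundInductive} applies verbatim. I would fix an inductive ansatz $|\bar\partial^\pm\bar\partial^\pm c|(\xi)\le M_\pm'(\xi_0)\,\xi^{-2}$, use Theorem \ref{thm:InitialTheorem}-type local existence of $C^2$ regularity near $\xi=0$ (from \cite{LiYu1985DUMS}) as a starting point, and then propagate the bound by the same three wave-pattern analysis: integrating the transport equation along a $C^{\mp}$ characteristic from a boundary incidence point picks up a geometric factor bounded by $1+O(\epsilon)$ via Theorem \ref{thm:characteristicestimate}, followed by a contraction of size $\bar K^2+O(\epsilon)<1$ on the shock reflection and a shift of size $O(M_s(\xi_0)\xi^{-2})$ on the wedge reflection. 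Choosing $M_\pm'(\xi_0)$ as the solution of the linear system analogous to \eqref{eq:pmBound} with $\bar K$ replaced by $\bar K^2$ and $M_b$ by a quantity controlled by $\|f\|_{C^3([0,\xi_0])}$, and taking $\epsilon_1$ small enough to absorb the $O(\epsilon)$ perturbations, closes the induction and yields \eqref{eq:SecondOrderEstimatesofCharacteristics}.

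Finally, to obtain \eqref{eq:SecondOrderEstimates}, I would differentiate the identities in \eqref{eq:SonicExpressions} once more to express $\bar\partial^\pm\bar\partial^\pm u$ and $\bar\partial^\pm\bar\partial^\pm v$ as polynomials in $\bar\partial^\pm c$, $\bar\partial^\pm\omega$ and $\bar\partial^\pm\bar\partial^\pm c$, then convert to $\nabla^2(u,v)$ via the angular separation $2\omega\in[2\omega_0,\pi-2\omega_0]$ guaranteed by Lemma \ref{lem:BasicWavePattern}, exactly as in the proof of Lemma \ref{lem:DerivativeEquiv}. The main obstacle I expect is bookkeeping: verifying that the new reflection constants (now involving $\bar K^2$ and higher products of first-order data) indeed yield a solvable analogue of \eqref{eq:pmBound}, and that every source term produced by differentiating the shock polar relation $G(\vec U,\epsilon)=0$ twice along $\mathsf S$ remains $O(\xi^{-2})$ rather than accumulating $O(\xi^{-1})$ singularities; this hinges delicately on the cancellations already visible in \eqref{eq:ShockRelation} and on the uniform non-degeneracy $|G_\epsilon|+|G_v|>0$ established in Section \ref{subsect:Preliminary}.
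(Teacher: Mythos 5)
Your proposal follows essentially the same route as the paper: differentiate the characteristic decompositions and the two boundary relations, observe that the shock reflection of second derivatives is still a contraction and the wall reflection contributes a source controlled by $\|f\|_{C^3}$, close a reflection iteration, and convert back to $\nabla^2(u,v)$ via the characteristic expressions. One small correction: applying $\bar{\partial}^s=t_+\bar{\partial}^++t_-\bar{\partial}^-$ to \eqref{eq:ShockRelation} gives the leading coefficient $\frac{t_-}{t_+}\,g(\vec U,\epsilon)$ in front of $\bar{\partial}^-\bar{\partial}^-c$, not $g^2$; since $t_-/t_+\to1$ as $\epsilon\to0$ this coefficient is still bounded by $\frac{\bar K+1}{2}<1$, so your contraction argument survives unchanged, but the analogue of \eqref{eq:pmBound} should be set up with this bound rather than $\bar K^2$.
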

\begin{proof}
First, by \eqref{eq:SupersonicWallCondition}, the existence of the solution is a direct conclusion in Section \ref{subsect:GlobalSolutionwithoutConvexity}. Moreover, we have the following
\begin{equation}\label{eq:derivativeBoundness}
\abs{\bar{\partial}^+c}+\abs{\bar{\partial}^-c}+\abs{\nabla \vec{U}}\leq \frac{M_s}{\xi}
\end{equation}
and
\begin{equation*}
\abs{P_wP^s}(\xi)\leq \epsilon M_0(\xi_0)\xi
\end{equation*}
for $\xi\leq\xi_0$.

We first deduce the higher order differential relations on $\mathsf{W}$ and $\mathsf{S}$. Put $\bar{\partial}^s$ on \eqref{eq:ShockRelation} to have
\begin{equation*}
\bar{\partial}^s\qnt{\bar{\partial}^+c}=\bar{\partial}^s\qnt{g(u, \epsilon)\bar{\partial}^-c}.
\end{equation*}
Recalling \eqref{eq:sderivative} and \eqref{eq:SonicExpressions}, we have
\begin{equation*}
\bar{\partial}^+\bar{\partial}^+c=\frac{t_-}{t_+}g(u, \epsilon)\qnt{\bar{\partial}^-\bar{\partial}^-c}+R_1(\vec{U}, \epsilon)\bar{\partial}^+\bar{\partial}^-c+R_2(\vec{U}, \epsilon)\bar{\partial}^-\bar{\partial}^+c+R_3(\vec{U}, \epsilon)(\bar{\partial}^+c, \bar{\partial}^-c),
\end{equation*}
where $R_1$, $R_2$ are functions of $\vec{U}$ and $\epsilon$, $R_3$ is quadratic form of $(\bar{\partial}^+c, \bar{\partial}^-c)$ and the coefficients are functions of $(\vec{U}, \epsilon)$. Owing to the characteristic decomposition \eqref{eq:CharacteristicDecomposition}, $R_1, R_2$ and $R_3$ can be summarised to
\begin{equation}\label{eq:shockrelation2nd}
\bar{\partial}^+\bar{\partial}^+c=\frac{t_-}{t_+}g(u, \epsilon)\qnt{\bar{\partial}^-\bar{\partial}^-c}+R_4(\vec{U}, \epsilon)(\bar{\partial}^+c, \bar{\partial}^-c),
\end{equation}
where $R_4$ is quadratic form of $(\bar{\partial}^+c, \bar{\partial}^-c)$ and its coefficients are depending on $(\vec{U}, \epsilon)$.

Similarly, we apply $\bar{\partial}^0$ on \eqref{eq:WallRelationCharacteristic} to have
\begin{equation*}
\bar{\partial}^0\qnt{\bar{\partial}^-c-\bar{\partial}^+c}=\bar{\partial}^0\qnt{(\gamma-1)q\kappa_w}.
\end{equation*}
Recalling $\bar{\partial}^0=\frac{\bar{\partial}^++\bar{\partial}^-}{2\cos\omega}$, we have
\begin{equation*}
\bar{\partial}^-\bar{\partial}^-c-\bar{\partial}^+\bar{\partial}^+c=\bar{\partial}^-\bar{\partial}^+c-\bar{\partial}^+\bar{\partial}^-c+\qnt{R_5(\vec{U}, \epsilon)\bar{\partial}^+c+R_6(\vec{U}, \epsilon)\bar{\partial}^-c}\kappa_w+R_7(\vec{U}, \epsilon)\frac{\dif \kappa_w}{\dif \xi}.
\end{equation*}
Similarly, the above can be reduced to
\begin{equation}\label{eq:wallrelation2nd}
\bar{\partial}^-\bar{\partial}^-c-\bar{\partial}^+\bar{\partial}^+c=R_8(\vec{U}, \epsilon)(\bar{\partial}^+c, \bar{\partial}^-c)+\qnt{R_5(\vec{U}, \epsilon)\bar{\partial}^+c+R_6(\vec{U}, \epsilon)\bar{\partial}^-c}\kappa_w+R_7(\vec{U}, \epsilon)\frac{\dif \kappa_w}{\dif \xi}.
\end{equation}
As the Proposition 3.8 in \cite{LiYangZheng2011JDE}, we can also deduce the higher order characteristic decomposition
\begin{equation}\label{eq:CharacteristicDecompositionSecond}
\begin{cases}
\bar{\partial}^+\qnt{\bar{\partial}^-\bar{\partial}^-c}+R_9(\vec{U})\bar{\partial}^-\bar{\partial}^-c=R_{10}(\vec{U}, \bar{\partial}^+c, \bar{\partial}^-c);\\
\bar{\partial}^-\qnt{\bar{\partial}^+\bar{\partial}^+c}+R_{11}(\vec{U})\bar{\partial}^+\bar{\partial}^+c=R_{12}(\vec{U}, \bar{\partial}^+c, \bar{\partial}^-c).
\end{cases}
\end{equation}
Finally, by \eqref{eq:SonicCharacteristicBoundInductive} and \eqref{eq:VelocityBoundInductive}, for $R_i$ in above, we have
\begin{equation*}
\abs{R_i}\leq \frac{M_s(\xi_0)}{\xi^2}.
\end{equation*}

Then, to show the estimates of $\bar{\partial}^+\bar{\partial}^+c$ and $\bar{\partial}^-\bar{\partial}^-c$, we apply a characteristic reflection scheme as Proposition \ref{lem:DerivativeDecay}. Indeed, by Theorem \ref{thm:characteristicestimate}, for any characteristic between $\textsf{W}$ and $\textsf{S}$, take an example $\widehat{\underline{P}_1\bar{P}_1}$ in Figure \ref{fig:CharacteristicReflect}, we have
$$\xi(\underline{P}_1)\leq\xi(\bar{P}_1)-\frac{8}{\tan\omega_0}\abs{\bar{P}_1(\bar{P}_1)_w}\leq\xi(\bar{P}_1)-\frac{8}{\tan\omega_0}\epsilon M_1\xi(\bar{P}_1).$$
Thus, for fixed $\epsilon>0$, the characteristic finally approaches the origin $\xi=0$, otherwise it leads to a contradiction. We obtain a series $\bar{P}_n$ on the shock $\textsf{S}$ and a series $\underline{P}_n$ on the wedge boundary $\textsf{W}$ with the $C^\pm_n$ characteristics.

Along $C^-_1$, we can integrate the second equation of \eqref{eq:CharacteristicDecompositionSecond} and utilise Gronwall inequality to have
\begin{equation}\label{eq:iteration2nd1}
\abs{\bar{\partial}^+\bar{\partial}^+c(P)}\leq(1+\epsilon M_s(\xi_0))\abs{\bar{\partial}^+\bar{\partial}^+c(\bar{P}_1)}+\frac{\epsilon M_s(\xi_0)}{\xi},
\end{equation}
where we employ the narrow estimate
\begin{equation*}
\mbox{length of $C^-_1$}\leq M_s\abs{P_wP^s}\leq \epsilon M_s\xi(P)\leq \epsilon M_s(\xi_0),
\end{equation*}
which is a conclusion of Theorem \ref{thm:characteristicestimate}.

For \eqref{eq:shockrelation2nd}, we point out that the coefficient
\begin{equation}\label{eq:gSecondS}
\lim\limits_{\epsilon\rightarrow0}\frac{t_-}{t_+}g=\frac{t_-}{t_+}g|_{\epsilon=0}=g(\vec{U}_s, 0),
\end{equation}
where we use \eqref{eq:ShockRelationT}. So, as \eqref{eq:gRange}, by approximate estimate \eqref{eq:SpecialSolutionPerturbationEstimate3}, we can show
\begin{equation*}
\sup_{\xi\in[0, \xi_0]}\abs{\frac{t_-}{t_+}g(\vec{U}(\xi), \epsilon)}\leq \frac{\bar{K}+1}{2},\quad\mbox{for}\quad\xi\in[0, \xi_0]~\mbox{and small $\epsilon$}.
\end{equation*}
So \eqref{eq:shockrelation2nd} yields
\begin{equation}\label{eq:iteration2nd2}
\abs{\bar{\partial}^+\bar{\partial}^+c(\bar{P}_1)}\leq \frac{\bar{K}+1}{2}\abs{\bar{\partial}^-\bar{\partial}^-c(\bar{P}_1)}+\frac{M_s(\xi_0)}{\xi^2}.
\end{equation}
Along $C^+_1$, as \eqref{eq:iteration2nd1} we have
\begin{equation}\label{eq:iteration2nd3}
\abs{\bar{\partial}^-\bar{\partial}^-c(\bar{P}_1)}\leq(1+\epsilon M_s(\xi_0))\abs{\bar{\partial}^-\bar{\partial}^-c(\underline{P}_1)}+\epsilon M_s(\xi_0)\xi.
\end{equation}
Then \eqref{eq:wallrelation2nd} yields
\begin{equation}\label{eq:iteration2nd4}
\abs{\bar{\partial}^-\bar{\partial}^-c(\underline{P}_1)}\leq\abs{\bar{\partial}^+\bar{\partial}^+c(\underline{P}_1)}+\frac{M_s(\xi_0)}{\xi^2},
\end{equation}
where we estimate $\frac{\dif \kappa_w}{\dif \xi}$ by \eqref{eq:ThirdOrderRegularW}. By choosing $\epsilon$ small, we can summarise \eqref{eq:iteration2nd1}-\eqref{eq:iteration2nd4} to have
\begin{equation*}
\abs{\bar{\partial}^+\bar{\partial}^+c(P)}\leq \frac{\bar{K}+3}{4}\abs{\bar{\partial}^+\bar{\partial}^+c(\underline{P}_1)}+\frac{M_s(\xi_0)}{\xi^2}.
\end{equation*}
Iterating this inequality, since $\bar{K}\in(0,1)$, we have
\begin{equation*}
\abs{\bar{\partial}^+\bar{\partial}^+c(P)}\leq \frac{M_s(\xi_0)}{\xi^2}.
\end{equation*}
Similarly, we have
\begin{equation*}
\abs{\bar{\partial}^-\bar{\partial}^-c(P)}\leq \frac{M_s(\xi_0)}{\xi^2}.
\end{equation*}
By the characteristic decomposition \eqref{eq:CharacteristicDecomposition}, we have
\begin{equation*}
\abs{\bar{\partial}^+\bar{\partial}^-c(P)}+\abs{\bar{\partial}^-\bar{\partial}^+c(P)}\leq \frac{M_s(\xi_0)}{\xi^2}.
\end{equation*}
By characteristic expressions, $\nabla^2 (u, v)$ can be expressed by $\bar{\partial}^\pm\bar{\partial}^\pm c, \bar{\partial}^\pm c$ and $\vec{U}$. Thus, we have \eqref{eq:SecondOrderEstimates}.
\end{proof}

Next, we turn to the limit of the derivatives of the solution as $\epsilon\rightarrow0$. Define the limit formal derivatives $\bar{\parbar}^\pm c_s(P_w)$ via the following equations
\begin{equation}\label{eq:FormalDerivativesEquations}
\begin{cases}
\bar{\parbar}^+c_s(P_w)=g(\vec{U}_s(P_w), 0)\bar{\parbar}^-c_s(P_w),\\
\bar{\parbar}^-c_s(P_w)-\bar{\parbar}^+c_s(P_w)=(\gamma-1)q_s(P_w)\kappa_w(P_w).
\end{cases}
\end{equation}
Then, we have

\begin{thm}[The higher order approximation]\label{thm:LocalSolutionApproximateEstimate}
Under the assumption of Theorem \ref{thm:HigherSmoothnessoftheSolution}, if $f'|_{\xi_0}>0$, then there exists $\epsilon_2\leq\epsilon_1$, such that Problem 2 can be solved for $0<\epsilon<\epsilon_2$ and $\xi\in[0, \xi_0]$. In addition,
\begin{equation}\label{eq:CPerturbation}
\abs{\bar{\partial}^\pm c(P)|_{P\in P^sP_w}-\bar{\parbar}^\pm c_s(P_w)}\leq \frac{\epsilon M_s(\xi_0)}{\xi}.
\end{equation}
\end{thm}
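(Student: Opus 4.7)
The existence assertion follows from Theorem \ref{thm:HigherSmoothnessoftheSolution} by taking $\epsilon_2=\epsilon_1$; the substance of the claim is the approximation estimate \eqref{eq:CPerturbation}. My plan is to compare the actual characteristic derivatives $\bar\partial^\pm c$ along the segment $P^sP_w$ with the formal values $\bar{\parbar}^\pm c_s(P_w)$ by combining three ingredients: (i) the second-order bound \eqref{eq:SecondOrderEstimates} supplied by Theorem \ref{thm:HigherSmoothnessoftheSolution}, which lets me transfer values between any two points of $P^sP_w$ cheaply; (ii) the boundary relations \eqref{eq:ShockRelation} on $\mathsf S$ and \eqref{eq:WallRelationCharacteristic} on $\mathsf W$, which together play the same role for the actual solution as the system \eqref{eq:FormalDerivativesEquations} plays for $\bar{\parbar}^\pm c_s$; and (iii) the perturbation estimates \eqref{eq:NarrowEstimate} and \eqref{eq:SpecialSolutionPerturbationEstimate3}, which control $|P^sP_w|$ and $|\vec U-\vec U_s|$.

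The first step is to observe that the bound $|\bar\partial^\pm\bar\partial^\pm c|\le M_s(\xi_0)/\xi^2$ together with the narrow estimate $|P^sP_w|\le \epsilon M_0\xi$ forces $\bar\partial^\pm c$ to vary along $P^sP_w$ by at most $O(\epsilon M_s(\xi_0)/\xi)$. Consequently it is enough to establish \eqref{eq:CPerturbation} at one convenient point of the segment, since an error of this order can be absorbed into the desired right-hand side. I would evaluate the shock relation at $P^s$, the wall relation at $P_w$, and then transfer both to a common reference point along the segment.

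The second step is to interpret the two boundary conditions as a $2\times 2$ linear system. Using the variation estimate of Step 1, the pair $(a^+,a^-)=(\bar\partial^+c,\bar\partial^-c)$ taken, say, at $P^s$ satisfies
\begin{equation*}
a^+ = g(\vec U(P^s),\epsilon)\,a^-, \qquad a^- - a^+ = (\gamma-1)q(P_w)\kappa_w(P_w) + O(\epsilon M_s(\xi_0)/\xi).
\end{equation*}
Subtracting the formal system \eqref{eq:FormalDerivativesEquations} and using the continuity of $g$ and $q$ with respect to their arguments together with the bound $|\vec U(P^s)-\vec U_s(P_w)|\le \epsilon M_s$ from \eqref{eq:SpecialSolutionPerturbationEstimate3}, the coefficients differ by $O(\epsilon)$ while the right-hand sides differ by $O(\epsilon M_s(\xi_0)/\xi)$; note that the contribution $(q-q_s)\kappa_w$ is even smaller because of the decay $|\kappa_w|\le B/(1+\xi)^{1+a}$ from \eqref{eq:WallCondition2nd}. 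Since Lemma \ref{lem:PerturbationConstants} gives $|g(\vec U_s(P_w),0)|\le\bar K<1$, the linear system is uniformly well-posed, and inverting yields $|a^\pm - \bar{\parbar}^\pm c_s(P_w)|\le O(\epsilon M_s(\xi_0)/\xi)$. Combining this with Step 1 proves \eqref{eq:CPerturbation} at every point of $P^sP_w$.

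The only genuine obstacle is the uniform invertibility of the linear system in Step 2. The required bound $|g|\le\bar K<1$ is precisely Lemma \ref{lem:PerturbationConstants}, and the role of the extra hypothesis $f'|_{\xi_0}>0$ is to guarantee that $\vec U_s(P_w)$ remains strictly inside $\mathcal C_s$ for all $P_w$ with $\xi(P_w)\le \xi_0$; in particular $c_s>0$ and $\vec U_s$ is a genuine supersonic state on the shock polar, so that Lemmas \ref{lem:omegaRangeS}--\ref{lem:PerturbationConstants} apply and the formal values in \eqref{eq:FormalDerivativesEquations} are well-defined with uniformly controlled magnitude. Once this is in hand, the rest is a straightforward Lipschitz perturbation argument.
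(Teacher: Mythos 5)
Your proposal is correct and follows essentially the same route as the paper: control the variation of $\bar{\partial}^\pm c$ across the thin shock layer via the second-order bound \eqref{eq:SecondOrderEstimates} and the narrow estimate, then read the shock relation at $P^s$ and the wall relation at $P_w$ as a $2\times2$ linear system that is an $O(\epsilon M_s(\xi_0)/\xi)$ perturbation of \eqref{eq:FormalDerivativesEquations}, and invert using $|g|\leq \bar K<1$. The only cosmetic difference is that the paper absorbs the $O(\epsilon)$ coefficient perturbation of $g$ into the inhomogeneous terms (using $|\bar{\partial}^- c|\leq M_-/\xi$) so that both systems share identical coefficients, whereas you keep the perturbed coefficients and invoke uniform invertibility; these are equivalent.
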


\begin{proof}This theorem can be proved in a similar way as \eqref{eq:SpecialSolutionPerturbationEstimate3} in Theorem \ref{thm:SpecialSolutionPerturbationEstimate}. We give an outline here.

By the narrow estimate, we have
\begin{equation}\label{eq:Fluctuation}
\abs{\bar{\partial}^\pm c(P^s)-\bar{\partial}^\pm c(P)}+\abs{\bar{\partial}^\pm c(P_w)-\bar{\partial}^\pm c(P)}\leq \abs{\nabla^2 (u, v)}\abs{P^sP_w}\leq \frac{\epsilon M_s(\xi_0)}{\xi}.
\end{equation}
On $P^s$, in view of \eqref{eq:ShockPolar}, we have
\begin{equation*}
\bar{\partial}^+c(P^s)=g(u, \epsilon)(P^s)\bar{\partial}^-c(P^s).
\end{equation*}
By \eqref{eq:SpecialSolutionPerturbationEstimate3}, \eqref{eq:SonicCharacteristicBoundInductive}, \eqref{eq:Fluctuation} and above, it yields
\begin{equation}\label{eq:Fluctuation1}
\begin{split}
\abs{\bar{\partial}^+c(P)-g(\vec{U}_s(P_w), 0)\bar{\partial}^-c(P)}&\leq \abs{g(u, \epsilon)(P^s)-g(\vec{U}_s(P_w), 0)}\abs{\bar{\partial}^-c(P)}\\
&\quad+\abs{\bar{\partial}^+c(P)-\bar{\partial}^+c(P^s)}\\
&\quad+\abs{g(u, \epsilon)(P^s)}\abs{\bar{\partial}^-c(P^s)-\bar{\partial}^-c(P)}\\
&\leq \frac{\epsilon M_s(\xi_0)}{\xi}.
\end{split}
\end{equation}
On $P_w$, in view of \eqref{eq:WallRelationCharacteristic}, we have
\begin{equation*}
\bar{\partial}^-c(P_w)-\bar{\partial}^+c(P_w)=(\gamma-1)q(P_w)\kappa_w(P_w).
\end{equation*}
As \eqref{eq:Fluctuation1}, by \eqref{eq:Fluctuation}, it yields
\begin{equation}\label{eq:Fluctuation2}
\abs{\bar{\partial}^-c(P)-\bar{\partial}^+c(P)-(\gamma-1)q_s(P_w)\kappa_w(P_w)}\leq \frac{\epsilon M_s(\xi_0)}{\xi}.
\end{equation}
\eqref{eq:FormalDerivativesEquations} can be viewed as linear equations of $(\bar{\parbar}^+ c_s(P_w), \bar{\parbar}^- c_s(P_w))$. \eqref{eq:Fluctuation1} and \eqref{eq:Fluctuation2} indicate that $(\bar{\partial}^+c(P), \bar{\partial}^-c(P))$ serves the linear equations with the same coefficients, while their inhomogeneous terms get perturbations with $\epsilon M_s(\xi_0)$ estimates. This observation yields the perturbation estimate \eqref{eq:CPerturbation} between $(\bar{\parbar}^+ c_s(P_w), \bar{\parbar}^- c_s(P_w))$ and $(\bar{\partial}^+c(P), \bar{\partial}^-c(P))$.
\end{proof}

\section{Convex wedge and the extinction of shock wave}\label{sect:ConvexWedge}
In this section, we prove Theorem \ref{thm:MainTheorem2}. Under the assumption of convex wedge, in order to show the global existence of the solution, unlike obtaining estimates above, we now need to calculate the sign of the key quantities clearly. In the previous proofs, we calculated the quantities near the limit solution and obtained the required estimate through continuity. For the case $f_\infty=0$, this approach is not feasible here, because many quantities have complex singularities that are difficult to calculate. The handling of this part is the key to our article. We believe that our disposing of this part will provide inspiration for solving general problems.

\subsection{Some sign results along the shock polar}\label{subsect:SignofShockStates}
\begin{thm}\label{thm:ShockPolarI}
For any positive constants $\delta_1>0$, there exists $\bar{\epsilon}_1>0$, such that for any $0<\epsilon<\bar{\epsilon}_1$, $\frac{\gamma-1}{2}\bar{q}+\delta_1<u<\bar{q}$ and $v>\delta_1$, we have
\begin{equation}\label{eq:InnerCondition}
\begin{cases}
(\sin\alpha, -\cos\alpha)\cdot(-G_u, -G_v)>0,\\
(\sin\beta, -\cos\beta)\cdot(-G_u, -G_v)>0,
\end{cases}
\end{equation}
on the shock polar \eqref{eq:ShockPolar}.
\end{thm}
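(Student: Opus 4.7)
The plan is to rewrite the two inequalities in \eqref{eq:InnerCondition} in terms of the angles $\alpha$, $\beta$, and the shock-polar tangent angle $k$ from \eqref{eq:kexpression}, verify them explicitly at $\epsilon=0$ using the circular geometry of \eqref{eq:ShockPolarS}, and then extend to small $\epsilon$ by a compactness/continuity argument.

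The first step is to rewrite the dot products. Since $G_v=2v+O(\epsilon)>0$ on the region of interest for small $\epsilon$, and $\tan k=-G_u/G_v$ with $k\in[-\pi/2,0]$, one has $(-G_u,-G_v)=\sqrt{G_u^2+G_v^2}\,(\sin k,-\cos k)$. Consequently
\begin{equation*}
(\sin\alpha,-\cos\alpha)\cdot(-G_u,-G_v)=\sqrt{G_u^2+G_v^2}\,\cos(\alpha-k),
\end{equation*}
and analogously for $\beta$, so \eqref{eq:InnerCondition} is equivalent to $\cos(\alpha-k)>0$ and $\cos(\beta-k)>0$.

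Next, at $\epsilon=0$ I use \eqref{eq:kLimit}, i.e., $k=2\tau-\pi/2$, together with $\alpha=\tau+\omega$ and $\beta=\tau-\omega$, to compute $\alpha-k=\pi/2-\beta$ and $\beta-k=\pi/2-\alpha$; hence $\cos(\alpha-k)=\sin\beta$ and $\cos(\beta-k)=\sin\alpha$. Theorem~\ref{thm:gRangeS} then provides both signs on the set $\{G(\vec{U},0)=0,\ \tfrac{\gamma-1}{2}\bar{q}+\delta_1\le u<\bar{q},\ v\ge\delta_1\}$: part (1) yields $\sin\alpha>0$ (applicable since $\tfrac{\gamma-1}{2}>\tfrac{\gamma-1}{\gamma+1}$), and part (2) yields $\sin\beta>0$. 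These lower bounds are uniform, since the $(u,v)$-constraints cut out a compact arc of the shock polar.

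Finally, I close the argument by contradiction. If the conclusion fails, one obtains sequences $\epsilon_n\to 0^+$ and $\vec{U}_n$ with $G(\vec{U}_n,\epsilon_n)=0$, $u_n\in[\tfrac{\gamma-1}{2}\bar{q}+\delta_1,\bar{q}]$, $v_n\ge\delta_1$, and one of the two dot products $\le 0$. The shock polar deforms continuously in $\epsilon$, so $\{\vec{U}_n\}$ is bounded; extracting a subsequence, $\vec{U}_n\to\vec{U}^*$ with $G(\vec{U}^*,0)=0$ and the same $(u,v)$-bounds. Because $v^*\ge\delta_1$ keeps $G_v>0$ and forces $u^*<\bar{q}$, while $u^*\ge\tfrac{\gamma-1}{2}\bar{q}+\delta_1$ keeps the flow supersonic so that $\alpha$ and $\beta$ are well defined, all of $\alpha$, $\beta$, $k$, and $\sqrt{G_u^2+G_v^2}$ are continuous at $(\vec{U}^*,0)$; passing to the limit contradicts the sign established in the previous paragraph. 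The main obstacle is exactly this uniform continuity at the limit point: the three hypotheses $u\ge\tfrac{\gamma-1}{2}\bar{q}+\delta_1$, $u<\bar{q}$, and $v\ge\delta_1$ are tailored precisely to forbid the degeneracies (sonic transition, $c\to 0$, $G_v\to 0$) that would break it.
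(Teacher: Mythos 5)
Your proposal is correct and follows essentially the same route as the paper: reduce to the case $\epsilon=0$, where the gradient of the circular shock polar points in the direction $(\cos 2\tau,\sin 2\tau)$ (equivalently $k=2\tau-\pi/2$), so the two inner products reduce to positive multiples of $\sin\beta$ and $\sin\alpha$, whose signs are supplied by Theorem~\ref{thm:gRangeS}, and then conclude for small $\epsilon$ by continuity on the compact set cut out by the $\delta_1$-constraints. Your compactness/contradiction formulation of the final step is just a more explicit version of the paper's ``by continuity'' argument.
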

\begin{proof}
On the shock polar \eqref{eq:ShockPolar}, we employ $v$ as the parameter around $(\underline{u}, 0)$ and define
\begin{equation}\label{eq:HJExpressions}
\begin{cases}
H(v, \epsilon):=(\sin\alpha, -\cos\alpha)\cdot(-G_u, -G_v),\\
J(v, \epsilon):=(\sin\beta, -\cos\beta)\cdot(-G_u, -G_v).
\end{cases}
\end{equation}
For $\epsilon=0$, by \eqref{eq:ShockPolarS}
\begin{equation*}
G_u=2u - \bar{q},\quad G_v=2v.
\end{equation*}
In such a case, noticing that \eqref{eq:ShockPolarS} is a circle, we have $G^2_u+G^2_v=\bar{q}^2$, 
\begin{equation*}
\qnt{G_u, G_v}=\bar{q}\qnt{\cos2\tau, \sin2\tau}
\end{equation*}
and \eqref{eq:HJExpressions} becomes
\begin{equation*}
\begin{cases}
H(v, 0)=\bar{q}\sin\beta,\\
J(v, 0)=\bar{q}\sin\alpha.
\end{cases}
\end{equation*}

By Theorem \ref{thm:gRangeS}, we have $\sin\alpha>0$ and $\sin\beta>0$ for the points on the shock polar \eqref{eq:ShockPolarS} satisfying $\frac{\gamma-1}{2}\bar{q}<u<\bar{q}$ and $v>0$. So, by continuity, there exists $\bar{\epsilon}_1>0$, which is depending on $\delta_1$, for the points on the shock polar \eqref{eq:ShockPolarS} satisfying $\epsilon<\bar{\epsilon}_1$, $\frac{\gamma-1}{2}\bar{q}+\delta_1\leq u$ and $v\geq \delta_1$, we have
\begin{equation*}
H(v, \epsilon)>m_g>0,\quad\mbox{and}\quad J(v, \epsilon)>m_g>0
\end{equation*}
where $m_g$ is a constant.
\end{proof}

We can see in the above proof that the lower positive bound of $v$ plays an important role to ensure the validity of \eqref{eq:InnerCondition}. If the wedge boundary approaches a flat, this assumption on $v$ will be invalid. It makes the estimates similar to \eqref{eq:InnerCondition} become much more complex. For this case, we have the following

\begin{thm}\label{thm:ShockPolarIV}
For the adiabatic index $\gamma=2$, there exist positive constants $K_{\bar{q}}<1$, $\delta_2$ and $\bar{\epsilon}_2$, such that for any $0<\epsilon<\bar{\epsilon}_2$, $\bar{q}-\delta_2<u<\underline{u}$ and $0<v<\delta_2$, we have both \eqref{eq:InnerCondition} and 
\begin{equation}\label{eq:gBoundV}
0<g\leq K_{\bar{q}}
\end{equation}
on the shock polar \eqref{eq:ShockPolar}.
\end{thm}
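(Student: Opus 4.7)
The plan is to exploit the low-degree polynomial algebra afforded by $\gamma=2$, under which $\rho(u,v) = \tfrac14(\bar q^{2}-u^{2}-v^{2})$, $\epsilon = \underline\rho\,\underline u = \tfrac14(\bar q^{2}-\underline u^{2})\underline u$, and $c^{2} = \tfrac12(\bar q^{2}-u^{2}-v^{2})$ are all rational in their arguments. The shock polar \eqref{eq:ShockPolar}, the slopes $\lambda_\pm$ in \eqref{eq:lambda}, the shock angle $s$ in \eqref{eq:sExpression}, the polar-tangent angle $k$ in \eqref{eq:kexpression} and the ratio $g$ in \eqref{eq:gexpression} are then algebraic and can be manipulated symbolically. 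The regime of the theorem is a neighbourhood of the tangent point $(u,v) = (\underline u, 0)$ of the polar with $\underline u$ itself close to $\bar q$, so at the corner $(u,v,\epsilon) = (\bar q, 0, 0)$ every factor appearing in \eqref{eq:InnerCondition} and \eqref{eq:gexpression} is simultaneously small. The argument must therefore be a careful asymptotic expansion at this corner, followed by a continuity/perturbation step to transfer leading-order information to a full neighbourhood.

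First I would parametrize the polar by $\mu := \underline u - u > 0$, as forced by the entropy condition. From \eqref{eq:ShockPolar} we get $v^{2} = -(u-\underline u)(u-\epsilon/\rho) = \mu(u-\epsilon/\rho)$; combined with the identity $\underline c^{2} = 2\underline\rho$ valid for $\gamma=2$, a short expansion yields $v = \mu\sqrt{\underline{\mathbf M}^{2}-1} + O(\mu^{2})$, so $v$ and $\mu$ are of the same order along the polar. Direct differentiation produces the rational expressions
\begin{equation*}
G_{v} = v\Bigl[2 - \tfrac{\epsilon(u-\underline u)}{2\rho^{2}}\Bigr], \qquad G_{u} = \Bigl(1 - \tfrac{\epsilon u}{2\rho^{2}}\Bigr)(u-\underline u) + \Bigl(u - \tfrac{\epsilon}{\rho}\Bigr),
\end{equation*}
both vanishing at $(\underline u, 0)$ and of order $\mu$ along the polar. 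Simultaneously the downstream angles $\alpha, \beta$ tend to the upstream Mach angles $\pm\omega_{\underline u}$ and the shock angle $s$ tends to $+\omega_{\underline u}$, so each factor of the scalar products in \eqref{eq:InnerCondition} and of the ratio \eqref{eq:gexpression} is indeterminate and must be expanded to leading order.

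The computational heart is then: (i) write $G_{u}, G_{v}$ as explicit rational functions of $(u,v,\underline u)$; (ii) write $\sin\alpha, \cos\alpha, \sin\beta, \cos\beta$ as algebraic functions of $(u,v)$ via \eqref{eq:lambda}; (iii) substitute the polar parametrization and extract the leading term in $\mu$ of
\begin{equation*}
(\sin\alpha,-\cos\alpha)\cdot(-G_{u},-G_{v}) \quad \text{and} \quad (\sin\beta,-\cos\beta)\cdot(-G_{u},-G_{v}),
\end{equation*}
verifying that each leading coefficient is a strictly positive polynomial in $\underline u$ when $\underline u$ is close to $\bar q$. For \eqref{eq:gBoundV}, I would use the same expansions to compute $\lim_{\mu\to 0} g$ along the $\epsilon$-polar and show that, as $\underline u \to \bar q$, this limit extends continuously to the value $\tfrac{\sqrt{2}-1}{\sqrt{2}+1} = 3 - 2\sqrt{2}$ already identified in \eqref{eq:gLimitS} for $\gamma=2$. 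Choosing any $K_{\bar q} \in (3 - 2\sqrt{2}, 1)$, a continuity argument then gives $g \le K_{\bar q}$ in a full neighbourhood of the corner, which combined with Theorem~\ref{thm:ShockPolarI} covers the complementary regime.

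The main obstacle is the bookkeeping of leading coefficients in this doubly-singular expansion: both $\mu\to 0^+$ (approach to the tangent point along the polar) and $\epsilon\to 0^+$ (the corner $\underline u\to\bar q$) must be tracked simultaneously, and many of the candidate scalar products have all factors degenerating. The choice $\gamma = 2$ is essential precisely here: it reduces every algebraic quantity appearing in the analysis to a polynomial of small degree, so the expansions can be carried out symbolically (as the authors anticipate in their remarks on computer-algebra systems) and the positivity of the leading terms can be read off explicitly. For general $\gamma \in (1,3)$ the fractional powers in $\rho$ and $c$ would make these expansions substantially more delicate.
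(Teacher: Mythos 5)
There is a genuine gap in your treatment of \eqref{eq:gBoundV}, and it is precisely the trap the paper's proof is built to avoid. You propose to compute $\lim_{\mu\to0}g$ along the $\epsilon$-polar, identify the value $3-2\sqrt{2}$ from \eqref{eq:gLimitS}, and then conclude $g\le K_{\bar q}$ ``in a full neighbourhood of the corner'' by continuity. But $g(\vec U,\epsilon)$ has \emph{no} continuous extension to the corner $(u,v,\epsilon)=(\bar q,0,0)$: for fixed $\epsilon>0$ the tangent to the shock polar at its tail $(\underline u,0)$ is the Mach direction, so $\cos(k-\alpha)\to0$ and hence $\lim_{v\to0^+}g=0$, whereas along $\epsilon=0$, $u\to\bar q$ one gets $3-2\sqrt2$. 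Different paths give different limits (the paper's model is $y/(x+y)$ near the origin), so the value $3-2\sqrt2$ is not ``the'' limit at the corner and no continuity argument can convert it into a uniform bound. What is actually needed — and what the paper does — is a quantitative two-parameter estimate: factor $g=\frac{\sin(s-\alpha)}{\sin(\beta-s)}\cdot\frac{\cos(k-\alpha)}{\cos(k-\beta)}$, bound the first factor in $[0,1]$ by the ordering $\beta\le\tau\le s\le\alpha$, and reduce the second to the boundedness of $(n-\alpha)/\omega$ with $n=\tfrac\pi2+k$, which in turn is the boundedness of $\mathcal K=\mathcal F/\bigl(\mathcal G_u^2(u^2-c^2)\tan^2\omega\bigr)$. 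That last step requires the nontrivial relations $U\le 3(V+Y)$ and $\tfrac{VU}{6}\le (U-Y)^2\le 6VU$ extracted from $\mathcal G=0$ in the variables $u=1-U$, $v^2=V$, $\underline u=1-Y$; none of this is recoverable from a single limit computation.

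For \eqref{eq:InnerCondition} your plan is closer in spirit to the paper, but note two points. First, the product $(\sin\alpha,-\cos\alpha)\cdot(-G_u,-G_v)$ vanishes identically at $v=0$ for every $\epsilon>0$ (shock and rarefaction curves are tangent to second order at the generation point), so the ``leading term in $\mu$'' you must extract is a third-order quantity whose coefficient itself degenerates as $\underline u\to\bar q$; your closing paragraph acknowledges this double degeneracy but does not resolve it. The paper sidesteps pointwise expansion entirely: it forms the single polynomial $\mathcal F$ whose sign equals that of the product of the two factors, shows by polynomial division ($\mathcal F=\mathcal P\mathcal G+\mathcal R$) and a root analysis that $\mathcal F=0$ and $\mathcal G=0$ have no common solution in the corner region $0<Y<U$, $0<V<U$, and then propagates the sign from the region already covered by Theorem \ref{thm:ShockPolarI}. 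You would need to supply an argument of comparable strength — either carry the expansion uniformly to the order where it is nondegenerate in both small parameters, or adopt the paper's resultant-style non-vanishing argument.
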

By the theory in quasilinear hyperbolic system, the shock wave curve and rarefaction wave curve are tangent to each other in the second order at the generation point, see \cite[(5.36) in Page 98]{Bressan2000OLSMA}. Thus, to show the first inequality of \eqref{eq:InnerCondition} near the $(\underline{u}, 0)$, we should calculate the third-order difference of the wave curves. It is a very complicated computation. For a fixed supersonic incoming speed $\underline{u}$, \eqref{eq:InnerCondition} has been proved by Taylor expansion for the point on shock polar near $(\underline{u}, 0)$ in \cite[Lemma 3.2]{Chen1998CAMS}. But, for our case, we need to show \eqref{eq:InnerCondition} and \eqref{eq:gBoundV} uniformly for the point around $(\bar{q}, 0)$, which is a vacuum point and $g$ has complicated singularity at this point. The implicit function theorem is not valid and Taylor expansion is difficult to apply in this situation. Here, we choose the adiabatic index $\gamma=2$. Under this assumption, many quantities become polynomials of $(u, v, \underline{u})$. They are easily handled with the software Mathematica.
\begin{proof}
(1) Under the assumption $\gamma=2$, we have the states function $p=\rho^2$ and $c^2=2\rho$. Let $\bar{q}=1$, then the Bernoulli's law takes the form
\begin{equation}\label{eq:BernoulliLaw2}
\frac{u^2+v^2}{2}+2\rho=\frac{1}{2}.
\end{equation}
We consider the quantity $\mathcal{F}$ defined as follows
\begin{equation}\label{eq:Fexpressionuvu}
\mathcal{F}(u, v, \underline{u}):=(u^2 - c^2)\mathcal{G}_v^2- 2 u v \mathcal{G}_u \mathcal{G}_v + (v^2 - c^2)\mathcal{G}_u^2,
\end{equation}
where
\begin{equation}\label{eq:Gexpressionuvu}
\mathcal{G}(u, v, \underline{u})=(\rho u-\epsilon(\underline{u}))(u-\underline{u})+\rho v^2.
\end{equation}

Then the shock polar \eqref{eq:ShockPolar} is equivalent to $\mathcal{G}=0$. The function $\epsilon(\underline{u})$ is a cubic polynomial of $\underline{u}$
\begin{equation}\label{eq:Eexpressionu}
\epsilon(\underline{u})=\underline{\rho}\underline{u}=\frac{1}{4}\qnt{1-\underline{u}^2}\underline{u}.
\end{equation}
Solving \eqref{eq:BernoulliLaw2} gives $\rho=\frac14-\frac{u^2+v^2}{4}$. Inserting it and \eqref{eq:Eexpressionu} into \eqref{eq:Fexpressionuvu} and \eqref{eq:Gexpressionuvu} respectively, it follows that both $\mathcal{F}$ and $\mathcal{G}$ are polynomials of $(u, v, \underline{u})$
\begin{equation*}
\begin{split}
\mathcal{F}&=\frac{1}{32} (-4 u^2 + 20 u^4 - 32 u^6 + 16 u^8 - 4 v^2 + 40 u^2 v^2\\
&\quad- 96 u^4 v^2 + 64 u^6 v^2 + 20 v^4 - 96 u^2 v^4 + 96 u^4 v^4\\
&\quad - 32 v^6 + 64 u^2 v^6 + 16 v^8 + 8 u \underline{u} - 36 u^3 \underline{u} + 52 u^5 \underline{u}\\
&\quad - 24 u^7 \underline{u} - 36 u v^2 \underline{u} + 104 u^3 v^2 \underline{u} - 72 u^5 v^2 \underline{u}\\
&\quad + 52 u v^4 \underline{u} - 72 u^3 v^4 \underline{u} - 24 u v^6 \underline{u} - 4 \underline{u}^2 + 16 u^2 \underline{u}^2\\
&\quad - 21 u^4 \underline{u}^2 + 9 u^6 \underline{u}^2 + 16 v^2 \underline{u}^2 - 34 u^2 v^2 \underline{u}^2\\
&\quad + 21 u^4 v^2 \underline{u}^2 - 13 v^4 \underline{u}^2 + 15 u^2 v^4 \underline{u}^2 + 3 v^6 \underline{u}^2\\
&\quad - 4 u \underline{u}^3 + 12 u^3 \underline{u}^3 - 8 u^5 \underline{u}^3 + 12 u v^2 \underline{u}^3 - 16 u^3 v^2 \underline{u}^3\\
&\quad - 8 u v^4 \underline{u}^3 + 4 \underline{u}^4 - 10 u^2 \underline{u}^4 + 6 u^4 \underline{u}^4 - 14 v^2 \underline{u}^4\\
&\quad + 12 u^2 v^2 \underline{u}^4 + 6 v^4 \underline{u}^4 - \underline{u}^6 + u^2 \underline{u}^6 + 3 v^2 \underline{u}^6),
\end{split}
\end{equation*}
and
\begin{equation*}
\mathcal{G}=\frac{1}{4} (u^2 - u^4 + v^2 - 2 u^2 v^2 - v^4 - 2 u \underline{u} + u^3 \underline{u} + u v^2 \underline{u} + 
\underline{u}^2 + u \underline{u}^3 - \underline{u}^4).
\end{equation*}

Introduce the new variables $\qnt{U, V, Y}$ as follows
\begin{equation}\label{eq:uvyUVY}
u=1-U,\quad v^2=V,\quad \underline{u}=1-Y.
\end{equation}
It is obviously that 
\begin{equation}\label{eq:UVYCorner1}
V\geq0.
\end{equation} 
Since $u\leq\underline{u}\leq\bar{q}=1$, we have 
\begin{equation}\label{eq:UVYCorner2}
0\leq Y\leq U\leq 1.
\end{equation} 
To consider the states bounded by the shock polar \eqref{eq:ShockPolarS}, we have
\begin{equation*}
u^2-u+v^2\leq 0.
\end{equation*}
Inserting \eqref{eq:uvyUVY} into above, it yields 
\begin{equation}\label{eq:UVYCorner3}
V\leq U(1-U)\leq U.
\end{equation}
Simultaneously, $\mathcal{F}$ and $\mathcal{G}$ become
\begin{equation*}
\begin{split}
\mathcal{F}&=\frac{1}{32} (-32 U^3 + 160 U^4 - 298 U^5 + 257 U^6 - 104 U^7 + 16 U^8\\
&\quad + 72 U^2 V - 348 U^3 V + 525 U^4 V - 312 U^5 V + 64 U^6 V\\
&\quad - 50 U V^2 + 279 U^2 V^2 - 312 U^3 V^2 + 96 U^4 V^2 + 11 V^3\\
&\quad - 104 U V^3 + 64 U^2 V^3 + 16 V^4 + 64 U^2 Y - 272 U^3 Y\\
&\quad + 448 U^4 Y - 368 U^5 Y + 150 U^6 Y - 24 U^7 Y - 64 U V Y\\
&\quad + 320 U^2 V Y - 496 U^3 V Y + 318 U^4 V Y - 72 U^5 V Y + 16 V^2 Y\\
&\quad - 128 U V^2 Y + 186 U^2 V^2 Y - 72 U^3 V^2 Y + 18 V^3 Y\\
&\quad - 24 U V^3 Y - 32 U Y^2 + 64 U^2 Y^2 - 36 U^3 Y^2 + 30 U^4 Y^2\\
&\quad - 30 U^5 Y^2 + 9 U^6 Y^2 + 24 V Y^2 - 52 U V Y^2 + 20 U^2 V Y^2\\
&\quad - 36 U^3 V Y^2 + 21 U^4 V Y^2 + 14 V^2 Y^2 - 6 U V^2 Y^2\\
&\quad + 15 U^2 V^2 Y^2 + 3 V^3 Y^2 + 48 U Y^3 - 80 U^2 Y^3 + 28 U^3 Y^3\\
&\quad + 16 U^4 Y^3 - 8 U^5 Y^3 - 48 V Y^3 + 60 U V Y^3 - 16 U^3 V Y^3\\
&\quad - 16 V^2 Y^3 - 8 U V^2 Y^3 - 34 U Y^4 + 41 U^2 Y^4 - 24 U^3 Y^4\\
&\quad + 6 U^4 Y^4 + 43 V Y^4 - 24 U V Y^4 + 12 U^2 V Y^4 + 6 V^2 Y^4\\
&\quad + 12 U Y^5 - 6 U^2 Y^5 - 18 V Y^5 - 2 U Y^6 + U^2 Y^6 + 3 V Y^6),
\end{split}
\end{equation*}
and
\begin{equation*}
\begin{split}
\mathcal{G}&=\frac{1}{4} (-2 U^2 + 3 U^3 - U^4 + 3 U V - 2 U^2 V - V^2 + 4 U Y - 3 U^2 Y\\
& + U^3 Y - V Y + U V Y - 2 Y^2 - 3 U Y^2 + 3 Y^3 + U Y^3 - Y^4).
\end{split}
\end{equation*}

For $\mathcal{F}$ and $\mathcal{G}$, with the constrain \eqref{eq:UVYCorner1}-\eqref{eq:UVYCorner3}, we claim that there is no real solution for 
\begin{equation}\label{eq:FGeq}
\mathcal{F}=0,\quad\mbox{and}\quad\mathcal{G}=0
\end{equation} 
around $(U, V, Y)=(0, 0, 0)$ with
\begin{equation}\label{eq:UVYCorner}
0< Y< U,\quad\mbox{and}\quad 0< V< U.
\end{equation}

In fact, treating $V$ as the independent variable, we divide $\mathcal{F}$ by $\mathcal{G}$ to obtain
\begin{equation*}
\mathcal{F}=\mathcal{P}\mathcal{G}+\mathcal{R}
\end{equation*}
where
\begin{equation*}
\begin{split}
\mathcal{P}&=\frac{1}{8} (17 U - 57 U^2 + 56 U^3 - 16 U^4 - 11 V + 56 U V - 32 U^2 V\\
&\quad -16 V^2 - 5 Y - 9 U Y - 22 U^2 Y + 8 U^3 Y - 2 V Y + 8 U V Y\\
&\quad +20 Y^2 + 35 U Y^2 - U^2 Y^2 - 3 V Y^2 - 29 Y^3 - 11 U Y^3 + 10 Y^4)
\end{split}
\end{equation*}
and
\begin{equation*}
\begin{split}
\mathcal{R}&=\frac{1}{32} (2 U^3 - 5 U^4 + 2 U^5 - U^2 V + 2 U^3 V - 14 U^2 Y + 4 U^3 Y\\
&\quad + 14 U^4 Y - 6 U^5 Y + 12 U V Y + 2 U^2 V Y - 6 U^3 V Y + 22 U Y^2\\
&\quad + 62 U^2 Y^2 - 19 U^3 Y^2 - 10 U^4 Y^2 + 6 U^5 Y^2 - 3 V Y^2\\
&\quad - 29 U V Y^2 + 2 U^2 V Y^2 + 6 U^3 V Y^2 - 10 Y^3 - 116 U Y^3\\
&\quad - 135 U^2 Y^3 + 21 U^3 Y^3 - 2 U^4 Y^3 - 2 U^5 Y^3 + V Y^3\\
&\quad + 27 U V Y^3 - 6 U^2 V Y^3 - 2 U^3 V Y^3 + 55 Y^4 + 261 U Y^4\\
&\quad + 139 U^2 Y^4 - 7 U^3 Y^4 + 3 U^4 Y^4 + 3 V Y^4 - 11 U V Y^4\\
&\quad + 3 U^2 V Y^4 - 123 Y^5 - 271 U Y^5 - 63 U^2 Y^5 - U^3 Y^5 - V Y^5\\
&\quad + U V Y^5 + 127 Y^6 + 125 U Y^6 + 11 U^2 Y^6 - 59 Y^7 - 21 U Y^7\\
&\quad + 10 Y^8).
\end{split}
\end{equation*}

Suppose to the contrary that \eqref{eq:FGeq} admits real solutions around $(U, V, Y)=(0, 0, 0)$ satisfying \eqref{eq:UVYCorner}. Then it follows from \eqref{eq:Fexpressionuvu} that 
\begin{equation*}
\mathcal{G}=0,\quad\mbox{and}\quad\mathcal{R}=0.
\end{equation*}
Note that $\mathcal{G}=0$ is a quadratic equation of $V$ and $\mathcal{R}=0$ is a linear equation of $V$. We obtain by $\mathcal{G}=0$ that 
\begin{equation*}
V=\frac{\mathcal{G}_r\pm\sqrt{\mathcal{G}_i}}{2},
\end{equation*}
where
\begin{equation*}
\mathcal{G}_r=3 U - 2 U^2 - Y + U Y,
\end{equation*}
and
\begin{equation*}
\mathcal{G}_i=U^2 + 10 U Y - 2 U^2 Y - 7 Y^2 - 14 U Y^2 + U^2 Y^2 + 12 Y^3 + 
4 U Y^3 - 4 Y^4.
\end{equation*}

Next, we go on to solve $V$ from $\mathcal{R}=0$. Note that $\mathcal{R}=\mathcal{R}_1V+\mathcal{R}_0$, where 
\begin{equation*}
\begin{split}
\mathcal{R}_1&=-U^2 + 2 U^3 + 12 U Y + 2 U^2 Y - 6 U^3 Y - 3 Y^2 - 29 U Y^2\\
&\quad + 2 U^2 Y^2 + 6 U^3 Y^2 + Y^3 + 27 U Y^3 - 6 U^2 Y^3 - 2 U^3 Y^3\\
&\quad + 3 Y^4 - 11 U Y^4 + 3 U^2 Y^4 - Y^5 + U Y^5,
\end{split}
\end{equation*}
and 
\begin{equation*}
\begin{split}
\mathcal{R}_0&=2 U^3 - 5 U^4 + 2 U^5 - 14 U^2 Y + 4 U^3 Y + 14 U^4 Y - 6 U^5 Y\\
&\quad + 22 U Y^2 + 62 U^2 Y^2 - 19 U^3 Y^2 - 10 U^4 Y^2 + 6 U^5 Y^2\\
&\quad - 10 Y^3 - 116 U Y^3 - 135 U^2 Y^3 + 21 U^3 Y^3 - 2 U^4 Y^3\\
&\quad - 2 U^5 Y^3 + 55 Y^4 + 261 U Y^4 + 139 U^2 Y^4 - 7 U^3 Y^4\\
&\quad + 3 U^4 Y^4 - 123 Y^5 - 271 U Y^5 - 63 U^2 Y^5 - U^3 Y^5 + 127 Y^6\\
&\quad + 125 U Y^6 + 11 U^2 Y^6 - 59 Y^7 - 21 U Y^7 + 10 Y^8.
\end{split}
\end{equation*}
The equation $\mathcal{R}=0$ can be solved for two cases. 

{\bf Case 1}:
\begin{equation*}
\mathcal{R}_1=0 \quad \text{and}\quad \mathcal{R}_0=0.
\end{equation*}

{\bf Case 2}:
\begin{equation*}
V=-\frac{\mathcal{R}_0}{\mathcal{R}_1}.
\end{equation*}

For {\bf Case 1}, as $(U, Y)\rightarrow(0, 0)$, in view of \eqref{eq:UVYCorner}, the solution $(U, Y)$ of $\mathcal{R}_1=0$ obeys
\begin{equation*}
\frac{U}{Y}\rightarrow \mbox{a root of} \quad{x^2-12x+3=0},
\end{equation*}
while the solution $(U, Y)$ of $\mathcal{R}_0=0$ obeys
\begin{equation*}
\frac{U}{Y}\rightarrow \mbox{a root of} \quad{x^3-7x^2+11x-5=0}.
\end{equation*}
These two cannot coexist for $(U, Y)$ near $(0, 0)$.

For {\bf Case 2}, we have
\begin{equation*}
\frac{\mathcal{G}_r\pm\sqrt{\mathcal{G}_i}}{2}=-\frac{\mathcal{R}_0}{\mathcal{R}_1}.
\end{equation*}
This leads to
\begin{equation*}
\begin{split}
&-4 (U - Y)^4 (-2 + Y) (-1 + Y)^3 Y (-U - 11 Y + U Y + 5 Y^2) \\
&\quad\cdot(1 - 
2 U - 5 Y - 4 U Y + 13 Y^2 + 2 U Y^2 - 5 Y^3)=0.
\end{split}
\end{equation*}
It cannot hold for $0<Y<U<<1$.

So we have $\mathcal{F}\neq0$ around $(U, V, Y)=(0, 0, 0)$ provided $\mathcal{G}=0$ and \eqref{eq:UVYCorner}. Since $\mathcal{G}=0$ is the shock polar, we have
\begin{equation*}
\frac{\mathcal{G}_v}{\mathcal{G}_u}=\frac{G_v}{G_u}=\tan \qnt{k+\frac{\pi}{2}}.
\end{equation*}
Then
\begin{equation*}
\begin{split}
&\sgn\qnt{\qnt{\tan \qnt{k+\frac{\pi}{2}}-\tan\alpha}\qnt{\tan \qnt{k+\frac{\pi}{2}}-\tan\beta}}\\
&=\sgn\qnt{\qnt{\frac{G_v}{G_u}-\lambda_+}\qnt{\frac{G_v}{G_u}-\lambda_-}}\\
&=\sgn\qnt{\qnt{\frac{\mathcal{G}_v}{\mathcal{G}_u}-\lambda_+}\qnt{\frac{\mathcal{G}_v}{\mathcal{G}_u}-\lambda_-}}\\
&=\sgn\mathcal{F}.
\end{split}
\end{equation*}
By Theorem \ref{thm:ShockPolarI}, since
\begin{equation*}
\tan \qnt{k+\frac{\pi}{2}}-\tan\alpha>0\quad\mbox{and}\quad\tan \qnt{k+\frac{\pi}{2}}-\tan\beta>0,
\end{equation*}
for $(u, v)$ on $\mathcal{C}_s$ with $u>\frac{\gamma-1}{2}\bar{q}$ and $v>0$, we have $\mathcal{F}>0$. On $\frac{\gamma-1}{2}\bar{q}<u<\bar{q}$ and $v=0$, we have
\begin{equation*}
\tan \qnt{k+\frac{\pi}{2}}-\tan\alpha=0\quad\mbox{and}\quad\tan \qnt{k+\frac{\pi}{2}}-\tan\beta>0.
\end{equation*}
Therefore, $\tan\qnt{k+\frac{\pi}{2}}-\tan\alpha$ and $\tan\qnt{k+\frac{\pi}{2}}-\tan\beta$ cannot change sign around $(u, v)=(\bar{q}, 0)$ for $u<\bar{q}$ and $v>0$. (1) is proved.

(2) Although it holds Theorem \ref{thm:gRangeS}, which confirms that $g$ obeys \eqref{eq:gEstimateS1} for $\epsilon=0$ and $v>0$ with \eqref{eq:gEstimateF1}, we cannot apply the continuity of $g(\vec{U}, \epsilon)$ to get its bound around $\vec{U}=(u, v)=(\bar{q}, 0)$ for $\epsilon<<1$. Because $g(\vec{U}, \epsilon)$ has singularity at $\vec{U}=(\bar{q}, 0)$ and is multivalued near $\vec{U}=(\bar{q}, 0)$. To see this, on the one hand, for $\epsilon=0$ and $u\rightarrow\bar{q}-0$, we have \eqref{eq:gLimitS}. On the other hand, by the results in \cite{CourantFriedrichs1976AMS}, we have
\begin{equation*}
\lim\limits_{v\rightarrow 0+,\epsilon>0}\cot k=\lim\limits_{v\rightarrow 0+,\epsilon>0}\frac{G_v}{G_u}=\tan\alpha.
\end{equation*}
This yields that 
\begin{equation*}
\lim\limits_{v\rightarrow 0+,\epsilon>0}g(\vec{U}, \epsilon)=\lim\limits_{v\rightarrow 0+,\epsilon>0}\frac{\sin(s-\alpha)\cos(k-\alpha)}{\sin(\beta-s)\cos(k-\beta)}=0.
\end{equation*}
$g$ is presented in a way similar to the function $\frac{y}{x+y}$ as follows
\begin{equation*}
\lim\limits_{x=0,y\rightarrow0+}\frac{y}{x+y}=1\neq0=\lim\limits_{y=0,x\rightarrow0+}\frac{y}{x+y}.
\end{equation*}

Now, we begin to get the bound of $g$. On the one hand, we have
\begin{equation*}
\beta\leq\tau\leq s\leq\alpha,
\end{equation*}
which yields
\begin{equation}\label{eq:sabBound}
0\leq\frac{\sin(s-\alpha)}{\sin(\beta-s)}\leq1.
\end{equation}
On the other hand, noting that $n:=\frac{\pi}{2}+k$ is the angle of the normal direction of the shock polar, we have
\begin{equation*}
\frac{\cos(k-\alpha)}{\cos(k-\beta)}=\frac{\sin(n-\alpha)}{\sin(n-\beta)}.
\end{equation*}
By the first part of this theorem, we have
\begin{equation*}
\frac{\sin(n-\alpha)}{\sin(n-\beta)}\in(0, 1).
\end{equation*}
Our mission is to show that there exists a constant $K_{\bar{q}}\in(0, 1)$ such that
\begin{equation}\label{eq:nabBound}
0<\frac{\sin(n-\alpha)}{\sin(n-\beta)}<K_{\bar{q}},
\end{equation}
for $(u, v)$ around $(\bar{q}, 0)$. Then, as a result, \eqref{eq:gBoundV} would be proved by combining \eqref{eq:sabBound} and \eqref{eq:nabBound}. To do it, first of all, we have $\lim\limits_{(u, v)\rightarrow(\bar{q}, 0)}\sin(n-\alpha)=0$. Then for $(u, v)$ around $(\bar{q}, 0)$, $c=\sqrt{\frac{\bar{q}^2}{2}-\frac{u^2+v^2}{2}}$ is small. So does the quantity $\omega=\arcsin\frac{c}{\sqrt{u^2+v^2}}$. Due to $\beta=\alpha-2\omega$, we have by equivalent infinitesimals the following
\begin{equation*}
\frac{\sin(n-\alpha)}{\sin(n-\beta)}\sim \frac{n-\alpha}{n-\beta}=\frac{n-\alpha}{n-\alpha+2\omega}.
\end{equation*}
Therefore, \eqref{eq:nabBound} is equivalent to 
\begin{equation*}
0\leq\frac{n-\alpha}{\omega}\leq M<\infty
\end{equation*}
for some positive constant $M$ around $(\bar{q}, 0)$. By equivalent infinitesimals again, to avoid the difficulties associated with the square root terms as the first part, we instead show the boundedness of $\mathcal{K}$ defined as follows
\begin{equation*}
\begin{split}
\mathcal{K}&=\frac{\qnt{\tan n-\tan\alpha}\qnt{\tan n-\tan\beta}}{\tan^2\omega}\\
&=\frac{\mathcal{G}_u^2(u^2-c^2)\qnt{\tan \qnt{k+\frac{\pi}{2}}-\tan\alpha}\qnt{\tan \qnt{k+\frac{\pi}{2}}-\tan\beta}}{\mathcal{G}_u^2(u^2-c^2)\tan^2\omega}\\
&=\frac{\mathcal{F}}{\mathcal{G}_u^2(u^2-c^2)\tan^2\omega}.
\end{split}
\end{equation*}

Firstly, for $(u, v)$ around $(\bar{q}, 0)=(1, 0)$, we have $(U, V, Y)$ around $(0, 0, 0)$ and
\begin{equation}\label{eq:omegaU}
(u^2-c^2)\tan^2\omega\sim c^2=\frac{1}{2}(1-u^2-v^2)=\frac{1}{2}(2U-U^2-V)\geq \frac{U}{4}
\end{equation}
because of \eqref{eq:UVYCorner}.

Secondly, for $\mathcal{G}$, on the one hand, around $(U, V, Y)=(0, 0, 0)$, we complete the square of its principal part (second order terms) as follows
\begin{equation*}
\begin{split}
&\frac{1}{4} (-2 U^2 + 3 U V - V^2 + 4 U Y - V Y - 2 Y^2)\\
&=\frac{1}{8}(-4 U^2 + 6 U V - 2V^2 + 8 U Y - 2V Y - 4 Y^2)\\
&=\frac{1}{8}(-3 U^2- U^2 + 6 U V - 2V^2 + 8 U Y - 2V Y - 4 Y^2)\\
&=\frac{1}{8}(-3 U^2- (U^2 - 6 U V- 8 U Y)- 2V^2 - 2V Y - 4 Y^2)\\
&=\frac{1}{8}(-3 U^2- (U^2 - 6 U V- 8 U Y)- 2V^2 - 2V Y - 4 Y^2)\\
&=\frac{1}{8}(-3 U^2- (U^2 - 6 U V- 8 U Y+(3V+4Y)^2)\\
&\quad+(3V+4Y)^2- 2V^2 - 2V Y - 4 Y^2)\\
&=\frac{1}{8}(-3 U^2- (U^2 - 6 U V- 8 U Y+(3V+4Y)^2)+7V^2 +22V Y +12 Y^2)\\
&=\frac{1}{8}(-3 U^2- (U-3V-4Y)^2+7V^2 +22V Y +12 Y^2).
\end{split}
\end{equation*}
So for $U^2+V^2\leq\Delta^2$ where $\Delta<<1$, repeat above algebraic manipulating and absorb the higher order terms, we have
\begin{equation*}
\mathcal{G}=\frac{1}{8}(-\tilde{3} U^2- (\tilde{1}U-\tilde{3}V-\tilde{4}Y)^2+\tilde{7}V^2 +\tilde{22}V Y +\tilde{12} Y^2),
\end{equation*}
where we denotes
\begin{equation*}
\tilde{i}=i+\mathcal{O}(\Delta)
\end{equation*}
and $\mathcal{O}(\Delta)$ is the infinitesimals of the same order of $\Delta$. Thus $\mathcal{G}=0$ leads to
\begin{equation*}
\tilde{3} U^2+ (\tilde{1}U-\tilde{3}V-\tilde{4}Y)^2=\tilde{7}V^2 +\tilde{22}V Y +\tilde{12} Y^2
\end{equation*}
Since $U>V>0$ and $U>Y>0$, above yields
\begin{equation*}
2U^2\leq\tilde{3} U^2\leq \tilde{7}V^2 +\tilde{22}V Y +\tilde{12} Y^2\leq 18(V+Y)^2,
\end{equation*}
provided $\Delta<<1$. Finally, due to $0<Y<U$ as in \eqref{eq:UVYCorner}, we get
\begin{equation}\label{eq:VYequU}
U\leq3(V+Y).
\end{equation}
On the other hand,
\begin{equation*}
\begin{split}
\mathcal{G}&=\frac{1}{4}\qnt{V (3 U - 2 U^2 - Y + U Y-V)-(U - Y)^2 (2 - 3 U + U^2 - 3 Y + U Y + Y^2)}\\
&=0.
\end{split}
\end{equation*}
So we have the expression
\begin{equation*}
(U - Y)^2=\frac{V (3 U - 2 U^2 - Y + U Y-V)}{2 - 3 U + U^2 - 3 Y + U Y + Y^2},
\end{equation*}
where $0<Y<U$ and $0<V<U$. 
Due to \eqref{eq:UVYCorner}, we have
\begin{equation}\label{eq:UmYequVU}
\frac{VU}{6}\leq(U - Y)^2\leq6VU
\end{equation}
provided $\Delta<<1$.

Thirdly, to calculate $\mathcal{G}_u^2$, we have
\begin{equation}\label{eq:Gu2decomposition}
\mathcal{G}_u^2=\mathcal{G}_u^2+2\mathcal{G}=\qnt{\mathcal{G}_u^2}_1+\qnt{\mathcal{G}_u^2}_2
\end{equation}
where
\begin{equation*}
\begin{split}
\qnt{\mathcal{G}_u^2}_1&=\frac{1}{16} (U - Y)^2 (-48 U + 105 U^2 - 72 U^3 + 16 U^4 + 54 U Y\\
&\quad - 
42 U^2 Y + 8 U^3 Y + 9 Y^2 - 24 U Y^2 + 9 U^2 Y^2 - 6 Y^3\\
&\quad + 
2 U Y^3 + Y^4),
\end{split}
\end{equation*}
and
\begin{equation*}
\begin{split}
\qnt{\mathcal{G}_u^2}_2&=\frac{1}{16} V (70 U^2 - 96 U^3 + 32 U^4 + V - 24 U V + 16 U^2 V + 16 Y\\
&\quad - 
68 U Y + 84 U^2 Y - 32 U^3 Y + 6 V Y - 8 U V Y - 10 Y^2\\
&\quad + 
12 U Y^2 + 6 U^2 Y^2 + V Y^2 - 8 U Y^3 + 2 Y^4).
\end{split}
\end{equation*}
By \eqref{eq:UmYequVU},
\begin{equation*}
\abs{\qnt{\mathcal{G}_u^2}_1}\leq CVU^2.
\end{equation*}
By \eqref{eq:VYequU}, noting the principal part in the bracket of $\qnt{\mathcal{G}_u^2}_2$ is $V+16Y$, we have
\begin{equation*}
\qnt{\mathcal{G}_u^2}_2\geq \frac{1}{100}VU.
\end{equation*}
Combining above two, for \eqref{eq:Gu2decomposition}, we have
\begin{equation*}
\mathcal{G}_u^2\geq \frac{1}{200}VU.
\end{equation*}
Thus, for the denominator part of $\mathcal{K}$, combining \eqref{eq:omegaU} with above, we have
\begin{equation}\label{eq:Kdenominator}
\mathcal{G}_u^2(u^2-c^2)\tan^2\omega\geq \frac{1}{1000}VU^2.
\end{equation}

Fourthly, for $\mathcal{F}$, we have
\begin{equation*}
\begin{split}
\mathcal{F}&=\frac{1}{32} (-2 + U) U (U - Y)^2 (4 - 9 U + 4 U^2 - 3 Y + U Y + Y^2)^2\\
&\quad+\frac{1}{32} V (72 U^2 - 348 U^3 + 525 U^4 - 312 U^5 + 64 U^6 - 50 U V\\
&\quad +279 U^2 V - 312 U^3 V + 96 U^4 V + 11 V^2 - 104 U V^2\\
&\quad +64 U^2 V^2 + 16 V^3 - 64 U Y + 320 U^2 Y - 496 U^3 Y + 318 U^4 Y\\
&\quad -72 U^5 Y + 16 V Y - 128 U V Y + 186 U^2 V Y - 72 U^3 V Y\\
&\quad +18 V^2 Y - 24 U V^2 Y + 24 Y^2 - 52 U Y^2 + 20 U^2 Y^2\\
&\quad -36 U^3 Y^2 + 21 U^4 Y^2 + 14 V Y^2 - 6 U V Y^2 + 15 U^2 V Y^2\\
&\quad +3 V^2 Y^2 - 48 Y^3 + 60 U Y^3 - 16 U^3 Y^3 - 16 V Y^3\\
&\quad -8 U V Y^3 + 43 Y^4 - 24 U Y^4 + 12 U^2 Y^4 + 6 V Y^4 - 18 Y^5\\
&\quad +3 Y^6)
\end{split}
\end{equation*}
Inserting \eqref{eq:UmYequVU} into the first term on the right hand side of above and applying \eqref{eq:UVYCorner} directly to the second term, we have
\begin{equation*}
\abs{\mathcal{F}}\leq 1000VU^2,
\end{equation*}
for $0<V<U<<1$ and $0<Y<U<<1$. In view of \eqref{eq:Kdenominator}, we have
\begin{equation*}
\abs{\mathcal{K}}\leq 10^6.
\end{equation*}
We complete our proof.
\end{proof}
Combine Theorem \ref{thm:ShockPolarI} and \ref{thm:ShockPolarIV}, we have 
\begin{thm}\label{thm:gRange0}
For any positive constants $\delta_0\leq\delta_2$, there exists $\bar{\epsilon}_0(\delta_0)<\min\set{\bar{\epsilon}_1, \bar{\epsilon}_2}$, which is depending on $\delta_0$, and constant $\underline{K}\in(0, 1)$, such that for any $0<\epsilon<\bar{\epsilon}_0$, $\frac{\gamma-1}{2}\bar{q}+\delta_0<u<\underline{u}$ and $v>0$, we have

(1) \eqref{eq:InnerCondition} holds on the shock polar \eqref{eq:ShockPolar};

(2) $g$ satisfies
\begin{equation}\label{eq:gBoundV2}
0<g\leq \underline{K}<1.
\end{equation}
\end{thm}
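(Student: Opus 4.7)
The strategy is to glue Theorems \ref{thm:ShockPolarI} and \ref{thm:ShockPolarIV} by partitioning the relevant portion of the shock polar into a neighbourhood of the vacuum corner $(u,v)=(\bar{q},0)$ and its complement, then verifying the $g$-bound separately on each piece. Concretely, I would fix $\delta_0\le\delta_2$ and introduce a threshold $\delta_*\in(0,\delta_2/2]$. On the limit shock polar \eqref{eq:ShockPolarS}, i.e. $u(u-\bar{q})+v^2=0$, a point with small $v$ must have $u$ within $O(v^2)$ of either $0$ or $\bar{q}$; the constraint $u>\tfrac{\gamma-1}{2}\bar{q}+\delta_0>0$ eliminates the branch near $u=0$. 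Since $G(\vec{U},\epsilon)$ is a continuous perturbation of $G(\vec{U},0)$, one can choose $\delta_*$ and an upper bound $\bar{\epsilon}_*$ so that, for $\epsilon<\bar{\epsilon}_*$, every point on $G(\vec{U},\epsilon)=0$ with $v<\delta_*$ and $u>\tfrac{\gamma-1}{2}\bar{q}+\delta_0$ automatically lies in the regime $\bar{q}-\delta_2<u<\underline{u}$, so Theorem \ref{thm:ShockPolarIV} applies and yields both \eqref{eq:InnerCondition} and $0<g\le K_{\bar{q}}<1$ on this ``corner'' region.

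On the complementary ``interior'' region $\{v\ge\delta_*\}$ of the shock polar, Theorem \ref{thm:ShockPolarI} applies with $\delta_1=\min(\delta_0,\delta_*)$, giving \eqref{eq:InnerCondition}. For the $g$-bound on this region I would argue by compactness and continuity. At $\epsilon=0$, the set
\[
\mathcal{K}_*=\Bigl\{(u,v):\,G(\vec{U},0)=0,\ \tfrac{\gamma-1}{2}\bar{q}+\delta_0\le u\le\bar{q},\ v\ge\delta_*\Bigr\}
\]
is compact, and by Theorem \ref{thm:gRangeS}(2) the continuous function $g(\vec{U},0)=\sin\beta/\sin\alpha$ is strictly less than $1$ on it; hence it attains a maximum $K<1$. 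Away from the vacuum point, $g(\vec{U},\epsilon)$ depends continuously and uniformly on $\epsilon$ (the singularity analysed in the proof of Theorem \ref{thm:ShockPolarIV} is confined to $v\to 0$), so a standard uniform-continuity argument provides $\bar{\epsilon}'>0$ such that $g(\vec{U},\epsilon)\le (K+1)/2$ on the interior region for $\epsilon<\bar{\epsilon}'$. Setting $\underline{K}=\max\bigl(K_{\bar{q}},(K+1)/2\bigr)<1$ and $\bar{\epsilon}_0=\min(\bar{\epsilon}_1,\bar{\epsilon}_2,\bar{\epsilon}_*,\bar{\epsilon}')$ then delivers both conclusions on the union of the two regions.

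The main obstacle is not the bound itself but ensuring that the partition $\{v<\delta_*\}\cup\{v\ge\delta_*\}$ genuinely covers the admissible portion of the shock polar \emph{uniformly} in $\epsilon$, because the shock polar itself moves with $\epsilon$ and because $g$ is known to be multivalued at $(\bar{q},0)$. This is precisely handled by the separation argument above: the hypothesis $u>\tfrac{\gamma-1}{2}\bar{q}+\delta_0$ forces the $v$-small portion of $G(\vec{U},\epsilon)=0$ into the corner where Theorem \ref{thm:ShockPolarIV} has already done the delicate polynomial work (under $\gamma=2$) to control $g$. Everything else is a compactness and continuity argument away from the singular corner.
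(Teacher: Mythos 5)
Your proposal is correct and follows essentially the route the paper intends: the paper proves Theorem \ref{thm:gRange0} only by the remark ``Combine Theorem \ref{thm:ShockPolarI} and \ref{thm:ShockPolarIV}'', and your partition of the shock polar into the corner $\{v<\delta_*\}$ (handled by Theorem \ref{thm:ShockPolarIV}) and the complement (handled by Theorem \ref{thm:ShockPolarI} together with a compactness/continuity argument built on Theorem \ref{thm:gRangeS} in the spirit of Lemma \ref{lem:PerturbationConstants}) is exactly the missing glue. Your version is in fact more explicit than the paper's, correctly noting that Theorem \ref{thm:ShockPolarI} alone does not supply the bound on $g$ away from the corner.
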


Then, with the above theorem, we can prove the following

\begin{thm}[No vacuum]\label{thm:NoVacuum}
Under the assumption of Theorem \ref{thm:gRange0}, we consider the states $(u, v)$ along the $C^\pm$ characteristics issuing from the shock with $v>0$. 

(1) Along each $C^-$ characteristic, if $\tau$ decreases, then the sonic $c$ decreases and the states $(u, v)$ locate in the domain on the $(u, v)$ plane bounded by the shock polar \eqref{eq:ShockPolar} provided $\tau\geq0$;

(2) Along each $C^+$ characteristic, if $\tau$ decreases, then the sonic $c$ increases and the states $(u, v)$ locate in the domain on the $(u, v)$ plane bounded by the shock polar \eqref{eq:ShockPolar} provided $\tau\geq0$.
\end{thm}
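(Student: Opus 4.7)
The plan is to read the monotonicity of $c$ directly off the $\bar{\partial}^\pm\tau$ identities in \eqref{eq:SonicExpressions}, and then to track the shock polar function $G$ along the characteristic by an infinitesimal displacement formula whose sign is pinned down by the inner condition \eqref{eq:InnerCondition} supplied by Theorem \ref{thm:gRange0}.

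The first halves of (1) and (2) are immediate. From the last line of \eqref{eq:SonicExpressions},
\begin{equation*}
c\,\bar{\partial}^+\tau=-\frac{\sin(2\omega)}{2\kappa}\,\bar{\partial}^+ c,\qquad c\,\bar{\partial}^-\tau=\frac{\sin(2\omega)}{2\kappa}\,\bar{\partial}^- c,
\end{equation*}
with $c>0$ and $\sin(2\omega)>0$ since $\omega\in(0,\pi/2)$ by \eqref{eq:AnglesRange}. Hence $\tau$ decreasing along $C^+$ is equivalent to $\bar{\partial}^+ c>0$, i.e.\ $c$ increasing, while $\tau$ decreasing along $C^-$ is equivalent to $\bar{\partial}^- c<0$, i.e.\ $c$ decreasing.

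For the second halves, I substitute the $\bar{\partial}^\pm u$ and $\bar{\partial}^\pm v$ expressions of \eqref{eq:SonicExpressions} into $\bar{\partial}^\pm G=G_u\,\bar{\partial}^\pm u+G_v\,\bar{\partial}^\pm v$ to obtain
\begin{equation*}
\bar{\partial}^+ G=\kappa^{-1}(G_u\sin\beta-G_v\cos\beta)\,\bar{\partial}^+ c,\qquad \bar{\partial}^- G=\kappa^{-1}(-G_u\sin\alpha+G_v\cos\alpha)\,\bar{\partial}^- c.
\end{equation*}
The inner condition \eqref{eq:InnerCondition} reads $-G_u\sin\alpha+G_v\cos\alpha>0$ and $-G_u\sin\beta+G_v\cos\beta>0$, so the coefficient on the $C^-$ side is strictly positive while the coefficient $G_u\sin\beta-G_v\cos\beta$ on the $C^+$ side is strictly negative. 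Combining with the signs of $\bar{\partial}^\pm c$ from the previous paragraph, one finds $\bar{\partial}^\pm G<0$ along the corresponding characteristic whenever $\tau$ decreases. Since $G\equiv 0$ on $\mathsf{S}$ and the characteristic leaves $\mathsf{S}$ in the forward direction, $G$ strictly drops below $0$, placing $(u,v)$ inside the loop $\{G=0\}$; for $\epsilon=0$ this is precisely the disk bounded by \eqref{eq:ShockPolarS}, and the region persists as a bounded domain for $0<\epsilon\ll 1$ by continuity in $\epsilon$.

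The main difficulty is keeping the hypotheses of Theorem \ref{thm:gRange0} in force as we march along the characteristic: the inner condition is proven only for $v>0$ and $u$ bounded away from $\frac{\gamma-1}{2}\bar{q}$. The clause ``$\tau\geq 0$'' in the statement is exactly the $v\geq 0$ constraint, and the strict case $v>0$ gives the strict sign conclusion; the boundary $v=0$ is then recovered by continuity, exploiting the symmetry of the shock polar across the $u$-axis. Propagating the lower bound $u>\frac{\gamma-1}{2}\bar{q}+\delta_0$ is tied to the very conclusion being proved, so the argument must be arranged as a continuation bootstrap: extend the characteristic stepwise, reapplying Theorem \ref{thm:gRange0} at each step where the interior inclusion $G<0$ has just been verified, until either $\tau$ reaches $0$ or the characteristic hits another boundary.
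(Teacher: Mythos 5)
Your proposal is correct and is essentially the paper's own argument in different clothing: the paper phrases it as Riemann-invariant contour lines in the $(u,v)$-plane being integral curves of $(\sin\alpha,-\cos\alpha)$ and takes inner products with the tangent/normal of $q=\mathrm{const.}$ and with $(-G_u,-G_v)$, which are exactly your directional derivatives $\bar{\partial}^\pm\tau$, $\bar{\partial}^\pm c$ and $\bar{\partial}^\pm G$, with the same reliance on \eqref{eq:InnerCondition} from Theorem \ref{thm:gRange0}. Your closing remark that \eqref{eq:InnerCondition} is only known on the shock polar, so that ``stays inside'' must be run as a first-exit/continuation argument, matches the (equally brief) treatment in the paper.
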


\begin{figure}[htbp]
\setlength{\unitlength}{1bp}
\begin{center}
\begin{picture}(200,150)
\put(-30,-10){\includegraphics[scale=0.66]{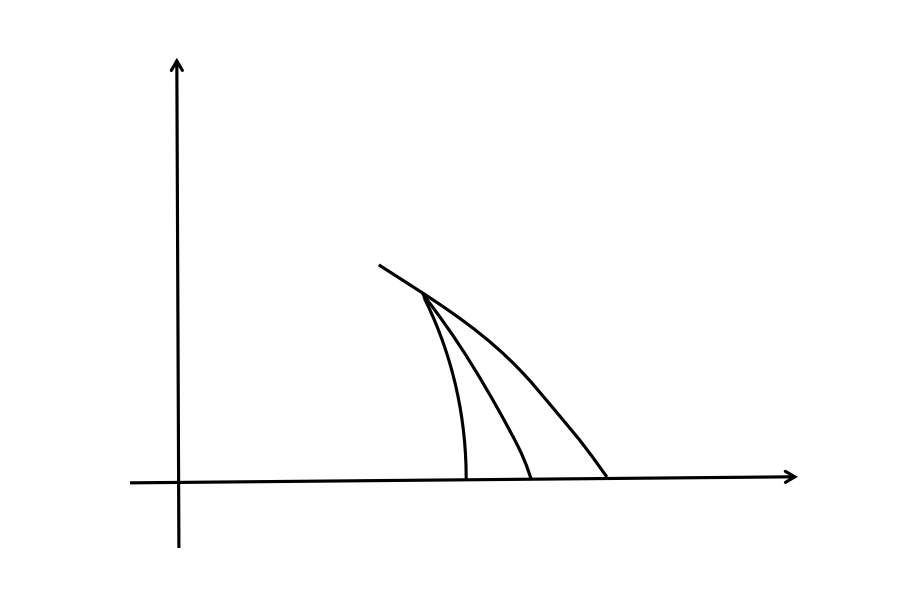}}
\put(75,43){\tiny$q=q_0$}
\put(197,25){\tiny$u$}
\put(87,75){\tiny$(u_0, v_0)$}
\put(130,43){\tiny$\mbox{shock polar}$}
\put(140,15){\tiny$\underline{u}$}
\put(10,130){\tiny$v$}
\end{picture}
\end{center}
\caption{bounded by the shock polar}\label{fig:ShockpolarInner}
\end{figure}

\begin{proof}
We use Riemann invariants of \eqref{eq:EulerEquations} (see \cite{CourantFriedrichs1976AMS},\cite{Chen2014Interaction}) to prove this theorem. Consider a Riemann invariant $F(u, v)$ for $C^-$ characteristic, we have
\begin{equation*}
\bar{\partial}^-F(u, v)=F_u\bar{\partial}^-u+F_v\bar{\partial}^-v=0.
\end{equation*}
So $$F(u, v)=\rm{constant}$$ along the $C^-$ characteristic. By the equation $\bar{\partial}^-u+\lambda_+\bar{\partial}^-v=0$, we have 
$$F_u:F_v=1:\lambda_+.$$ 
By the above equation, contour lines $$F(u, v)=\rm{constant}$$ are the integral curves defined by the vector fields $(\lambda_+, -1)$ or $(\sin\alpha, -\cos\alpha)$. The conclusions follow by a comparison of $(\sin\alpha, -\cos\alpha)$ with the following vectors:
\begin{itemize}
\item the tangent vector of circle $q=\mbox{constant}$: $(\sin\tau, -\cos\tau)$;
\item the inner normal vector of circle $q=\mbox{constant}$: $(-\cos\tau, -\sin\tau)$;
\item the inner normal vector of the shock polar: $(-G_u, -G_v)$.
\end{itemize}
In fact, taking inner products of $(\sin\alpha, -\cos\alpha)$ and the above three vectors respectively, we have 
\begin{equation}\label{eq:RIDirectionsCompare1}
\begin{split}
(\sin\alpha, -\cos\alpha)\cdot(\sin\tau, -\cos\tau)=\cos\omega&>0;\\
(\sin\alpha, -\cos\alpha)\cdot(-\cos\tau, -\sin\tau)=-\sin\omega&<0;\\
(\sin\alpha, -\cos\alpha)\cdot(-G_u, -G_v)&>0,
\end{split}
\end{equation}
where Theorem \ref{thm:ShockPolarI} yields the third inequality. The first inequality of \eqref{eq:RIDirectionsCompare1} indicates that $\tau$ decreases in $(\sin\alpha, -\cos\alpha)$ direction. Indeed, it follows from the fact that
\begin{equation*}
(\sin\alpha, -\cos\alpha)\nabla_{(u, v)}\tau=(\sin\alpha, -\cos\alpha)\cdot\frac{1}{q\cos\tau}\qnt{-\sin\tau, \cos\tau}=-\frac{\cos\omega}{q\cos\tau}<0.
\end{equation*}
In a similar way, the second inequality of \eqref{eq:RIDirectionsCompare1} indicates that $q$ increases in $(\sin\alpha, -\cos\alpha)$ direction. The third inequality of \eqref{eq:RIDirectionsCompare1} indicates that $(\sin\alpha, -\cos\alpha)$ inserts the domain bounded by the shock polar. Since the curve $F(u, v)=\rm{constant}$ is generated by the vector fields $(\sin\alpha, -\cos\alpha)$, combine the first two conclusions above, along these curves, if $\tau$ decreases, then $q$ increases and, consequently, $c$ decreases.

Meanwhile, in view of the third conclusion of \eqref{eq:RIDirectionsCompare1}, the curve must insert the shock polar provided $c$ decreases. Recall that along each $C^-$ characteristic, the states must lie on the curve $F(u, v)=\rm{constant}$. Summarising the above, we arrive at the first conclusion of this theorem.

The second conclusion can be proved in the same way. Thus we complete the proof.
\end{proof}

\subsection{Second initial value condition}
\begin{lem}\label{lem:SecondInitialValueConditionS}
Under the assumption of {\bf Case B}, for some $x_0>0$ with $\xi_0=\xi(x_0, f(x_0))$, we have 
\begin{equation}\label{eq:SecondInitialPointCondition}
0<f'(x_0)<\sqrt{\frac{3-\gamma}{\gamma-1}},\quad\mbox{and}\quad f''(x_0)<0,
\end{equation} 
then there exist positive constants $\delta_f$ and $\epsilon_s\leq\bar{\epsilon}_0(\delta_f)$, so that we have

(1)
\begin{equation}\label{eq:SecondInitialValueConditionuS}
u_s(\xi_0)>\frac{\gamma-1}{2}\bar{q}+2\delta_f,\quad v_s(\xi_0)>2\delta_f
\end{equation}
and
\begin{equation}\label{eq:SecondInitialValueConditionqS}
q_s(\xi_0)>c_0+2\delta_f,
\end{equation}
where $c_0:=\frac{\gamma-1}{\gamma+1}\bar{q}$ is the critical sonic.

(2) We have
\begin{equation}\label{eq:SecondInitialValueConditionSonicS}
\bar{\parbar}^\pm c_s(\xi_0)<0.
\end{equation}

(3) For $\underline{K}\in(0, 1)$ introduced in Theorem \ref{thm:gRange0} and $\epsilon<\epsilon_s$,
\begin{equation}\label{eq:PositiveG}
g(\vec{U}, \epsilon)\in(0, \underline{K}),
\end{equation}
where $u\in(\frac{\gamma-1}{2}\bar{q}+\delta_f, \underline{u})$ and $v>0$ satisfy \eqref{eq:ShockPolar}.

(4) We have \eqref{eq:InnerCondition} for any $\epsilon<\epsilon_s$, $v>0$ and $\frac{\gamma-1}{2}\bar{q}+\delta_f<u<\underline{u}$.
\end{lem}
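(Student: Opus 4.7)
\medskip

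The plan is to treat the four parts essentially independently and in order, since (1) and (2) are pure computations at the single point $\xi_0$, while (3) and (4) are immediate applications of Theorem \ref{thm:gRange0}.

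For (1), I would simply plug the value $f_0 := f'(x_0)$ into the explicit formulas \eqref{eq:SpeedS}. Checking $u_s(\xi_0)=\bar q/(1+f_0^2)>\tfrac{\gamma-1}{2}\bar q$ reduces to $f_0^2<\tfrac{3-\gamma}{\gamma-1}$, which is exactly the first half of \eqref{eq:SecondInitialPointCondition}; the inequality being strict, one gets the slack $2\delta_f$. The bound $v_s(\xi_0)>0$ is immediate from $f_0>0$, and for $q_s(\xi_0)>c_0$ the elementary algebraic check $\tfrac{3-\gamma}{\gamma-1}<\tfrac{4\gamma}{(\gamma-1)^2}$ (equivalent to $\gamma^2+3>0$) shows the range allowed by \eqref{eq:SecondInitialPointCondition} is strictly smaller than the range giving $q_s>c_0$. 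Taking $\delta_f>0$ small enough absorbs the slack uniformly in all three estimates at once.

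For (2), I would use the two relations in \eqref{eq:FormalDerivativesEquations} as a $2\times 2$ linear system for $(\bar{\parbar}^+c_s,\bar{\parbar}^-c_s)$. Under \eqref{eq:SecondInitialPointCondition} the wall curvature $\kappa_w(x_0)=f''(x_0)/(1+(f')^2)^{3/2}$ is strictly negative, so the right-hand side of the second equation, $(\gamma-1)q_s\kappa_w$, is strictly negative. Substituting $\bar{\parbar}^+c_s=g_0\bar{\parbar}^-c_s$ with $g_0:=g(\vec U_s(\xi_0),0)$ gives
\begin{equation*}
(1-g_0)\,\bar{\parbar}^-c_s=(\gamma-1)q_s\kappa_w<0.
\end{equation*}
By part (1), $u_s(\xi_0)$ lies in the interval where Theorem \ref{thm:gRangeS}(2) applies, so $g_0\in(0,1)$, hence $1-g_0>0$ and therefore $\bar{\parbar}^-c_s<0$; then $\bar{\parbar}^+c_s=g_0\bar{\parbar}^-c_s<0$ as well.

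Parts (3) and (4) are direct invocations of Theorem \ref{thm:gRange0} with the choice $\delta_0=\delta_f$ fixed by part (1): one simply sets $\epsilon_s:=\bar{\epsilon}_0(\delta_f)$ and reads off the bound \eqref{eq:gBoundV2} and the sign condition \eqref{eq:InnerCondition}. I do not expect any real obstacle: the only care needed is to fix $\delta_f$ once in part (1) so that all subsequent $u$-ranges in parts (3) and (4) match the one for which Theorem \ref{thm:gRange0} has been proved, and to check that the particular state $\vec U_s(\xi_0)$ itself lies inside that $u$-range, which is guaranteed by \eqref{eq:SecondInitialValueConditionuS}.
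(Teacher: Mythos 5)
Your proposal is correct and follows essentially the same route as the paper: part (1) is a direct computation from the explicit formulas \eqref{eq:SpeedS}, part (2) solves the linear system \eqref{eq:FormalDerivativesEquations} using $g(\vec U_s(\xi_0),0)\in(0,1)$ from Theorem \ref{thm:gRangeS} and $\kappa_w(x_0)<0$, and parts (3)--(4) are read off from Theorem \ref{thm:gRange0}. The paper's own proof is just a terser version of the same argument, so your filled-in computations (e.g.\ the check $\tfrac{3-\gamma}{\gamma-1}<\tfrac{4\gamma}{(\gamma-1)^2}$) are exactly the details it leaves implicit.
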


\begin{proof}
The existence of $x_0$ is a direct result of the assumption for {\bf Case B}. 

(1) By the expression \eqref{eq:LimitSolution}, it is trivial.

(2) In \eqref{eq:FormalDerivativesEquations}, $g(\vec{U}_s(P), 0)\in(0, 1)$ and $\kappa_w(P)<0$ lead to \eqref{eq:SecondInitialValueConditionSonicS}. Consequently, it is a direct conclusion of Theorem \ref{thm:gRangeS}.

(3) and (4) follow directly from Theorem \ref{thm:gRange0}.

We have complete the proof.
\end{proof}

By Lemma \ref{lem:SecondInitialValueConditionS}, the approximation estimates \eqref{eq:SpecialSolutionPerturbationEstimate3} and \eqref{eq:CPerturbation} yield the following

\begin{thm}[Second initial condition]\label{thm:SecondInitialCondition}
There exists a point $\epsilon_p\leq\epsilon_s$, such that for $0<\epsilon\leq\epsilon_p$, we have

(1) {\bf Problem 2} can be solved in $\xi\in[0, \xi_0]$ with
\begin{equation}\label{eq:SecondOriginCondition}
u(P)|_{\xi(P)=\xi_0}>\frac{\gamma-1}{2}\bar{q}+\delta_f,\quad v(P)|_{\xi(P)=\xi_0}>\delta_f,
\end{equation}
and
\begin{equation}\label{eq:SecondOriginConditionSupersonic}
q(P)|_{\xi(P)=\xi_0}>c_0+\delta_f.
\end{equation}

(2) We have
\begin{equation}\label{eq:SecondOriginConditionSonic}
\bar{\partial}^\pm c(P)|_{\xi(P)=\xi_0}<0.
\end{equation}
\end{thm}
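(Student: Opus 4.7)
The plan is to read off the conclusion from three ingredients already in place: the finite existence theory of Section \ref{sect:GlobalShockwithoutConvexity}, the zeroth order perturbation estimate \eqref{eq:SpecialSolutionPerturbationEstimate3}, and the first order approximation \eqref{eq:CPerturbation}. The content of the theorem is that every strict inequality in Lemma \ref{lem:SecondInitialValueConditionS} survives under an $O(\epsilon)$ perturbation once $\epsilon$ is small enough.

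\textbf{Step 1 (existence up to $\xi_0$).} Under the Case B hypotheses together with \eqref{eq:SecondInitialPointCondition}, convexity $f''\le 0$ yields $f'(x)\in[f'(x_0),f'(0)]\subset (0,\sqrt{2/(\gamma-1)})$ for $x\in[0,x_0]$. Hence on $[0,x_0]$ the wedge satisfies the Case A bounds \eqref{eq:SupersonicWallCondition} with $\underline f:=f'(x_0)$ and $\bar f:=f'(0)$, and since $f\in C^3_{loc}(\Real^+)$, the second order decay \eqref{eq:WallCondition2nd} holds automatically on this compact interval. Theorem \ref{thm:HigherSmoothnessoftheSolution} then gives, for every $\epsilon<\epsilon_1$, a $C^1$ smooth solution of Problem 2 on $\xi\in[0,\xi_0]$ with the second order bound \eqref{eq:SecondOrderEstimates}.

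\textbf{Step 2 (strict inequalities on $\vec U$).} For any $P$ with $\xi(P)=\xi_0$, the perturbation bound \eqref{eq:SpecialSolutionPerturbationEstimate3} gives
\[
\abs{\vec U(P)-\vec U_s(\xi_0)}\le \epsilon M_s.
\]
Combining this with \eqref{eq:SecondInitialValueConditionuS}--\eqref{eq:SecondInitialValueConditionqS} and choosing $\epsilon$ so small that $\epsilon M_s<\delta_f$ produces the strict bounds \eqref{eq:SecondOriginCondition} and \eqref{eq:SecondOriginConditionSupersonic}.

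\textbf{Step 3 (negativity of $\bar\partial^\pm c$).} Since $f'(x_0)>0$, the hypothesis $f'|_{\xi_0}>0$ of Theorem \ref{thm:LocalSolutionApproximateEstimate} is met, so \eqref{eq:CPerturbation} yields
\[
\abs{\bar\partial^\pm c(P)-\bar{\parbar}^\pm c_s(\xi_0)}\le \frac{\epsilon M_s(\xi_0)}{\xi_0}
\]
at $\xi(P)=\xi_0$. By \eqref{eq:SecondInitialValueConditionSonicS} the formal derivatives $\bar{\parbar}^\pm c_s(\xi_0)$ are \emph{strictly} negative; more precisely, solving the linear system \eqref{eq:FormalDerivativesEquations} for $(\bar{\parbar}^+c_s,\bar{\parbar}^-c_s)$ gives
\[
\bar{\parbar}^- c_s(\xi_0)=\frac{(\gamma-1)q_s(\xi_0)\kappa_w(x_0)}{1-g(\vec U_s(\xi_0),0)},\qquad
\bar{\parbar}^+ c_s(\xi_0)=g(\vec U_s(\xi_0),0)\,\bar{\parbar}^- c_s(\xi_0),
\]
both of which are bounded above by some $-c_1<0$ because $\kappa_w(x_0)<0$ and $g(\vec U_s(\xi_0),0)\in(0,1)$ is bounded away from $1$. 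Taking $\epsilon$ small enough that $\epsilon M_s(\xi_0)/\xi_0<c_1/2$ then yields \eqref{eq:SecondOriginConditionSonic}.

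\textbf{Step 4 (choice of $\epsilon_p$).} Set $\epsilon_p:=\min\set{\epsilon_s,\epsilon_2,\epsilon_1,\delta_f/M_s,c_1\xi_0/(2M_s(\xi_0))}$. For $0<\epsilon\le\epsilon_p$, all three steps apply simultaneously.

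The only non-routine point is Step 3: one needs the \emph{quantitative} strict negativity of $\bar{\parbar}^\pm c_s(\xi_0)$ to absorb the $O(\epsilon)$ error in \eqref{eq:CPerturbation}. This is where convexity $f''(x_0)<0$ is used in an essential way, together with the bound $g<1$ from Lemma \ref{lem:PerturbationConstants}; without the strict inequality $f''(x_0)<0$, the linear system \eqref{eq:FormalDerivativesEquations} would only give $\bar{\parbar}^\pm c_s(\xi_0)\le 0$ and the perturbation argument would fail.
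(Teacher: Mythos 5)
Your proposal is correct and follows exactly the route the paper intends: the paper gives no written proof for this theorem, only the one-line remark that it follows from Lemma \ref{lem:SecondInitialValueConditionS} together with the approximation estimates \eqref{eq:SpecialSolutionPerturbationEstimate3} and \eqref{eq:CPerturbation}, which are precisely the three ingredients you assemble. Your Steps 1--4 simply make explicit the absorption of the $O(\epsilon)$ errors into the strict inequalities of Lemma \ref{lem:SecondInitialValueConditionS}, including the one genuinely quantitative point (strict negativity of $\bar{\parbar}^\pm c_s(\xi_0)$ via $\kappa_w(x_0)<0$ and $g\in(0,1)$), so nothing is missing.
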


\subsection{Global solution}
In this part, we prove the global existence of the solution for {\bf Case B} by showing {\it a prioir} estimates.
\begin{thm}\label{thm:ExistenceGlobalSolution}
If $\epsilon<\epsilon_p$, then {\bf Case B} of {\bf Problem 2} admits a global piecewise smooth solution.
\end{thm}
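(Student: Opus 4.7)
The plan is to extend the local solution furnished by Theorem \ref{thm:SecondInitialCondition} on $[0,\xi_0]$ to all of $\mathbb{R}^+$ by a continuation argument based on an invariant domain tailored to the convexity assumption. The essential difference from Case A is that when $f_\infty=0$, the limit solution $\vec{U}_s$ may approach the vacuum point $(\bar{q},0)$ where the class $\mathcal{C}_s$ and the decay estimates $|\bar{\partial}^\pm c|\le M_\pm/\xi$ used in Section \ref{sect:GlobalShockwithoutConvexity} cease to be uniform. I therefore replace magnitude estimates by sign estimates, exploiting the fact that $f''\le 0$ makes both $\tau$ on the wall and the sonic derivatives monotone in a favourable direction.

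First, starting from $\xi=\xi_0$, I would propagate the following invariant domain: (a) $u>\tfrac{\gamma-1}{2}\bar{q}+\delta_f$ and $v\ge 0$ in the whole shock layer, so the state lies in the ``good'' portion of the shock polar where Theorem \ref{thm:gRange0} applies; (b) $\bar{\partial}^+ c\le 0$ and $\bar{\partial}^- c\le 0$; (c) the shock stays attached with $s(\vec{U},\epsilon)$ bounded. The base case is exactly the content of \eqref{eq:SecondOriginCondition}--\eqref{eq:SecondOriginConditionSonic}.

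Second, I would prove that (a)--(c) are preserved by combining three sign-preserving mechanisms. On the wedge, \eqref{eq:WallRelationCharacteristic} with $\kappa_w\le 0$ from \eqref{eq:ConvexWall} gives $\bar{\partial}^- c\le \bar{\partial}^+ c\le 0$, so reflection on $\mathsf{W}$ cannot produce a positive derivative. On the shock, \eqref{eq:ShockRelation} together with the uniform bound $g\in(0,\underline{K})$ from \eqref{eq:PositiveG} yields $\bar{\partial}^+ c=g\,\bar{\partial}^- c$, which preserves negativity and strictly contracts the magnitude. In the interior, the characteristic decomposition \eqref{eq:CharacteristicDecomposition} has right-hand sides of the form $\bar{\partial}^\pm c\bigl(\text{positive}\cdot \bar{\partial}^\pm c+\text{positive}\cdot \bar{\partial}^\mp c\bigr)$, so if both derivatives are nonpositive on a characteristic starting datum, they remain so along the integration. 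The containment (a) then follows from Theorem \ref{thm:NoVacuum}: along $C^\pm$ the velocity angle $\tau$ decreases monotonically and the state stays inside the shock polar bounded region, while the Rankine--Hugoniot condition keeps the trace on the shock inside the arc where \eqref{eq:InnerCondition} holds.

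Third, with (a)--(c) in force, I would run a characteristic reflection argument in the spirit of Theorem \ref{thm:SonicCharacteristicBoundInductive} but now using signs instead of $M_\pm/\xi$ bounds. The contraction factor $\underline{K}<1$ from \eqref{eq:gBoundV2} together with the nonpositive curvature contribution from the wall guarantees that $|\bar{\partial}^\pm c|$ does not amplify across a full $\mathsf{W}$--$\mathsf{S}$ reflection cycle, and hence cannot blow up in finite $\xi$. Combined with the narrow estimate of Theorem \ref{thm:NarrowEstimate} and the perturbation estimate of Theorem \ref{thm:SpecialSolutionPerturbationEstimate}, this yields uniform control of $|\nabla \vec{U}|$ away from the origin, which is what a standard continuation based on Theorem \ref{thm:finitegrowth} needs in order to push the existence interval indefinitely. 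A maximality argument then completes the proof, and the entropy condition $p>\underline{p}$ follows as before from \eqref{eq:SpecialSolutionPerturbationEstimate3}.

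The delicate step, and the one I expect to be the main obstacle, is keeping (a) quantitatively effective when $f'(\xi)\to f_\infty=0$: in that regime the limit state $\vec{U}_s(\xi)$ drifts toward $(\bar{q},0)$ and the naive perturbation $\vec{U}\in\mathcal{C}_s^{\delta_0}$ fails near infinity. This is precisely where the case $\gamma=2$ becomes essential, because Theorem \ref{thm:ShockPolarIV} supplies the uniform bounds \eqref{eq:InnerCondition} and $0<g\le K_{\bar{q}}<1$ all the way down to the vacuum corner $(\bar{q},0)$; without that theorem both the sign analysis of the reflection operator and the containment given by Theorem \ref{thm:NoVacuum} would degenerate as $v\to 0^+$.
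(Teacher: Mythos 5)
Your proposal follows essentially the same route as the paper: propagate the sign conditions $\bar{\partial}^\pm c<0$, $u>\frac{\gamma-1}{2}\bar{q}+\delta_f$, $v>0$ as an invariant domain starting from the second initial condition at $\xi_0$, using the convex wall relation \eqref{eq:WallRelationCharacteristic}, the shock contraction $g\in(0,\underline{K})$ from Theorem \ref{thm:gRange0}, the interior sign preservation from \eqref{eq:CharacteristicDecomposition}, the containment of states by Theorem \ref{thm:NoVacuum}, and the $\gamma=2$ analysis of Theorem \ref{thm:ShockPolarIV} near the vacuum corner; the reflection-cycle bound on $\abs{\bar{\partial}^\pm c}$ with multiplicative factor $\underline{K}<1$ and additive wall-curvature term is exactly the paper's closing estimate.

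One step you elide, which the paper proves explicitly and which your ``maximality argument'' actually depends on, is that the reflection pattern itself persists: every backward $C^-$ characteristic from an interior point must reach the shock and every $C^+$ must reach the wall. Without this, the iteration bounding $\abs{\bar{\partial}^\pm c}$ has no well-defined sequence of reflection points, and the continuation could fail not by gradient blow-up but by characteristic degeneracy. The paper rules out the two failure modes: a $C^-$ characteristic tangent to $\mathsf{W}$ would force $\omega=\tau-\beta=0$, i.e.\ vacuum, contradicting the containment of Theorem \ref{thm:NoVacuum}; a $C^+$ characteristic tangent to $\mathsf{S}$ is excluded because $c\bar{\partial}^+\alpha=-\nu\Xi\sin(2\omega)\bar{\partial}^+c>0$ gives the $C^+$ curves a convexity opposite to that of the shock (Lemma \ref{lem:ConvexShock}). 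Relatedly, you assert that the Rankine--Hugoniot condition ``keeps the trace on the shock inside the arc'' where $g\in(0,\underline{K})$; the actual mechanism is the shock-convexity lemma, which makes $u$ increase along $\mathsf{S}$, together with $c\bar{\partial}^+\tau=-\frac{\sin(2\omega)}{2\kappa}\bar{\partial}^+c>0$ keeping $\tau>0$ (note $\tau$ \emph{increases} along $C^+$ and decreases along $C^-$, not along both). These are refinements within your strategy rather than a different argument, but they are the parts that make the continuation close.
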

\begin{proof}
The proof relies on the following lemmas. With Theorem \ref{thm:SecondInitialCondition} in hand, our task is to get the global solution for $\xi\geq\xi_0$. The key is to show $\bar{\partial}^\pm c<0$ for $\xi\geq\xi_0$. We first prove the following geometric lemma.

\begin{figure}[htbp]
\setlength{\unitlength}{1bp}
\begin{center}
\begin{picture}(200,90)
\put(-10,0){\includegraphics[scale=0.7]{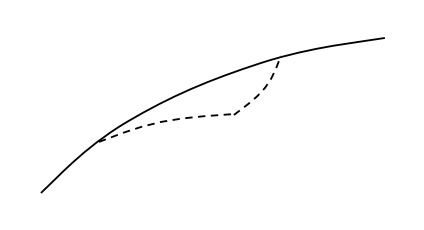}}
\put(89,40){\small$P$}
\put(105,77){\small$\bar{P}_2$}
\put(145,83){\small$\mathsf{S}$}
\put(63,40){\tiny$C^-$}
\put(105,60){\tiny$C^+$}
\put(24,42){\small$\bar{P}_1$}
\end{picture}
\end{center}
\caption{convexity}\label{fig:convexshock}
\end{figure}
\begin{lem}\label{lem:ConvexShock}
The shock is convex provided $\bar{\partial}^\pm c<0$.
\end{lem}
\begin{proof}
As illustrated in Figure \ref{fig:convexshock}, we consider the pattern on $\mathsf{S}$ locally. Let $\bar{P}_1$ and $\bar{P}_2$ on $\mathsf{S}$ be two adjacent points connecting the $C^\pm$ characteristics with the intersection $P$. By $\bar{\partial}^\pm c<0$, we have $c|_{\bar{P}_1}>c|_{P}>c|_{\bar{P}_2}$ locally. This indicates $q|_{\bar{P}_1}>q|_{\bar{P}_2}$. So the strength of the shock is weakened locally along the shock $\mathsf{S}$ as $\xi$ increases. Moreover, $s$ is decreasing. Thus, the shock is convex locally. We complete the proof.
\end{proof}

Then, to show the global estimates, we prove the following continuity lemma. As illustrated in Figure \ref{fig:characteristicSW}, we assume that the $C^-$ characteristic connects $P^s$ on $\mathsf{S}$ with $\underline{P}_1$ on $\mathsf{W}$ and the $C^+$ characteristic connects $P_w$ on $\mathsf{W}$ with $\bar{P}_1$ on $\mathsf{S}$. Therefore, the domain is divided into $\Omega_1$, $\bar{\Omega}_1$ and $\underline{\Omega}_1$. We have
\begin{lem}
If it holds
\begin{equation}\label{eq:gPositiveConditionSecondInitial}
u(P^s)\in(\frac{\gamma-1}{2}\bar{q}+\delta_f, \underline{u}),\quad\mbox{and}\quad v(P^s)>0;
\end{equation}
\begin{equation}\label{eq:InitialDerivativeSgnSecondInitial}
\bar{\partial}^\pm c|_P<0,\quad\mbox{for}\quad P\in P^sP_w;
\end{equation}
and
\begin{equation}\label{eq:InitialDirectionSgnSecondInitial}
\tau|_P>0,\quad\mbox{for}\quad P\in P^sP_w,
\end{equation}
then we have 
\begin{equation}\label{eq:gPositiveConditionSecond}
u(P)\in(\frac{\gamma-1}{2}\bar{q}+\delta_f, \underline{u}),\quad\mbox{and}\quad v(P)>0,
\end{equation}
where $P$ belong to the shock $\mathsf{S}$ between $P^s$ and $\bar{P}_1$;
\begin{equation}\label{eq:InitialDerivativeSgnSecond}
\bar{\partial}^\pm c|_{\Omega_1\cup\bar{\Omega}_1\cup\underline{\Omega}_1}<0;
\end{equation}
and
\begin{equation}\label{eq:InitialDirectionSgnSecond}
\tau|_{\Omega_1\cup\bar{\Omega}_1\cup\underline{\Omega}_1}>0.
\end{equation}
\end{lem}

\begin{proof}
In view of the characteristic decomposition \eqref{eq:CharacteristicDecomposition}, we see that $\bar{\partial}^\pm c$ can not change sign along the characteristic. So $\bar{\partial}^\pm c|_{P^sP_w}<0$ indicates $\bar{\partial}^- c<0$ in $\Omega_1\cup\bar{\Omega}_1$ and $\bar{\partial}^+ c<0$ in $\Omega_1\cup\underline{\Omega}_1$. On the shock $\mathsf{S}$, $\bar{\partial}^- c<0$, \eqref{eq:ShockRelation} and \eqref{eq:PositiveG} imply $\bar{\partial}^+ c<0$ on $\mathsf{S}$ provided that \eqref{eq:gPositiveConditionSecond} holds. Since we have \eqref{eq:gPositiveConditionSecondInitial}, \eqref{eq:gPositiveConditionSecond} holds around $P^s$ on $\mathsf{S}$. By Lemma \ref{lem:ConvexShock}, $\bar{\partial}^\pm c<0$ guarantee the convexity of the shock $\mathsf{S}$. On the one hand, the convexity makes $u$ increase along $\mathsf{S}$. So $u|_{\mathsf{S}\cap\set{\xi\geq\xi(P^s)}}\geq u(P^s)$. On the other hand, \eqref{eq:SonicExpressions} indicates $c\bar{\partial}^+\tau=-\frac{\sin(2\omega)}{2\kappa}\bar{\partial}^+c>0$ provided $\bar{\partial}^+c<0$. So $\tau|_{\mathsf{S}\cap\set{\xi\geq\xi(P^s)}}\geq\tau|_{P^sP_w}>0$. These two reinforce the condition \eqref{eq:gPositiveConditionSecond} along $\mathsf{S}$. Therefore, \eqref{eq:gPositiveConditionSecond} must hold along the shock $\mathsf{S}$ between $P^s$ and $\bar{P}_1$. We have $\bar{\partial}^+ c<0$ in $\bar{\Omega}_1$. On the other hand, $\bar{\partial}^+c<0$, convex condition $\kappa_w\leq0$ and \eqref{eq:WallRelationCharacteristic} imply $\bar{\partial}^- c<0$ on $\mathsf{W}$. Finally, we get $\bar{\partial}^- c<0$ in $\underline{\Omega}_1$. Since \eqref{eq:InitialDirectionSgnSecondInitial} and $\tau|_{\mathsf{W}}\geq0$, \eqref{eq:InitialDerivativeSgnSecond} and $c\bar{\partial}^-\tau=\frac{\sin(2\omega)}{2\kappa}\bar{\partial}^-c<0$ in \eqref{eq:SonicExpressions} lead to \eqref{eq:InitialDirectionSgnSecond}.
\end{proof}

Now, to show that the sign condition $\bar{\partial}^\pm c<0$ can be extended to infinity, we need the following
\begin{lem}
All the characteristics connect the shock $\mathsf{S}$ and the wedge $\mathsf{W}$.
\end{lem}
\begin{proof}
We consider the set
\begin{equation*}
\Gamma=\set{\xi|~\mathsf{W}_{[0, \xi]} \mbox{ arrived by the $C^-$ characteristics from the shock $\mathsf{S}$.}}.
\end{equation*}
Define $\bar{\xi}:=\sup\Gamma$. We assume (with the aim of arriving at a contradiction) $\bar{\xi}\neq+\infty$. Then the $C^-$ characteristic is tangent to the wedge $\mathsf{W}$ on the point $\mathsf{W}|_{\bar{\xi}}$. Otherwise, we can extend the solution forward a little and it contradicts $\bar{\xi}=\sup\Gamma$. So we have $\beta=\tau$ at the point $\mathsf{W}|_{\bar{\xi}}$. This indicates $\omega=\tau-\beta=0$ and the flow is vacuum. But, by \eqref{eq:SonicExpressions}, we have $\bar{\partial}^-\tau\leq0$. So $\tau|_{P^s}\geq\tau|_{C^-}\geq\tau|_{\underline{P}_1}\geq0$. In view of Theorem \ref{thm:NoVacuum}, we can derive that $(u, v)|_{C^-}$ locates in the upper $(u, v)$ plane and is bounded by the curves: (1) the line $v=0$; (2) the circle $q= q|_{P^s}$; (3) the shock polar $G(\vec{U}, \epsilon)=0$. So the states $(u, v)$ on the point $\mathsf{W}|_{\bar{\xi}}$ must locate inside the $(u, v)$ domain bounded by the shock polar $G(\vec{U}, \epsilon)=0$. It indicates that the flow can not touch the vacuum along $C^-$. Thus we get contradiction and $\bar{\xi}=+\infty$. 

In a similar way, we can show that all the $C^+$ characteristics issuing from the wedge $\mathsf{W}$ arrive at the shock $\mathsf{S}$. To see this, we assume to the contrary that there exists a $C^+$ characteristic tangent to the shock $\mathsf{S}$ on a point $\mathsf{S}|_{\bar{\xi}}$. So, the shock is degenerate on $\mathsf{S}|_{\bar{\xi}}$. On the one hand, $\mathsf{S}$ is convex for $\xi\in[\xi_0, \bar{\xi}]$. On the other hand, \eqref{eq:SonicExpressions} yields $c\bar{\partial}^+\alpha=-\nu\Xi\sin(2\omega)\bar{\partial}^+c>0$, where the hypersonic incoming flow assumption guarantee $\Xi>0$. The opposing convexity of $\mathsf{S}$ and the $C^+$ characteristic at the tangent point $\mathsf{S}|_{\bar{\xi}}$ leads to a contradiction.
\end{proof}

Above lemma leads to $\bar{\partial}^\pm c<0$ for $\xi\geq\xi_0$. Finally, we give the lower boundedness of $\bar{\partial}^\pm c$. As illustrated in Figure \ref{fig:CharacteristicReflect}, we let $P\in\Omega$ and consider $\bar{\partial}^+c(P)$. By $\bar{\partial}^\pm c<0$, the terms in brackets of \eqref{eq:CharacteristicDecomposition} are negative, we have
\begin{equation*}
\bar{\partial}^+c(\bar{P}_1)<\bar{\partial}^+c(P)<0,
\end{equation*}
and
\begin{equation*}
\bar{\partial}^-c(\underline{P}_1)<\bar{\partial}^-c(\bar{P}_1)<0.
\end{equation*}
On the shock $\mathsf{S}$, we have
\begin{equation*}
\bar{\partial}^+c(\bar{P}_1)=g(\vec{U}, \epsilon)\bar{\partial}^-c(\bar{P}_1).
\end{equation*}
On the wall $\mathsf{W}$, we have
\begin{equation*}
\bar{\partial}^-c(\underline{P}_1)-\bar{\partial}^+c(\underline{P}_1)=(\gamma-1)q(\underline{P}_1)\kappa_w(\underline{P}_1)\leq0.
\end{equation*}
Summarising above four, we conclude
\begin{equation*}
\begin{split}
0&>\bar{\partial}^+c(P)\\
&>\bar{\partial}^+c(\bar{P}_1)\\
&=g(\vec{U}, \epsilon)\bar{\partial}^-c(\bar{P}_1)\\
&>g(\vec{U}, \epsilon)\bar{\partial}^-c(\underline{P}_1)\\
&=g(\vec{U}, \epsilon)\qnt{\bar{\partial}^+c(\underline{P}_1)+(\gamma-1)q(\underline{P}_1)\kappa_w(\underline{P}_1)}\\
&\geq\underline{K}\qnt{\bar{\partial}^+c(\underline{P}_1)+(\gamma-1)\bar{q}\inf_{P\in \mathsf{W}}\kappa_w(P)},
\end{split}
\end{equation*}
in the last inequality of which we have applied \eqref{eq:PositiveG} to control $g(\vec{U}, \epsilon)$. Iterating this inequalities, we can easily prove that
\begin{equation*}
0>\bar{\partial}^+c(P)\geq \frac{K+1}{1-K}(\gamma-1)\bar{q}\inf_{P\in \mathsf{W}}\kappa_w(P)+\inf_{\xi(P)=\xi_0}\bar{\partial}^+c(P).
\end{equation*}
For small $\epsilon$, the right-hand side of above inequality only depends on the wall $\mathsf{W}$. It means
\begin{equation*}
0>\bar{\partial}^+c(P)\geq -M_s.
\end{equation*}
In a similar way, we can show
\begin{equation*}
0>\bar{\partial}^-c(P)\geq -M_s.
\end{equation*}

Therefore, we conclude that $\bar{\partial}^\pm c$ is bounded and the speed $q=\sqrt{u^2+v^2}$ is less than $\underline{u}$. In view of $\bar{\partial}^\pm c<0$ and Theorem \ref{thm:NoVacuum}, for any point $P$ with $\xi(P)>\xi_0$, we have
\begin{equation*}
q(P)>q(Q)|_{\xi(Q)=\xi_0}>c_0+\delta_f.
\end{equation*}
Thus $(u, v)$ keeps a distance of $\min\set{\delta_f, \bar{q}-\underline{u}}>0$ from the critical sonic circle and the limit speed circle for fixed $\epsilon>0$. As a consequence, the Mach angle $\omega\in[\omega_1, \frac{\pi}{2}-\omega_1]$ and the derivatives $(\bar{\partial}^+, \bar{\partial}^-)$ are equivalent to $(\partial_x, \partial_y)$. Moreover, in view of the sonic expressions \eqref{eq:SonicExpressions}, we have
\begin{equation}\label{eq:SonicExpressionsEquiv0}
\abs{\nabla_{(x, y)} \vec{U}}\leq C(\epsilon)\qnt{\abs{\bar{\partial}^+\vec{U}}+\abs{\bar{\partial}^-\vec{U}}}\leq C(\epsilon)\qnt{\abs{\bar{\partial}^+c}+\abs{\bar{\partial}^-c}}\leq C(\epsilon)M_s,
\end{equation}
where $C(\epsilon)$ is a constant depending on $\epsilon$. So the solution can be extended to infinity for any fixed $\epsilon<\epsilon_p$. The entropy condition is a conclusion of Theorem \ref{thm:NoVacuum}. We complete our proof.
\end{proof}

\subsection{Asymptotic behaviour}
In this section, we end up our proof for the main theorem by the following
\begin{thm}[Dissipation of the shock wave]\label{thm:ExtinctionShock}
For {\bf Case B}, in $\Omega$, we have
\begin{equation*}
\lim\limits_{x\rightarrow+\infty}(u, v)=(u_\infty, v_\infty),
\end{equation*}
where $(u_\infty, v_\infty)$ is the supersonic intersection of the line $\frac{v}{u}=f_\infty$ and shock polar \eqref{eq:ShockPolar}, and
\begin{equation*}
\lim\limits_{x\rightarrow+\infty}\phi'(x)=\lim\limits_{(u, v)\rightarrow(u_\infty, v_\infty)}\phi'(u, v).
\end{equation*}
\end{thm}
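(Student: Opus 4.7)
The plan is to combine the monotonicity $\bar{\partial}^\pm c<0$ established in the proof of Theorem \ref{thm:ExistenceGlobalSolution} with a new global integrability of the wall curvature that comes from the convex hypothesis \eqref{eq:ConvexWall} and the limit \eqref{eq:AsyCondition}. First I would observe that since $f''\leq 0$ and $f'$ is bounded with limit $f_\infty$, $f'$ is monotone on $\Real^+$, so
\[
\int_0^\infty \abs{f''(x)}\,dx = f'(0)-f_\infty <\infty.
\]
By the arc-length equivalence \eqref{eq:arcparameterestimate}, this translates to $\int_0^\infty \abs{\kappa_w(\xi)}\,d\xi \leq M_s$, hence $\int_X^\infty \abs{\kappa_w(\xi)}\,d\xi \to 0$ as $X\to\infty$.

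Next I would show that the traces of $\vec{U}$ on the two boundaries of $\Omega$ converge. By Lemma \ref{lem:ConvexShock} the shock $\mathsf{S}$ is convex, and since $\bar{\partial}^\pm c<0$, $c$ is monotone decreasing both along $\mathsf{S}$ and along $\mathsf{W}$; since $q\in(c_0+\delta_f,\underline{u})$ is bounded, $c(P^s)\to c^s_\infty>0$ and $c(P_w)\to c^w_\infty>0$. Coupled with the shock polar $G(\vec{U}(P^s),\epsilon)=0$ and $v(P^s)>0$, this pins down $\vec{U}(P^s)\to\vec{U}^s_\infty$. Analogously, the slip condition $v(P_w)/u(P_w)=f'(\xi)\to f_\infty$ together with Bernoulli \eqref{eq:BernoulliLaw} yields $\vec{U}(P_w)\to\vec{U}^w_\infty$ with $v^w_\infty/u^w_\infty=f_\infty$.

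The heart of the argument is a quantitative collapse of the thin shock layer, which forces $\vec{U}^s_\infty=\vec{U}^w_\infty$. For this I would refine the reflection scheme of Lemma \ref{lem:DerivativeDecay}: running the $C^\pm$ zigzag back from $P$, the strict contraction $g(\vec{U},\epsilon)\in(0,\underline{K})$ on $\mathsf{S}$ from Theorem \ref{thm:gRange0} and the wall relation $\bar{\partial}^-c-\bar{\partial}^+c=(\gamma-1)q\kappa_w$ combine into
\[
\abs{\bar{\partial}^+ c(P)} \leq \underline{K}\,\abs{\bar{\partial}^+ c(\underline{P}_1)} + \underline{K}(\gamma-1)\bar{q}\abs{\kappa_w(\underline{P}_1)} + \mbox{(quadratic)},
\]
and iterating produces a bound on $F(X):=\sup_{\xi(P)\geq X}\xi(P)(\abs{\bar{\partial}^+c}+\abs{\bar{\partial}^-c})$. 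The curvature contributions, summed over the wall-reflection points $\xi_n\geq X$, are controlled by $\int_X^\infty\abs{\kappa_w}\,d\xi\to 0$, and the limit inequality $F_\infty\leq \underline{K}F_\infty$ forces $F_\infty=0$. Lemma \ref{lem:DerivativeEquiv} then gives $\xi\abs{\nabla\vec{U}}\to 0$, which combined with the narrow estimate $\abs{P^sP_w}\leq \epsilon M_0\,\xi(P)$ (still valid here since $q\in(c_0+\delta_f,\underline{u})$ yields uniform mass-flux bounds) delivers $\abs{\vec{U}(P^s)-\vec{U}(P_w)}\to 0$. Hence $\vec{U}^s_\infty=\vec{U}^w_\infty$, which is the unique supersonic simultaneous solution of $G(\vec{U},\epsilon)=0$ and $v/u=f_\infty$, namely $(u_\infty,v_\infty)$. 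The shock-slope convergence follows at once from \eqref{eq:sExpression} and continuity.

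The main obstacle will be the third step: unlike Case A, where the pointwise decay \eqref{eq:WallCondition2nd} drove Lemma \ref{lem:DerivativeDecay} directly, only the $L^1$ bound $\int\abs{\kappa_w}\,d\xi<\infty$ is available now. To close the recursion one must group reflection indices so that $\sum_{\xi_n\geq X}\abs{\kappa_w(\xi_n)}$ is dominated by the tail integral $\int_X^\infty\abs{\kappa_w}\,d\xi$, which requires using the continuity $\kappa_w\in C^0$ and the geometric-type ratio $\xi_{n+1}/\xi_n\in[1-\epsilon M_s,1-\epsilon m_s]$ from Theorem \ref{thm:characteristicestimate}. Once this comparison is in place, the strict contraction $\underline{K}<1$ provided by the delicate sign analysis of Theorem \ref{thm:ShockPolarIV} does the rest, and the conclusions for the dissipation case $\gamma=2$, $f_\infty=0$ follow immediately since the only supersonic intersection of $v/u=0$ with the shock polar is then the incoming state $(\underline{u},0)$.
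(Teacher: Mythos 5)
Your route is genuinely different from the paper's, and it has gaps that I do not think can be closed with the tools available in Case B. The paper's proof is purely qualitative: convexity of the shock gives monotonicity of the trace $(u,v)(x,\phi(x))$ along the shock polar, hence a limit $(u_m,v_m)$ exists; assuming $v_m>v_\infty$, one follows the level sets of the two Riemann invariants in the $(u,v)$-plane (a $C^-$ contour from $(u_m,v_m)$ into the region bounded by the shock polar down to the line $v/u=f_\infty$ at some $(u_c,v_c)$, then a $C^+$ contour back out to the shock polar at $(u_M,v_M)$ with $v_M<v_m$, using Theorem \ref{thm:NoVacuum} and \eqref{eq:InnerCondition}), so a single $C^-$/$C^+$ reflection produces a shock point with $v$ strictly below the supposed limit, a contradiction. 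No decay rates of derivatives are needed.

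The concrete problems with your argument are these. First, the quantity $F(X)=\sup_{\xi\geq X}\xi\bigl(\abs{\bar{\partial}^+c}+\abs{\bar{\partial}^-c}\bigr)$ is not known to be finite in Case B: the inductive bound \eqref{eq:SonicCharacteristicBoundInductive} was proved only under the pointwise decay \eqref{eq:WallCondition2nd} of Case A, while Theorem \ref{thm:ExistenceGlobalSolution} gives only $0>\bar{\partial}^\pm c\geq -M_s$. Without the $1/\xi$ weight your recursion still fails, because the quadratic error terms in \eqref{eq:CharacteristicDecomposition} are of size (characteristic length)$\times(\abs{\bar{\partial}^+c}+\abs{\bar{\partial}^-c})^2$, and in the dissipation regime the characteristic lengths grow linearly in $\xi$: since $\phi'\to \underline{c}/\sqrt{\underline{u}^2-\underline{c}^2}>0$ while $f'\to 0$, the shock layer width $\abs{P^sP_w}$ grows like a fixed positive multiple of $\xi$ for fixed $\epsilon$, so the layer does not collapse and the "thin shock layer" step $\abs{\vec{U}(P^s)-\vec{U}(P_w)}\leq\abs{\nabla\vec{U}}\,\abs{P^sP_w}\to0$ requires exactly the decay $\xi\abs{\nabla\vec{U}}\to0$ that you have not established. (The narrow estimate of Theorem \ref{thm:NarrowEstimate} also degenerates here: its lower mass-flux bound $m_0$ comes from $\mathcal{C}_s$ staying away from vacuum, which fails as $f'\to0$.) Second, even granting a bounded recursion, the sum $\sum_n\abs{\kappa_w(\xi_n)}$ over the discrete reflection points is not dominated by $\int_X^\infty\abs{\kappa_w}\,d\xi$ without extra structure ($f''\leq0$ gives $L^1$ integrability of $f''$ but not monotonicity of $\abs{f''}$, so $\abs{f''}$ may concentrate at the reflection points). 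I would recommend replacing the whole quantitative step by the paper's Riemann-invariant contour argument, for which Theorem \ref{thm:NoVacuum} and the sign conditions of Theorem \ref{thm:gRange0} are precisely the needed ingredients.
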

\begin{proof}
By Theorem \ref{thm:ExistenceGlobalSolution}, the shock is convex and $\tau\geq0$, so $\lim\limits_{x\rightarrow+\infty}\phi'(x)=\inf\phi'(x)$ exists. In the proof there, we also deduce that $\Omega$ is covered by the $C^-$ characteristics from the shock. Therefore, on the $(u, v)$ plane, the flow field is covered by the contour line $F(u, v)=\mbox{constant}$ from the shock polar \eqref{eq:ShockPolar}, where $F$ is the Riemann invariant for the -characteristic. Then, in view of Theorem \ref{thm:NoVacuum}, it suffices to show
\begin{equation}\label{eq:ApproximateVelocityShock}
\lim\limits_{x\rightarrow+\infty}(u, v)(x, \phi(x))=(u_\infty, v_\infty).
\end{equation}

\begin{figure}[htbp]
\setlength{\unitlength}{1bp}
\begin{center}
\begin{picture}(200,165)
\put(-40,-10){\includegraphics[scale=0.7]{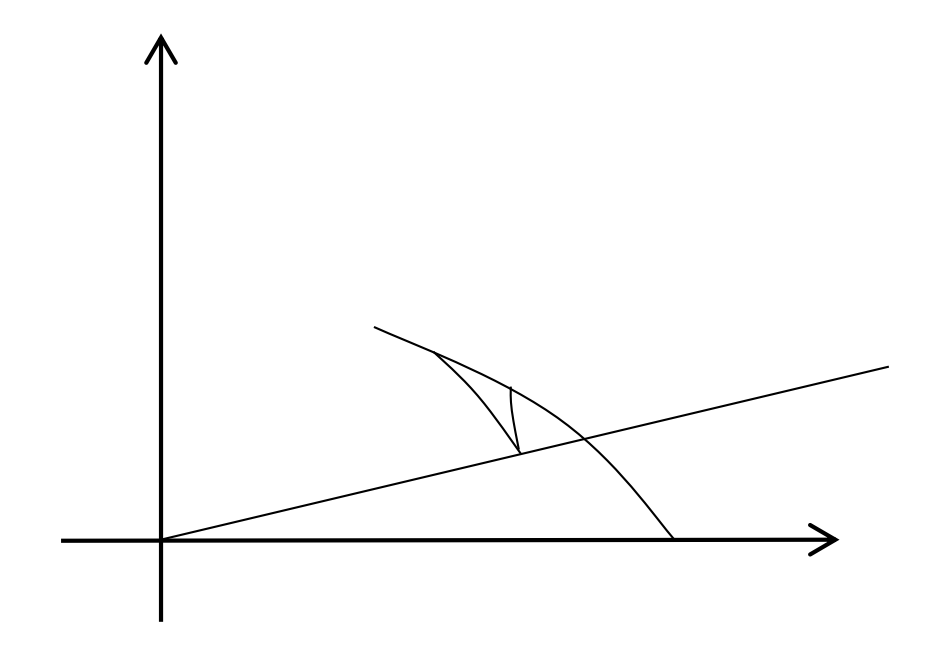}}
\put(220,16){\small$u$}
\put(0,135){\small$v$}
\put(195,60){\tiny$\frac{v}{u}=f_\infty$}
\put(90,82){\tiny$(u_m, v_m)$}
\put(110,38){\tiny$(u_c, v_c)$}
\put(111,65){\tiny$\bullet$}
\put(114,45){\tiny$\bullet$}
\put(133,49){\tiny$\bullet$}
\put(115,67){\tiny$(u_M, v_M)$}
\put(143,47){\tiny$(u_\infty, v_\infty)$}
\put(89,75){\tiny$\bullet$}
\put(15,75){\small shock polar}
\end{picture}
\end{center}
\caption{asymptotic states on $(u, v)$ plane}\label{fig:inftystates1}
\end{figure}

Let
\begin{equation}\label{eq:AyStates}
\lim\limits_{x\rightarrow+\infty}(u, v)(x, \phi(x))=(u_m, v_m),
\end{equation}
where $(u_m, v_m)$ is on the shock polar \eqref{eq:ShockPolar}. We assume that (with the aim of arriving at a contradiction) $v_m>v_\infty$.

In Figure \ref{fig:inftystates1}, consider the contour line $F(u, v)=F(u_m, v_m)$. This contour line inserts the shock polar and finally hits the line $\frac{v}{u}=f_\infty$ on $(u_c, v_c)$ where $v_\infty<v_c$. Then we consider the Riemann invariant $E(u, v)$ for $C^+$ characteristic and the contour line $E(u, v)=E(u_c, v_c)$ in the upper $(u, v)$ plane. By Theorem \ref{thm:NoVacuum}, this contour line threads out the shock polar at $(u_M, v_M)$, $u_M^2+v_M^2>u_c^2+v_c^2>u_m^2+v_m^2$, and then $v_M<v_m$. Thus, if the states $(u, v)(P^s)$ on the shock $\mathsf{S}$ approximates $(u_m, v_m)$ as $\xi\rightarrow+\infty$, then consider the corresponding reflection point $P^s_1$, which is on the shock $\mathsf{S}$ via a -characteristic and a +characteristic from $P^s$ (Figure \ref{fig:inftystates2}).

\begin{figure}[htbp]
\setlength{\unitlength}{1bp}
\begin{center}
\begin{picture}(200,180)
\put(-40,-10){\includegraphics[scale=0.7]{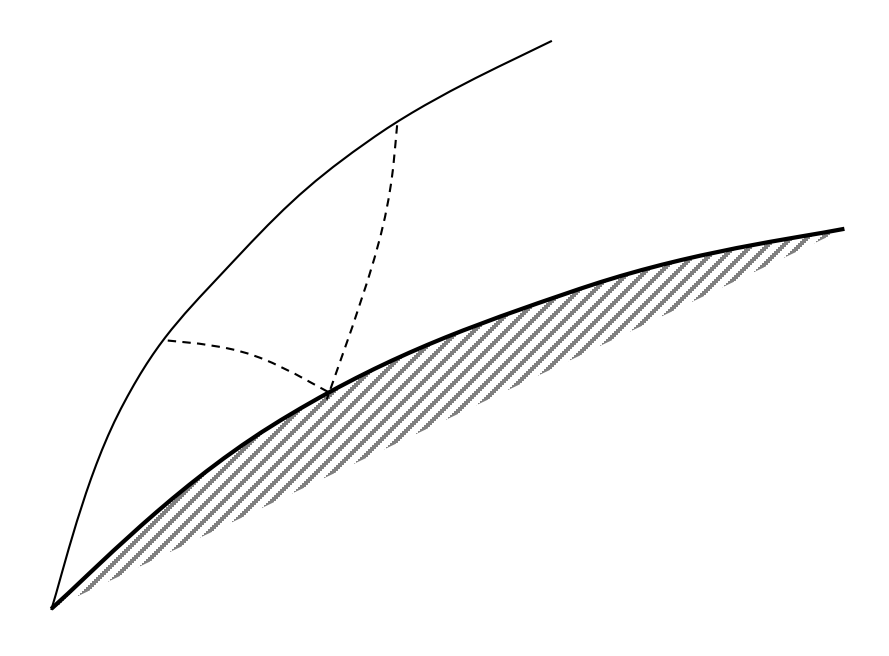}}
\put(120,160){\small$\mathsf{S}$}
\put(180,117){\small$\mathsf{W}$}
\put(23,70){\tiny$C^-$}
\put(-2,85){\small$P^s$}
\put(73,95){\tiny$C^+$}
\put(65,155){\small$P^s_1$}
\end{picture}
\end{center}
\caption{asymptotic states}\label{fig:inftystates2}
\end{figure}
In view of the discussion above, we have that 
\begin{equation*}
(u, v)(P^s_1)\rightarrow(u_M, v_M),\quad\mbox{as}\quad(u, v)(P^s)\rightarrow(u_m, v_m).
\end{equation*}
This stands in contradiction to \eqref{eq:AyStates}. We complete the proof.
\end{proof}

\section*{Acknowledgments}
This work is supported by the National Natural Science Foundation of China under Grant No.12071298.

\bibliographystyle{plain}
\bibliography{hypersonicflowpastwedge20240611}

\def\cprime{$'$} \def\cprime{$'$} \def\cprime{$'$} \def\cprime{$'$}
  \def\cprime{$'$}
\begin{thebibliography}{10}

\bibitem{Anderson2006AIAA}
J.~D. Anderson.
\newblock {\em Hypersonic and High Temperature Gas Dynamics, Second Edition}.
\newblock AIAA Education Series. McGraw-Hill.

\bibitem{BaeChenFeldman2009IM}
Myoungjean Bae, Gui-Qiang Chen, and Mikhail Feldman.
\newblock Regularity of solutions to regular shock reflection for potential
  flow.
\newblock {\em Invent. Math.}, 175(3):505--543, 2009.

\bibitem{BaeXiang2023AAM}
Myoungjean Bae and Wei Xiang.
\newblock Detached shock past a blunt body.
\newblock {\em Acta Appl. Math.}, 188:Paper No. 7, 79, 2023.

\bibitem{Bressan2000OLSMA}
Alberto Bressan.
\newblock {\em Hyperbolic systems of conservation laws}, volume~20 of {\em
  Oxford Lecture Series in Mathematics and its Applications}.
\newblock Oxford University Press, Oxford, 2000.
\newblock The one-dimensional Cauchy problem.

\bibitem{ChenChenZhu2021SIAM}
Geng Chen, Gui-Qiang~G. Chen, and Shengguo Zhu.
\newblock Formation of singularities and existence of global continuous
  solutions for the compressible {E}uler equations.
\newblock {\em SIAM J. Math. Anal.}, 53(6):6280--6325, 2021.

\bibitem{ChenPanZhu2017SIAM}
Geng Chen, Ronghua Pan, and Shengguo Zhu.
\newblock Singularity formation for the compressible {E}uler equations.
\newblock {\em SIAM J. Math. Anal.}, 49(4):2591--2614, 2017.

\bibitem{ChenYoungZhang2013JHDE}
Geng Chen, Robin Young, and Qingtian Zhang.
\newblock Shock formation in the compressible {E}uler equations and related
  systems.
\newblock {\em J. Hyperbolic Differ. Equ.}, 10(1):149--172, 2013.

\bibitem{Chen2017SCM}
Gui-Qiang Chen.
\newblock Supersonic flow onto solid wedges, multidimensional shock waves and
  free boundary problems.
\newblock {\em Sci. China Math.}, 60(8):1353--1370, 2017.

\bibitem{ChenFang2009DCDS}
Gui-Qiang Chen and Beixiang Fang.
\newblock Stability of transonic shock-fronts in three-dimensional conical
  steady potential flow past a perturbed cone.
\newblock {\em Discrete Contin. Dyn. Syst.}, 23(1-2):85--114, 2009.

\bibitem{ChenFang2017AM}
Gui-Qiang Chen and Beixiang Fang.
\newblock Stability of transonic shocks in steady supersonic flow past
  multidimensional wedges.
\newblock {\em Adv. Math.}, 314:493--539, 2017.

\bibitem{ChenFeldman2010AM}
Gui-Qiang Chen and Mikhail Feldman.
\newblock Global solutions of shock reflection by large-angle wedges for
  potential flow.
\newblock {\em Ann. of Math. (2)}, 171(2):1067--1182, 2010.

\bibitem{ChenLi2008JDE}
Gui-Qiang Chen and Tian-Hong Li.
\newblock Well-posedness for two-dimensional steady supersonic {E}uler flows
  past a {L}ipschitz wedge.
\newblock {\em J. Differential Equations}, 244(6):1521--1550, 2008.

\bibitem{ChenZhangZhu2006ARMA}
Gui-Qiang Chen, Yongqian Zhang, and Dianwen Zhu.
\newblock Existence and stability of supersonic {E}uler flows past {L}ipschitz
  wedges.
\newblock {\em Arch. Ration. Mech. Anal.}, 181(2):261--310, 2006.

\bibitem{ChenFeldman2018AMS}
Gui-Qiang~G. Chen and Mikhail Feldman.
\newblock {\em The mathematics of shock reflection-diffraction and von
  {N}eumann's conjectures}, volume 197 of {\em Annals of Mathematics Studies}.
\newblock Princeton University Press, Princeton, NJ, 2018.

\bibitem{ChenFeldmanXiang2020ARMA}
Gui-Qiang~G. Chen, Mikhail Feldman, and Wei Xiang.
\newblock Convexity of self-similar transonic shocks and free boundaries for
  the {E}uler equations for potential flow.
\newblock {\em Arch. Ration. Mech. Anal.}, 238(1):47--124, 2020.

\bibitem{ChenKuangXiangZhang2024AM}
Gui-Qiang~G. Chen, Jie Kuang, Wei Xiang, and Yongqian Zhang.
\newblock Hypersonic similarity for steady compressible full {E}uler flows over
  two-dimensional {L}ipschitz wedges.
\newblock {\em Adv. Math.}, 451:Paper No. 109782, 100, 2024.

\bibitem{Chen1998CAMS}
Shuxing Chen.
\newblock Asymptotic behaviour of supersonic flow past a convex combined wedge.
\newblock {\em Chinese Ann. Math. Ser. B}, 19(3):255--264, 1998.
\newblock A Chinese summary appears in Chinese Ann. Math. Ser. A {\bf 19}
  (1998), no. 4, 533.

\bibitem{Chen1998JPDE}
Shuxing Chen.
\newblock Global existence of supersonic flow past a curved convex wedge.
\newblock {\em J. Partial Differential Equations}, 11(1):73--82, 1998.

\bibitem{Chen2003ZAMP}
Shuxing Chen.
\newblock A free boundary problem of elliptic equation arising in supersonic
  flow past a conical body.
\newblock {\em Z. Angew. Math. Phys.}, 54(3):387--409, 2003.

\bibitem{Chen2010QAM}
Shuxing Chen.
\newblock Mixed type equations in gas dynamics.
\newblock {\em Quart. Appl. Math.}, 68(3):487--511, 2010.

\bibitem{ChenFang2007JDE}
Shuxing Chen and Beixiang Fang.
\newblock Stability of transonic shocks in supersonic flow past a wedge.
\newblock {\em J. Differential Equations}, 233(1):105--135, 2007.

\bibitem{Chen2014Interaction}
Shuxing Chen and Aifang Qu.
\newblock Interaction of rarefaction waves and vacuum in a convex duct.
\newblock {\em Arch. Ration. Mech. Anal.}, 213(2):423--446, 2014.

\bibitem{ChenXinYin2002CMP}
Shuxing Chen, Zhouping Xin, and Huicheng Yin.
\newblock Global shock waves for the supersonic flow past a perturbed cone.
\newblock {\em Comm. Math. Phys.}, 228(1):47--84, 2002.

\bibitem{ChenZheng2010IUMJ}
Xiao Chen and Yuxi Zheng.
\newblock The interaction of rarefaction waves of the two-dimensional {E}uler
  equations.
\newblock {\em Indiana Univ. Math. J.}, 59(1):231--256, 2010.

\bibitem{Christodoulou2022JMP}
Demetrios Christodoulou.
\newblock On the development of shocks in fluids.
\newblock {\em J. Math. Phys.}, 63(9):Paper No. 091102, 13, 2022.

\bibitem{ChristodoulouMiao2014SMM}
Demetrios Christodoulou and Shuang Miao.
\newblock {\em Compressible flow and {E}uler's equations}, volume~9 of {\em
  Surveys of Modern Mathematics}.
\newblock International Press, Somerville, MA; Higher Education Press, Beijing,
  2014.

\bibitem{CourantFriedrichs1976AMS}
R.~Courant and K.~O. Friedrichs.
\newblock {\em Supersonic flow and shock waves}.
\newblock Springer-Verlag, New York, 1976.
\newblock Reprinting of the 1948 original, Applied Mathematical Sciences, Vol.
  21.

\bibitem{CuiYin2009JDE}
Dacheng Cui and Huicheng Yin.
\newblock Global supersonic conic shock wave for the steady supersonic flow
  past a cone: polytropic gas.
\newblock {\em J. Differential Equations}, 246(2):641--669, 2009.

\bibitem{EllingLiu2008CPAM}
Volker Elling and Tai-Ping Liu.
\newblock Supersonic flow onto a solid wedge.
\newblock {\em Comm. Pure Appl. Math.}, 61(10):1347--1448, 2008.

\bibitem{Evans2010GSM}
Lawrence~C. Evans.
\newblock {\em Partial differential equations}, volume~19 of {\em Graduate
  Studies in Mathematics}.
\newblock American Mathematical Society, Providence, RI, second edition, 2010.

\bibitem{gilbarg2001elliptic}
David Gilbarg and Neil~S. Trudinger.
\newblock Elliptic partial differential equations of second order, reprint of
  the 1998 edition.
\newblock {\em Grundlehren Der Mathematischen Wissenschaften}, 224(3):469--484,
  2001.

\bibitem{Glimm1965CPAM}
James Glimm.
\newblock Solutions in the large for nonlinear hyperbolic systems of equations.
\newblock {\em Comm. Pure Appl. Math.}, 18:697--715, 1965.

\bibitem{Hu2018JMAA}
Dian Hu.
\newblock The supersonic flow past a wedge with large curved boundary.
\newblock {\em J. Math. Anal. Appl.}, 462(1):380--389, 2018.

\bibitem{HuZhang2019SIAM}
Dian Hu and Yongqian Zhang.
\newblock Global conic shock wave for the steady supersonic flow past a curved
  cone.
\newblock {\em SIAM J. Math. Anal.}, 51(3):2372--2389, 2019.

\bibitem{HuLi2020ARMA}
Yanbo Hu and Jiequan Li.
\newblock Sonic-supersonic solutions for the two-dimensional steady full
  {E}uler equations.
\newblock {\em Arch. Ration. Mech. Anal.}, 235(3):1819--1871, 2020.

\bibitem{JinQuYuan2021CPAA}
Yunjuan Jin, Aifang Qu, and Hairong Yuan.
\newblock Radon measure solutions for steady compressible hypersonic-limit
  euler flows passing cylindrically symmetric conical bodies.
\newblock {\em Commun. Pure Appl. Anal.}, 2021.

\bibitem{JinQuYuan2023CAMC}
Yunjuan Jin, Aifang Qu, and Hairong Yuan.
\newblock Radon measure solutions to {R}iemann problems for isentropic
  compressible {E}uler equations of polytropic gases.
\newblock {\em Commun. Appl. Math. Comput.}, 5(3):1097--1129, 2023.

\bibitem{John1974CPAM}
Fritz John.
\newblock Formation of singularities in one-dimensional nonlinear wave
  propagation.
\newblock {\em Comm. Pure Appl. Math.}, 27:377--405, 1974.

\bibitem{Kong2002TAMS}
De-Xing Kong.
\newblock Formation and propagation of singularities for {$2\times2$}
  quasilinear hyperbolic systems.
\newblock {\em Trans. Amer. Math. Soc.}, 354(8):3155--3179, 2002.

\bibitem{KuangXiangZhang2020AIPAN}
Jie Kuang, Wei Xiang, and Yongqian Zhang.
\newblock Hypersonic similarity for the two dimensional steady potential flow
  with large data.
\newblock {\em Ann. Inst. H. Poincar\'e{} C Anal. Non Lin\'eaire},
  37(6):1379--1423, 2020.

\bibitem{KuangXiangZhang2023CVPDE}
Jie Kuang, Wei Xiang, and Yongqian Zhang.
\newblock Convergence rate of hypersonic similarity for steady potential flows
  over two-dimensional {L}ipschitz wedge.
\newblock {\em Calc. Var. Partial Differential Equations}, 62(3):Paper No. 106,
  49, 2023.

\bibitem{Lai2019IUMJ}
Geng Lai.
\newblock Global solutions to a class of two-dimensional {R}iemann problems for
  the {E}uler equations with a general equation of state.
\newblock {\em Indiana Univ. Math. J.}, 68(5):1409--1464, 2019.

\bibitem{LaiSheng2016SIAM}
Geng Lai and Wancheng Sheng.
\newblock Elementary wave interactions to the compressible {E}uler equations
  for {C}haplygin gas in two dimensions.
\newblock {\em SIAM J. Appl. Math.}, 76(6):2218--2242, 2016.

\bibitem{LaiSheng2021ARMA}
Geng Lai and Wancheng Sheng.
\newblock Two-dimensional pseudosteady flows around a sharp corner.
\newblock {\em Arch. Ration. Mech. Anal.}, 241(2):805--884, 2021.

\bibitem{Lax1964JMP}
Peter~D. Lax.
\newblock Development of singularities of solutions of nonlinear hyperbolic
  partial differential equations.
\newblock {\em J. Mathematical Phys.}, 5:611--613, 1964.

\bibitem{LiXiao2012JDE}
Fengbai Li and Wei Xiao.
\newblock Interaction of four rarefaction waves in the bi-symmetric class of
  the pressure-gradient system.
\newblock {\em J. Differential Equations}, 252(6):3920--3952, 2012.

\bibitem{LiYangZheng2011JDE}
Jiequan Li, Zhicheng Yang, and Yuxi Zheng.
\newblock Characteristic decompositions and interactions of rarefaction waves
  of 2-{D} {E}uler equations.
\newblock {\em J. Differential Equations}, 250(2):782--798, 2011.

\bibitem{LiZhangZheng2006CMP}
Jiequan Li, Tong Zhang, and Yuxi Zheng.
\newblock Simple waves and a characteristic decomposition of the two
  dimensional compressible {E}uler equations.
\newblock {\em Comm. Math. Phys.}, 267(1):1--12, 2006.

\bibitem{LiZheng2009ARMA}
Jiequan Li and Yuxi Zheng.
\newblock Interaction of rarefaction waves of the two-dimensional self-similar
  {E}uler equations.
\newblock {\em Arch. Ration. Mech. Anal.}, 193(3):623--657, 2009.

\bibitem{LiZheng2010CMP}
Jiequan Li and Yuxi Zheng.
\newblock Interaction of four rarefaction waves in the bi-symmetric class of
  the two-dimensional {E}uler equations.
\newblock {\em Comm. Math. Phys.}, 296(2):303--321, 2010.

\bibitem{LiWittYin2014CMP}
Jun Li, Ingo Witt, and Huicheng Yin.
\newblock On the global existence and stability of a multi-dimensional
  supersonic conic shock wave.
\newblock {\em Comm. Math. Phys.}, 329(2):609--640, 2014.

\bibitem{LiZheng2011ARMA}
Mingjie Li and Yuxi Zheng.
\newblock Semi-hyperbolic patches of solutions to the two-dimensional {E}uler
  equations.
\newblock {\em Arch. Ration. Mech. Anal.}, 201(3):1069--1096, 2011.

\bibitem{LiYu1985DUMS}
Ta~Tsien Li and Wen~Ci Yu.
\newblock {\em Boundary value problems for quasilinear hyperbolic systems},
  volume~V of {\em Duke University Mathematics Series}.
\newblock Duke University, Mathematics Department, Durham, NC, 1985.

\bibitem{LienLiu1999CMP}
Wen-Ching Lien and Tai-Ping Liu.
\newblock Nonlinear stability of a self-similar 3-dimensional gas flow.
\newblock {\em Comm. Math. Phys.}, 204(3):525--549, 1999.

\bibitem{Liu1979JDE}
Tai~Ping Liu.
\newblock Development of singularities in the nonlinear waves for quasilinear
  hyperbolic partial differential equations.
\newblock {\em J. Differential Equations}, 33(1):92--111, 1979.

\bibitem{MiaoYu2017IM}
Shuang Miao and Pin Yu.
\newblock On the formation of shocks for quasilinear wave equations.
\newblock {\em Invent. Math.}, 207(2):697--831, 2017.

\bibitem{Nakane1988SIAM}
Shizuo Nakane.
\newblock Formation of shocks for a single conservation law.
\newblock {\em SIAM J. Math. Anal.}, 19(6):1391--1408, 1988.

\bibitem{Nishida1968PJA}
Takaaki Nishida.
\newblock Global solution for an initial boundary value problem of a
  quasilinear hyperbolic system.
\newblock {\em Proc. Japan Acad.}, 44:642--646, 1968.

\bibitem{NishidaSmoller1973CPAM}
Takaaki Nishida and Joel~A. Smoller.
\newblock Solutions in the large for some nonlinear hyperbolic conservation
  laws.
\newblock {\em Comm. Pure Appl. Math.}, 26:183--200, 1973.

\bibitem{QuYuan2023CAMB}
Aifang Qu and Hairong Yuan.
\newblock Steady compressible {E}uler equations of concentration layers for
  hypersonic-limit flows passing three-dimensional bodies and generalized
  {N}ewton-{B}usemann pressure law.
\newblock {\em Chinese Ann. Math. Ser. B}, 44(4):561--576, 2023.

\bibitem{QuYuanZhao2020ZAMM}
Aifang Qu, Hairong Yuan, and Qin Zhao.
\newblock Hypersonic limit of two-dimensional steady compressible {E}uler flows
  passing a straight wedge.
\newblock {\em ZAMM Z. Angew. Math. Mech.}, 100(3):e201800225, 14, 2020.

\bibitem{Rammaha1989PAMS}
M.~A. Rammaha.
\newblock Formation of singularities in compressible fluids in two-space
  dimensions.
\newblock {\em Proc. Amer. Math. Soc.}, 107(3):705--714, 1989.

\bibitem{Schaeffer1976DMJ}
David~G. Schaeffer.
\newblock Supersonic flow past a nearly straight wedge.
\newblock {\em Duke Math. J.}, 43(3):637--670, 1976.

\bibitem{Tsien1939JSR}
HS~Tsien.
\newblock {Two-dimensional subsonic flow of compressible fluids}.
\newblock {\em {Journal of Spacecraft and Rockets}}, {40}({6}):{983--991},
  {NOV-DEC} {2003}.

\bibitem{XuYin2009SIAM}
Gang Xu and Huicheng Yin.
\newblock Global multidimensional transonic conic shock wave for the perturbed
  supersonic flow past a cone.
\newblock {\em SIAM J. Math. Anal.}, 41(1):178--218, 2009.

\bibitem{YinZhou2009JDE}
Huicheng Yin and Chunhui Zhou.
\newblock On global transonic shocks for the steady supersonic {E}uler flows
  past sharp 2-{D} wedges.
\newblock {\em J. Differential Equations}, 246(11):4466--4496, 2009.

\bibitem{ZhangCui2012AAM}
Kangqun Zhang and Dacheng Cui.
\newblock Global conic shock wave for the steady supersonic flow past a large
  curved cone at infinity.
\newblock {\em Acta Appl. Math.}, 119:1--22, 2012.

\bibitem{Zhang1999SIAM}
Yongqian Zhang.
\newblock Global existence of steady supersonic potential flow past a curved
  wedge with a piecewise smooth boundary.
\newblock {\em SIAM J. Math. Anal.}, 31(1):166--183, 1999.

\bibitem{Zhang2003JDE}
Yongqian Zhang.
\newblock Steady supersonic flow past an almost straight wedge with large
  vertex angle.
\newblock {\em J. Differential Equations}, 192(1):1--46, 2003.

\end{thebibliography}

\end{document}